\newcommand{\nm}{\noalign{\smallskip}}
\definecolor{green2}{rgb}{0.3,0.6,0.4}
\newtheorem{lem}{Lemma}[section]
\newtheorem{prop}{Proposition}[section]
\newtheorem{cor}{Corollary}[section]
\newtheorem{thm}{Theorem}[section]
\newtheorem{rem}{Remark}[section]
\newtheorem{defi}{Definition}[section]
\numberwithin{equation}{section} \numberwithin{figure}{section}
\def\R{\mathbb{R}}
\def\C{\mathbb{C}}
\def\Z{\mathbb{Z}}
\def\E{\mathbb{E}}
\def\F{\mathcal{F}}
\def\Pb{\mathbb{P}}
\def\eps{\varepsilon}
\def\ueps{u_\eps}
\def\uepsp{u_\eps^+}
\def\uepsm{u_\eps^-}
\def\xiepsp{\xi_\eps^+}
\def\xiepsm{\xi_\eps^-}
\def\xieps{\xi_\eps}
\def\Deps{\Omega_\eps}
\def\Depsp{\Omega_\eps^+}
\def\Depsm{\Omega_\eps^-}
\def\interface{\Gamma_\eps}
\def\GGamma{{\bm \Gamma}}
\def\Weps{W_\eps}
\def\Hq{H_\C}
\def\mean{\mathcal{M}}
\def\Upsilon{\Phi(\Gamma)}
\title{Spectroscopic imaging  of a dilute cell suspension\thanks{\footnotesize This work was supported  by the
ERC Advanced
Grant Project MULTIMOD--267184.}}
\author{Habib Ammari\thanks{\footnotesize Department of Mathematics and Applications,
Ecole Normale Sup\'erieure, 45 Rue d'Ulm, 75005 Paris, France
(habib.ammari@ens.fr, laure.giovangigli@ens.fr,
wjing@dma.ens.fr).} \and Josselin Garnier\thanks{\footnotesize
Laboratoire de Probabilit\'es et Mod\`eles Al\'eatoires \&
Laboratoire Jacques-Louis Lions, Universit\'e Paris VII, 75205
Paris Cedex 13, France (garnier@math.jussieu.fr).} \and Laure
Giovangigli\footnotemark[2] \and Wenjia Jing\footnotemark[2] \and
Jin-Keun Seo\thanks{\footnotesize Department of Computational
Science and Engineering, Yonsei University 50 Yonsei-Ro,
Seodaemun-Gu, Seoul 120-749, Korea (seoj@yonsei.ac.kr).}}
\begin{document}

%\begin{titlepage}
\maketitle
%\begin{center}

%\end{center}
%\end{titlepage}

\begin{abstract}
A rigorous homogenization theory is derived to describe the
effective admittivity of cell suspensions. A new formula is
reported for dilute cases that gives the frequency-dependent
effective admittivity with respect to the membrane polarization.
Different microstructures are shown to be distinguishable via
spectroscopic measurements of the overall admittivity using the
spectral properties of the membrane polarization. The Debye
 relaxation times associated with the membrane polarization tensor are shown to be able
 to give the microscopic structure of the medium. A natural measure of the
 admittivity
anisotropy is introduced and its dependence on the frequency of
applied current is derived. A Maxwell-Wagner-Fricke formula is
given for concentric circular cells, and the results can be
extended to the random cases. A randomly deformed periodic medium
is also considered and a new formula is derived for the overall
admittivity of a dilute suspension of randomly deformed cells.

\end{abstract}

\bigskip

\noindent {\footnotesize Mathematics Subject Classification
(MSC2000): 35R30, 35B30.}

\noindent {\footnotesize Keywords: cell membrane, effective
admittivity, electrical impedance spectroscopy, dilute suspension,
stochastic homogenization, Maxwell-Wagner-Fricke formula, Debye
relaxation time.}

\tableofcontents

%\newpage

\selectlanguage{english}

\section{Introduction}

The electric behavior of biological tissue under the influence of
an electric field at frequency $\omega$ can be characterized by
its frequency-dependent effective admittivity $k_{ef}:=
\sigma_{ef}(\omega) + i \omega \epsilon_{ef}(\omega)$, where
$\sigma_{ef}$ and $\varepsilon_{ef}$ are respectively its
effective conductivity and permittivity. Electrical impedance
spectroscopy assesses the frequency dependence of the effective
admittivity by measuring it across a range of frequencies from a
few Hz to hundreds of MHz. Effective admittivity of biological
tissues and its frequency dependence vary with tissue composition,
membrane characteristics, intra-and extra-cellular fluids and
other factors. Hence, the admittance spectroscopy provides
information about the microscopic structure of the medium and
physiological and pathological conditions of the tissue.

The determination of the effective, or macroscopic, property of a
suspension is an enduring problem in physics \cite{miltonbook}. It
has been studied by many distinguished scientists, including
Maxwell, Poisson \cite{poisson}, Faraday, Rayleigh
\cite{rayleigh}, Fricke \cite{fricke53}, Lorentz, Debye, and
Einstein \cite{Ein1906}. Many studies have been conducted on
approximate analytic expressions for overall admittivity of a cell
suspension from the knowledge of pointwise conductivity
distribution, and these studies were mostly restricted to the
simplified model of a strongly dilute suspension of spherical or
ellipsoidal cells.

%The aim of our work is to prove that this spectrum carries
%information on the microscopic structure of the medium.

In this paper, we consider a periodic suspension of identical
cells of arbitrary shape. We apply at the boundary of the medium
an electric field of frequency $\omega$. The medium outside the
cells has an admittivity of $k_0:=\sigma_0+ i\omega
\epsilon_0$. Each cell is composed of an isotropic homogeneous
core of admittivity $k_0$ and a thin membrane of constant
thickness $\delta$ and admittivity $k_m := \sigma_m + i \omega
\epsilon_m $. The thickness $\delta$ is considered to be very
small relative to the typical cell size  and the membrane is
considered very resistive, {\it i.e.}, $\sigma_m \ll \sigma_0$. In
this context, the potential in the medium passes an effective
discontinuity over the cell boundary; the jump is proportional to
its normal derivative with a coefficient of the effective
thickness, given by $ \delta k_0 \, / k_m$. The normal derivative
of the potential is continuous across the cell boundaries.

We use homogenization techniques with asymptotic expansions to
derive a homogenized problem and to define an effective
admittivity of the medium. We prove a rigorous convergence of the
initial problem to the homogenized problem via two-scale
convergence.  For dilute cell suspensions, we use layer potential
techniques to expand the effective admittivity in terms of cell
volume fraction. Through the effective thickness, $\delta \, k_0/
k_m$, the first-order term in this expansion can be expressed in
terms of a membrane polarization tensor, $M$, that depends on the
operating frequency $\omega$. We retrieve the
Maxwell-Wagner-Fricke formula for concentric circular-shaped
cells. This explicit formula has been generalized in many
directions: in three dimension for concentric spherical cells; to
include higher power terms of the volume fraction for concentric
circular and spherical cells; and to include various shapes
 such as concentric, confocal ellipses and ellipsoids;
see
\cite{asami,asami2,fricke1,fricke2,fricke3,schwan1,schwan2,schwan3,
jinkeun}.

The imaginary part of $M$ is positive for $\delta$ small enough.
Its two eigenvalues are maximal for frequencies $1/\tau_i, i=1,2,$
of order of a few MHz with physically plausible parameters values.
This dispersion phenomenon well known by the biologists is
referred to as the $\beta$-dispersion. The associated
characteristic times $\tau_i$ correspond to Debye  relaxation
times. Given this, we show that different microscopic
organizations of the medium can be distinguished via $\tau_i,
i=1,2,$ alone. The relaxation times $\tau_i$ are computed
numerically  for different configurations: one circular or
elliptic cell, two or three cells in close proximity. The obtained
results illustrate the viability of imaging cell suspensions using
the spectral properties of the membrane polarization. The Debye
 relaxation times are shown to be able to give the microscopic
structure of the medium.

In the second part of this paper, we show that our results can be
extended to the random case by considering a randomly deformed
periodic medium. We also derive a rigorous homogenization theory
for cells (and hence interfaces) that are randomly deformed from a
periodic structure by random, ergodic, and stationary
deformations. We prove a new formula for the overall conductivity
of a dilute suspension of randomly deformed cells. Again, the
spectral properties of the membrane polarization can be used to
classify different microscopic structures of the medium through
their Debye  relaxation times. For recent works on effective
properties of dilute random media, we refer to \cite{almog,
beryland}.

Our results in this paper have potential  applicability in cancer
imaging, food sciences and biotechnology \cite{biotech, techno2},
and applied and environmental geophysics. They can be used to
model and improve the MarginProbe system for breast cancer
\cite{dune}, which emits an electric field and senses the
returning signal from tissue under evaluation. The greater
vascularization, differently polarized cell membranes, and other
anatomical differences of tumors compared with healthy tissue
cause them to show different electromagnetic signatures. The
ability of the probe to detect signals characteristic of cancer
helps surgeons ensure the removal of all unwanted tissue around
tumor margins.

Another commercial medical system to which our results can be
applied is ZedScan \cite{zilicoweb}. ZedScan is based on
electrical impedance spectroscopy for detecting neoplasias in
cervical disease \cite{zilico,zilico2}. Malignant white blood
cells can be also detected using induced membrane polarization
\cite{blood}. In food quality inspection, spectroscopic
conductivity imaging can be used to detect bacterial cells
\cite{food, food2}. In applied and environmental geophysics,
induced membrane polarization can be used to probe up to subsurface
depths of thousands of meters \cite{geo, geo2}.

The structure of the rest of this paper is as follows. Section
\ref{sec:setting} introduces the problem settings and state the
main results of this work. Section \ref{sect:analysis} is devoted
to the analysis of the problem. We prove existence and uniqueness
results and establish useful {\it a priori} estimates. In section
\ref{sect:homog} we consider a periodic  cell suspension and
derive spectral properties of the overall conductivity. In section
\ref{sec:dilutes} we consider the problem of determining the
effective property of a suspension of cells when the volume
fraction  goes to zero. Section \ref{sec:spectro} is devoted to
spectroscopic imaging of a dilute suspension. We make use of the
asymptotic expansion of the effective admittivity in terms of the
volume fraction to image a permittivity inclusion. We also discuss
selective spectroscopic imaging using a pulsed  approach. Finally,
we introduce a natural measure of the conductivity anisotropy and
derive its dependence on the frequency of applied current. In
section \ref{sec:stoch} we extend our results to the case of
randomly deformed periodic media. In section \ref{sect:numer} we
provide numerical examples that support our findings. A few
concluding remarks are given in the last section.

\section{Problem settings and main results} \label{sec:setting}

The aim of this section is to introduce the problem settings and
state the main results of this paper.

\subsection{Periodic domain}

We consider the probe domain $\Omega$ to be a bounded open set of
$\mathbb{R}^2$ of class $\mathcal{C}^2$. The domain contains a
periodic array of cells whose size is controlled by $\varepsilon$.
Let $C$ be a $\mathcal{C}^{2, \eta}$ domain being contained in the
unit square $Y=\displaystyle [0,1]^2$, see Figure~\ref{figdessin}.
Here, $0 < \eta <1$ and $C$ represents a reference cell. We divide
the domain $\Omega$ periodically in each direction in identical
squares $(Y_{\varepsilon, n})_n$ of size $\varepsilon$, where
\begin{equation*}
Y_{\varepsilon,n} = \varepsilon n + \varepsilon Y.
\end{equation*}
Here, $\displaystyle n \in N_{\varepsilon} :=\left \{ n \in
\mathbb{Z}^2 | Y_{\varepsilon,n} \cap \Omega \neq \emptyset \right
\}$.

We consider that a cell $\vspace{0.1cm} \displaystyle
C_{\varepsilon,n}$ lives in each small square $\displaystyle
Y_{\varepsilon, n}$.  As shown in Figure~\ref{figshematic1}, all
cells are identical, up to a translation and scaling of size
$\varepsilon$, to the reference cell $C$:
\begin{equation*}
\forall n \in N_{\varepsilon}, \hspace{0.3cm} C_{\varepsilon,n} =
\varepsilon n + \varepsilon \,C.
\end{equation*}
So are their boundaries $(\Gamma_{\varepsilon, n})_{n \in
N_{\varepsilon}}$ to the boundary $\Gamma$ of $C$:
\begin{equation*}
\forall n \in N_{\varepsilon}, \hspace{0.3cm}
\Gamma_{\varepsilon,n} = \varepsilon n + \varepsilon \,\Gamma.
\end{equation*}

Let us also assume that all the cells are strictly contained in
$\Omega$, that is for every $n \in N_{\varepsilon}$, the boundary
$\Gamma_{\varepsilon, n}$ of the cell $C_{\varepsilon, n}$ does
not intersect  the boundary $\partial \Omega$:
\begin{equation*}
\partial \Omega  \cap ( \displaystyle \bigcup_{n \in N_{\varepsilon}}\Gamma_{\varepsilon, n}) = \emptyset.
\end{equation*}

\subsection{Electrical model of the cell}

Set for any open set $D$ of $\mathbb{R}^2$ :  $$L^2_0(D) := \left
\{ f \in L^2(D) \Big | \displaystyle \int_{\partial D} f(x) ds(x)
=0 \right \}$$ and  $$H^1(D) := \left \{ f \in L^2(D) \Big |
|\nabla f| \in  L^2(D) \right \}.$$

We consider in this section the reference cell $C$ immersed in a
domain $D$. We apply a sinusoidal electrical current $g \in L_0^2(\partial D)$
with angular frequency $\omega$ at the boundary of $D$.

The medium outside the cell, $D\setminus \overline{C}$, is a
homogeneous isotropic medium with admittivity $k_0:=\sigma_0+
i\omega \epsilon_0$. The cell $C$ is composed of an isotropic
homogeneous core of admittivity $k_0$ and a thin membrane of
constant thickness $\delta$ with admittivity $k_m := \sigma_m + i
\omega \epsilon_m $. We make the following assumptions :
$$
\sigma_0 >0, \sigma_m >0, \epsilon_0 >0, \epsilon_m  \geq 0.
$$

If we apply a sinusoidal current $g(x)\sin(\omega t)$ on the boundary $\partial D$ in the low frequency range below $10$ MHz, the resulting time harmonic potential $\check{u}$ is governed approximately by 
\begin{equation*}
\left \{
\begin{array}{ll}
\vspace{0.3cm}\nabla \cdot (k_0 +(k_m-k_0)\chi_{ \Gamma^\delta })\nabla \check{u})=0 &\textrm{in } D\\
\displaystyle k_0\frac{\partial\check{u}}{\partial n}\Big |_{\partial D} =g, & 
\end{array}
\right.
\end{equation*}
where $\Gamma^\delta:=\{ x\in C~:~ \mbox{dist}(x, \Gamma)<\delta\}$ and 
$\chi_{ \Gamma^\delta }$ is the characteristic function of the set $\Gamma^\delta $.
   
The membrane thickness $\delta$ is considered to be very small compared to
the typical size $\rho$ of the cell  {\it i.e.} $\delta/\rho \ll 1$. According to the transmission condition, the normal component of the current density $\displaystyle k_0 \frac{\partial u}{\partial n}$ can be regarded as continuous across the thin membrane $\Gamma$. 
\vspace{0.3cm}

We set $\beta : = \displaystyle \frac{\delta}{k_m} $.
Since the membrane is very resistive, {\it i.e.} $\sigma_m/\sigma_0\ll 1$, the potential $u$ in $D$ undergoes a jump across
the cell membrane $\Gamma$, which can be approximated at first order by $\beta k_0 \displaystyle \frac{\partial u}{\partial n}$. A rigorous proof of this result, based on asymptotic expansions of layer potentials, can be found in \cite{Khelifi}.\vspace{0.4cm}

More precisely, $u$ is the
solution of the following equations:

\begin{equation} \label{modelcell}
\left \{
\begin{array}{l}
\begin{array}{ll}
\vspace{0.3cm}  \nabla \cdot k_0\nabla u= 0 \,&\, \textrm{in}\, \,D\setminus \overline{C},\\
\vspace{0.3cm}  \nabla \cdot k_0 \nabla u= 0 \,&\, \textrm{in}\, \,C,\\
\vspace{0.3cm} \displaystyle k_0 \frac{\partial u}{\partial n}\Big|_+ =
k_0 \frac{\partial u}{\partial n}\Big|_- \,&\, \textrm{on}\,\, \Gamma,\\
\vspace{0.3cm} u|_+- u|_- -\,\beta k_0 \displaystyle
\frac{\partial u}{\partial n} = 0 \hspace{0.7cm}\,&\,
\textrm{on}\, \,\Gamma,
\end{array} \\
\displaystyle k_0 \frac{\partial u}{\partial n} \Big|_{\partial D}
= g, \hspace{0.3cm}\displaystyle \int_{\partial D} g (x) ds(x)= 0,
\hspace{0.3cm}\displaystyle \int_{D \setminus \overline{C}} u (x)
dx =0.
\end{array}
\right .\end{equation} Here $n$ is the outward unit normal vector
and $u|_{\pm}(x)$ denotes $\lim\limits_{t \rightarrow 0^+} u(x \pm
t n(x))$ for $x$ on the concerned boundary. Likewise,
$\displaystyle\frac{\partial u}{\partial n}\Big|_{\pm} := \lim
\limits_{t \to 0^+} \nabla u(x \pm t n(x)) \cdot n(x)$.

\begin{figure}
\begin{center}
\includegraphics[scale=0.5]{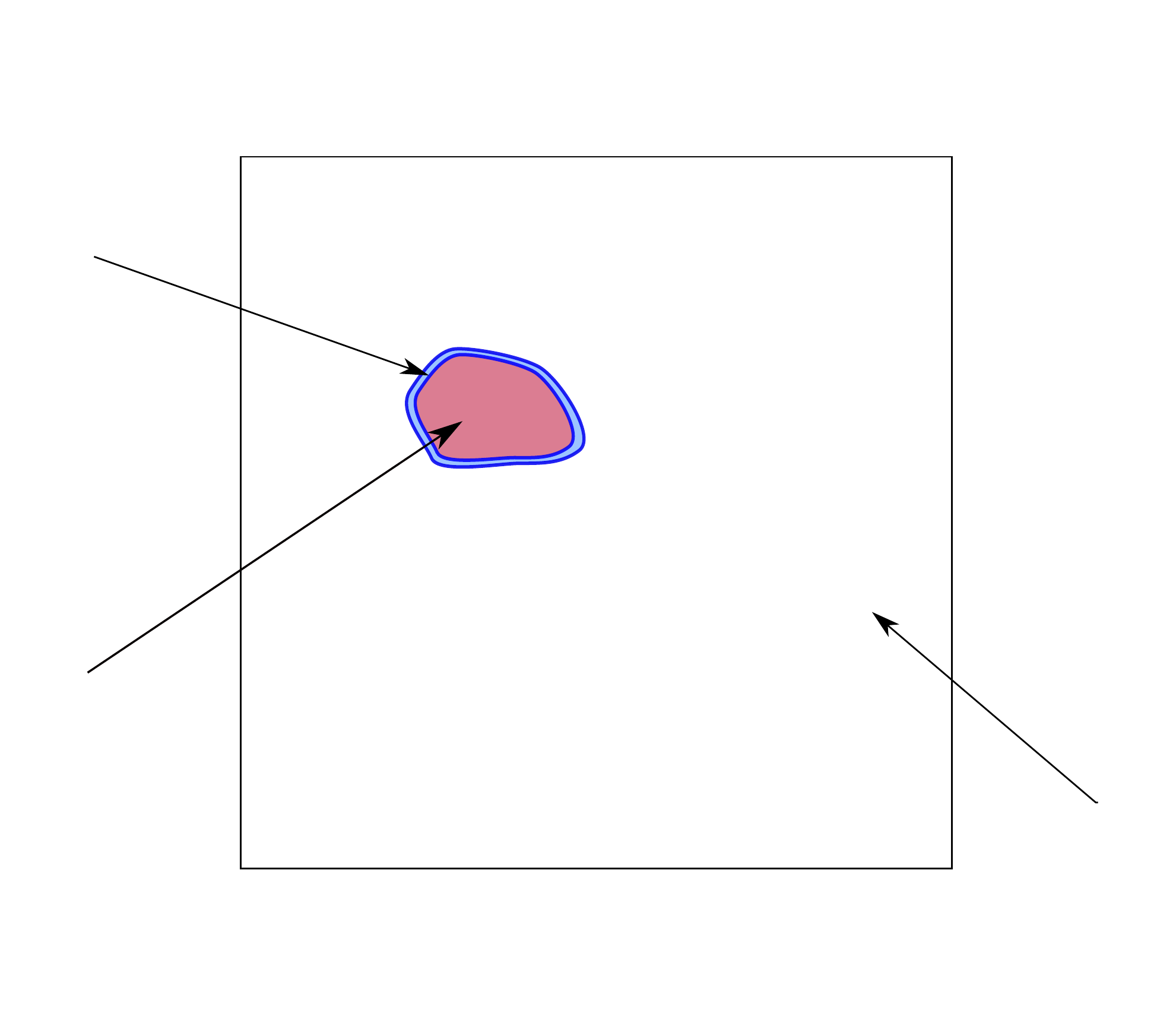}
\begin{picture}(0,0)
\put(-287,187){\textcolor{blue}{$\Gamma$}}
\put(-300,175){\textcolor{blue}{$(\delta, k_m)$}}
\put(-282,84){\textcolor{red}{$Y^-$}}
\put(-289,70){\textcolor{red}{$(k_0)$}} \put(-19,50){$Y^+$}
\put(-25,36){$(k_0)$}
\end{picture}
 \caption{\it{Schematic illustration of a unit period $Y$.}\label{figdessin}}
 \end{center}
\end{figure}

\subsection{Governing equation}

We denote by $\Omega_{\varepsilon}^+$ the medium outside the cells
and $\Omega_{\varepsilon}^-$ the medium inside the cells:
\begin{equation*}
\Omega_{\varepsilon}^+ = \displaystyle  \Omega  \cap ( \bigcup_{n
\in N_{\varepsilon}} Y_{\varepsilon, n} \setminus
\overline{C_{\varepsilon, n}}), \hspace{0.3cm}
\Omega_{\varepsilon}^- = \displaystyle \bigcup_{n \in
N_{\varepsilon}} C_{\varepsilon, n}.
\end{equation*}
Set $\vspace{0.1cm}\Gamma_{\varepsilon} := \displaystyle
\bigcup_{n \in N_{\varepsilon}}\Gamma_{\varepsilon, n}$. By
definition, the boundaries $\partial  \Omega_{\varepsilon}^+$ and
$\partial \Omega_{\varepsilon}^- $ of respectively $
\Omega_{\varepsilon}^+$ and $ \Omega_{\varepsilon}^-$ satisfy:
\begin{equation*}
\partial  \Omega_{\varepsilon}^+ = \partial \Omega \cup \Gamma_{\varepsilon}, \hspace{0.3cm} \partial  \Omega_{\varepsilon}^- = \Gamma_{\varepsilon}.
\end{equation*}

We apply a sinusoidal current $g(x)\sin (\omega t)$ at $x\in
\partial \Omega$, where $g \in L_0^2(\partial \Omega)$. The
induced time-harmonic potential $u_\varepsilon$ in $\Omega$
satisfies \cite{6,poig2,poig3}:

\begin{equation}\label{eq:u_{epsilon}}
\left \{
\begin{array}{l}
\begin{array}{ll}
\vspace{0.3cm}  \nabla \cdot k_0\nabla u_{\varepsilon}^+  = 0
\,&\,
 \textrm{in}\, \,\Omega_{\varepsilon} ^+,\\
\vspace{0.3cm}  \nabla \cdot k_0 \nabla u_{\varepsilon}^-  = 0 \,&\, \textrm{in}\, \,\Omega_{\varepsilon} ^-,\\
\vspace{0.3cm} \displaystyle k_0 \frac{\partial u_{\varepsilon} ^+}{\partial n}
=  k_0 \frac{\partial u_{\varepsilon} ^-}{\partial n} \,&\, \textrm{on}\,\, \Gamma_{\varepsilon},\\
\vspace{0.3cm} u_{\varepsilon} ^+ - u_{\varepsilon} ^- -
\varepsilon \,\beta k_0 \displaystyle \frac{\partial
u_{\varepsilon} ^+}{\partial n} = 0 \hspace{0.7cm}\,&\,
\textrm{on}\, \,\Gamma_{\varepsilon},
\end{array}\\
\vspace{0.3cm} \displaystyle k_0 \frac{\partial u_{\varepsilon}^+
}{\partial n} \Big|_{\partial \Omega} = g,
\hspace{0.3cm}\displaystyle \int_{\partial \Omega} g(x) ds(x)= 0,
\hspace{0.3cm}\displaystyle \int_{\Omega_{\varepsilon} ^+}
u_{\varepsilon} ^+(x) dx= 0,
\end{array}
\right .\end{equation} where $u_{\varepsilon} = \left\{
\begin{array}{ll}
\vspace{0.2 cm} u_{\varepsilon}^+ \, & \, \textrm{in} \, \, \Omega_{\varepsilon} ^+,\\
\vspace{0.2 cm} u_{\varepsilon}^- \, & \, \textrm{in} \, \,
\Omega_{\varepsilon} ^- .
\end{array}
\right.$

Note that the previously introduced constant $\beta$, {\it i.e.},
the ratio between the thickness of the membrane of $C$ and its
admittivity, becomes $\varepsilon \beta$. Because the cells
$\vspace{0.1 cm} (C_{\varepsilon,n})_{n \in N_{\varepsilon}}$ are
in squares of size $\varepsilon$, the thickness of their membranes
is given by $\varepsilon \delta$ and consequently, a factor
$\varepsilon$ appears.\vspace{0.5cm}

\begin{figure}
\fontsize{9.7pt}{7.2}
\includegraphics[scale=0.9]{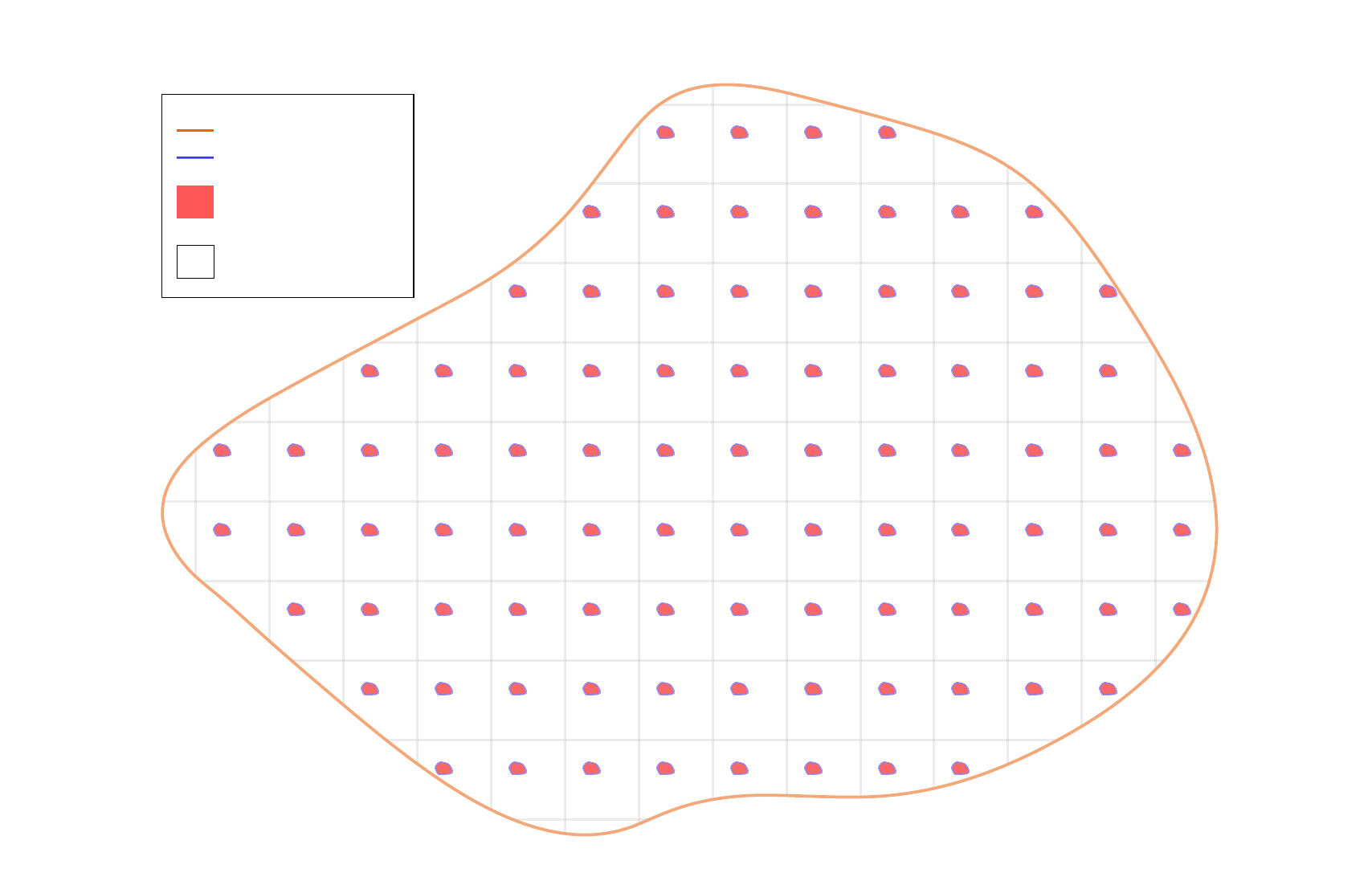}
\begin{picture}(0,0)
\put(-365,244){\textcolor{orange}{$\partial \Omega$}}
\put(-362,232){\textcolor{blue}{$\Gamma_{\varepsilon} \,(\varepsilon
\delta, k_m)$}}
\put(-365,217){\textcolor{red}{$\Omega_{\varepsilon}^- (k_0)$}}
\put(-365,198){$\Omega_{\varepsilon}^+ (k_0)$}
\end{picture}
 \caption{\it{Schematic illustration of the periodic medium $\Omega$.} \label{figshematic1}}
\end{figure}

\subsection{Main results in the periodic case}

We set $Y^+ := Y \setminus \overline{C}$ and $Y^- := C$.
For any open set $D$ in $\mathbb{R}^2$, we denote $H^1_{\mathbb{C}}(D)$ the Sobolev space $H^1(D)/\mathbb{C}$ which can be represented as :
\begin{equation*}
H^1_{\mathbb{C}}(D) = \left\{ u \in H^1(D) ~|~ \int_{D} u(x) dx
= 0\right\}.
\end{equation*}
Throughout this paper, we assume that $ \mathrm{ dist } (Y^-,
\partial Y) =O(1)$. We write the solution $u_{\varepsilon}$ as
\begin{equation} \label{ansatz}
\forall x \in \Omega\, \, \, u_{\varepsilon}(x) = u_0(x) +
\varepsilon u_1(x, \frac{x}{\varepsilon}) + o(\varepsilon),
\end{equation}
with
\begin{equation*}
y \longmapsto u_1(x,y)\, Y\textrm{-periodic} \, \, \textrm{and} \,
\, u_1(x,y) = \left \{
\begin{array}{l}
\vspace{0.2cm} u_1^+(x,y) \, \, \textrm{in} \, \Omega \times Y^+ ,\\
u_1^-(x,y) \, \, \textrm{in} \, \Omega \times Y^- .
\end{array}
\right .
\end{equation*}
The following theorem holds.
\begin{thm} \label{thm:homo}
\begin{enumerate}
\item[{\upshape (i)}] The solution $\ueps$ to
\eqref{eq:u_{epsilon}} two-scale converges to $u_0$ and $\nabla
u_{\varepsilon}(x)$ two-scale converges to $\nabla u_0(x) +
\chi_{Y^+}(y) \nabla_y u_1^+(x,y) + \chi_{Y^-}(y) \nabla_y
u_1^-(x,y)$, where $\chi_{Y^{\pm}}$ are the characteristic
functions of $Y^{\pm}$. \item[{\upshape (ii)}] The function $u_0$
in \eqref{ansatz} is the solution in $H^1_{\mathbb{C}}(\Omega)$ to the following homogenized
problem:
\begin{equation}\label{eq:u_0}
\left \{
\begin{array}{ll}
\vspace{0.3cm} \nabla \cdot K^* \, \nabla u_0(x) = 0 \, \, & \, \, \textrm{in} \,\Omega, \\
\displaystyle n\cdot K^*\nabla u_0  = g \, \, & \, \, \textrm{on}
\,\partial \Omega,
\end{array}
\right .
\end{equation}
where $K^*$, the effective admittivity of the medium, is given by
\begin{equation}\label{eq:K^*}
\displaystyle \forall (i, j) \in \{1,2\}^2, \hspace{1cm}K^* _{i, j} =k_0 \left(\delta_{ij} + \int_{Y}
(\chi_{Y^+} \nabla w_i^+ + \chi_{Y^-} \nabla w_i^-)\cdot
e_j\right),
\end{equation}
and the function $(w_i)_{i=1,2}$ are the solutions of the
following cell problems:
\begin{equation}\label{eq:w_i}
\left \{
\begin{array}{ll}
\vspace{0.3cm} \nabla \cdot k_0 \nabla (w_i^+(y) + y_i) = 0 \,&\, \textrm{in} \,\,Y^+,\\
\vspace{0.3cm} \nabla \cdot k_0 \nabla (w_{i}^- (y) + y_i) = 0 \,&\, \textrm{in} \,\,Y^-,\\
\vspace{0.3cm} \displaystyle k_0 \frac{\partial}{\partial n} ( w_{i}^+(y) + y_i) = k_0
\frac{\partial}{\partial n}( w_{i}^-(y) + y_i) \,&\, \textrm{on}\, \,\Gamma ,\\
\vspace{0.3cm} w_{i}^+ - w_{i}^- - \beta k_0 \displaystyle \frac{\partial}{\partial
n}(w_{i}^+(y) + y_i) = 0 \hspace{0.7cm}\,&\, \textrm{on} \,\,\Gamma ,\\
y \longmapsto w_{i}(y) \, \, Y\textrm{-periodic}. &
\end{array}
\right .
\end{equation}
\item[{\upshape (iii)}] Moreover, $u_1$ can be written as
\begin{equation}\label{eq:u_1}
\displaystyle \forall (x, y) \in \Omega \times Y , \, \, \, u_1(x,
y) = \sum_{i=1}^2 \frac{\partial u_0}{\partial x_i}(x) w_i(y).
\end{equation}
\end{enumerate}
\end{thm}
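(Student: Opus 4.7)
My plan is to treat (2.2) variationally in a piecewise Sobolev space and apply two-scale convergence with oscillating test functions allowed to jump across $\Gamma$. Setting $V_\varepsilon := \{v \in L^2(\Omega) : v|_{\Omega_\varepsilon^\pm} \in H^1(\Omega_\varepsilon^\pm),~\int_{\Omega_\varepsilon^+} v\,dx = 0\}$ and integrating by parts on each $\Omega_\varepsilon^\pm$, one uses the continuity of the flux together with the transmission identity $k_0\partial_n u_\varepsilon = (\varepsilon\beta)^{-1}[u_\varepsilon]$, with $[u_\varepsilon]:=u_\varepsilon^+-u_\varepsilon^-$, to rewrite (2.2) as
$$\int_\Omega k_0\,\nabla u_\varepsilon\cdot\overline{\nabla\phi}\,dx + \frac{1}{\varepsilon\beta}\int_{\Gamma_\varepsilon}[u_\varepsilon]\,\overline{[\phi]}\,d\sigma = \int_{\partial\Omega} g\,\bar\phi\,d\sigma\quad\forall\phi\in V_\varepsilon.$$
Testing with $u_\varepsilon$ and taking real parts, the coercivity constants $\mathrm{Re}\,k_0\ge\sigma_0>0$ and $\mathrm{Re}(1/\beta)=\sigma_m/\delta>0$ combined with an $\varepsilon$-uniform piecewise Poincar\'e--trace inequality on the perforated domain yield the a priori bound
$$\|\nabla u_\varepsilon\|_{L^2(\Omega\setminus\Gamma_\varepsilon)} + \varepsilon^{-1/2}\|[u_\varepsilon]\|_{L^2(\Gamma_\varepsilon)} \le C\|g\|_{L^2(\partial\Omega)}.$$

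From this bound, the Nguetseng--Allaire compactness theorem, applied separately on $\Omega_\varepsilon^\pm$, delivers (up to extraction) $u_0\in H^1_{\mathbb{C}}(\Omega)$ and $u_1^\pm(x,\cdot)\in H^1(Y^\pm)$ that are $Y$-periodic in $y$, such that $u_\varepsilon$ two-scale converges to $u_0(x)$ and $\nabla u_\varepsilon$ two-scale converges to $\nabla_x u_0+\chi_{Y^+}\nabla_y u_1^++\chi_{Y^-}\nabla_y u_1^-$; the $y$-independence of the limit of $u_\varepsilon$ follows from the $O(\sqrt\varepsilon)$ smallness of the jump, which prevents a macroscopic gap between $u_\varepsilon^\pm$. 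This proves (i). To identify the limit system I would plug in oscillating test functions $\phi_\varepsilon(x) = \phi_0(x)+\varepsilon\phi_1(x,x/\varepsilon)$, with $\phi_0$ smooth on $\overline\Omega$ and $\phi_1(x,\cdot)$ smooth, $Y$-periodic, and permitted to jump across $\Gamma$. Since $[\phi_\varepsilon]|_{\Gamma_\varepsilon}=\varepsilon[\phi_1](x,x/\varepsilon)$, the surface term reduces to $\beta^{-1}\int_{\Gamma_\varepsilon}[u_\varepsilon]\,\overline{[\phi_1]}(x,x/\varepsilon)\,d\sigma$; identifying the surface two-scale limit of $[u_\varepsilon]/\varepsilon$ with $[u_1]$ and passing to the limit via surface two-scale convergence yields
$$\int_\Omega\!\!\int_Y k_0(\nabla_x u_0+\nabla_y u_1)\cdot\overline{(\nabla_x\phi_0+\nabla_y\phi_1)}\,dy\,dx+\frac{1}{\beta}\int_\Omega\!\!\int_\Gamma [u_1]\,\overline{[\phi_1]}\,d\sigma_y\,dx=\int_{\partial\Omega}g\,\overline{\phi_0}\,d\sigma.$$
Setting $\phi_0\equiv 0$ and varying $\phi_1$ on one side of $\Gamma$ at a time, integration by parts in $y$ produces $\nabla_y\cdot(k_0(\nabla_x u_0+\nabla_y u_1))=0$ in $Y^\pm$, continuity of $k_0(\nabla_x u_0+\nabla_y u_1)\cdot n$ across $\Gamma$, and the jump relation $[u_1]=\beta k_0(\nabla_x u_0+\nabla_y u_1)\cdot n$ on $\Gamma$. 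By linearity in $\nabla_x u_0$, writing $u_1(x,y)=\sum_{i=1}^2\partial_{x_i}u_0(x)\,w_i(y)$ reduces each cell problem to (2.6), which proves (iii); setting $\phi_1\equiv 0$ in the limit equation and substituting this ansatz for $u_1$ produces the homogenized equation (2.3) with formula (2.5) for $K^*$, establishing (ii). Uniqueness for (2.3) and (2.6), via Lax--Milgram on the coercive sesquilinear forms, promotes subsequential to full convergence.

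The main obstacle is the rigorous treatment of the highly oscillating interface $\Gamma_\varepsilon$, whose Hausdorff length grows like $\varepsilon^{-1}$. Two technical ingredients are required: an $\varepsilon$-uniform piecewise trace inequality bounding $\|[v]\|_{L^2(\Gamma_\varepsilon)}^2$ by $\varepsilon\|\nabla v\|_{L^2(\Omega\setminus\Gamma_\varepsilon)}^2$ plus lower-order terms, and a surface two-scale convergence principle of the type $\varepsilon\int_{\Gamma_\varepsilon}f_\varepsilon\psi(x,x/\varepsilon)\,d\sigma\to\int_\Omega\!\int_\Gamma f\,\psi\,d\sigma_y\,dx$, combined with a trace-type identification of the surface limit of $[u_\varepsilon]/\varepsilon$ as $[u_1]$. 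Both rely on the $\mathcal{C}^{2,\eta}$-regularity of $\Gamma$ and on cell-by-cell rescaling or periodic unfolding; once they are in place the remaining steps (coercivity, Lax--Milgram, and decoupling by linearity) are routine.
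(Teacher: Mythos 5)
Your overall strategy coincides with the paper's: variational formulation in a broken Sobolev space, coercivity and $\varepsilon$-uniform a priori bounds via extension operators, two-scale compactness applied separately on $\Omega_\varepsilon^\pm$, oscillating test functions $\phi_0+\varepsilon\phi_1$ with $\phi_1$ jumping across $\Gamma$, and uniqueness of the limit problems to upgrade subsequential to full convergence. The formal derivation of (ii)--(iii) and the identification $u^+=u^-$ (which the paper obtains by multiplying the weak form by $\varepsilon^2$ and passing to the limit in the interface term; your argument via the $O(\sqrt{\varepsilon})$ smallness of the jump amounts to the same estimate) are all consistent with the paper.

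There is, however, one genuine gap, and it sits exactly at the step you dismiss as a ``technical ingredient'': the identification of the surface two-scale limit of $\varepsilon^{-1}[u_\varepsilon]$ with $[u_1]=u_1^+-u_1^-$. The energy estimate does give $\varepsilon\int_{\Gamma_\varepsilon}|\varepsilon^{-1}[u_\varepsilon]|^2\,d\sigma\le C$, so surface two-scale compactness yields \emph{some} limit $\zeta(x,y)\in L^2(\Omega\times\Gamma)$; but nothing in the standard theory forces $\zeta=[u_1]$. The principle that ``the surface two-scale limit is the trace of the volume two-scale limit'' (the paper's Remark 4.1) applies to sequences bounded together with their gradients in $L^2(\Omega)$, i.e.\ to $u_\varepsilon^\pm$ themselves --- whose surface limit is the trace of $u_0$, not of the corrector --- and it does not apply to the rescaled jump, which is only defined on $\Gamma_\varepsilon$ and has no bulk $H^1$ bound at the relevant scale. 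This identification is the crux of the convergence proof, and the paper avoids it entirely: it constructs (Lemma D.1) an auxiliary field $\theta=\nabla_y\eta$ with $\nabla_y\cdot\theta^\pm=0$ in $Y^\pm$ and $\theta^\pm\cdot n=\varphi_1^+-\varphi_1^-$ on $\Gamma$, uses the resulting variational identity to rewrite $\int_{\Gamma_\varepsilon}[u_\varepsilon]\,\overline{[\varphi_1]}\,d\sigma$ as volume integrals of $\nabla u_\varepsilon^\pm\cdot\theta^\pm(x,x/\varepsilon)$, passes to the limit there using the already-established volume two-scale convergence of the gradients, and then converts back through the same identity to produce $\int_\Omega\int_\Gamma(v^+-v^-)\overline{[\varphi_1]}$. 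Without this duality argument (or an equivalent one, e.g.\ via periodic unfolding of the jump), your passage to the limit in the interface term is unjustified, and the limiting jump relation $[u_1]=\beta k_0(\nabla_x u_0+\nabla_y u_1)\cdot n$ --- hence the cell problem \eqref{eq:w_i} and the formula \eqref{eq:K^*} --- does not follow.
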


We define the integral operator $\mathcal{L}_\Gamma: \mathcal{C}^{2,
\eta}(\Gamma) \rightarrow \mathcal{C}^{1, \eta}(\Gamma)$, with $0<\eta<1$ by
\begin{equation} \label{defL}
\mathcal{L}_\Gamma[\varphi](x) = \frac{1}{2\pi} \int_{\Gamma}
\frac{\partial^2 \ln |x-y|}{\partial n(x) \partial n(y)}
\varphi(y) ds(y), \quad x \in \Gamma.
\end{equation}

$\mathcal{L}_\Gamma$ is the normal derivative of the double layer potential $\mathcal{D}_{\Gamma}$.

Since $\mathcal{L}_\Gamma$ is positive, one can prove that the operator $I +
\alpha \mathcal{L}_\Gamma: \mathcal{C}^{2, \eta}(\Gamma) \rightarrow
\mathcal{C}^{1, \eta}(\Gamma)$ is a bounded operator and has a
bounded inverse provided that $\Re \, \alpha >0$ \cite{14,2}.

As the fraction $f$ of the volume occupied by the cells goes to zero,
we derive an expansion of the effective admittivity  for arbitrary
shaped cells in terms of the volume fraction. We refer to the
suspension, as  periodic dilute. The following theorem holds.
\begin{thm} \label{mainhomog}
The effective admittivity of a periodic dilute suspension admits
the following asymptotic expansion:
\begin{equation} \label{dilutethmf}
\displaystyle K^* = k_0 \left(I + f M \left ( I -
\frac{f}{2} M \right )^{-1}\right) + o(f^2),
\end{equation}
where $\rho = \sqrt{|Y^-|}$, $f = \rho^2$,
\begin{equation} \label{defM}
M =\left(M_{ij} = \displaystyle\beta k_0 \int_{\rho^{-1}\Gamma}
n_j \psi_i^*(y) ds(y)\right )_{(i, j) \in \{1,
2\}^2},\end{equation} and $ \psi_i^*$ is defined by
\begin{equation} \label{defpsii}
\psi_i^* =  - \left ( I + \beta k_0 \mathcal{L}_{\rho^{-1}\Gamma} \right
)^{-1}[n_i]. \end{equation}
\end{thm}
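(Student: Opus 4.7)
My plan is to obtain \eqref{dilutethmf} by representing the cell-problem solution $w_i$ through layer potentials on $\Gamma$, rescaling the small inclusion to unit size on $\rho^{-1}\Gamma$, and then resumming the contributions from the periodic lattice of image cells into a Clausius--Mossotti type formula.

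First, I would represent $w_i$ as a double-layer potential with density $\phi_i$ on $\Gamma$, built from the $Y$-periodic Green's function $G_Y$ of the torus. This ansatz automatically yields harmonicity of $w_i^\pm+y_i$ in $Y^\pm$ and continuity of the normal derivative across $\Gamma$; the jump condition $w_i^+-w_i^-=\beta k_0\,\partial_n(w_i^++y_i)$ then reduces, after taking boundary traces and differentiating normally, to a boundary integral equation of the form
\begin{equation*}
  (I+\beta k_0\,\mathcal{L}_\Gamma)[\phi_i] \;=\; -\beta k_0\,n_i + \beta k_0\,\mathcal{R}_\Gamma[\phi_i],
\end{equation*}
with $\mathcal{L}_\Gamma$ the operator in \eqref{defL} and $\mathcal{R}_\Gamma$ a bounded operator coming from the smooth periodic remainder $G_Y(y)+\tfrac{1}{2\pi}\log|y|$. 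The invertibility of $I+\beta k_0\mathcal{L}_\Gamma$ stated after \eqref{defL} uniquely determines $\phi_i$.

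Second, I would rescale to the fixed contour $\widetilde\Gamma:=\rho^{-1}\Gamma$ by pulling back through $y=\rho\tilde y$. In $\mathbb{R}^2$ the kernel of $\mathcal{L}$ is homogeneous of degree $-2$ and $ds$ carries a factor $\rho$, so a suitably normalised rescaled density $\tilde\phi_i$ satisfies an equation on $\widetilde\Gamma$ whose principal part is again $I+\beta k_0\mathcal{L}_{\widetilde\Gamma}$; a Taylor expansion of the smooth remainder of $G_Y$ at the origin only contributes at order $\rho^2$, giving
\begin{equation*}
  (I+\beta k_0\mathcal{L}_{\widetilde\Gamma})[\tilde\phi_i] \;=\; -n_i + \rho^2 R[\tilde\phi_i] + O(\rho^4),
\end{equation*}
so at leading order $\tilde\phi_i=\psi_i^*$, precisely the density in \eqref{defpsii}. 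Since $k_0\nabla(w_i+y_i)$ is divergence-free in each phase, Green's formula converts the volume integral in \eqref{eq:K^*} into $\int_\Gamma n_j(w_i^+-w_i^-)\,ds=\beta k_0\int_\Gamma n_j\,\partial_n(w_i^++y_i)\,ds$. Rewriting this integral on $\widetilde\Gamma$ and inserting the leading-order density produces the $O(f)$ contribution $k_0\,f\,M_{ij}$ with $M$ exactly as in \eqref{defM}.

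The hard part is the $O(f^2)$ correction, which encodes the interaction with the periodic lattice of image cells. Iterating the rescaled integral equation yields $\tilde\phi_i=\psi_i^*+\rho^2(I+\beta k_0\mathcal{L}_{\widetilde\Gamma})^{-1}R[\psi_i^*]+O(\rho^4)$; here $R[\psi_i^*]$ is the source induced at the reference cell by the lattice of image dipoles carrying the leading polarization $Me_i$. A direct computation shows that near the origin $R$ is governed by the Hessian of the smooth part $G_Y+\tfrac{1}{2\pi}\log|\cdot|$ at $0$, which by the $\mathbb{Z}^2$-symmetry of the square lattice is a scalar multiple of the identity; an Eisenstein-type lattice-sum evaluation pins the scalar down to $\tfrac12$, the two-dimensional depolarization factor. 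Substituting back into the boundary-integral formula for $K^*$ contributes $k_0\,\tfrac{f^2}{2}M^2$, and continuing the same reflection argument collapses the full series into the geometric sum $fM(I-\tfrac{f}{2}M)^{-1}$, establishing \eqref{dilutethmf} up to the $o(f^2)$ remainder absorbed by the $O(\rho^4)$ term above. The principal obstacle is the exact identification of the coefficient $\tfrac12$: this requires separating cleanly the singular and smooth parts of $G_Y$ and evaluating the Hessian of the smooth part at the origin as a conditionally convergent lattice sum. Once this depolarization factor is in hand, the remainder of the argument is routine layer-potential bookkeeping combined with the invertibility of $I+\beta k_0\mathcal{L}_{\widetilde\Gamma}$.
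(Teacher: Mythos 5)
Your proposal is correct and follows essentially the same route as the paper: a periodic double-layer representation of $w_i$, integration by parts to reduce $K^*$ to a boundary integral of the jump $w_i^+-w_i^-$, rescaling to $\rho^{-1}\Gamma$, splitting $G_{\sharp}(x)=\frac{1}{2\pi}\ln|x|+R_2(x)$ and Taylor-expanding the smooth part, then iterating the resulting integral equation to produce the $\frac{f}{2}M^2$ correction and resumming into $fM(I-\frac{f}{2}M)^{-1}$ modulo $o(f^2)$. The only cosmetic difference is that the paper obtains the coefficient $\tfrac12$ directly from the quoted expansion $R_2(x)=R_2(0)-\tfrac14(x_1^2+x_2^2)+O(|x|^4)$ (equivalently, from $\Delta R_2=\mathrm{const}$ plus the square-lattice symmetry) rather than from a conditionally convergent lattice sum.
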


\subsection{Description of the random cells and interfaces}

We describe the domains occupied by the cells. As mentioned
earlier, they are formed by randomly deforming a periodic
structure. We transform the aforementioned periodic structure by a
random diffeomorphism $\Phi: \R^2 \to \R^2$. Let
\begin{equation}
\R_2^+ := \bigcup_{n\in \Z^2} (n + Y^+), \quad \R_2^- :=
\bigcup_{n\in \Z^2} (n + Y^-), \quad \GGamma_2 := \bigcup_{n\in
\Z^2} (n + \Gamma).
\end{equation}
The cells, the environment and the interfaces are hence deformed
to $\Phi(\R_2^-)$, $\Phi(\R_2^+)$ and $\Phi(\GGamma_2)$. We
emphasize that the topology of these sets are the same as before.
Finally, the deformed structure is scaled to size $\eps$, where
$0< \eps \ll 1$, by the dilation operator $\eps {\bf I}$ where
${\bf I}$ is the identity operator. The final sets $\eps
\Phi(\R_2^-)$, $\eps \Phi(\GGamma_2)$ and $\eps \Phi(\R^+_2)$ thus
are realistic models for the random cells, membranes and the
environment for the biological problem at hand.

To model the cells inside an arbitrary bounded domain $\Omega$ as
in \eqref{eq:u_{epsilon}}, we would like to set $\Depsp := \Omega
\cap \eps \Phi(\R_2^-)$ and $\interface := \Omega \cap \eps
\Phi(\GGamma_2)$. However, a technicality is encountered,
precisely, the intersection of $\eps \Phi(\GGamma_2)$ with the
boundary $\partial \Omega$ may not be empty. In this case, some
 cells are cut by the boundary of the body, which is not physically
admissible. Moreover, an arbitrary diffeomorphism
$\Phi$ may allow some deformed cells in $\eps \Phi(\R_2^-)$ to get
arbitrarily close to each other. This imposes difficulties for
rigorous mathematical analysis. In order to resolve these issues,
we will impose a few conditions on $\Phi$ and refine the above
construction in the next subsection.

\subsection{Stationary ergodic setting}

Let $(\mathcal{O},\F,\Pb)$ be some probability space on which
$\Phi(x,\gamma): \R^2 \times \mathcal{O} \to \R^2$ is defined. For
a random variable $X \in L^1(\mathcal{O},d\Pb)$, we will denote
its expectation by $$\E X = \int_{\mathcal{O}} X(\gamma)
d\Pb(\gamma).$$

Throughout this paper, we assume that the group $(\Z^2,+)$ acts on
$\mathcal{O}$ by some action $\{\tau_n: \mathcal{O} \to
\mathcal{O}\}_{n \in \Z^2}$, and that for all $n \in \Z^2$,
$\tau_n$ is $\Pb$-preserving, that is, $$\Pb(A) = \Pb(\tau_n A),
\quad \mbox{for all } A \in \F.$$ We assume further that the
action is {\itshape ergodic}, which means that for any $A \in \F$,
if $\tau_n A = A$ for all $n \in \Z^2$, then necessarily $\Pb(A)
\in \{0,1\}$.

Following \cite{BLBL06}, we say that a random process $F \in
L^1_{\rm{loc}}(\R^2,L^1(\mathcal{O}))$ is (discrete) {\itshape
stationary} if
\begin{equation}
\forall n \in \Z^2, \quad F(x+n, \gamma) = F(x,\tau_n \gamma)
\quad \text{for almost every } x \text{ and } \gamma.
\label{eq:stati}
\end{equation}
Clearly, a deterministic periodic function is a special case of
stationary process. However, we precise that the above notion of
stationarity is different from the classical one, see for instance
\cite{PV79} and \cite{Kozlov}. Throughout this paper, we presume
stationarity in the sense of \eqref{eq:stati} if not stated
otherwise. What makes this notion useful is the following version
of ergodic theorem \cite{Dunford, Jikov_book}.

\begin{prop}
Let $F \in L^\infty(\R^2,L^1(\mathcal{O}))$ be a stationary random
process. Equip $\Z^2$ with the norm $|n|_\infty = \max_{1\le i\le
2} |n_i|$ for all $n \in \Z^2$. Then
\begin{equation}
\frac{1}{(2N+1)^2} \sum_{|n|_\infty \le N} F(x,\tau_n \gamma)
\xrightarrow[N \to \infty]{L^\infty} \E F(x,\cdot) \quad \text{
for a.e.}\ \gamma \in \mathcal{O}.
\end{equation}
This implies in particular that if the family
$\{F(\frac{\cdot}{\eps},\gamma)\}$ is bounded in $L^p_{\rm
loc}(\R^2)$, for some $p \in [1,\infty)$, then
\begin{equation}
F\left(\frac{x}{\eps}, \gamma \right) \overset{}{\underset{\eps
\to 0}{\rightharpoonup}} \E\left(\int_Y F(x,\cdot) dx \right)
\text{ in } L^p_{\rm loc}(\R^2) \text{ for a.e.}\ \gamma \in
\mathcal{O}.
\end{equation}
The convergence holds also in the weak-$*$ sense for $p = \infty$.
\end{prop}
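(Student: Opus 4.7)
The plan is to derive both assertions from the multiparameter (Wiener--Tempelman) pointwise ergodic theorem for the measure-preserving, ergodic $\Z^2$-action $\{\tau_n\}_{n\in \Z^2}$, taking full advantage of the stationarity identity \eqref{eq:stati} to exchange translations in $x$ for shifts by $\tau_n$ in $\gamma$.

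For the first assertion, I would first invoke stationarity to rewrite the Ces\`aro average
$$S_N(x,\gamma) := \frac{1}{(2N+1)^2}\sum_{|n|_\infty\le N} F(x,\tau_n\gamma) = \frac{1}{(2N+1)^2}\sum_{|n|_\infty\le N} F(x+n,\gamma),$$
and, using the decomposition $x = \lfloor x\rfloor + \{x\}$ with $\{x\}\in Y$, reduce the essential supremum in $x \in \R^2$ to $x \in Y$. Indeed $\E F(x,\cdot)$ is $\Z^2$-periodic in $x$ by the $\tau_n$-invariance of $\Pb$, and shifting the summation cube by $\lfloor x\rfloor$ changes the average only by a boundary term of order $|\lfloor x\rfloor|_\infty / N$ controlled through $\|F\|_{L^\infty(\R^2,L^1(\mathcal{O}))}$. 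For each $x \in Y$ the slice $F(x,\cdot)$ lies in $L^1(\mathcal{O})$, so the multiparameter ergodic theorem yields $S_N(x,\cdot)\to \E F(x,\cdot)$ both $\Pb$-almost surely and in $L^1(\mathcal{O})$. To upgrade this into $L^\infty$-in-$x$ convergence on a single full-probability event, I pick a countable dense family $(x_k)_{k\in\mathbb{N}}\subset Y$, intersect the corresponding full-measure events to obtain one on which convergence holds at every $x_k$, and control the oscillation $|S_N(x,\gamma) - S_N(x_k,\gamma)|$ through the uniform $L^1(\mathcal{O})$-bound on $F$ together with a Fubini argument transferring $x$-pointwise null sets into a single $\gamma$-null set.

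For the second assertion, set $g(\gamma) := \int_Y F(y,\gamma)\,dy$, which lies in $L^1(\mathcal{O})$ by the standing hypothesis. Testing weak convergence against $\phi \in C_c(\R^2)$, the change of variables $y = x/\eps$ combined with the decomposition over unit cells $n+Y$ and stationarity yields
$$\int_{\R^2} F\left(\frac{x}{\eps},\gamma\right)\phi(x)\,dx = \eps^2\sum_{n} \int_Y F(y,\tau_n\gamma)\,\phi(\eps(n+y))\,dy = \eps^2\sum_n \phi(\eps n)\,g(\tau_n\gamma) + O(\eps),$$
since $|\phi(\eps(n+y))-\phi(\eps n)| = O(\eps)$ uniformly on $Y$ and only $O(\eps^{-2})$ indices contribute. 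Applying the first assertion to the constant-in-$x$ function $g$ identifies the $(\eps n)$-sum as a Riemann approximation of $\int\phi \cdot \E g = \int\phi(x)\,dx \cdot \E\!\int_Y F(x,\cdot)\,dx$. The uniform $L^p_{\rm loc}$-bound on $F(\cdot/\eps,\gamma)$ then permits extension by density to general test functions in the dual space $L^q_{\rm loc}$ with $1/p + 1/q = 1$, and the case $p=\infty$ follows analogously by testing against elements of $L^1_c(\R^2)$.

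The principal obstacle is the uniformity step of part one: the multiparameter Birkhoff theorem only furnishes $x$-dependent exceptional null sets, whereas $L^\infty$-convergence in $x$ demands a \emph{single} null set on which $S_N(x,\gamma)\to \E F(x,\cdot)$ essentially uniformly. The hypothesis $F \in L^\infty(\R^2, L^1(\mathcal{O}))$, rather than mere local integrability, is exactly what supplies the equi-$L^1$ control over the $x$-variable needed to transfer convergence from the countable skeleton $(x_k)$ to the full essential supremum.
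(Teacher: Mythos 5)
The paper does not actually prove this proposition: it is quoted as a known version of the ergodic theorem, with the proof deferred to the cited references (Dunford--Schwartz, Jikov--Kozlov--Oleinik), so the comparison is really with the argument those references supply, namely a Banach-space-valued pointwise ergodic theorem. Your derivation of the second assertion from the first -- unfolding $\int F(x/\eps,\gamma)\phi(x)\,dx$ over unit cells by stationarity, freezing $\phi$ on each cell, and treating $\eps^2\sum_n\phi(\eps n)\,g(\tau_n\gamma)$ as a weighted ergodic average for $g=\int_Y F$ -- is the standard route and is essentially sound, modulo the usual approximation of $\phi$ by simple functions and the density argument in the dual $L^q$.

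The genuine gap is in the first assertion, precisely at the step you yourself flag as the principal obstacle, and the hypothesis $F\in L^\infty(\R^2,L^1(\mathcal{O}))$ does \emph{not} repair it in the way you claim. The scalar ergodic theorem gives, for each fixed $x$, a null set $N_x\subset\mathcal{O}$ off which $S_N(x,\gamma)\to\E F(x,\cdot)$; you must produce a \emph{single} null set off which the convergence holds in the $L^\infty$ norm in $x$. Your dense-skeleton argument requires controlling the oscillation $|S_N(x,\gamma)-S_N(x_k,\gamma)|$ for $x$ near $x_k$, but $F$ is merely measurable in $x$, and the bound $\sup_x\|F(x,\cdot)\|_{L^1(\mathcal{O})}<\infty$ carries no modulus of continuity: for the stationary example $F(x,\gamma)=\chi_A(\{x\})\,h(\tau_{\lfloor x\rfloor}\gamma)$ with $A\subset Y$ measurable and $h\in L^1(\mathcal{O})$, the oscillation between $x$ and $x_k$ does not shrink as $x_k\to x$. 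The Fubini alternative you invoke yields only ``for a.e.\ $\gamma$, for a.e.\ $x$'' convergence, i.e.\ a.e.\ convergence on $\R^2\times\mathcal{O}$, which is strictly weaker than convergence in $L^\infty(\R^2)$ for a.e.\ $\gamma$. (A secondary issue: your reduction to $x\in Y$ discards a cube-shifting error of size $O(|\lfloor x\rfloor|_\infty/N)$, which is not uniform over $x\in\R^2$ and so does not survive the global essential supremum.) The mechanism actually underlying the cited references is to regard the cell trace $\gamma\mapsto F(\cdot,\gamma)|_{Y}$ as a random variable with values in the Banach space $L^\infty(Y)$ and to apply the vector-valued (Dunford--Schwartz/Tempelman) ergodic theorem, which delivers almost sure convergence in the $L^\infty(Y)$ norm directly, with the single exceptional null set built in; this is the ingredient your sketch is missing.
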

%The reference for a proof is DUNFORD\& SCHWARTZ, suggested by
%Jikov {\it et. al.} \cite{Jikov} for the standard stationary case.

We assume that for every $\gamma \in \mathcal{O}$,
$\Phi(\cdot,\gamma)$ is a diffeomorphsim from $\R^2$ to $\R^2$ and
that it satisfies
\begin{equation}
\nabla \Phi(x,\gamma) \text{ is stationary.} \label{eq:Phic1}
\end{equation}
\begin{equation}
\underset{\gamma \in \mathcal{O}, x \in \R^2}{\mathrm{ess\ inf}} \
\mathrm{det} (\nabla \Phi(x,\gamma)) = \kappa > 0,
\label{eq:Phic2}
\end{equation}
\begin{equation}
\underset{\gamma \in \mathcal{O}, x \in \R^2}{\mathrm{ess\ sup}} \
|\nabla \Phi(x,\gamma)|_F = \kappa^\prime > 0, \label{eq:Phic3}
\end{equation}
where  $|\,\cdot\,|_F$ is the Frobenius norm and ${\mathrm{ess\
inf}}$ and ${\mathrm{ess\ sup}}$ are the essential infimum and the
essentiel supremum, respectively. To avoid the intersection of
$\partial \Omega$ and the random cells $\eps \Phi(\R_2^-)$ and the
collision of cells, that is when two connected components of $\eps
\Phi(\R_2^-)$ get as close as $o(\eps)$, we need the further
modification in the construction of cells. To this end, we assume
further that
\begin{equation}
\|\Phi(\cdot,\gamma) - \mathbf{I}(\cdot)\|_{L^\infty(\R^2)} \le
\frac{ \mathrm{ dist } (Y^-, \partial Y)}{2} \text{ for a.e. }
\gamma \in \mathcal{O}. \label{eq:Phic4}
\end{equation}
 Note that this implies also that $\|\Phi^{-1} -
\mathbf{I}\|_{L^\infty} \le  \mathrm{ dist } (Y^-, \partial Y)/2$
a.s. in $\mathcal{O}$. Now, given a bounded and simply connected
open set $\Omega$ with smooth boundary and a small number $\eps
\ll 1$, we denote by $\Omega_{1/\eps}$ the scaled set $\{x \in
\R^2 ~|~ \eps x \in \Omega\}$. Let $\widetilde{\Omega_{1/\eps}}$
be the shrunk set
\begin{equation*}
\widetilde{\Omega_{1/\eps}} := \{x \in \Omega_{1/\eps} ~|~ \mathrm{
dist } (x,
\partial \Omega_{1/\eps}) \ge  \mathrm{ dist } (Y^-, \partial Y)\}.
\end{equation*}
We introduce for $n \in \mathbb{Z}^2,\, \, Y_n$ and $Y_n^\pm$ the
translated cubes, reference cells and reference environments: $Y_n
:= n+Y, Y_n^\pm := n+Y^\pm$. Let $\mathcal{I}_\eps \subset \Z^2$
be the indices of cubes $Y_n$ such that $Y_n \in
\widetilde{\Omega_{1/\eps}}$. Note that
$\mathcal{I}_{\varepsilon}$ corresponds to $N_{\varepsilon}$ in
the periodic case. We set $\Depsm$ to be
\begin{equation}
\Depsm := \sum_{n \in \mathcal{I}_\eps} \eps \Phi(Y_n^-)
\label{eq:Depsmdef}
\end{equation}
and then $\Depsp = \Omega \setminus \overline{\Depsm}$. We also
define the following two notations:
\begin{equation}
E_\eps := \sum_{n \in \mathcal{I}_\eps} \eps \Phi(Y_n) \quad
\text{and } \quad K_\eps := \Omega \setminus \overline{E_\eps}.
\label{eq:KEdef}
\end{equation}
Clearly, $E_\eps$ encloses all the cells in $\eps\Phi(Y_n^-), n
\in \mathcal{I}_\eps$ and their immediate surroundings
$\eps\Phi(Y_n^+)$; $K_\eps$ is a cushion layer near the boundary
that prevents the cells from touching the boundary. From the
construction we see that
\begin{equation}
\inf_{x \in \Depsm} \mathrm{ dist }(x,\partial \Omega) \ge \eps
\mathrm{ dist } (Y^-, \partial Y) \quad \text{ and }\quad \sup_{x
\in K_\eps} \mathrm{ dist }(x,
\partial \Omega) \le (3  \mathrm{ dist } (Y^-, \partial
Y)+\sqrt{2}) \eps.
\end{equation}
Furthermore, we can check that
\begin{equation}
\sup_{n, j \in \mathcal{I}_\eps, n\ne j}\quad \inf_{x \in \eps
\Phi(Y_n^-), y \in \eps \Phi(Y_j^-)} |x - y| \ge  \mathrm{ dist }
(Y^-, \partial Y) \eps.
\end{equation}
This shows that the cells in $\Omega$ are well separated, {\it
i.e.}, with a distance comparable to (if not much larger than) the
size of the cells, see Figure~\ref{figshematic2}.

\begin{figure}
\fontsize{9.7pt}{7.2}
\includegraphics[scale=0.9]{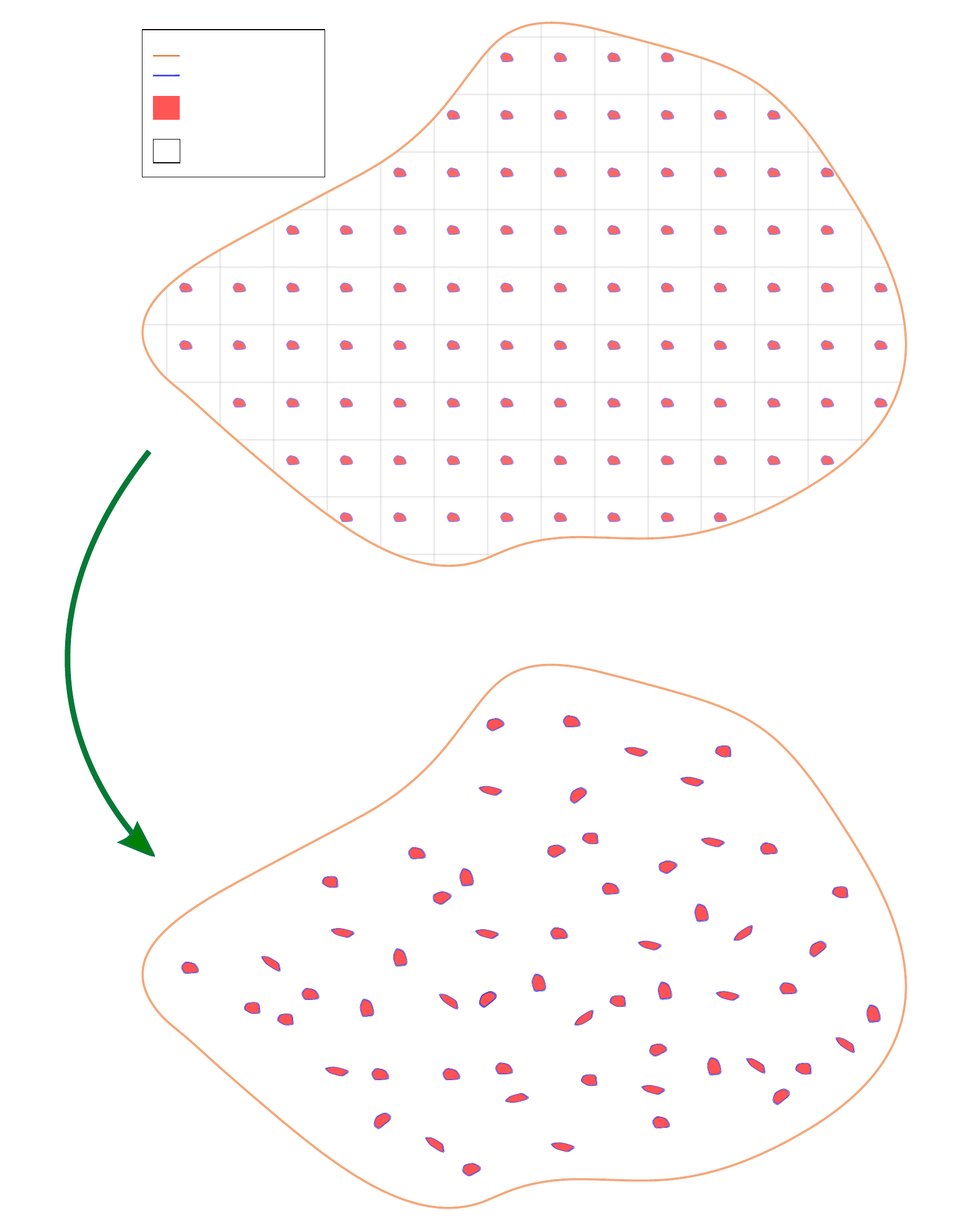}
\begin{picture}(0,0)
\put(-335,524){\textcolor{orange}{$\partial \Omega$}}
\put(-332,512){\textcolor{blue}{$\Gamma_{\varepsilon} \,(\varepsilon
\delta, k_m)$}}
\put(-335,497){\textcolor{red}{$\Omega_{\varepsilon}^- (k_0)$}}
\put(-335,478){$\Omega_{\varepsilon}^+ (k_0)$}
\put(-420,245){\textcolor{green2}{$\Phi$}}
\end{picture}
 \caption{\it{Schematic illustration of the randomly deformed periodic medium $\Omega$.} \label{figshematic2}}
\end{figure}

%For the physical parameters and boundary data of the problem
%\eqref{eq:rpde}, we assume that the conductivity is isotropic and
%constant, that is $k_0 \in \C$; we assume further that
%\begin{equation}
%k_0 = \kappa + i \rho, \text{ where } \kappa, \rho \in \R \text{
%and } \kappa > 0. \label{eq:Pdc1}
%\end{equation}
%We assume further that the current injected at the boundary
%satisfies
%\begin{equation}
%g \in L^2(\partial D), \text{ and } \int_{\partial D} g(x)
%ds(x) = 0. \label{eq:Pdc2}
%\end{equation}
%%%%%%%%%%%%%%%%%%%%%%%
\subsection{Main results in the random case}

The first important result in the random case concerns an
auxiliary problem which produces oscillating test functions that
are used in the stochastic homogenization procedure. In the
following theorem, a function $f^{\rm ext}$ in $W_{\rm
loc}^{1,s}(\R^2)$ is said to be an extension of $f \in W_{\rm
loc}^{1,s}(\R_2^+)$ if $f^{\rm ext} = f$ on $\R_2^+$ and $\|f^{\rm
ext}\|_{W^{1,s}(K)} \le C(K,\R_2^+)\|f\|_{W^{1,s}(\R_2^+ \cap
K)}$, for any compact subset $K$.

The following theorem holds.
\begin{thm}\label{thm:auxiliary} Let $\Phi(\cdot,\gamma)$ be a random process defined on the probability space
 $(\mathcal{O},\F,\Pb)$, valued in the space of diffeomorphisms from $\R^2$ to $\R^2$ satisfying
 \eqref{eq:Phic1}-\eqref{eq:Phic3}.
 Then for a.e. $\gamma \in \mathcal{O}$ and for an arbitrary fixed $p \in \R^2$, the system
\begin{equation}
\left\{
\begin{array}{l}
\begin{array}{ll}
\vspace{0.3cm}\nabla\cdot k_0 (\nabla w^+_p(y) + p) = 0 \hspace{3cm}&\text{ in } \Phi(\R^\pm_2,\gamma), \\
\vspace{0.3cm}\nabla\cdot k_0 (\nabla w^-_p(y) + p) = 0 &\text{ in } \Phi(\R^\pm_2,\gamma),\\
\vspace{0.3cm} k_0 \displaystyle \frac{\partial w^+_p}{\partial n}(y) - \frac{\partial w^-_p}{\partial n}(y)=0& \text{ on
} \Phi(\Gamma_2,\gamma), \\
\vspace{0.3cm} w^+_p
- w^-_p - \beta k_0 \displaystyle \frac{\partial w^+_p}{\partial n}(y) = 0 &\text{ on }
\Phi(\Gamma_2,\gamma),\\
\vspace{0.3cm}w^\pm_p(y,\gamma) = \widetilde{w}^\pm_p(\Phi^{-1}(y,\gamma),\gamma),\\
\vspace{0.3cm}\nabla \widetilde{w}^\pm_p \text{ are stationary},&
\end{array}\\
\exists \tilde{w}^{\rm ext}_p \in H^1_{\rm loc}(\R^2) \text{
extends } \tilde{w}^+_p \quad \text{ s.t. }  \E \left(\int_{Y}
\nabla \tilde{w}^{\rm ext}_p (\tilde{y},\cdot) d\tilde{y} \right)
= 0,
\end{array}
\right. \label{eq:auxiliary}
\end{equation}
admits a unique solution (up to an additive constant) $w_p = w^+_p
\chi_{\Phi(\R_2^+)} + w^-_p \chi_{\Phi(\R_2^-)}$, where $w^\pm_p
\in H^1_{\rm loc}(\Phi(\R_2^\pm))$.
\end{thm}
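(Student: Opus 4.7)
My plan is to reduce the random transmission problem in the deformed geometry to an equivalent problem with random, stationary coefficients on the \emph{fixed} periodic geometry, and then to solve the latter by Lax--Milgram on the Hilbert space of stationary-gradient potentials used classically in stochastic homogenization (Kozlov, Papanicolaou--Varadhan). First, I set $y = \Phi(\tilde y, \gamma)$ and $\tilde w_p^\pm(\tilde y,\gamma) = w_p^\pm(\Phi(\tilde y,\gamma),\gamma)$, so that \eqref{eq:auxiliary} becomes a transmission problem for $\tilde w_p^\pm$ posed on the unperturbed periodic domains $\R_2^\pm$ with interface $\GGamma_2$. The principal part transforms into $\nabla\cdot A(\tilde y,\gamma)\nabla \tilde w_p^\pm$ with
\begin{equation*}
A(\tilde y,\gamma) = k_0\,(\nabla \Phi)^{-1}(\nabla \Phi)^{-T}\det(\nabla \Phi)(\tilde y,\gamma),
\end{equation*}
and the flux/jump conditions on $\GGamma_2$ pick up analogous Jacobian factors. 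By \eqref{eq:Phic1}--\eqref{eq:Phic3}, $A$ is stationary, bounded, and uniformly elliptic with $\Re A$ positive definite; the forcing coming from $p$ also becomes a stationary vector field $A(\tilde y,\gamma)p$.

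Next, I build the Hilbert space $\mathcal{H}$ of equivalence classes (modulo additive constants) of pairs $\tilde v = (\tilde v^+,\tilde v^-)$ such that $\nabla \tilde v^\pm$ are stationary with $\E\!\int_Y |\nabla \tilde v^\pm|^2\,d\tilde y < \infty$, such that $\tilde v^+$ admits a stationary-gradient extension $\tilde v^{\rm ext}\in H^1_{\rm loc}(\R^2)$ with $\E\!\int_Y \nabla \tilde v^{\rm ext}\,d\tilde y = 0$, and such that the jump $[\tilde v] := \tilde v^+ - \tilde v^-$ has $\E\!\int_\Gamma |[\tilde v]|^2\,ds < \infty$. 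On $\mathcal H$ I introduce the sesquilinear form
\begin{equation*}
a(\tilde u,\tilde v) = \E\!\int_Y A\,\nabla \tilde u\cdot\overline{\nabla \tilde v}\,d\tilde y + \frac{1}{\beta}\,\E\!\int_\Gamma J(\tilde y,\gamma)\,[\tilde u]\,\overline{[\tilde v]}\,ds,
\end{equation*}
with $J$ the surface Jacobian from the pull-back, and the antilinear form $\ell(\tilde v) = -\E\!\int_Y A p\cdot\overline{\nabla \tilde v}\,d\tilde y$ (plus an analogous surface contribution). Coercivity follows since $\Re k_0 = \sigma_0 > 0$ and $\Re(1/\beta) = \sigma_m/\delta > 0$, which yield
\begin{equation*}
\Re\, a(\tilde v,\tilde v) \ge c\Bigl(\E\!\int_Y |\nabla \tilde v|^2\,d\tilde y + \E\!\int_\Gamma |[\tilde v]|^2\,ds\Bigr),
\end{equation*}
uniformly in $\tilde v\in\mathcal H$. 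Lax--Milgram then supplies a unique $\tilde w_p \in \mathcal H$ solving $a(\tilde w_p,\tilde v)=\ell(\tilde v)$ for every test $\tilde v$, and the Euler--Lagrange equation gives back the transformed PDE together with the transmission conditions almost surely. Pushing forward, $w_p^\pm(y,\gamma):=\tilde w_p^\pm(\Phi^{-1}(y,\gamma),\gamma)$ solves \eqref{eq:auxiliary}, and uniqueness up to an additive constant is immediate: two solutions differ by some $\tilde u\in\mathcal H$ with $a(\tilde u,\tilde u)=0$, hence $\nabla \tilde u\equiv 0$ and $[\tilde u]\equiv 0$, so $\tilde u$ is constant.

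The delicate step is the setup of $\mathcal H$, specifically giving rigorous meaning to the boundary sesquilinear term $\E\!\int_\Gamma |[\tilde v]|^2\,ds$ for fields whose gradients are only assumed to be stationary. Stationarity of $\nabla \tilde v^\pm$ alone does not suffice to define traces on $\Gamma$ in a stationary and integrable way; this is exactly why the extension condition $\tilde v^{\rm ext}\in H^1_{\rm loc}(\R^2)$ with $\E\!\int_Y\nabla \tilde v^{\rm ext} = 0$ is included in \eqref{eq:auxiliary}. To realize $\mathcal H$ concretely, I invoke a uniform Calder\'on-type extension operator from $\R_2^+$ to $\R^2$ (available because $\Gamma$ is $\mathcal C^{2,\eta}$ and the cells are separated from $\partial Y$) combined with the Kozlov reconstruction of a stationary scalar potential from a stationary, curl-free, mean-zero vector field. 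This produces genuine stationary fields $\tilde v^{\rm ext}$ and $\tilde v^-$ whose traces on $\Gamma$ are well-defined stationary $L^2$ random fields, after which the ergodic theorem stated earlier justifies the $\E\!\int_\Gamma$ notation and the rest of the argument proceeds as above.
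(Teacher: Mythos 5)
Your overall strategy (pull back to the fixed periodic geometry, then run Lax--Milgram directly on a space of stationary-gradient potentials) is a genuinely different route from the paper, which instead regularizes the problem by adding an absorption term $\alpha w_{p,\alpha}$, solves the regularized problem on a space of \emph{stationary functions}, passes to the limit $\alpha\downarrow 0$ in the gradients and in the interface jumps, and then proves uniqueness separately via elliptic regularity, sub-linear growth of potentials with stationary gradient, and an energy identity over $\Phi(NY)$ combined with the ergodic theorem. The direct approach could in principle be made to work, but your execution has a genuine gap precisely at the step you yourself identify as delicate: the construction of $\mathcal{H}$ and the meaning of $\E\int_\Gamma |[\tilde v]|^2\,ds$. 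You claim that the Kozlov reconstruction of a potential from a stationary, curl-free, mean-zero field, combined with an extension operator, ``produces genuine stationary fields $\tilde v^{\rm ext}$ and $\tilde v^-$ whose traces on $\Gamma$ are well-defined stationary $L^2$ random fields.'' This is false in the random setting: the potential of a stationary mean-zero gradient field is in general \emph{not} stationary; it is only defined up to constants and has sub-linear growth. (If it were stationary, the corrector would be bounded and stochastic homogenization would be no harder than the periodic case.) Consequently the traces of $\tilde v^{\rm ext}$ and $\tilde v^-$ on $\GGamma_2$ are not stationary, and the well-definedness of the interface term on your space does not follow from what you wrote. What one can hope to make stationary is the \emph{jump} $[\tilde v]$, but that requires choosing the cell-wise integration constants of $\tilde v^-$ (one per connected component of $\R_2^-$) as a stationary random field compatible with the global constant of $\tilde v^+$, and then proving that the resulting space is complete and that the jump map is continuous in the energy norm. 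None of this is done, and it is exactly the difficulty that the paper's regularization sidesteps: for $\alpha>0$ the zeroth-order term forces $w_{p,\alpha}$ itself to be a stationary $L^2$ function, so all traces and jumps are unambiguous, and the delicate work is deferred to the limit $\alpha\to 0$.

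Two further, smaller issues. First, recovering the strong transmission conditions from your Euler--Lagrange equation is not automatic: in particular, to identify the limit interface datum with $w^+_p - w^-_p$ (i.e., to pin down the constants), the paper has to construct special harmonic test functions with prescribed Neumann data on $\Phi(\Gamma_0)$; asserting that ``the Euler--Lagrange equation gives back the transformed PDE together with the transmission conditions almost surely'' skips this. Second, your uniqueness argument only yields uniqueness within $\mathcal{H}$, i.e., among solutions with finite expected energy; the theorem asserts uniqueness among all $H^1_{\rm loc}$ solutions satisfying the stationarity and extension conditions, which is why the paper proves uniqueness by the sub-linear-growth/ergodic-theorem argument rather than by coercivity.
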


Clearly, one could add another condition to the above problem,
namely that the integral of $\tilde{w}^+_p$ in $Y^+$ vanishes, to
fix the additional constant. The second main result in the random
case is the following homogenization theorem.

\begin{thm}\label{thm:homog} Let $\Omega$ be a bounded and connected open subset of $\R^2$ with regular boundary.
Let $\Phi$ be a random diffeomorphism on $(\mathcal{O},\F,\Pb)$
satisfying \eqref{eq:Phic1}-\eqref{eq:Phic4}. Assume that the
cells $\Depsm$ are constructed as in the previous section. Then
for a.e. $\gamma \in \mathcal{O}$, the solution
$\ueps(\cdot,\gamma) = (\uepsp, \uepsm)$ of \eqref{eq:u_{epsilon}}
satisfies the following properties:
\begin{enumerate}
\item[{\upshape (i)}] We can extend $\uepsp(\cdot,\gamma)$ to
$\ueps^{\rm ext}(\cdot,\gamma) \in H^1(\Omega)$, where $\ueps^{\rm
ext}(\cdot,\gamma)$ converges weakly, as $\eps \to 0$, to some
deterministic function $u_0 \in H^1(\Omega)$. \item[{\upshape
(ii)}] The function $\ueps(\cdot,\gamma)$ converges strongly in
$L^2$ to $u_0$ above. Further, let $Q$ be the trivial extension
operator setting $Qf = 0$ outside the domain of $f$, and define
$$
\varrho := \det\left(\E \int_{Y} \nabla \Phi(z,\cdot) dz
\right)^{-1}, \quad \theta := \varrho\ \E \int_{Y^-} \det \nabla
\Phi(z,\cdot) dz,
$$
where $\det$ denotes the determinant.  Then, $Q\uepsm$ converges
weakly to $\theta u_0$ in $L^2(\Omega)$ with $\theta < 1$.

\item[{\upshape (iii)}] The function $u_0$ is the unique weak
solution in $H^1_{\mathbb{C}}(\Omega)$ to the homogenized equation
\begin{equation}
\left\{
\begin{aligned}
\nabla \cdot K^* \nabla u_0(x) = 0, &\quad& &x\in \Omega,\\
n(x) \cdot K^*\nabla u_0(x) = g, &\quad& &x \in \partial \Omega,
\end{aligned}
\label{eq:hpde} \right.
\end{equation}
\end{enumerate}
The homogenized admittivity coefficient $K^*$ is given by
\begin{equation}
\forall (i,j) \in \{1,2\}^2, \quad
K^*_{ij} = k_0 \left(\delta_{ij} + \varrho \E\int_{\Phi(Y)} e_j
\cdot (\chi_{Y^+} \nabla w^+_{e_i} + \chi_{Y^-} \nabla w^-_{e_i})
(y,\cdot)\ dy \right), \label{eq:Khomog}
\end{equation}
where $\{e_i\}_{i=1}^2$ is the Euclidean basis of $\R^2$ and for
each $p \in \R^2$, the pair of functions $(w^+_p, w^-_p)$ is the
unique solution to the auxiliary system \eqref{eq:auxiliary}.
\end{thm}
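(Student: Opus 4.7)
The plan is to follow the classical Tartar energy method adapted to the stochastic two-phase setting with a resistive interface, using the correctors supplied by Theorem \ref{thm:auxiliary} as oscillating test functions.

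First I would establish uniform a priori estimates and an extension. Testing \eqref{eq:u_{epsilon}} against $\overline{\ueps}$ and using the jump condition on $\interface$ gives, almost surely,
\begin{equation*}
\int_{\Depsp \cup \Depsm} k_0 |\nabla \ueps|^2 \,dx + \int_{\interface} \frac{1}{\eps \beta k_0} |\uepsp - \uepsm|^2 \,ds \le C \|g\|_{L^2(\partial \Omega)} \|\ueps\|_{H^{1/2}(\partial\Omega)},
\end{equation*}
from which a uniform $H^1$ bound on $\uepsp$ and $\uepsm$ in their respective domains follows. Since \eqref{eq:Phic2}--\eqref{eq:Phic4} guarantee that the deformed cells are Lipschitz and separated by a distance $\gtrsim \eps$, standard Sobolev extension on each cell, followed by the $\eps$-scaling, produces $\ueps^{\mathrm{ext}} \in H^1(\Omega)$ with norm bounded independently of $\eps$ and $\gamma$. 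Along a subsequence $\ueps^{\mathrm{ext}} \rightharpoonup u_0$ weakly in $H^1$ and strongly in $L^2$, which is (i). For (ii), the ergodic proposition applied to the stationary indicator $\chi_{\Phi(\R_2^-)}$ gives $\chi_{\Depsm}(x,\gamma) \rightharpoonup \theta$ weak-$*$ in $L^\infty(\Omega)$; combined with the jump estimate $\|\uepsp - \uepsm\|_{L^2(\interface)} = O(\sqrt{\eps})$ and the separation of cells, one passes from $Q \uepsm \approx \chi_{\Depsm} \ueps^{\mathrm{ext}}$ to the weak $L^2$ limit $\theta u_0$; the bound $\theta < 1$ follows from $Y^- \subsetneq Y$ together with \eqref{eq:Phic4}.

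Next I would identify the limit equation. For $p=e_i$, let $(w_{e_i}^+, w_{e_i}^-)$ be the solution of \eqref{eq:auxiliary} and define the oscillating test function $W_{\eps,i}(x,\gamma) := x_i + \eps w_{e_i}(x/\eps,\gamma)$. By construction $W_{\eps,i}$ satisfies the same homogeneous transmission PDE as $\ueps$ on $\eps \Phi(\R_2^\pm)$ with the same membrane coefficient $\eps \beta k_0$ across $\interface$. For a smooth $\varphi \in C_c^\infty(\Omega)$, test the weak formulation of \eqref{eq:u_{epsilon}} against $\varphi W_{\eps,i}$ and the equation for $W_{\eps,i}$ against $\varphi \ueps^{\mathrm{ext}}$, then subtract. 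The crucial feature is that the interfacial contributions match and cancel, leaving a bulk difference
\begin{equation*}
\int_\Omega k_0 \bigl(\nabla \ueps \cdot \nabla \varphi\, W_{\eps,i} - \nabla W_{\eps,i} \cdot \nabla \varphi\, \ueps^{\mathrm{ext}}\bigr)\,dx + \text{b.t.}
\end{equation*}
Because $W_{\eps,i} \to x_i$ strongly in $L^2$ and $\nabla W_{\eps,i} = e_i + \nabla \tilde w_{e_i}^\pm(\Phi^{-1}(x/\eps,\gamma),\gamma)$ has stationary distribution on the reference lattice, the ergodic theorem (Proposition above) yields
\begin{equation*}
\nabla W_{\eps,i}(\cdot,\gamma) \rightharpoonup \varrho\, \E \!\int_{\Phi(Y)} \!\bigl(\chi_{Y^+}\nabla w_{e_i}^+ + \chi_{Y^-}\nabla w_{e_i}^-\bigr)(y,\cdot)\,dy
\end{equation*}
weakly in $L^2(\Omega)$, where the prefactor $\varrho = (\det \E \!\int_Y \nabla\Phi)^{-1}$ arises from the Jacobian of the random change of variables pulling integrals on $\eps\Phi(Y_n)$ back to $Y$. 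Passing to the limit yields the weak form of \eqref{eq:hpde} with $K^*$ as in \eqref{eq:Khomog}. Coercivity of $\mathrm{Re}(K^*)$ (inherited from coercivity of the cell problem) gives uniqueness of $u_0$, hence convergence of the full sequence.

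The main obstacle is the interaction between the random geometry and the resistive transmission jump. Unlike the smooth two-phase case, the corrector $w_{e_i}$ has its own jump of size $\eps\beta k_0 \partial_n w_{e_i}$ across $\interface$, and one must show that this jump matches the rescaled jump of $\ueps$ in the Tartar computation so that the interfacial integrals cancel exactly; this is precisely why \eqref{eq:auxiliary} is formulated with the same membrane constant $\beta$ and on the pulled-back domain $\Phi(\R_2^\pm)$. Closely related, one must verify that the weak limit of $\nabla w_{e_i}(\cdot/\eps,\gamma)$ is deterministic and given by an average over $\Phi(Y)$, which requires combining stationarity of the \emph{pulled-back} gradients $\nabla \tilde w_{e_i}^\pm$ with the random change of coordinates $\Phi$ via the ergodic theorem; the deterministic Jacobian factor $\varrho$ in \eqref{eq:Khomog} is the fingerprint of this step and is what distinguishes the random case from the periodic one treated in Theorem \ref{thm:homo}.
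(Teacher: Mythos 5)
Your proposal follows essentially the same route as the paper's proof: uniform energy estimates plus the extension operator for compactness, the Blanc--Le Bris--Lions change-of-variables/ergodic argument for the weak limits of $\chi_{\Depsm}$ and of the corrector gradients (which is where the Jacobian factor $\varrho$ enters), and Tartar's oscillating-test-function subtraction in which the interfacial terms cancel because the corrector satisfies the same membrane transmission condition at scale $\eps$. The one loose phrase is calling $\chi_{\Phi(\R_2^-)}$ ``stationary'' --- it is the pull-back $\chi_{\R_2^-}\circ\Phi^{-1}$ that must be treated via the random change of coordinates, exactly as you describe for the gradients --- but since that is also the mechanism the paper uses, the argument is sound and matches the paper's.
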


In the dilute limit, we obtain the following approximation of the
effective permittivity for the dilute suspension:
\begin{equation}
K^*_{ij} = k_0(I + \varrho f \E M_{ij}) + o(f),
\label{eq:Kdilute}
\end{equation}
where  $\varrho$ accounts for the averaged change of volume due to
the random diffeomorphism and $f$ is the volume fraction occupied by the cells ; the polarization matrix $M$ is defined
by
\begin{equation}
M_{ij} = \beta k_0 \int_{\rho^{-1} \Phi(\Gamma)} \tilde{\psi}_i
n_j \ ds(\tilde{y}), \label{eq:Mijdef}
\end{equation}
and is associated to the deformed inclusion scaled to the unit
length scale.

\section{Analysis of the problem} \label{sect:analysis}

For a fixed $\eps$, recall that $\Hq^1(\Depsp)$ denotes the Sobolev space
$H^1(\Depsp)/ \C$, which can be represented as
\begin{equation}
\Hq^1(\Depsp) = \left\{ u\in H^1(\Depsp) ~|~ \int_{\Depsp} u(x) dx
= 0\right\}.
\end{equation}
The natural functional space for \eqref{eq:u_{epsilon}} is
\begin{equation}
\Weps := \left\{ u = u^+\chi_\eps^+ + u^-\chi_\eps^- ~|~ u^+ \in
\Hq^1(\Depsp), u^- \in H^1(\Depsm) \right\},
\end{equation}
where $\chi_\eps^{\pm}$ are the characteristic functions of the
sets $\Deps^{\pm}$. We can verify that
\begin{equation}
\|u \|_{\Weps} = \left(\|\nabla u^+ \|_{L^2(\Depsp)}^2 + \|\nabla
u^- \|_{L^2(\Depsm)}^2 + \eps \|u^+ -
u^-\|_{L^2(\interface)}^2\right)^{\frac 1 2}
\end{equation}
defines a norm on $\Weps$. In fact, as it will be seen in
Proposition \ref{prop:equiv}, this norm is equivalent to the
standard norm on $\Weps$ which is
\begin{equation}
\|u\|_{\Hq^1(\Depsp)\times H^1(\Depsm)} = \left(\|\nabla
u^+\|_{L^2(\Depsp)}^2 + \|\nabla u^-\|_{L^2(\Depsm)}^2 +
\|u^-\|_{L^2(\Depsm)}^2 \right)^{\frac 1 2}.
\end{equation}

%%%%%%%%%
\subsection{Existence and uniqueness of a solution}

Problem \eqref{eq:u_{epsilon}} should be understood through its
weak formulation as follows: For a fixed $\eps > 0$, find $\ueps
\in \Weps$ such that
\begin{equation}
\begin{aligned}
\int_{\Depsp} k_0 \nabla \uepsp(x) \cdot \nabla \overline{v^+}(x) dx + &\int_{\Depsm} k_0
\nabla \uepsm(x) \cdot \nabla \overline{v^-}(x)ds(x) \\
+  &\frac{1}{\eps \beta} \int_{\interface}
(\uepsp-\uepsm)(x)\overline{(v^+ - v^-)}(x) ds(x) = \int_{\partial
\Omega} g(x) \overline{v^+}(x) ds(x),
\end{aligned}
\label{eq:rpdeweak}
\end{equation}
for any function $v \in \Weps$.

Define the sesquilinear form $a_\eps(\cdot,\cdot)$ on $W_\eps
\times W_\eps$ by
\begin{equation}
\label{eq:aepsdef} a_\eps(u,v) := \int_{\Depsp} k_0 \nabla u^+
\cdot \nabla \overline{v^+} dx + \int_{\Depsm} k_0 \nabla u^-
\cdot \nabla \overline{v^-} dx + \frac{1}{\eps \beta}
\int_{\interface} (u^+ - u^-)\overline{(v^+ - v^-)} ds.
\end{equation}
Associate the following anti-linear form on $W_\eps$ to the
boundary data $g$:
\begin{equation}
\ell(u) := \int_{\partial \Omega} g \overline{u^+} ds.
\end{equation}
The forms $a_\eps$ and $\ell$ are bounded. Moreover, $a_\eps$ is
coercive in the following sense
\begin{equation}
\Re \ k_0^{-1} a_\eps(u,u) = \left(\int_{\Depsp}  |\nabla u^+|^2
dx + \int_{\Depsm}  |\nabla u^-|^2 dx\right) +
\frac{1}{\eps\beta^\prime} \int_{\interface} |u^+ - u^-|^2 ds \ge
C \|u\|_{\Weps}^2,
\end{equation}
where $\beta^\prime := \delta (\sigma_0 \sigma_m + \omega^2
\epsilon_0 \varepsilon_m)/(\sigma_m^2 + \omega^2
\epsilon_m^2)$. Consequently, due to the Lax--Milgram theorem
we have existence and uniqueness for \eqref{eq:u_{epsilon}} for
each fixed $\eps$ and for every $\gamma \in \mathcal{O}$. Note
that $C$ can be chosen independent of $\varepsilon$.

\begin{prop} Let $g \in H^{-1/2}(\partial \Omega)$. There exists a unique $u_\eps \in \Weps$ so that
\begin{equation}
a_\eps(u_\eps, \varphi) = \ell(\varphi), \quad \forall \varphi \in
\Weps. \label{eq:aEqg}
\end{equation}
\end{prop}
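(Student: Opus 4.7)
The plan is a straightforward application of the Lax--Milgram theorem to the sesquilinear form $a_\eps$ and the anti-linear form $\ell$ on the Hilbert space $\Weps$, equipped with the norm $\|\cdot\|_{\Weps}$ defined above. Essentially all of the work has been laid out in the preceding paragraphs; the proof reduces to verifying the three hypotheses of Lax--Milgram for a fixed $\eps > 0$.

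First, I would check that $\Weps$ is indeed a Hilbert space with the stated norm. This reduces to showing that $\|\cdot\|_{\Weps}$ is equivalent to the graph norm $\|\cdot\|_{\Hq^1(\Depsp)\times H^1(\Depsm)}$, which is exactly the content of Proposition \ref{prop:equiv} alluded to in the text; I would cite that result. Next, I would verify that $a_\eps$ is a bounded sesquilinear form on $\Weps \times \Weps$: this is immediate from the Cauchy--Schwarz inequality applied to each of the three integrals in \eqref{eq:aepsdef}, noting that $|k_0| < \infty$ and $|1/(\eps\beta)| < \infty$ for $\eps > 0$ fixed.

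The coercivity of $a_\eps$ is the key estimate and is already displayed in the excerpt: taking the real part of $k_0^{-1} a_\eps(u,u)$, using the positivity of $\sigma_0$, $\sigma_m$, $\epsilon_0$, and the nonnegativity of $\epsilon_m$, yields
\begin{equation*}
\Re\, k_0^{-1} a_\eps(u,u) \ge C_\eps \|u\|_{\Weps}^2
\end{equation*}
with $C_\eps > 0$. I would just carefully unwind the computation of $\Re\, k_0^{-1}(k_0/(\eps\beta)) = 1/(\eps\beta')$ with $\beta' = \delta(\sigma_0\sigma_m + \omega^2 \epsilon_0 \varepsilon_m)/(\sigma_m^2 + \omega^2 \epsilon_m^2)$ and then take $C$ to be the minimum of $1$ and $\Re\, k_0^{-1} \cdot k_0 / (\eps\beta) \cdot \eps$, which is strictly positive.

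The part that requires a little care, and which I expect to be the main (minor) obstacle, is the boundedness of the anti-linear form $\ell$ on $\Weps$. Since $g \in H^{-1/2}(\partial\Omega)$, the pairing $\ell(u) = \langle g, u^+|_{\partial\Omega}\rangle_{H^{-1/2},H^{1/2}}$ requires an $H^{1/2}(\partial\Omega)$ bound on the trace of $u^+$. By the continuous trace operator $H^1(\Depsp) \to H^{1/2}(\partial\Omega)$, this is controlled by $\|u^+\|_{H^1(\Depsp)}$; and by the Poincaré--Wirtinger inequality for functions in $\Hq^1(\Depsp)$ (whose mean vanishes over $\Depsp$), we recover $\|u^+\|_{H^1(\Depsp)} \le C_\eps \|\nabla u^+\|_{L^2(\Depsp)} \le C_\eps \|u\|_{\Weps}$. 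Once boundedness is established, the Lax--Milgram theorem immediately yields a unique $\ueps \in \Weps$ satisfying \eqref{eq:aEqg}. I would emphasize that the constant $C_\eps$ in the boundedness of $\ell$ may depend on $\eps$, but this is harmless here since existence and uniqueness are claimed for each fixed $\eps$; the uniform-in-$\eps$ analysis needed for homogenization is a separate matter, treated later.
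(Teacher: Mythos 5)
Your proposal is correct and follows essentially the same route as the paper, which likewise establishes the result by checking boundedness of $a_\eps$ and $\ell$ and the displayed coercivity estimate and then invoking the Lax--Milgram theorem (with the norm equivalence of Proposition \ref{prop:equiv} and the trace/Poincar\'e--Wirtinger bounds supplying the boundedness of $\ell$). The only nuance is that the paper notes the coercivity constant can be taken independent of $\eps$, which you correctly defer to the later uniform analysis.
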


%We transform the above version of well-posedness of
%\eqref{eq:u_{epsilon}} to the one using the more familiar norm.
%This is achieved by establishing the equivalence between the norm
%$\|\cdot\|_{\Weps}$ and the standard norm $\|\cdot\|_{\Hq^1 \times
%H^1}$, {\it i.e.},
%\begin{equation}
%\|u\|_{\Hq^1 \times H^1}^2 = \|\nabla u^+\|_{L^2(\Depsp)}^2 + (\|
%u^- \|_{L^2(\Depsm)}^2 + \|\nabla u^-\|_{L^2(\Depsm)}^2).
%\end{equation}
To end this subsection we remark that the two norms on $W_\eps$
are equivalent.

\begin{prop}\label{prop:equiv}
The norm $\|\cdot\|_{\Weps}$ is equivalent with the standard norm
on $\Hq^1(\Depsp)\times H^1(\Depsm)$. Moreover, we can find two
positive constants $C_1 < C_2$, independent of $\eps$, so that
\begin{equation}
\label{eq:equiv} C_1 \|u\|_{\Weps} \le \|u\|_{\Hq^1 \times H^1}
\le C_2 \|u\|_{\Weps},
\end{equation}
for any $u \in \Hq^1(\Depsp)\times H^1(\Depsm)$.
\end{prop}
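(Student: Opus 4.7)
\medskip

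\noindent\textbf{Proof plan for Proposition \ref{prop:equiv}.}
The direction $\|u\|_{\Weps} \le C\|u\|_{\Hq^1\times H^1}$ is the easier one: only the interface term needs to be controlled. I would use the standard scaled trace inequality on each period cell. For a fixed $n \in N_\eps$, pulling back $\eps Y_n$ to the unit cell $Y$ and applying the trace theorem there, one gets
\begin{equation*}
\|v\|_{L^2(\partial(\eps Y_n))}^2 \le C\bigl(\eps^{-1}\|v\|_{L^2(\eps Y_n)}^2 + \eps\,\|\nabla v\|_{L^2(\eps Y_n)}^2\bigr),
\end{equation*}
with $C$ depending only on $Y$ and $\Gamma$. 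Applying this to $u^-$ on $\eps C_n$ and to $u^+$ on $\eps Y_n^+$, summing over $n \in N_\eps$, and multiplying by $\eps$ yields
\begin{equation*}
\eps \|u^+-u^-\|^2_{L^2(\interface)} \le C\bigl(\|u^-\|_{L^2(\Depsm)}^2 + \|u^+\|_{L^2(\Depsp)}^2 + \eps^2\|\nabla u^+\|_{L^2(\Depsp)}^2 + \eps^2\|\nabla u^-\|_{L^2(\Depsm)}^2\bigr).
\end{equation*}
Because $u^+ \in \Hq^1(\Depsp)$ has zero mean on $\Depsp$, a Poincaré--Wirtinger inequality bounds $\|u^+\|_{L^2(\Depsp)}$ by $\|\nabla u^+\|_{L^2(\Depsp)}$ with an $\eps$-independent constant, which gives the desired estimate.

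\medskip

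\noindent The reverse inequality $\|u\|_{\Hq^1\times H^1} \le C\|u\|_{\Weps}$ amounts to controlling $\|u^-\|_{L^2(\Depsm)}$ in terms of $\|u\|_{\Weps}$. The plan is a cell-by-cell mean decomposition. On each cell, write $u^-$ as its mean $m_n := \fint_{\eps C_n} u^-$ plus a mean-free part. By Poincaré--Wirtinger on $\eps C_n$ (after scaling),
\begin{equation*}
\|u^- - m_n\|^2_{L^2(\eps C_n)} \le C\eps^2\|\nabla u^-\|^2_{L^2(\eps C_n)},
\end{equation*}
so summing yields the mean-free contribution $\le C\eps^2\|\nabla u^-\|^2_{L^2(\Depsm)}$. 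To control the means, introduce $M_n := \fint_{\eps Y_n^+} u^+$ and compare $m_n$ with $M_n$ through the jump of $u$ across $\eps \Gamma_n$. Applying the scaled trace inequality to $u^- - m_n$ on $\eps C_n$ and to $u^+ - M_n$ on $\eps Y_n^+$, combined with the Poincaré--Wirtinger estimate on each side, gives
\begin{equation*}
|M_n - m_n|^2\,\eps \le C\bigl(\|u^+-u^-\|^2_{L^2(\eps\Gamma_n)} + \eps\,\|\nabla u^+\|^2_{L^2(\eps Y_n^+)} + \eps\,\|\nabla u^-\|^2_{L^2(\eps C_n)}\bigr).
\end{equation*}
Multiplying by $\eps$ and summing in $n$ yields $\sum_n |M_n - m_n|^2 \eps^2 \le C\bigl(\eps\,\|u^+-u^-\|^2_{L^2(\interface)} + \eps^2(\|\nabla u^+\|^2_{L^2(\Depsp)} + \|\nabla u^-\|^2_{L^2(\Depsm)})\bigr)$, which is bounded by $\|u\|^2_{\Weps}$. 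The term $\sum_n |M_n|^2 \eps^2$ is, by Cauchy--Schwarz, at most $C\|u^+\|^2_{L^2(\Depsp)}$, which is again bounded by $\|\nabla u^+\|^2_{L^2(\Depsp)}$ thanks to the zero-mean condition. Putting everything together,
\begin{equation*}
\|u^-\|^2_{L^2(\Depsm)} \le C\eps^2\sum_n |m_n|^2 + C\eps^2\|\nabla u^-\|^2_{L^2(\Depsm)} \le C\|u\|^2_{\Weps}.
\end{equation*}

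\medskip

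\noindent The main obstacle is to ensure $\eps$-independence of all constants. This reduces to two points: the uniform-in-$\eps$ Poincaré--Wirtinger inequality on the perforated domain $\Depsp$, and the uniform scaled trace inequality on each period cell. The trace estimate is standard since all cells are obtained by translation-dilation of a single reference configuration, so the constant depends only on $(Y,\Gamma)$. The Poincaré inequality on $\Depsp$ relies on the existence of an $\eps$-uniform extension operator $P_\eps: H^1(\Depsp) \to H^1(\Omega)$, which is available because of the uniform geometry of the cells (and in the random case, the stationary-ergodic construction of Section 2 together with the separation condition \eqref{eq:Phic4} guarantees exactly this uniformity, via the by-now classical construction of extension operators for porous media).
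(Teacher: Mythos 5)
Your argument is correct, and both directions rest, as they must, on exactly the two $\eps$-uniform ingredients you isolate at the end: the scaled per-cell trace inequality and the Poincar\'e--Wirtinger inequality on $\Depsp$ furnished by the uniform extension operator (Theorem \ref{thm:Dext}, Corollary \ref{cor:poincare}). For the bound $\eps\|u^+-u^-\|^2_{L^2(\interface)}\le C\|u\|^2_{\Hq^1\times H^1}$ your route coincides with the paper's (Lemma \ref{lem:L2Gamma} combined with \eqref{eq:PW}). For the harder direction --- controlling $\|u^-\|_{L^2(\Depsm)}$ by $\|u\|_{\Weps}$ --- you genuinely diverge. The paper, following Monsurr\`o, first proves the unit-cell inequality $\|v\|^2_{L^2(Y^-)}\le C\bigl(\|v\|^2_{L^2(\Gamma)}+\|\nabla v\|^2_{L^2(Y^-)}\bigr)$ by a compactness/contradiction argument, rescales it into $\|u^-\|_{L^2(\Depsm)}\le C\bigl(\sqrt{\eps}\,\|u^-\|_{L^2(\interface)}+\eps\|\nabla u^-\|_{L^2(\Depsm)}\bigr)$ (Lemma \ref{lem:L2fW}), and then trades $\|u^-\|_{L^2(\interface)}$ for $\|u^+\|_{L^2(\interface)}$ plus the jump. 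You instead split $u^-$ on each cell into its mean plus a fluctuation and transport the mean across the membrane by comparing it with the mean of $u^+$ on the surrounding matrix; your key estimates --- $\eps|M_n-m_n|^2\le C\bigl(\|u^+-u^-\|^2_{L^2(\eps\Gamma_n)}+\eps\|\nabla u^+\|^2_{L^2(\eps Y_n^+)}+\eps\|\nabla u^-\|^2_{L^2(\eps C_n)}\bigr)$, obtained by splitting $M_n-m_n$ on the interface and applying trace plus Poincar\'e--Wirtinger on each side, and $\sum_n\eps^2|M_n|^2\le C\|u^+\|^2_{L^2(\Depsp)}$ by Cauchy--Schwarz --- both check out. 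What your version buys is a fully constructive proof with an in-principle explicit constant depending only on the trace and Poincar\'e constants of the reference cells $Y^\pm$; what the paper's version buys is a shorter write-up once the compactness lemma \eqref{eq:Ymest} is accepted, and that lemma is reused elsewhere (e.g., in the proof of Lemma \ref{lem:osctest}), which your mean-comparison argument would not replace for free.
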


Similar equivalence relation was established by Monsurr\`{o}
\cite{Monsur}, whose method can be adapted easily to the current
case. For the sake of completeness, we present the details in
Appendix \ref{appendixc}.

%%%%%%%%%%%%%%%%%
\subsection{Energy estimate} \label{sec:apriori}

For any fixed $\gamma \in \mathcal{O}$ and a sequence of $\eps \to
0$, by solving \eqref{eq:u_{epsilon}} we obtain the sequence
$\ueps = \uepsp \chi_{\eps}^+ + \uepsm \chi_{\eps}^-$. We obtain
some {\it a priori} estimates for $\ueps$.

We first recall that the extension theorem (Theorem
\ref{thm:Dext}) yields a Poincar\'e--Wirtinger inequality in
$H^1_{\mathbb{C}}(\Omega_{\varepsilon}^+)$ with a constant
independent of $\varepsilon$. Indeed, Corollary \ref{cor:poincare}
shows that for all $v^+ \in
H^1_{\mathbb{C}}(\Omega^+_{\varepsilon})$, there exists a constant
$C$, independent of $\varepsilon$,  such that
\begin{equation*}
 \|v^+ \|_{L^2(\Omega_{\varepsilon}^+)} \leq C \| \nabla v^+ \|_{L^2(\Omega_{\varepsilon}^+)}.
 \end{equation*}

Similarly, we can find a constant, independent of $\varepsilon$,
by applying the trace theorem  in $H^1(\Omega_{\varepsilon}^+)$.
Using Corollary \ref{cor:trace}, the following result holds.

\begin{prop}
\label{prop:apriori} Let $g \in H^{-\frac{1}{2}}(\partial
\Omega)$. For any $\gamma \in \mathcal{O}$, let $\Omega = \Depsp
\cup \interface\cup \Depsm$. Then there exist constants $C$'s,
independent of $\eps$ and $\gamma$, such that the solution $\ueps$
to \eqref{eq:u_{epsilon}} satisfies the following estimates:
\begin{equation}
\label{eq:GradientEst} \|\nabla \uepsp\|_{L^2(\Depsp)} + \|\nabla
\uepsm\|_{L^2(\Depsm)} \le C |k_0|^{-1}
\|g\|_{H^{-\frac{1}{2}}(\partial \Omega)},
\end{equation}
\begin{equation}
\|\uepsp - \uepsm\|_{L^2(\interface)} \le C |k_0|^{-1} \sqrt{\eps
\beta^\prime} \|g\|_{H^{-\frac 1 2}(\partial \Omega)}.
\label{eq:L2GammaEst}
\end{equation}
\end{prop}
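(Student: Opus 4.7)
The plan is to obtain both estimates in a single energy argument, by testing the weak formulation \eqref{eq:aEqg} against $\ueps$ itself and exploiting the coercivity identity stated just before the proposition. Setting $\varphi = \ueps$ in \eqref{eq:aEqg}, taking real parts after multiplying by $k_0^{-1}$, and using
\begin{equation*}
\Re \ k_0^{-1} a_\eps(\ueps,\ueps) = \|\nabla \uepsp\|_{L^2(\Depsp)}^2 + \|\nabla \uepsm\|_{L^2(\Depsm)}^2 + \frac{1}{\eps\beta^\prime}\|\uepsp - \uepsm\|_{L^2(\interface)}^2,
\end{equation*}
the left-hand side is bounded below by (a multiple of) the three nonnegative quantities we want to control. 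On the right-hand side I have $\Re k_0^{-1}\ell(\ueps) = \Re k_0^{-1} \int_{\partial \Omega} g\, \overline{\uepsp}\,ds$, which I bound in absolute value by $|k_0|^{-1} \|g\|_{H^{-1/2}(\partial \Omega)} \|\uepsp\|_{H^{1/2}(\partial \Omega)}$.

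The crucial next step is to control $\|\uepsp\|_{H^{1/2}(\partial \Omega)}$ by $\|\nabla \uepsp\|_{L^2(\Depsp)}$ with a constant that does not depend on $\eps$. This is precisely where the extension results of the previous subsection enter: by Theorem \ref{thm:Dext}, $\uepsp$ extends to $\widetilde{\uepsp} \in H^1(\Omega)$ with norm control independent of $\eps$; Corollary \ref{cor:poincare} then gives $\|\uepsp\|_{L^2(\Depsp)} \le C \|\nabla \uepsp\|_{L^2(\Depsp)}$, while Corollary \ref{cor:trace} provides $\|\uepsp\|_{H^{1/2}(\partial \Omega)} \le C \|\uepsp\|_{H^1(\Depsp)}$, both with $\eps$-independent constants (these are the essential ingredients flagged in the paragraphs immediately preceding the proposition). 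Combining, $\|\uepsp\|_{H^{1/2}(\partial \Omega)} \le C \|\nabla \uepsp\|_{L^2(\Depsp)}$.

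Inserting this bound, dropping the two nonnegative terms we are not currently estimating, and using Young's inequality $ab \le \tfrac{1}{2}a^2 + \tfrac{1}{2}b^2$ on the right-hand side, I absorb $\|\nabla \uepsp\|_{L^2(\Depsp)}^2$ to the left and obtain
\begin{equation*}
\|\nabla \uepsp\|_{L^2(\Depsp)}^2 + \|\nabla \uepsm\|_{L^2(\Depsm)}^2 \le C |k_0|^{-2} \|g\|_{H^{-1/2}(\partial \Omega)}^2,
\end{equation*}
which yields \eqref{eq:GradientEst}. For \eqref{eq:L2GammaEst}, I go back to the full energy identity: the jump term contributes $\tfrac{1}{\eps\beta^\prime}\|\uepsp - \uepsm\|_{L^2(\interface)}^2$ to the left, while the right-hand side is bounded, using the uniform trace inequality and the gradient estimate just proved, by $C|k_0|^{-2}\|g\|_{H^{-1/2}(\partial \Omega)}^2$. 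Rearranging gives $\|\uepsp - \uepsm\|_{L^2(\interface)}^2 \le C|k_0|^{-2}\eps\beta^\prime\|g\|_{H^{-1/2}(\partial \Omega)}^2$, hence \eqref{eq:L2GammaEst}.

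The main obstacle is not the energy manipulation, which is standard Lax--Milgram bookkeeping, but rather securing trace and Poincaré--Wirtinger constants that are uniform in $\eps$ on the perforated domain $\Depsp$. In the periodic case this follows from classical extension arguments of Cioranescu--Saint Jean Paulin, and here it is handled via the extension theorem and its corollaries cited in the statement; once those are in hand, everything else is Cauchy--Schwarz and Young. I also note that all constants come out independent of the realization $\gamma \in \mathcal{O}$ because the extension constants depend only on the geometric separation condition \eqref{eq:Phic4}, which is uniform in $\gamma$.
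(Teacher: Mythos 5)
Your proposal is correct and follows essentially the same route as the paper: test the weak formulation with $\ueps$ itself, use the coercivity identity, and control the boundary pairing $\langle g,\uepsp\rangle$ via the $\eps$- and $\gamma$-uniform trace and Poincar\'e--Wirtinger inequalities supplied by the extension theorem. The only cosmetic difference is that the paper absorbs the right-hand side by passing through the norm equivalence of Proposition \ref{prop:equiv} and cancelling one power of $\|\ueps\|_{\Weps}$, whereas you use Young's inequality; both yield the same estimates.
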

\begin{proof} By taking $\varphi = \ueps$ in \eqref{eq:aEqg}, and taking the real part of resultant equality, we get
\begin{equation}
\label{eq:prop:apri1}  \|\nabla \uepsp\|^2_{L^2(\Depsp)} +
\|\nabla \uepsm\|^2_{L^2(\Depsm)} + (\eps \beta^\prime)^{-1}
\|\uepsp - \uepsm\|_{L^2(\interface)}^2 = \Re k_0^{-1} \langle g,
\uepsp\rangle.
\end{equation}
Here $\displaystyle \langle g, \uepsp\rangle = \int_{\partial \Omega}
g\overline{\uepsp} ds$ is the pairing on $H^{-\frac 1 2}(\partial
\Omega) \times H^{\frac 1 2}(\partial \Omega)$, for which we have
the estimate
\begin{equation*}
\lvert \langle g, \uepsp\rangle \rvert \le \|g\|_{H^{-\frac 1 2}(\partial \Omega)}
\|\uepsp\|_{H^{\frac 1 2}(\partial \Omega)} \le C_1\|g\|_{H^{-\frac 1 2}(\partial
\Omega)} \|\uepsp\|_{H^1(\Depsp)}.
\end{equation*}
thanks to the Cauchy - Schwartz inequality and Corollary (\ref{cor:trace}). $C_1$ is here a constant which does not depend on $\varepsilon$.

Applying Proposition (\ref{prop:equiv}) yields
\begin{equation*}
\lvert \langle g, \uepsp\rangle \rvert \le C_2 \|g\|_{H^{-\frac 1 2}(\partial
\Omega)} \|\ueps\|_{W_{\varepsilon}},
\end{equation*} with a constant $C_2$ independent of $\varepsilon$.

Using this in \eqref{eq:prop:apri1} along with the coercivity of $a$ we get 
$$ \|\ueps\|_{W_{\varepsilon}}\le C_3 |k_0|^{-1}\|g\|_{H^{-\frac 1
2}(\partial \Omega)},$$where $C_3$ is still independent of $\varepsilon$.

It follows also that $$\lvert \langle g,
\uepsp\rangle \rvert \le C_2 C_3 |k_0|^{-1} \|g\|_{H^{-\frac 1
2}(\partial \Omega)}.$$
Substitute this estimate into the
right-hand side of \eqref{eq:prop:apri1}, we get the desired
estimates.
\end{proof}

Next, we apply the extension theorem (Theorem \ref{thm:Dext}) to
obtain a bounded sequence in $H^1(\Omega)$ for which we can
extract a converging subsequence.

\begin{prop}\label{prop:extlim}
Suppose that the same conditions of the previous proposition hold.
Let $P^\eps_\gamma: H^1(\Depsp) \to H^1(\Omega)$ be the extension
operator of Theorem \ref{thm:Dext}. Then we have
\begin{equation}
\|P^\eps_\gamma \uepsp\|_{H^1(\Omega)} \le C |k_0|^{-1}
\|g\|_{H^{-\frac 1 2}(\partial \Omega)}, \label{eq:BddExt}
\end{equation}
and
\begin{equation}
\|P^\eps_\gamma \uepsp - \ueps \|_{L^2(\Omega)} \le C \eps
|k_0|^{-1}(1+  \sqrt{\beta^\prime}) \|g\|_{H^{-\frac 1 2}(\partial
\Omega)}.
\end{equation}
\end{prop}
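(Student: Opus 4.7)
The plan is to derive the two bounds by combining the extension theorem (Theorem~\ref{thm:Dext}), the a priori estimates from Proposition~\ref{prop:apriori}, and a cell-wise scaled Poincar\'e inequality. The main subtlety is the $L^2$ control inside the cells, where the interface trace estimate \eqref{eq:L2GammaEst} has to interact with the size $\eps$ of the cells in exactly the right way.

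For the first bound \eqref{eq:BddExt}, I would apply the extension theorem, which provides an $\eps$- and $\gamma$-uniform bound $\|P^\eps_\gamma \uepsp\|_{H^1(\Omega)} \le C \|\uepsp\|_{H^1(\Depsp)}$. By Corollary~\ref{cor:poincare}, the Poincar\'e--Wirtinger inequality on $\Hq^1(\Depsp)$ holds with an $\eps$-independent constant, so the right-hand side is controlled by $C\|\nabla \uepsp\|_{L^2(\Depsp)}$, and the gradient estimate \eqref{eq:GradientEst} closes the bound.

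For the second bound, observe first that $P^\eps_\gamma \uepsp - \ueps$ vanishes on $\Depsp$, since the extension coincides there with $\uepsp$; only the contribution from $\Depsm$ matters. Set $v := P^\eps_\gamma \uepsp - \uepsm$ on $\Depsm$. Its trace on $\interface$ equals the membrane jump $\uepsp|_+ - \uepsm|_-$, whose $L^2$-norm is controlled by \eqref{eq:L2GammaEst}. I would then localize to each cell $C_{\eps,n}$, which has diameter $O(\eps)$. Rescaling $y = \eps^{-1}(x - \eps n)$ pulls each cell back to the reference cell $C$ (up to the diffeomorphism $\Phi$ in the random case), where the standard Poincar\'e-trace inequality on $C$ reads
\[
\|\hat v\|_{L^2(C)}^2 \le C\bigl(\|\nabla \hat v\|_{L^2(C)}^2 + \|\hat v\|_{L^2(\partial C)}^2\bigr).
\]
Tracking the $\eps$-factors produced by the change of variables, this translates into
\[
\|v\|_{L^2(C_{\eps,n})}^2 \le C\eps^2 \|\nabla v\|_{L^2(C_{\eps,n})}^2 + C\eps \|v\|_{L^2(\partial C_{\eps,n})}^2.
\]
Summing over $n$ and inserting \eqref{eq:GradientEst}, \eqref{eq:BddExt}, and \eqref{eq:L2GammaEst} yields
\[
\|v\|_{L^2(\Depsm)}^2 \le C\eps^2 |k_0|^{-2}\|g\|_{H^{-1/2}}^2 + C\eps \cdot \eps \beta^\prime |k_0|^{-2} \|g\|_{H^{-1/2}}^2,
\]
and the stated estimate then follows from the elementary inequality $\sqrt{1+\beta^\prime} \le 1 + \sqrt{\beta^\prime}$.

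The main obstacle is establishing the scaled Poincar\'e-trace inequality with a constant uniform in both $\eps$ and the realization $\gamma$: in the stochastic setting each cell is a random deformation $\eps \Phi(Y^-_n,\gamma)$ rather than a rigid scaled copy of $C$, so one cannot naively pull back by a single rescaling. This is where assumptions \eqref{eq:Phic2}--\eqref{eq:Phic3}, which bound $\det \nabla \Phi$ and $|\nabla \Phi|_F$ uniformly from below and above, become essential: composing with $\Phi^{-1}$ returns every cell to the fixed reference shape $Y^-$ with controlled bi-Lipschitz distortion, so that a single Poincar\'e-trace inequality on $Y^-$ propagates uniformly across $n$ and $\gamma$.
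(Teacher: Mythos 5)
Your proposal is correct and follows essentially the same route as the paper: the first bound is the extension estimate \eqref{eq:lem:Dext} combined with the Poincar\'e--Wirtinger inequality \eqref{eq:PW} and the energy bound \eqref{eq:GradientEst}, and the second bound reduces to estimating $P^\eps_\gamma \uepsp - \uepsm$ on $\Depsm$ via the cell-wise scaled Poincar\'e--trace inequality, which is exactly the paper's Lemma \ref{lem:L2fW} (estimate \eqref{eq:L2fW}, proved in Appendix \ref{appendixc} by the same rescaling argument you sketch, with uniformity in $\gamma$ coming from \eqref{eq:Phic2}--\eqref{eq:Phic3}). The only difference is that you re-derive that lemma inline rather than citing it, and your bookkeeping of the $\eps$ and $\beta'$ factors matches the stated estimate.
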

\begin{proof} The first inequality is a direct result of \eqref{eq:lem:Dext}, \eqref{eq:lem:Dext}, \eqref{eq:PW} and \eqref{eq:GradientEst}. For the second inequality, we have
\begin{equation*}\begin{array}{l}
\|P^\eps_\gamma \uepsp - \ueps \|_{L^2(\Omega)} = \|P^\eps_\gamma
\uepsp - \uepsm \|_{L^2(\Depsm)} \le C\sqrt{\eps} \|P^\eps_\gamma
\uepsp - \uepsm \|_{L^2(\interface)} \\ \nm \qquad + C\eps
\|\nabla(P^\eps_\gamma \uepsp - \uepsm)
\|_{L^2(\Depsm)}.\end{array}
\end{equation*}
Here, we have used  estimate \eqref{eq:L2fW}. Now,
$\|P^\eps_\gamma \uepsp - \uepsm \|_{L^2(\interface)} = \|\uepsp -
\uepsm \|_{L^2(\interface)}$ is bounded in \eqref{eq:L2GammaEst}.
The second term is bounded from above by
\begin{equation*}
C\eps \|\nabla P^\eps_\gamma \uepsp\|_{L^2(\Depsm)} +
C\eps\|\nabla \uepsm\|_{L^2(\Depsm)} \le C\eps(\|\nabla
\uepsp\|_{L^2(\Depsp)} + \|\nabla \uepsm\|_{L^2(\Depsm)}),
\end{equation*}
where we have used again \eqref{eq:lem:Dext}. This gives the
desired estimates.
\end{proof}

\begin{rem}\label{rem:subseq} As a consequence of the previous proposition, we get a sequence in $H^1(\Omega)$,
namely $P^\eps_\gamma \uepsp$, which is a good estimate of $\ueps$
in $L^2(\Omega)$ and from which we can extract a subsequence
weakly converging in $H^1(\Omega)$ and strongly in $L^2(\Omega)$.
\end{rem}

\section{Homogenization} \label{sect:homog}

We follow \cite{habibi1,habibi2} to derive a homogenized problem
for the model with two-scale asymptotic expansions and to prove a
rigorous two-scale convergence. In \cite{Monsur}, the
homogenization of an analogue problem is developed and proved with
another method.

\subsection{Two-scale asymptotic expansions}

We assume that the solution $u_{\varepsilon}$ admits the following two-scale asymptotic expansion
$$
\forall x \in \Omega\, \, \, u_{\varepsilon}(x) = u_0(x) +
\varepsilon u_1(x, \frac{x}{\varepsilon}) + o(\varepsilon),
$$
with
\begin{equation*}
y \longmapsto u_1(x,y)\, Y\textrm{-periodic} \, \, \textrm{and} \,
\, u_1(x,y) = \left \{
\begin{array}{l}
\vspace{0.2cm} u_1^+(x,y) \, \, \textrm{in} \, \Omega \times Y^+ ,\\
u_1^-(x,y) \, \, \textrm{in} \, \Omega \times Y^- .
\end{array}
\right .
\end{equation*}
We choose a test function $\varphi_{\varepsilon}$ of the same form
as $u_{\varepsilon}$:
\begin{equation*}
\forall x \in \Omega,\, \, \, \varphi_{\varepsilon}(x) =
\varphi_0(x) + \varepsilon \varphi_1(x, \frac{x}{\varepsilon}),
\end{equation*}
with $\varphi_0$ smooth in $\Omega$, $\varphi_1(x,.)$ $Y$-periodic,
\begin{equation*} 
\varphi_1(x,y) = \left \{
\begin{array}{l}
\vspace{0.2cm} \varphi_1^+(x,y) \, \, \textrm{in} \, \Omega \times Y^+ ,\\
\varphi_1^-(x,y) \, \, \textrm{in} \, \Omega \times Y^-,
\end{array}
\right .
\end{equation*}
and $\vspace{0.2cm}\varphi_1^{\pm} $ smooth in $\Omega \times Y^{\pm}$.

In order to prove items {\upshape (ii)} and {\upshape (iii)} in
Theorem \ref{thm:homo}, we perform an asymptotic expansion of the
variational formulation \eqref{eq:aEqg}. We thus inject these
ansatz in the variational formulation  and only consider the order
$0$ of the different integrals.

 At order $0$, $$\vspace{0.2cm}
\nabla u_{\varepsilon}(x) = \nabla u_0(x) + \nabla_y u_1(x,
\frac{x}{\varepsilon}) + o(\varepsilon).$$ Thanks to Lemma
\ref{lem:lim}, we then have for the two first integrals:
\begin{equation*}
\begin{array}{lr}
\vspace{0.3 cm} \displaystyle \int_{\Omega_{\varepsilon}^+}k_0 \left (\nabla u_0(x) + \nabla_y u_1^+(x, \frac{x}{\varepsilon}) \right ) \cdot \left (\nabla \overline{\varphi}_0(x) + \nabla_y \overline{\varphi}_1^+(x, \frac{x}{\varepsilon}) \right ) dx \\
\vspace{0.3 cm} \hspace{2.6cm} = \displaystyle \int_{\Omega} \int_{Y^+}k_0 \left (\nabla u_0(x) +
\nabla_y u_1^+(x, y) \right ) \cdot \left (\nabla \overline{\varphi}_0(x) + \nabla_y
\overline{\varphi}_1^+(x, y) \right ) dx dy + o(\varepsilon) \\
\end{array}
\end{equation*}
and
\begin{equation*}
\begin{array}{lr}
 \vspace{0.3 cm} \displaystyle\int_{\Omega_{\varepsilon}^-}k_0 \left (\nabla u_0(x) + \nabla_y u_1^-(x, \frac{x}{\varepsilon}) \right ) \cdot \left (\nabla \overline{\varphi}_0(x) + \nabla_y \overline{\varphi}_1^-(x, \frac{x}{\varepsilon}) \right ) dx \\
 \vspace{0.3cm} \hspace{2.6cm} = \displaystyle \int_{\Omega} \int_{Y^-} k_0\left
 (\nabla u_0(x) + \nabla_y u_1^-(x, y) \right ) \cdot \left (\nabla
 \overline{\varphi}_0(x) + \nabla_y \overline{\varphi}_1^-(x, y) \right ) dx dy + o(\varepsilon). \\
 \end{array}
\end{equation*}

We write the third integral in (\ref{eq:aepsdef}) as the sum, over all squares $Y_{\varepsilon,n}$,
of integrals on the boundaries $\Gamma_{\varepsilon,n}$. We have
\begin{equation*}
\begin{array}{l}
\vspace{0.3 cm} \displaystyle\frac{1} {\beta \varepsilon} \int_{\Gamma_{\varepsilon}} \left (
u_{\varepsilon}^+ (x, \frac{x}{\varepsilon}) -
 u_{\varepsilon}^- (x, \frac{x}{\varepsilon}) \right ) \left (
 \overline{\varphi}_{\varepsilon}^{+} (x, \frac{x}{\varepsilon})
  - \overline{\varphi}_{\varepsilon}^- (x, \frac{x}{\varepsilon}) \right )
 ds(x)  \\
\hspace{3cm}\displaystyle = \frac{1} {\beta \varepsilon} \sum_{n
\in N_{\varepsilon}}  \int_{\Gamma_{\varepsilon,n}} \left (
u_{\varepsilon}^+ (x, \frac{x}{\varepsilon}) - u_{\varepsilon}^-
(x, \frac{x}{\varepsilon}) \right ) \left (
\overline{\varphi}_{\varepsilon}^{+} (x, \frac{x}{\varepsilon})
-\overline{\varphi}_{\varepsilon}^- (x, \frac{x}{\varepsilon})
\right )
 ds(x).
 \end{array}
\end{equation*}

Let $x_{0,n}$ be the center of $Y_{\varepsilon, n}$ for each
$n \in N_{\varepsilon}$. We perform Taylor expansions with respect
to the variable $x$ around $x_{0,n}$ for all functions $(u_i)_{i\in\{1,2\}}$ and
$(\varphi_i)_{i\in\{1,2\}}$ on $\vspace{0.1cm}Y_{\varepsilon, n}$. After the
change of variables $\varepsilon(y - y_{0,n}) = x - x_{0, n}$, we
obtain that

\begin{equation*}
\begin{array}{l}
\vspace{0.2cm} u_{\varepsilon} (x) = u_0(x_{0,n}) + \varepsilon
u_1(x, y) + \varepsilon
(y - y_{0,n}) \cdot\nabla u_0(x_{0,n}) + o(\varepsilon), \\
\varphi_{\varepsilon} (x) = \varphi_0(x_{0,n}) + \varepsilon
\varphi_1(x, y) + \varepsilon (y - y_{0,n})\cdot \nabla
\varphi_0(x_{0,n}) + o(\varepsilon).
\end{array}
\end{equation*}

 Consequently, the third integral in the variational formulation \eqref{eq:aEqg} becomes
\begin{equation*}
\displaystyle\frac{\varepsilon^2} {\beta} \sum_{n\in
N_{\varepsilon}}  \int_{\Gamma_{n}} \left ( u_1^+ (x_{0, n}, y) -
u_1^-(x_{0, n}, y) \right ) \left ( \overline{\varphi}_1^+(x_{0,
n}, y)- \overline{\varphi}_1^-(x_{0, n}, y) \right ) ds(y).
\end{equation*}

Finally, Lemma \ref{lem:lim} gives us that
\begin{equation*}
\begin{array}{l}
\vspace{0.3cm}  \displaystyle \frac{1} {\varepsilon \beta} \int_{\Gamma_{\varepsilon}}
\left ( u_{\varepsilon}^+ - u_{\varepsilon}^- \right ) \left (  \overline{\varphi}_{\varepsilon}^+
- \overline{\varphi}_{\varepsilon}^-\right )
ds \\
\hspace{3cm} = \displaystyle\frac{1} {\beta} \int_{\Omega}
\int_{\Gamma} \left ( u_1^+ (x, y) - u_1^- (x, y) \right ) \left (
\overline{\varphi}_1^{+} (x, y) - \overline{\varphi}_1^- (x, y)
\right ) dx ds(y) + o(\varepsilon).
\end{array}
\end{equation*}

Moreover, we can easily see that
\begin{equation*}
  \displaystyle  \int_{\partial \Omega} g \overline{\varphi}_{\varepsilon}^+ ds =
  \displaystyle  \int_{\partial \Omega} g \overline{\varphi}_0
  ds + o(\varepsilon).
\end{equation*}

The order $0$ of the variational formula is thus given by
\begin{equation*}
\begin{array}{ll}
\vspace{0.3 cm} &\displaystyle \int_{\Omega} \int_{Y^+} k_0 \left (\nabla u_0(x) + \nabla_y u_1^+(x, y) \right ) \cdot \left (\nabla \overline{\varphi}_0(x) + \nabla_y \overline{\varphi}_1^+(x, y) \right ) dx dy \\
\vspace{0.3 cm} +&\displaystyle\int_{\Omega} \int_{Y^-}k_0  \left (\nabla u_0(x) + \nabla_y u_1^-(x, y) \right ) \cdot \left (\nabla \overline{\varphi}_0(x) + \nabla_y \overline{\varphi}_1^-(x, y) \right ) dx dy  \\
\vspace{0.3 cm} +&\displaystyle\frac{1} {\beta} \int_{\Omega} \int_{\Gamma} \left ( u_1^+ (x, y) - u_1^- (x, y) \right ) \left ( \overline{\varphi}_1^{+} (x, y) - \overline{\varphi}_1^- (x, y) \right ) dx
ds(y) \\
- &\displaystyle  \int_{\partial \Omega}  g(x)
\overline{\varphi}_0(x) ds(x) = 0.
\end{array}
\end{equation*}

By taking $\varphi_0 =0$, it follows that
\begin{equation*}
\begin{array}{ll}
\vspace{0.3 cm} &\displaystyle \int_{\Omega} \int_{Y^+} k_0\left (\nabla u_0(x) + \nabla_y u_1^+(x, y) \right ) \cdot  \nabla_y \overline{\varphi}_1^+(x, y) dx dy \\
\vspace{0.3 cm} +&\displaystyle\int_{\Omega} \int_{Y^-} k_0\left (\nabla u_0(x) + \nabla_y u_1^-(x, y) \right ) \cdot \nabla_y\overline{\varphi}_1^-(x, y) dx dy  \\
+&\displaystyle\frac{1} {\beta} \int_{\Omega} \int_{\Gamma} \left
( u_1^+ (x, y) - u_1^- (x, y) \right ) \left (
\overline{\varphi}_1^{+} (x, y) - \overline{\varphi}_1^- (x, y)
\right ) dx ds(y) =0,
\end{array}
\end{equation*}
which is exactly the variational formulation of the cell problem
(\ref{eq:w_i}) and  definition (\ref{eq:u_1}) of $u_1$.

By taking $\varphi_1 = 0$, we recover the variational formulation
of the homogenized problem (\ref{eq:u_0}):
\begin{equation*}
\begin{array}{ll}
\vspace{0.3 cm} &\displaystyle \int_{\Omega} \int_{Y^+}k_0 \left (\nabla u_0(x) + \nabla_y u_1^+(x, y) \right ) \cdot  \nabla \overline{\varphi}_0(x) dx dy  \\
\vspace{0.3 cm}+& \displaystyle\int_{\Omega} \int_{Y^-}k_0 \left (\nabla u_0(x) + \nabla_y u_1^-(x, y) \right ) \cdot \nabla\overline{\varphi}_0(x) dx dy \\
-& \displaystyle \int_{\partial \Omega}  g(x)
\overline{\varphi}_0(x) ds(x) = 0.
\end{array}
\end{equation*}
%\end{proof}

We introduce some function spaces, which will be very useful in
the following:
\begin{itemize}
\item $C^{\infty}_{\sharp} (D)$ is the space of functions, which
are $Y$ - periodic and in $C^{\infty}(D)$, \item $L^2_{\sharp}(D)$
is the completion of $C^{\infty}_{\sharp}(D)$ in the $L^2$-norm,
\item $H^1_{\sharp}(D)$ is the completion of
$C^{\infty}_{\sharp}(D)$ in the $H^1$-norm, \item $L^2(\Omega,
H^1_{\sharp}(D))$ is the space of square integrable functions on
$\Omega$ with values in the space $H^1_{\sharp}(D)$, \item
$\mathcal{D}(\Omega)$ is the space of infinitely smooth functions
with compact support in $\Omega$, \item $\mathcal{D}(\Omega,
C^{\infty}_{\sharp} (D))$ is the space of infinitely smooth
functions with compact support in $\Omega$ and with values in the
space $C^{\infty}_{\sharp}$,
\end{itemize}
where $D$ is $Y, Y^{+}, Y^-$ or $\Gamma$.

The following lemma was used in the preceding proof.  It follows
from \cite[Lemma 3.1]{habibi1}.
\begin{lem} \label{lem:lim}Let $f$ be a smooth function. We have
\begin{equation*}
\begin{array}{ll}
\vspace{0.3 cm} (i)& \displaystyle\varepsilon^2 \sum_{n\in
N_{\varepsilon}} \int_{\Gamma_{\varepsilon, n}}f(x_{0, n},y)ds(y)
 = \int_{\Omega}\int_{\Gamma} f(x,y) dxds(y) + o(\varepsilon);\\
\vspace{0.3cm}(ii)&\displaystyle \int_{\Omega_{\varepsilon}^+} f(x, \frac{x}{\varepsilon}) \,dx =
 \int_{\Omega} \int_{Y^+} f(x, y) \,dx dy + o(\varepsilon)\\
&\displaystyle \mbox{and} \quad \int_{\Omega_{\varepsilon}^-} f(x,
\frac{x}{\varepsilon})\, dx = \int_{\Omega} \int_{Y^-} f(x, y)
\,dx dy + o(\varepsilon).
\end{array}
\end{equation*}
\end{lem}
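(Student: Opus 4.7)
The plan is to reduce each left-hand side to a midpoint Riemann sum over the periodic grid $\{\varepsilon n\}_{n \in N_\varepsilon}$ and invoke the convergence of such sums for smooth integrands, with a separate estimate for the boundary layer of cubes that are cut by $\partial \Omega$. Since $\partial \Omega \in \mathcal{C}^2$, this layer has thickness $O(\varepsilon)$ and contains at most $O(\varepsilon^{-1})$ cubes, so its contribution is absorbed into the $o(\varepsilon)$ remainder.

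For part (i), I change variables on each interface $\Gamma_{\varepsilon, n} = \varepsilon n + \varepsilon \Gamma$ by setting $z = \varepsilon n + \varepsilon y$, under which $ds(z) = \varepsilon \, ds(y)$; interpreting the second argument of $f$ as the fast variable $z/\varepsilon$ (implicitly $Y$-periodic in the context of the application), each interface integral becomes $\varepsilon \int_\Gamma f(x_{0,n}, y)\, ds(y)$. The left-hand side then reads $\varepsilon^2 \sum_{n \in N_\varepsilon} G(x_{0,n})$ with $G(x) := \int_\Gamma f(x,y)\, ds(y)$, which is precisely the midpoint Riemann sum for $\int_\Omega G(x)\, dx$ over the cubes $\{Y_{\varepsilon, n}\}$. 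Smoothness of $G$ gives a per-cube quadrature error of order $\varepsilon^4$ inside $\Omega$; summing over the $O(\varepsilon^{-2})$ interior cubes yields $O(\varepsilon^2)$, and the $O(\varepsilon^{-1})$ boundary cubes each contribute at most $\varepsilon^2 \|G\|_\infty$, giving a total error of order $\varepsilon$ which is in fact $o(\varepsilon)$ after also invoking dominated convergence to kill the boundary term.

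For part (ii), I split $\int_{\Omega_\varepsilon^+} f(x, x/\varepsilon)\, dx = \sum_{n \in N_\varepsilon} \int_{Y_{\varepsilon, n}^+} f(x, x/\varepsilon)\, dx$, and on each cell I change variables $x = \varepsilon n + \varepsilon y$, so that $dx = \varepsilon^2\, dy$, the integration domain becomes $Y^+$, and $x/\varepsilon = n + y$ is identified with $y$ by the $Y$-periodicity of $f$ in the fast variable. A Taylor expansion of $f(\varepsilon n + \varepsilon y, y)$ around $x = x_{0,n}$ in the slow argument, uniform in $y \in \overline{Y^+}$, replaces the integrand by $f(x_{0,n}, y)$ plus a remainder of order $\varepsilon$. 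The principal part becomes $\varepsilon^2 \sum_n H(x_{0,n})$ with $H(x) := \int_{Y^+} f(x,y)\, dy$, which is again a Riemann sum converging to $\int_\Omega H(x)\, dx$; the second identity follows by replacing $Y^+$ by $Y^-$.

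The main obstacle is the boundary bookkeeping: $\bigcup_{n \in N_\varepsilon} Y_{\varepsilon, n}$ is not exactly $\Omega$, and one must verify that the symmetric difference, which has Lebesgue measure $O(\varepsilon)$ thanks to the regularity of $\partial \Omega$, contributes a term which, when combined with the quadrature error inside $\Omega$, is of order $o(\varepsilon)$. Once this is settled, the rest is routine and exactly matches the argument of \cite[Lemma 3.1]{habibi1} cited by the authors.
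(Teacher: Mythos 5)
The paper does not actually prove this lemma; it only cites \cite[Lemma 3.1]{habibi1}, so your direct Riemann-sum argument is the natural way to supply the missing details, and its architecture (periodicity reduction cell by cell, midpoint quadrature in the slow variable, separate treatment of the $O(\varepsilon^{-1})$ cut cubes) is the right one. One bookkeeping point in (i), though: after your change of variables $z=\varepsilon n+\varepsilon y$ each interface integral equals $\varepsilon\int_\Gamma f(x_{0,n},y)\,ds(y)$, so the left-hand side as you have rewritten it is $\varepsilon^{3}\sum_n G(x_{0,n})=O(\varepsilon)$, not the Riemann sum $\varepsilon^{2}\sum_n G(x_{0,n})$. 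For the identity to make sense the integral in (i) must already be taken over the unit-scale interface (as in the display preceding the lemma, where the authors write $\int_{\Gamma_n}$ with $\Gamma_n=n+\Gamma$); you should state this reading explicitly rather than silently absorbing the extra factor of $\varepsilon$.

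More substantively, the remainder your argument controls is $O(\varepsilon)$, not $o(\varepsilon)$. The cut cells contribute an error bounded by $\|G\|_\infty\,\bigl|\bigcup_{n\in N_\varepsilon}Y_{\varepsilon,n}\setminus\Omega\bigr|$, and for a $\mathcal{C}^2$ boundary this set has measure genuinely comparable to $\varepsilon\,|\partial\Omega|$; dominated convergence identifies the limit of the normalized quantity but cannot upgrade $\Theta(\varepsilon)$ to $o(\varepsilon)$. Likewise in (ii) the first-order Taylor term $\varepsilon\int_{Y^\pm}(y-y_0)\cdot\nabla_x f(x_{0,n},y)\,dy$ (with $y_0$ the center of $Y$) does not cancel, since $Y^\pm$ carries no symmetry in $y$, and after summation it produces $\varepsilon\int_{\partial\Omega}\int_{Y^\pm}(y-y_0)\cdot n(x)\,f(x,y)\,dy\,ds(x)+O(\varepsilon^2)$, a genuine $\Theta(\varepsilon)$ boundary contribution unless $f$ is compactly supported in $x$. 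So your proof, correctly carried out, yields the statements with $O(\varepsilon)$ in place of $o(\varepsilon)$ --- which is all the formal two-scale expansion actually uses --- but it does not deliver the literal $o(\varepsilon)$, and for general smooth $f$ that sharper claim is in fact false. Either restrict to $f$ compactly supported in the slow variable or weaken the remainder to $O(\varepsilon)$.
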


We prove that the following lemmas hold.

\begin{lem} The homogenized problem admits a unique solution in
$H^1_\mathbb{C}(\Omega)$. \end{lem}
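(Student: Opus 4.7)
The plan is to apply Lax--Milgram to the weak formulation of \eqref{eq:u_0}, namely to find $u_0 \in H^1_\mathbb{C}(\Omega)$ with
\[
a^*(u_0,v) := \int_\Omega K^* \nabla u_0 \cdot \nabla \overline{v}\,dx = \int_{\partial\Omega} g\,\overline{v}\,ds, \qquad \forall v \in H^1_\mathbb{C}(\Omega).
\]
The anti-linear form on the right is bounded on $H^1_\mathbb{C}(\Omega)$ thanks to $g \in H^{-1/2}(\partial\Omega)$, the compatibility condition $\int_{\partial\Omega} g\,ds = 0$, the trace theorem, and the Poincar\'e--Wirtinger inequality valid on the quotient space. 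Boundedness of $a^*$ reduces to boundedness of the entries of $K^*$, which follows from the a priori estimate for the cell problems \eqref{eq:w_i}.

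The crux is coercivity of $K^*$ in the form
\[
\Re\bigl(k_0^{-1}\overline{\xi}^{\,T} K^* \xi\bigr) \geq c |\xi|^2, \qquad \forall \xi \in \mathbb{C}^2.
\]
To establish this, I would introduce $W_\xi := \sum_i \xi_i w_i$ and $y_\xi(y) := \xi \cdot y$; by linearity $W_\xi$ satisfies the weak cell identity
\[
\int_Y k_0 \nabla (W_\xi + y_\xi) \cdot \nabla \overline{v}\, dy + \frac{1}{\beta} \int_\Gamma (W_\xi^+ - W_\xi^-)\overline{(v^+-v^-)}\, ds = 0
\]
for every $Y$-periodic $v$ (possibly jumping across $\Gamma$). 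Testing with $v = W_\xi$ and combining with the identity $k_0^{-1}\overline{\xi}^{\,T} K^*\xi = \int_Y \overline{\xi} \cdot \nabla(W_\xi+y_\xi)\,dy$ that comes directly from \eqref{eq:K^*}, one obtains the key energy representation
\[
k_0^{-1}\overline{\xi}^{\,T} K^*\xi = \int_Y |\nabla (W_\xi + y_\xi)|^2\, dy + \frac{1}{k_0 \beta}\int_\Gamma |W_\xi^+-W_\xi^-|^2\,ds.
\]
Since $1/(k_0\beta) = k_m/(k_0\delta)$ and $\Re(k_m/k_0) = (\sigma_0\sigma_m+\omega^2\epsilon_0\epsilon_m)/|k_0|^2 > 0$ under $\sigma_0,\sigma_m>0$, the real part of the right-hand side is a positive combination of $\|\nabla (W_\xi+y_\xi)\|_{L^2(Y)}^2$ and $\|W_\xi^+-W_\xi^-\|_{L^2(\Gamma)}^2$.

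To convert this into a lower bound in $|\xi|^2$, I would use the identity
\[
\int_Y \nabla (W_\xi + y_\xi)\, dy = \xi - \int_\Gamma (W_\xi^+ - W_\xi^-)\, n\, ds,
\]
obtained by integration by parts accounting for the jump of $W_\xi$ across $\Gamma$. Cauchy--Schwarz on each term then yields $|\xi|^2 \leq C\bigl(\|\nabla(W_\xi+y_\xi)\|_{L^2(Y)}^2 + \|W_\xi^+-W_\xi^-\|_{L^2(\Gamma)}^2\bigr)$, whence $\Re(k_0^{-1}\overline{\xi}^{\,T} K^*\xi) \geq c|\xi|^2$ with $c$ independent of $\xi$. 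The Poincar\'e--Wirtinger inequality on $H^1_\mathbb{C}(\Omega)$ then upgrades this to coercivity of $a^*$, and Lax--Milgram concludes.

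The hard part is the energy representation step. Because of the complex-valued admittivities and the interface jump condition, the classical Dirichlet-energy minimization argument of the real symmetric case is not directly available, so one must carefully combine the definition of $K^*$ with the weak form of the cell problem and exploit \emph{both} sign conditions $\sigma_0>0$ (controlling the bulk term) and $\sigma_m>0$ (controlling the membrane term).
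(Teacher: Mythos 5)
Your proposal is correct and follows the same overall strategy as the paper: rewrite $\overline{\xi}^{\,T}K^*\xi$ via the weak form of the cell problems as a bulk Dirichlet energy plus a membrane term, observe that $\sigma_0>0$ and $\sigma_m>0$ make the real parts of both contributions nonnegative, and conclude by uniform ellipticity and standard elliptic theory (Lax--Milgram). The one genuine difference is in how the lower bound $\Re\bigl(k_0^{-1}\overline{\xi}^{\,T}K^*\xi\bigr)\geq c|\xi|^2$ is obtained. The paper discards the membrane term (using $\Re\,\beta\geq 0$) and invokes the standard coercivity result from \cite{allairebook}; but that standard argument rests on $\int_Y\nabla w\,dy=0$ for a periodic corrector, which is not available here since
\begin{equation*}
\int_{Y^+}\nabla w^+\,dy+\int_{Y^-}\nabla w^-\,dy=-\int_\Gamma (w^+-w^-)\,n\,ds
\end{equation*}
need not vanish because of the jump across $\Gamma$. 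Your argument keeps the membrane term and uses precisely this integration-by-parts identity together with Cauchy--Schwarz to bound $|\xi|^2$ by a constant times the sum of the bulk energy and the interfacial jump energy; this closes the step left implicit in the paper and is the more self-contained route. Both approaches reach the same conclusion, but yours makes explicit that the sign condition $\sigma_m>0$ (through $\Re(k_m/k_0)>0$) is genuinely used in the coercivity of $K^*$, not merely in the well-posedness of the cell problem.
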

\begin{proof}
 The effective admittivity can be rewritten as $$ \begin{array}{lll} \displaystyle K^* _{i, j} &=&\displaystyle k_0  \int_{Y^+} (\nabla w_i^+
+ e_i) \cdot (\nabla \overline{w_j^+} + e_j) dx + k_0 \int_{Y^-}
(\nabla w_i^- + e_i) \cdot (\nabla \overline{w_j^-} + e_j) dx \\
\nm && \displaystyle + \frac{1}{\beta} \int_\Gamma (w_i^+ - w_i^-)
(\overline{w_j^+} - \overline{w_j^-}) ds, \quad i,j=1,2.
\end{array}$$ Therefore, it follows that, for $\xi =
\left( \xi_1, \xi_2 \right) \in \mathbb{R}^2$,
$$
K^* \xi \cdot \xi = k_0  \int_{Y^+} |\nabla w^+ + \xi|^2 dx + k_0
\int_{Y^-} |\nabla w^- + \xi|^2 dx + \frac{1}{\beta} \int_\Gamma
|w^+ - w^-|^2 ds, $$ where $w = \sum_i \xi_i w_i$. Since $\Re e
\beta \geq 0$,
$$
K^* \xi \cdot \xi \geq k_0  \int_{Y^+} |\nabla w^+ + \xi|^2 dx +
k_0 \int_{Y^-} |\nabla w^- + \xi|^2 dx.
$$
Consequently, it follows from \cite{allairebook} that there exist
two positive constants $C_1$ and $C_2$ such that
$$
C_1 |\xi|^2 \leq \Re e K^* \xi \cdot \xi \leq C_2 |\xi|^2.$$
Standard elliptic theory yields existence and uniqueness of a
solution to the homogenized problem in $H^1_\mathbb{C}(\Omega)$.
\end{proof}

\begin{lem}
The cell problem (\ref{eq:w_i}) admits a unique solution in $H^1_{\sharp}(Y^+)/\mathbb{C} \times H^1_{\sharp}(Y^-)$.
\end{lem}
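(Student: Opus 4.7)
The plan is to cast \eqref{eq:w_i} into its weak formulation and apply the complex Lax--Milgram theorem on the space $V:=H^1_\sharp(Y^+)/\C \times H^1_\sharp(Y^-)$, parallel to the treatment of \eqref{eq:u_{epsilon}} in Section \ref{sect:analysis}. I would multiply the bulk equations $\nabla\cdot k_0\nabla(w_i^\pm + y_i)=0$ by a test function $\varphi = \varphi^+\chi_{Y^+}+\varphi^-\chi_{Y^-}$ and integrate by parts on each of $Y^\pm$. Periodicity of $w_i^\pm$ and $\varphi^\pm$ kills the contributions on $\partial Y$; flux continuity turns the two interface integrals on $\Gamma$ into a single one whose common value is $k_0\partial_n(w_i^+ + y_i)|_\Gamma$; substituting the jump condition expresses this flux as $\beta^{-1}(w_i^+-w_i^-)$. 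This yields the sesquilinear form
\begin{equation*}
a_\sharp(w,\varphi) := \int_{Y^+} k_0\nabla w^+\cdot\nabla\overline{\varphi^+}\,dy + \int_{Y^-} k_0\nabla w^-\cdot\nabla\overline{\varphi^-}\,dy + \frac{1}{\beta}\int_\Gamma (w^+-w^-)\overline{(\varphi^+-\varphi^-)}\,ds,
\end{equation*}
together with the antilinear functional $\ell_i(\varphi):=-\int_{Y^+} k_0 e_i\cdot\nabla\overline{\varphi^+}\,dy - \int_{Y^-} k_0 e_i\cdot\nabla\overline{\varphi^-}\,dy$. Since $y_i$ is continuous across $\Gamma$, both $a_\sharp$ and $\ell_i$ are invariant under the addition of a common constant to $(w^+,w^-)$, which is precisely the gauge that the quotient by $\C$ in $V$ removes.

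Continuity of $a_\sharp$ and $\ell_i$ on $V$ is immediate from Cauchy--Schwarz combined with the trace inequality $\|\varphi^\pm\|_{L^2(\Gamma)}\le C\|\varphi^\pm\|_{H^1(Y^\pm)}$. The crux of the proof is coercivity of $\Re(k_0^{-1}a_\sharp)$: one has
\begin{equation*}
\Re\bigl(k_0^{-1}a_\sharp(w,w)\bigr) = \|\nabla w^+\|_{L^2(Y^+)}^2 + \|\nabla w^-\|_{L^2(Y^-)}^2 + \frac{1}{\beta'}\|w^+-w^-\|_{L^2(\Gamma)}^2,
\end{equation*}
where $\beta' > 0$ is as in Section \ref{sect:analysis}, and the objective is to bound the $L^2$-norms of $w^\pm$ by this quantity. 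I would handle $w^+$ by the Poincaré--Wirtinger inequality, applied to the zero-mean representative selected inside $H^1_\sharp(Y^+)/\C$. The main obstacle is $w^-$: there is no Dirichlet condition on $\partial Y^-=\Gamma$, so a plain Poincaré inequality on $Y^-$ is unavailable. I would overcome this with the bounded-domain estimate $\|w^-\|_{L^2(Y^-)}^2\le C\bigl(\|\nabla w^-\|_{L^2(Y^-)}^2+\|w^-\|_{L^2(\Gamma)}^2\bigr)$ followed by the triangle-inequality split $\|w^-\|_{L^2(\Gamma)}\le \|w^+-w^-\|_{L^2(\Gamma)}+\|w^+\|_{L^2(\Gamma)}$; the first term is already present in the coercivity expression, while the second is controlled by a trace inequality and the Poincaré--Wirtinger bound on $w^+$. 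This chain closes the coercivity estimate with a constant independent of $w$.

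Once continuity and coercivity on $V$ are established, the complex Lax--Milgram theorem produces a unique element $w_i\in V$ satisfying $a_\sharp(w_i,\varphi)=\ell_i(\varphi)$ for every $\varphi\in V$. Testing against $\varphi^\pm\in\mathcal{D}(Y^\pm)$ recovers the two bulk equations in the distributional sense; taking test functions supported near $\Gamma$ with arbitrary independent traces on either side recovers both the flux continuity and the Robin-type jump condition; the $Y$-periodicity is built into $V$. This furnishes the unique solution asserted by the lemma.
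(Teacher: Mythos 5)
Your proposal is correct and follows essentially the same route as the paper: the weak formulation you derive is exactly the paper's \eqref{eq:vfw_i} on the space $W_\sharp$, and existence and uniqueness follow from the (complex) Lax--Milgram theorem. The coercivity chain you spell out --- Poincar\'e--Wirtinger on the zero-mean representative of $w^+$, the Friedrichs-type bound $\|w^-\|_{L^2(Y^-)}^2 \le C(\|\nabla w^-\|_{L^2(Y^-)}^2 + \|w^-\|_{L^2(\Gamma)}^2)$, and the triangle-inequality split of $\|w^-\|_{L^2(\Gamma)}$ through $\|w^+-w^-\|_{L^2(\Gamma)}$ and the trace of $w^+$ --- is precisely the argument the paper uses to show that the $W$-norm controls the standard norm (inequality \eqref{eq:ineq1x} in Lemma \ref{lem:normineq}), so no gap remains.
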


\begin{proof}
Let us introduce the Hilbert space
\begin{equation*}
W:=\left \{v:=v^+ \chi_{Y^+} + v^- \chi_{Y^-} | (v^+, v^-) \in
H^1_{\mathbb{C}}(Y^+) \times H^1(Y^-) \right \},
\end{equation*}
equipped with the norm
\begin{equation*}
\| v\|_W^2 = \|\nabla v^+\|_{L^2(Y^+)}^2 +  \| \nabla v^-\|_{L^2(Y^-)}^2 +\| v^+ - v^-\|_{L^2(\Gamma)}^2.
\end{equation*}
We consider the following problem:
\begin{equation}\label{eq:vfw_i}
\left \{
\begin{array}{l}
\vspace{0.3 cm} \textrm{Find} \, w_i \in  W_{\sharp} \, \textrm{such that for all} \,\varphi \in W_{\sharp}\\
\vspace{0.3 cm} \displaystyle \int_{Y^+} k_0 \nabla w_i^+(y) \cdot \nabla \overline{\varphi} ^+(y) dy+ \int_{Y^-}k_0  \nabla w_i^-(y) \cdot \nabla \overline{\varphi}^-(y) dy  \\
\vspace{0.3 cm} \hspace{1cm}\displaystyle + \frac{1} {\beta}
\int_{\Gamma} \left ( w_i^+ - w_i^- \right )(y) \left (
\overline{\varphi}^+ - \overline{\varphi}^-\right )(y)
ds(y)=\\
 \hspace{1.5cm} - \displaystyle \int_{Y^+} k_0 \nabla y_i \cdot \nabla \overline{\varphi} ^+(y) dy - \int_{Y^-}k_0  \nabla y_i \cdot \nabla \overline{\varphi}^-(y) dy.  \\
\end{array}
\right .
\end{equation}

Lax--Milgram theorem  gives us existence and uniqueness of a
solution. Moreover, one can show that this ensures the existence
of a unique solution in $H^1_{\sharp}(Y^+)/\mathbb{C} \times H^1_{\sharp}(Y^-)$ for the cell
problem (\ref{eq:w_i}).
\end{proof}

\subsection{Convergence}

We present in this section a rigorous proof of the convergence of
the initial problem to the homogenized one. We use for this
purpose the two-scale convergence technique and hence need first
of all some bounds on $u_{\varepsilon}$ to ensure the convergence.

\subsubsection{A priori estimates}

\begin{thm}\label{thm:bound+}
The function $u_{\varepsilon}^+$ is uniformly bounded with respect
to $\varepsilon$ in $H^1(\Omega_{\varepsilon}^+)$, {\it i.e.},
there exists a constant $C$, independent of $\varepsilon$, such
that
\begin{equation*}
\| u_{\varepsilon}^+ \|_{H^1(\Omega_{\varepsilon}^+)} \le C.
\end{equation*}
\end{thm}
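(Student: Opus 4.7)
The plan is to combine two ingredients that are already essentially available in the paper: the gradient bound from the energy estimate, and a Poincaré--Wirtinger inequality on the perforated domain $\Omega_\varepsilon^+$ with a constant independent of $\varepsilon$.

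First, I would recall that by definition of the space $W_\varepsilon$, the trace $u_\varepsilon^+$ belongs to $H^1_{\mathbb{C}}(\Omega_\varepsilon^+)$, i.e.\ it has zero mean on $\Omega_\varepsilon^+$. Proposition \ref{prop:apriori} gives the uniform gradient bound
$$
\|\nabla u_\varepsilon^+\|_{L^2(\Omega_\varepsilon^+)} \le C |k_0|^{-1} \|g\|_{H^{-1/2}(\partial \Omega)},
$$
with $C$ independent of $\varepsilon$.

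Next, I would invoke the uniform Poincaré--Wirtinger inequality on $\Omega_\varepsilon^+$ stated at the beginning of Section \ref{sec:apriori} (coming from Corollary \ref{cor:poincare}, itself a consequence of the extension theorem \ref{thm:Dext}): there exists $C>0$, independent of $\varepsilon$, such that for every $v^+ \in H^1_{\mathbb{C}}(\Omega_\varepsilon^+)$,
$$
\|v^+\|_{L^2(\Omega_\varepsilon^+)} \le C \|\nabla v^+\|_{L^2(\Omega_\varepsilon^+)}.
$$
Applying this to $u_\varepsilon^+$ and combining with the gradient bound above, I obtain
$$
\|u_\varepsilon^+\|_{H^1(\Omega_\varepsilon^+)}^2 = \|u_\varepsilon^+\|_{L^2(\Omega_\varepsilon^+)}^2 + \|\nabla u_\varepsilon^+\|_{L^2(\Omega_\varepsilon^+)}^2 \le (1 + C^2)\|\nabla u_\varepsilon^+\|_{L^2(\Omega_\varepsilon^+)}^2 \le C' |k_0|^{-2}\|g\|_{H^{-1/2}(\partial \Omega)}^2,
$$
with $C'$ independent of $\varepsilon$, which is the claimed uniform $H^1$-bound.

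The only genuine difficulty in this statement lies in the $\varepsilon$-independence of the Poincaré constant on the perforated domain $\Omega_\varepsilon^+$, but this is precisely what the extension theorem of Section \ref{sect:analysis} is designed to provide: one extends $u_\varepsilon^+$ to an $H^1(\Omega)$-function with controlled norm, applies the standard Poincaré--Wirtinger inequality on $\Omega$ (adjusting for the mean), and restricts back. Since this has already been recorded in Corollary \ref{cor:poincare}, the proof of Theorem \ref{thm:bound+} reduces to the two-line combination above, so I would not expect any further obstacle.
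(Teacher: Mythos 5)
Your argument is correct and is exactly the paper's proof: the authors also obtain the result by combining the gradient estimate \eqref{eq:GradientEst} with the $\varepsilon$-uniform Poincar\'e--Wirtinger inequality of Corollary \ref{cor:poincare}. Your additional remarks on why the Poincar\'e constant is $\varepsilon$-independent (via the extension operator of Theorem \ref{thm:Dext}) match the paper's setup in Section \ref{sec:apriori}.
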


\begin{proof}
Combining (\ref{eq:GradientEst}) and Poincaré - Wirtinger inequality, we obtain immediately the wanted result.
\end{proof}

The proof of the following result follows the one of Lemma 2.8 in
\cite{Monsur}.

\begin{lem}\label{lem:normineq}There exists a constant $C$, which does not depend on $\varepsilon$, such that
 for all $v \in \Weps$:
\begin{equation*}
\|v^-\|_{L^2(\Omega^-_{\varepsilon})} \leq C \|v\|_{\Weps}.
\end{equation*}
\end{lem}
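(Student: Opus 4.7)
The plan is to bound $v^-$ in $L^2(\Omega_\eps^-)$ by its trace on $\Gamma_\eps$ plus its gradient, then split its trace as $v^- = v^+ - (v^+ - v^-)$ and control the $v^+$-part by a (rescaled) trace inequality combined with the Poincar\'e--Wirtinger inequality of Corollary \ref{cor:poincare}.

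\textbf{Step 1: cell-by-cell estimate for $v^-$.} On the reference cell $C$, there is a standard Poincar\'e-type inequality
\[
\|w\|_{L^2(C)}^2 \le C\bigl(\|w\|_{L^2(\Gamma)}^2 + \|\nabla w\|_{L^2(C)}^2\bigr), \qquad w\in H^1(C).
\]
Applying this to $w(y)=v^-(\eps n+\eps y)$ on each cell $C_{\eps,n}$ and using the scalings $\|w\|_{L^2(C)}^2=\eps^{-2}\|v^-\|_{L^2(C_{\eps,n})}^2$, $\|\nabla w\|_{L^2(C)}^2=\|\nabla v^-\|_{L^2(C_{\eps,n})}^2$, $\|w\|_{L^2(\Gamma)}^2=\eps^{-1}\|v^-\|_{L^2(\Gamma_{\eps,n})}^2$ yields
\[
\|v^-\|_{L^2(C_{\eps,n})}^2 \le C\bigl(\eps\,\|v^-\|_{L^2(\Gamma_{\eps,n})}^2 + \eps^2\,\|\nabla v^-\|_{L^2(C_{\eps,n})}^2\bigr).
\]
Summing over $n\in N_\eps$ gives
\[
\|v^-\|_{L^2(\Omega_\eps^-)}^2 \le C\bigl(\eps\,\|v^-\|_{L^2(\Gamma_\eps)}^2 + \eps^2\,\|\nabla v^-\|_{L^2(\Omega_\eps^-)}^2\bigr).
\]

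\textbf{Step 2: splitting the trace and a trace inequality on $Y^+$.} Writing $v^- = v^+ - (v^+-v^-)$ on $\Gamma_\eps$ gives $\eps\,\|v^-\|_{L^2(\Gamma_\eps)}^2 \le 2\eps\,\|v^+\|_{L^2(\Gamma_\eps)}^2 + 2\eps\,\|v^+-v^-\|_{L^2(\Gamma_\eps)}^2$, the second term being directly controlled by $\|v\|_{W_\eps}^2$. To handle the first term, apply the trace inequality $\|u\|_{L^2(\Gamma)}^2 \le C\bigl(\|u\|_{L^2(Y^+)}^2+\|\nabla u\|_{L^2(Y^+)}^2\bigr)$ on the reference environment $Y^+$ to $u(y)=v^+(\eps n+\eps y)$, and rescale as above to obtain, after summation,
\[
\eps\,\|v^+\|_{L^2(\Gamma_\eps)}^2 \le C\bigl(\|v^+\|_{L^2(\Omega_\eps^+)}^2 + \eps^2\,\|\nabla v^+\|_{L^2(\Omega_\eps^+)}^2\bigr).
\]

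\textbf{Step 3: conclusion.} Since $v^+\in H^1_{\mathbb{C}}(\Omega_\eps^+)$ has zero mean, Corollary \ref{cor:poincare} gives $\|v^+\|_{L^2(\Omega_\eps^+)}\le C\|\nabla v^+\|_{L^2(\Omega_\eps^+)}$ with $C$ independent of $\eps$, so the right-hand side above is $\le C\|\nabla v^+\|_{L^2(\Omega_\eps^+)}^2$. Inserting this together with the jump term into Step 1 yields
\[
\|v^-\|_{L^2(\Omega_\eps^-)}^2 \le C\bigl(\|\nabla v^+\|_{L^2(\Omega_\eps^+)}^2+\eps\,\|v^+-v^-\|_{L^2(\Gamma_\eps)}^2+\eps^2\,\|\nabla v^-\|_{L^2(\Omega_\eps^-)}^2\bigr)\le C\|v\|_{W_\eps}^2,
\]
which is the desired inequality.

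\textbf{Main obstacle.} The only delicate point is bookkeeping the $\eps$-scalings so that the three reference inequalities combine cleanly; in particular, the trace inequality on $Y^+$ must produce exactly the factor $\eps$ in front of $\|v^+\|_{L^2(\Gamma_\eps)}^2$ so that, after invoking the uniform Poincar\'e--Wirtinger estimate on $\Omega_\eps^+$, no uncontrolled term survives. The uniformity in $\eps$ of that Poincar\'e--Wirtinger constant, which is the only non-elementary ingredient, is provided by the extension theorem (Theorem \ref{thm:Dext}) and has already been asserted in Corollary \ref{cor:poincare}.
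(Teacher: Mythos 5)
Your proof is correct, but it is organized differently from the one the paper gives for this lemma. The paper's proof establishes, by a single compactness/contradiction argument on the reference cell, the combined inequality $\|v^-\|_{L^2(Y^-)}\le C_1\|v\|_W$ for $v\in W$, and then rescales and sums over the cells; the trace splitting $v^-=(v^--v^+)+v^+$ and the Poincar\'e--Wirtinger inequality on $Y^+$ appear only inside that contradiction argument. You instead chain three explicit, elementary rescaled inequalities: a Poincar\'e-type inequality on $Y^-$ controlled by the trace on $\Gamma$ plus the gradient, a trace inequality on $Y^+$, and the global uniform Poincar\'e--Wirtinger inequality of Corollary \ref{cor:poincare}. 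This is in fact exactly the route the paper itself takes in Appendix \ref{appendixc} (Lemmas \ref{lem:L2Gamma} and \ref{lem:L2fW}, inequality \eqref{eq:Ymest}) when proving the norm equivalence in the randomly deformed setting, so your argument doubles as the periodic specialization of that appendix. One advantage of your version is worth noting: the paper's reference inequality is stated for $v^+\in H^1_{\mathbb{C}}(Y^+)$ (zero mean on $Y^+$), yet when it is applied cell by cell the rescaled $v^+$ has no reason to have zero mean on each $Y_{\varepsilon,n}^+$; your use of the \emph{global} zero-mean condition via Corollary \ref{cor:poincare}, rather than a per-cell Poincar\'e--Wirtinger inequality, sidesteps this issue cleanly. (Both proofs share the standard, harmless imprecision of summing over all $n\in N_\varepsilon$ without singling out the cells near $\partial\Omega$.)
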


\begin{proof}
We write the norm $\|v^-\|_{L^2(\Omega_{\varepsilon}^-)}$ as a sum over all the cells.
\begin{equation*}
\|v^-\|_{L^2(\Omega_{\varepsilon}^-)}^2 = \sum_{n \in
N_{\varepsilon}} \|v^-\|_{L^2(Y_{\varepsilon,n}^-)}^2 =  \sum_{n
\in N_{\varepsilon}} \int_{Y_{\varepsilon,n}^-} |v^-(x)|^2 dx.
\end{equation*}

We perform the change of variable $y=\displaystyle
\frac{x}{\varepsilon}$ and get
\begin{equation}\label{eq:decomposition}
\|v^-\|_{L^2(\Omega_{\varepsilon}^-)}^2 =  \varepsilon^2 \sum_{n
\in N_{\varepsilon}} \int_{Y_{n}^-} |v_{\varepsilon}^-(y)|^2 dy,
\end{equation}
where $v_{\varepsilon}^- (y) := v^-(\varepsilon y)$ for all $y \in
Y^-$ and $Y_n^- = n + Y^-$ with $n \in N_{\varepsilon}$.

Recall that $W$ denotes the following Hilbert space:
\begin{equation*}
W:=\left \{v:=v^+ \chi_{Y^+} + v^- \chi_{Y^-}| (v^+, v^-) \in
H^1_{\mathbb{C}}(Y^+) \times H^1(Y^-) \right \},
\end{equation*}
equipped with the norm:
\begin{equation*}
\| v\|_W^2 = \|\nabla v^+\|_{L^2(Y^+)}^2 +  \| \nabla v^-\|_{L^2(Y^-)}^2 +\| v^+ - v^-\|_{L^2(\Gamma)}^2.
\end{equation*}

We first prove that there exists a constant $C_1$, independent of $\varepsilon$, such that for every $v \in W$:
\begin{equation} \label{eq:ineq1x}
\|v^-\|_{L^2(Y^-)} \leq C_1 \|v\|_{W}.
\end{equation}

We proceed by contradiction. Suppose that for any $n \in
\mathbb{N^*}$, there exists $v_n \in \Weps$ such that
\begin{equation*}
\|v_n^-\|_{L^2(Y^-)} =1\hspace{0.5cm} \textrm{and} \hspace{0.5cm} \|v_n\|_{W} \leq \displaystyle \frac{1}{n}.
\end{equation*}

Since $\|v_n^-\|_{L^2(Y^-)} =1$ and $\| \nabla v_n^-\|_{L^2(Y^-)} \leq \| v_n\|_W \leq \displaystyle \frac{1}{n}$, $v_n^-$ is bounded in $H^1(Y^-)$. Therefore it converges weakly in $H^1(Y^-)$. By compactness, we can extract a subsequence, still denoted $v_n^-$, such that $v_n^-$ converges strongly in $L^2(Y^-)$.
We denote by $\vspace{0.1cm} l$ its limit.

Besides, $\nabla v_n^-$ converges strongly to $0$ in $L^2(Y^-)$. We thus have $\nabla l =0$ and $l$ constant in $Y^-$.

By applying  in $Y^+$ the trace theorem and Poincar\'e--Wirtinger
inequality to $v_n^+$, one also gets  that
\begin{equation*}
\|v_n^-\|_{L^2(\Gamma)} \leq \|v_n^+ - v_n^-\|_{L^2(\Gamma)} + \|v_n^+\|_{L^2(\Gamma)} \leq \|v_n^+ - v_n^-\|_{L^2(\Gamma)} + C  \|v_n^+\|_{H^1(Y^+)} \leq \displaystyle \frac{C'}{n}.
\end{equation*}

Consequently, $v_n^-$ converges strongly to $0$ in $L^2(\Gamma)$ and $l=0$ on $\Gamma$.

We have then $l=0$ in $Y^-$, which leads to a contradiction. This
proves (\ref{eq:ineq1x}).

We can now find an upper bound to (\ref{eq:decomposition}):
\begin{equation*}
\|v^-\|_{L^2(\Omega_{\varepsilon}^-)}^2 \leq \varepsilon^2 C_1
\sum_{n \in N_{\varepsilon}} \int_{Y_{n}^+} |\nabla
v_{\varepsilon}^+ (y)|^2 dy + \int_{Y_{n}^-} |\nabla
v_{\varepsilon}^- (y)|^2 dy + \int_{\Gamma_{n}} |
v_{\varepsilon}^+ (y) - v_{\varepsilon}^- (y)|^2 ds(y).
\end{equation*}

After the change of variable $x=\varepsilon y$, one gets
\begin{equation*}
\|v^-\|_{L^2(\Omega_{\varepsilon}^-)}^2 \leq \varepsilon\,
C_1 \left(\|\nabla v^+ \|_{L^2(\Omega_{\varepsilon}^+)}^2 +
\|\nabla v^-\|_{L^2(\Omega_{\varepsilon}^-)}^2 + \varepsilon\| v^+-
v^-\|^2_{L^2(\Gamma_{\varepsilon})}  \right).
\end{equation*}

Since $\varepsilon<1$, there exists a constant $C_2$, which does
not depend on $\varepsilon$ such that for every $v \in \Weps$,
\begin{equation*}
\|v^-\|_{L^2(\Omega^-_{\varepsilon})} \leq C_2 \|v\|_{\Weps},
\end{equation*}
which completes the proof.
 \end{proof}

\begin{thm}
$u_{\varepsilon}^-$ is uniformly bounded in $\varepsilon$ in
$H^1(\Omega_{\varepsilon}^-)$, {\it i.e.}, there exists a constant
$C$ independent of $\varepsilon$, such that
\begin{equation*}
\| u_{\varepsilon}^- \|_{H^1(\Omega_{\varepsilon}^-)} \le C.
\end{equation*}
\end{thm}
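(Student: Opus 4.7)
The plan is to decompose the $H^1$ norm as
$$\| u_\varepsilon^- \|_{H^1(\Omega_\varepsilon^-)}^2 = \| u_\varepsilon^- \|_{L^2(\Omega_\varepsilon^-)}^2 + \| \nabla u_\varepsilon^- \|_{L^2(\Omega_\varepsilon^-)}^2$$
and control each piece separately using the $\varepsilon$-uniform estimates already at our disposal. The gradient term is handled directly: estimate \eqref{eq:GradientEst} in Proposition \ref{prop:apriori} gives $\|\nabla u_\varepsilon^-\|_{L^2(\Omega_\varepsilon^-)} \le C |k_0|^{-1}\|g\|_{H^{-1/2}(\partial\Omega)}$ with a constant $C$ independent of $\varepsilon$, so nothing more is needed there.

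For the $L^2$ part, I would apply Lemma \ref{lem:normineq} with $v = u_\varepsilon$ to obtain
$$\| u_\varepsilon^- \|_{L^2(\Omega_\varepsilon^-)} \le C\, \| u_\varepsilon \|_{W_\varepsilon}.$$
It then suffices to show that $\|u_\varepsilon\|_{W_\varepsilon}$ is bounded uniformly in $\varepsilon$. Expanding the definition of the $W_\varepsilon$ norm, the first two contributions $\|\nabla u_\varepsilon^+\|_{L^2(\Omega_\varepsilon^+)}^2 + \|\nabla u_\varepsilon^-\|_{L^2(\Omega_\varepsilon^-)}^2$ are controlled by \eqref{eq:GradientEst}, while the jump contribution $\varepsilon \|u_\varepsilon^+ - u_\varepsilon^-\|_{L^2(\Gamma_\varepsilon)}^2$ is controlled by \eqref{eq:L2GammaEst}, which yields the bound $C\, \varepsilon^2 \beta^\prime |k_0|^{-2}\|g\|_{H^{-1/2}(\partial\Omega)}^2$, manifestly bounded (in fact vanishing) as $\varepsilon \to 0$.

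Combining these observations with the preceding gradient bound yields the desired uniform estimate. The proof is essentially a bookkeeping argument; the real work was done in Lemma \ref{lem:normineq}, which plays the role of a Poincaré-type inequality for $u_\varepsilon^-$ in the broken space $W_\varepsilon$, and in the energy estimate of Proposition \ref{prop:apriori}. The only point that requires a moment of attention is that the jump term in the $W_\varepsilon$ norm carries an extra factor of $\varepsilon$ which, combined with the factor $\sqrt{\varepsilon \beta^\prime}$ in \eqref{eq:L2GammaEst}, precisely ensures no blow-up from the interfacial contribution.
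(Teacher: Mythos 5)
Your proof is correct and follows essentially the same route as the paper: both reduce the $L^2$ part to Lemma \ref{lem:normineq} applied to $u_\varepsilon$ and then rely on a uniform-in-$\varepsilon$ bound for $\|u_\varepsilon\|_{W_\varepsilon}$. The only (immaterial) difference is that you assemble the $W_\varepsilon$ bound from the component estimates \eqref{eq:GradientEst} and \eqref{eq:L2GammaEst} of Proposition \ref{prop:apriori}, whereas the paper re-derives it from the coercivity of $a_\varepsilon$ together with a bound on $|a_\varepsilon(u_\varepsilon,u_\varepsilon)|$; both routes rest on the same underlying energy estimate.
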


\begin{proof}
By definition of the norm on $\Weps$ , $\| \nabla
u_{\varepsilon}^-\|^2_{L^2(\Omega_{\varepsilon}^-)} \le \|
u_{\varepsilon} \|^2_{\Weps} $.

We thus have with the result of Lemma \ref{lem:normineq}:
\begin{equation}\label{eq:part1}
\| u_{\varepsilon}^-\|^2_{H^1(\Omega_{\varepsilon}^-)} \le C_1 \|
u_{\varepsilon} \|^2_{\Weps} ,
\end{equation}
with a constant $C_1$ which does not depend on $\varepsilon$.

Furthermore, by combining (\ref{eq:ineq2}) and the result of
Theorem \ref{thm:bound+}, there exists a constant $C_2$
independent of $\varepsilon$ such that
 \begin{equation*}
 |a(u_{\varepsilon}, u_{\varepsilon})| \leq C_2.
 \end{equation*}
 We use the coercivity of $a$ and get a uniform bound in $\varepsilon$ of $u_{\varepsilon}$
  in $\vspace{0.2cm}\Weps$. This bound and (\ref{eq:part1}) complete the proof.
\end{proof}

\subsubsection{Two-scale convergence}

We first recall the definition of two-scale convergence and a few
results of this theory \cite{allaire}.
\begin{defi}
A sequence of functions $u_{\varepsilon}$ in $L^2(\Omega)$ is said
to \emph{two-scale converge} to a limit $u_0$ belonging to
$L^2(\Omega \times Y)$ if,
 for any function $\psi$ in $L^2(\Omega, C_{\sharp}(Y))$, we have
\begin{equation*}
\displaystyle \lim_{\varepsilon \to 0} \int_{\Omega}
u_{\varepsilon}(x) \psi(x, \frac{x}{\varepsilon}) dx =
\int_{\Omega} \int_{Y} u_0(x, y) \psi(x, y) dx dy.
\end{equation*}
\end{defi}

This notion of two-scale convergence makes sense because of the
next compactness theorem.

\begin{thm}
From each bounded sequence $u_{\varepsilon}$ in $L^2(\Omega)$, we
can extract a subsequence, and there exists a limit $u_0 \in
L^2(\Omega \times Y)$ such that this subsequence two-scale
converges to $u_0$.
\end{thm}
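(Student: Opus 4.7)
The plan is to follow the classical argument of Allaire by first constructing a candidate two-scale limit as a bounded linear functional on an auxiliary space of admissible test functions, then identifying it with an element of $L^2(\Omega\times Y)$ via a density argument.

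First I would fix a separable subspace of admissible test functions, namely $\mathcal{A} := L^2(\Omega; C_\sharp(Y))$ equipped with the norm $\|\psi\|_{\mathcal{A}} = \left(\int_\Omega \sup_{y\in Y} |\psi(x,y)|^2 dx\right)^{1/2}$. For $\psi \in \mathcal{A}$, the function $x \mapsto \psi(x, x/\varepsilon)$ is measurable and satisfies the uniform bound $\|\psi(\cdot, \cdot/\varepsilon)\|_{L^2(\Omega)} \le \|\psi\|_{\mathcal{A}}$. A key lemma, which I would either cite from \cite{allaire} or prove via a slicing/Riemann-sum argument for smooth $\psi$ and then extend by density, states that
\begin{equation*}
\lim_{\varepsilon \to 0} \int_\Omega \left| \psi\left(x,\frac{x}{\varepsilon}\right)\right|^2 dx = \int_\Omega \int_Y |\psi(x,y)|^2\, dx\, dy.
\end{equation*}

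Next, for each $\varepsilon$, define the linear functional $L_\varepsilon: \mathcal{A} \to \mathbb{C}$ by $L_\varepsilon(\psi) := \int_\Omega u_\varepsilon(x) \psi(x, x/\varepsilon)\, dx$. Cauchy--Schwarz combined with the boundedness of $(u_\varepsilon)$ in $L^2(\Omega)$ yields $|L_\varepsilon(\psi)| \le \|u_\varepsilon\|_{L^2(\Omega)} \|\psi(\cdot,\cdot/\varepsilon)\|_{L^2(\Omega)} \le C \|\psi\|_{\mathcal{A}}$. Since $\mathcal{A}$ is separable, a diagonal extraction produces a subsequence (still denoted $\varepsilon$) and a bounded limit functional $L$ on $\mathcal{A}$ with $L_\varepsilon(\psi) \to L(\psi)$ for every $\psi \in \mathcal{A}$.

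The crucial step is to upgrade the bound on $L$ from the $\mathcal{A}$-norm to the $L^2(\Omega\times Y)$-norm. Passing to the limit in the Cauchy--Schwarz inequality and using the lemma above, I obtain
\begin{equation*}
|L(\psi)| \le \liminf_{\varepsilon\to 0} \|u_\varepsilon\|_{L^2(\Omega)} \left(\int_\Omega\int_Y |\psi(x,y)|^2 dx\, dy\right)^{1/2} \le C \|\psi\|_{L^2(\Omega\times Y)}.
\end{equation*}
Since $\mathcal{A}$ is dense in $L^2(\Omega \times Y)$, $L$ extends uniquely to a bounded antilinear functional on the Hilbert space $L^2(\Omega\times Y)$, and Riesz representation furnishes $u_0 \in L^2(\Omega\times Y)$ such that $L(\psi) = \int_\Omega\int_Y u_0(x,y)\psi(x,y)\, dx\, dy$ for all $\psi$. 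This yields the desired two-scale convergence along the subsequence.

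The main obstacle is the \emph{mean-value lemma} $\|\psi(\cdot,\cdot/\varepsilon)\|_{L^2(\Omega)}^2 \to \|\psi\|_{L^2(\Omega\times Y)}^2$, since $\mathcal{A}$ is not contained in a space of uniformly continuous functions of $(x,y)$ jointly; the standard trick is to prove it first for $\psi \in C(\overline{\Omega}; C_\sharp(Y))$ by a Riemann-sum argument on the $\varepsilon$-periodic tiling of $\Omega$, and then extend to all of $\mathcal{A}$ by density together with the uniform bound $\|\psi(\cdot,\cdot/\varepsilon)\|_{L^2(\Omega)} \le \|\psi\|_{\mathcal{A}}$. A secondary technical point is the measurability of $x \mapsto \psi(x, x/\varepsilon)$, which also relies on the $C_\sharp(Y)$-regularity in $y$.
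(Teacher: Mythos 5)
Your proposal is correct. The paper does not prove this theorem at all --- it is stated as a recalled result and attributed to \cite{allaire} --- and your argument is precisely the classical one from that reference: uniform boundedness of the functionals $L_\varepsilon$ on the separable space $L^2(\Omega;C_\sharp(Y))$, diagonal extraction, the mean-value lemma to upgrade the bound to the $L^2(\Omega\times Y)$ norm, and Riesz representation after a density argument. The only cosmetic remark is that with the paper's convention (no complex conjugate on the test function in the definition of two-scale convergence) the limit functional is linear rather than antilinear, so the Riesz step should be phrased via the bilinear pairing on $L^2(\Omega\times Y)$; this does not affect the validity of the argument.
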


Two-scale convergence can be extended to sequences defined on
periodic surfaces.

\begin{prop}
For any sequence $u_{\varepsilon}$ in $L^2(\Gamma_{\varepsilon})$
such that
\begin{equation}\label{eq:condition}
\displaystyle \varepsilon \int_{\Gamma_{\varepsilon}} |
u_{\varepsilon}|^2 dx \leq C,
\end{equation}
there exists a subsequence, still denoted $u_{\varepsilon}$, and a
limit function $u_0 \in L^2(\Omega, L^2(\Gamma))$ such that
$u_{\varepsilon}$ two-scale converges to $u_0$ in the sense
\begin{equation*}
\displaystyle \lim_{\varepsilon \to 0}\varepsilon
\int_{\Gamma_{\varepsilon}} u_{\varepsilon}(x) \psi(x,
\frac{x}{\varepsilon}) ds(x) = \int_{\Omega}\int_{\Gamma}
u_0(x, y) \psi(x, y) dx ds(y),
\end{equation*}
for any function $\psi \in L^2(\Omega, C_{\sharp}(Y))$.
\end{prop}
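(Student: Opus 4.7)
The plan is to adapt the standard proof of surface two-scale convergence (due to Allaire, Damlamian, and Hornung) to our setting. The idea is to view the surface integrals as linear functionals on the admissible test function space, obtain a uniform bound, extract a weak-$*$ limit by Banach--Alaoglu, and then identify that limit with a function in $L^2(\Omega, L^2(\Gamma))$ via the Riesz representation theorem.

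First I would define, for each $\varepsilon>0$, the (anti-)linear form
\begin{equation*}
T_\varepsilon : L^2(\Omega, C_\sharp(Y)) \longrightarrow \mathbb{C}, \qquad T_\varepsilon(\psi) := \varepsilon \int_{\Gamma_\varepsilon} u_\varepsilon(x)\, \psi\bigl(x,\tfrac{x}{\varepsilon}\bigr)\, ds(x).
\end{equation*}
Applying Cauchy--Schwarz on the measure $\varepsilon\, ds|_{\Gamma_\varepsilon}$ and using the hypothesis \eqref{eq:condition} immediately yields
\begin{equation*}
|T_\varepsilon(\psi)| \leq \sqrt{C}\, \Bigl(\varepsilon \int_{\Gamma_\varepsilon} |\psi(x,\tfrac{x}{\varepsilon})|^2\, ds(x)\Bigr)^{1/2}.
\end{equation*}

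The key technical step is then to establish the surface Riemann-sum type identity
\begin{equation*}
\lim_{\varepsilon \to 0} \varepsilon \int_{\Gamma_\varepsilon} \bigl|\psi(x,\tfrac{x}{\varepsilon})\bigr|^2\, ds(x) = \int_\Omega \int_\Gamma |\psi(x,y)|^2\, dx\, ds(y),
\end{equation*}
for every admissible $\psi$. For smooth compactly supported $\psi$ this follows by splitting $\Gamma_\varepsilon = \bigcup_{n\in N_\varepsilon} \Gamma_{\varepsilon,n}$, rescaling each cell via $y = x/\varepsilon$ so that $ds_{\Gamma_{\varepsilon,n}} = \varepsilon\, ds_{\Gamma_n}$, freezing the slow variable at the cell centers $x_{0,n}$, and recognizing the remaining sum as a Riemann sum for the $x$-integral — this is essentially the reasoning of Lemma~\ref{lem:lim}(i) applied to $|\psi|^2$. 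The conclusion passes to general $\psi \in L^2(\Omega, C_\sharp(Y))$ by a density argument, using that the cells touching $\partial\Omega$ contribute negligibly since their number is $O(\varepsilon^{-1})$ while each carries surface measure $O(\varepsilon)$.

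Combining these two steps, there is a constant $C'$ such that $\limsup_\varepsilon |T_\varepsilon(\psi)| \leq C'\, \|\psi\|_{L^2(\Omega\times\Gamma)}$ and in particular $|T_\varepsilon(\psi)| \leq C''\, \|\psi\|_{L^2(\Omega, C_\sharp(Y))}$ uniformly in $\varepsilon$. Since $L^2(\Omega, C_\sharp(Y))$ is separable, Banach--Alaoglu yields a subsequence along which $T_\varepsilon$ converges weak-$*$ to some continuous form $T$ on $L^2(\Omega, C_\sharp(Y))$, with the same control $|T(\psi)| \leq \sqrt{C}\, \|\psi\|_{L^2(\Omega\times\Gamma)}$ inherited in the limit. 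Because $L^2(\Omega, C_\sharp(Y))$ is densely embedded in $L^2(\Omega\times\Gamma)$ (with the $L^2$ norm on $\Gamma$), $T$ extends uniquely to a continuous anti-linear form on $L^2(\Omega\times\Gamma) = L^2(\Omega, L^2(\Gamma))$, and the Riesz representation theorem provides $u_0 \in L^2(\Omega, L^2(\Gamma))$ with $T(\psi) = \int_\Omega\int_\Gamma u_0(x,y)\psi(x,y)\, dx\, ds(y)$, proving the announced two-scale convergence along the subsequence.

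The main obstacle is the surface averaging identity for the test function squared. The rescaling and Riemann-sum argument is routine for the cells strictly inside $\Omega$, but one must verify that the cells intersecting a neighborhood of $\partial\Omega$ — where the cell structure may be incomplete — contribute a vanishing amount. Our standing assumption that $\mathrm{dist}(Y^-,\partial Y)=O(1)$ and that all cells lie strictly inside $\Omega$ (so $\partial\Omega \cap \Gamma_\varepsilon = \emptyset$) is exactly what makes this boundary correction negligible, which is why the argument closes cleanly.
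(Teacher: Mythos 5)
Your proposal is correct, and it is the canonical compactness argument for two-scale convergence on periodic surfaces (Allaire--Damlamian--Hornung, Neuss-Radu): uniform bound on the forms $T_\varepsilon$ via Cauchy--Schwarz against the measure $\varepsilon\, ds|_{\Gamma_\varepsilon}$, the surface averaging identity for $|\psi|^2$ (which is indeed Lemma~\ref{lem:lim}(i) applied to $|\psi(x,y)|^2$, cells near $\partial\Omega$ being negligible since their total surface measure is $O(\varepsilon^{-1})\cdot O(\varepsilon)=O(1)$ times the $O(\varepsilon)$ width of the boundary layer), then Banach--Alaoglu in the dual of the separable space $L^2(\Omega,C_\sharp(Y))$, density of that space in $L^2(\Omega\times\Gamma)$, and Riesz representation. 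The paper itself gives no proof of this proposition --- it is recalled from the two-scale convergence literature --- so there is nothing to diverge from; your argument is precisely the one the cited references contain. The only point deserving a remark is the choice of test space: for $\psi$ merely in $L^2(\Omega,C_\sharp(Y))$ the trace of $x\mapsto\psi(x,x/\varepsilon)$ on the null set $\Gamma_\varepsilon$ is not well defined, which is why the uniform bound $|T_\varepsilon(\psi)|\le C''\|\psi\|_{L^2(\Omega,C_\sharp(Y))}$ should be asserted only for $\psi$ continuous in $x$ (e.g.\ $\psi\in C(\overline\Omega;C_\sharp(Y))$ or $\mathcal{D}(\Omega,C^\infty_\sharp(Y))$), with the general case recovered exactly as you do at the end, by extending the limit functional in the $L^2(\Omega\times\Gamma)$ norm. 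This imprecision is inherited from the statement as written in the paper, not introduced by your proof, and your density step already closes it.
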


\begin{rem} \label{remhabibi}
If $u_{\varepsilon}$ and $\nabla u_{\varepsilon}$ are bounded in
$L^2(\Omega)$, one can prove by using for example \cite[Lemma
2.4.9]{ganaoui} that $u_{\varepsilon}$ verifies the uniform bound
(\ref{eq:condition}). The two-scale limit on the surface is then
the trace on $\Gamma$ of the two-scale limit of $u_{\varepsilon}$
in $\Omega$.
\end{rem}

In order to prove item {\upshape (i)} in Theorem \ref{thm:homo},
we need the following results.

\begin{lem}\label{lem:tsc}
Let the functions $(u_{\varepsilon})_\varepsilon$ be the sequence
of solutions of (\ref{eq:u_{epsilon}}). There exist functions $u(x) \in H^1(\Omega)$, $v^+(x,y) \in
L^2(\Omega, H^1_{\sharp}(Y^+))$ and $v^-(x,y) \in L^2(\Omega,
H^1_{\sharp}(Y^-))$ such that, up to a subsequence,
\begin{equation*}
\begin{array}{lll}
\left (
\begin{array}{c}
\vspace{0.2cm} u_{\varepsilon}\\
\vspace{0.2cm}\displaystyle\chi_{\eps}^+(\frac{x}{\varepsilon})\nabla u_{\varepsilon}^+\\
\displaystyle\chi_{\eps}^-(\frac{x}{\varepsilon})\nabla
u_{\varepsilon}^-
\end{array}
\right)&
\textrm{\hspace{0.2cm}two-scale converge to\hspace{0.2cm}}&
\left (
\begin{array}{c}
\vspace{0.3cm} u(x)\\
\vspace{0.3cm}\chi_{Y^+}(y)\Big( \nabla u(x) + \nabla_y v^+(x,y)\Big)\\
\chi_{Y^-}(y)\Big( \nabla u(x) + \nabla_y v^-(x,y)\Big)
\end{array}
\right ).
\end{array}
\end{equation*}
\end{lem}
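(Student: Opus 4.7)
The plan is to extract two-scale converging subsequences from the bounded sequences $u_\varepsilon^\pm$ and their gradients, identify the limit of the outer piece via the extension operator $P_\gamma^\varepsilon$ of Proposition \ref{prop:extlim}, and then transfer this identification to the inner piece by exploiting the smallness of the interface jump established in Proposition \ref{prop:apriori}.

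First I would gather compactness. From Theorem \ref{thm:bound+} and its counterpart for $u_\varepsilon^-$, the pair $(u_\varepsilon^+,u_\varepsilon^-)$ is bounded in $H^1(\Omega_\varepsilon^+)\times H^1(\Omega_\varepsilon^-)$ uniformly in $\varepsilon$. Consequently $u_\varepsilon$ is bounded in $L^2(\Omega)$, and $\chi_\varepsilon^{\pm}(x/\varepsilon)\nabla u_\varepsilon^{\pm}$ (extended by zero) are bounded in $L^2(\Omega)$. The compactness theorem for two-scale convergence then yields, up to a common subsequence (not relabeled), a function $u_0(x,y)\in L^2(\Omega\times Y)$ and functions $\xi^\pm(x,y)\in L^2(\Omega\times Y)$ supported in $\Omega\times Y^\pm$, such that $u_\varepsilon$ two-scale converges to $u_0$ and $\chi_\varepsilon^{\pm}(x/\varepsilon)\nabla u_\varepsilon^{\pm}$ two-scale converge to $\xi^\pm$.

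Second, I would identify the limits on the exterior phase. The extension $\tilde u_\varepsilon:=P_\gamma^\varepsilon u_\varepsilon^+$ is bounded in $H^1(\Omega)$, hence converges weakly along a further subsequence to some $u\in H^1(\Omega)$. The classical Allaire result for $H^1$-bounded sequences gives that $\tilde u_\varepsilon$ two-scale converges to $u(x)$ and $\nabla \tilde u_\varepsilon$ two-scale converges to $\nabla u(x)+\nabla_y v^+(x,y)$ for some $v^+\in L^2(\Omega,H^1_\sharp(Y)/\mathbb{C})$. The estimate $\|\tilde u_\varepsilon-u_\varepsilon\|_{L^2(\Omega)}=O(\varepsilon)$ from Proposition \ref{prop:extlim} shows that $u_0(x,y)=u(x)$ is independent of $y$. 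Since $\chi_\varepsilon^+(x/\varepsilon)\nabla u_\varepsilon^+=\chi_\varepsilon^+(x/\varepsilon)\nabla\tilde u_\varepsilon$, passing to the two-scale limit identifies $\xi^+(x,y)=\chi_{Y^+}(y)\bigl(\nabla u(x)+\nabla_y v^+(x,y)\bigr)$, as desired, with $v^+$ restricted to $\Omega\times Y^+$.

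Third, I would treat the interior phase by extending $u_\varepsilon^-$ cell by cell using a bounded extension operator $H^1(C)\to H^1(Y)$, rescaled to each $Y_{\varepsilon,n}$; this yields $\tilde u_\varepsilon^-$ bounded in $H^1$ on a subdomain exhausting $\Omega$, to which Allaire's result again applies, producing a limit $u^-(x)\in H^1(\Omega)$ and some $v^-\in L^2(\Omega,H^1_\sharp(Y)/\mathbb{C})$ with $\chi_\varepsilon^-(x/\varepsilon)\nabla u_\varepsilon^-$ two-scale converging to $\chi_{Y^-}(y)(\nabla u^-(x)+\nabla_y v^-(x,y))$. The main obstacle is then to prove $u^-(x)=u(x)$. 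This is where the surface estimate \eqref{eq:L2GammaEst}, namely $\varepsilon\int_{\Gamma_\varepsilon}|u_\varepsilon^+-u_\varepsilon^-|^2\,ds\to 0$, enters decisively: combined with Remark \ref{remhabibi} and the two-scale convergence on the periodic surface $\Gamma_\varepsilon$, it forces the traces of $u$ and $u^-$ on $\Gamma\subset Y$ to coincide for almost every $x\in\Omega$, so the two $y$-independent limits must be equal. Restricting $v^-$ to $\Omega\times Y^-$ then delivers the stated two-scale limit for $\chi_\varepsilon^-\nabla u_\varepsilon^-$, completing the proof.
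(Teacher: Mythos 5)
Your overall architecture is sound and your treatment of the exterior phase and of the identification $u^+=u^-$ is a legitimate alternative to the paper's route: the paper never uses the extension operator here, but instead works with the zero-extensions $\widetilde{u}_\varepsilon^{\pm}$, $\widetilde{\nabla u}_\varepsilon^{\pm}$ and identifies the structure of the limits $\tau^\pm,\xi^\pm$ by the classical duality argument (testing first against $\varphi$ vanishing on $\overline{Y^\mp}$ to get $y$-independence of $\tau^\pm$, then against $y$-divergence-free fields to recognize $\xi^\pm$ as $\nabla u^\pm+\nabla_y v^\pm$); and it obtains $u^+=u^-$ by multiplying the variational formulation by $\varepsilon^2$ and passing to the limit in the interface term. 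Your replacement of that last step by the quantitative bound \eqref{eq:L2GammaEst} (so that $\varepsilon\int_{\Gamma_\varepsilon}|u_\varepsilon^+-u_\varepsilon^-|^2\,ds=O(\varepsilon^2)$, forcing the surface two-scale limit of the jump to vanish, and then invoking Remark \ref{remhabibi} to identify that limit with the traces of $u$ and $u^-$) is correct and arguably more direct than the paper's argument; likewise your use of Proposition \ref{prop:extlim} to pass the limit from $P^\varepsilon_\gamma u_\varepsilon^+$ to $u_\varepsilon$ is fine (modulo the standard remark that $\chi_{Y^+}$ is an admissible multiplier because $\partial Y^-$ is smooth).

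The genuine gap is in your third step. An extension operator $H^1(C)\to H^1(Y)$ applied cell by cell and rescaled does \emph{not} produce a function in $H^1$ of the union of the cubes $Y_{\varepsilon,n}$: the extensions built independently in adjacent cubes have no reason to have matching traces on the common faces, so the glued function is only piecewise $H^1$ and Allaire's structure theorem for $H^1(\Omega)$-bounded sequences cannot be invoked for it. (This is precisely why the connected phase $Y^+$ admits a genuine extension operator, Theorem \ref{thm:Dext}, while the disconnected phase $Y^-$ does not; and it is also why, for a general $H^1$-bounded sequence on disconnected inclusions, the bulk limit $u^-$ need not even lie in $H^1(\Omega)$.) The conclusion you want is nevertheless reachable without this extension: since $Y^-$ is simply connected and compactly contained in $Y$, testing $\int_{\Omega_\varepsilon^-}\nabla u_\varepsilon^-\cdot\varphi(x,x/\varepsilon)\,dx$ against fields $\varphi$ compactly supported in $Y^-$ with $\mathrm{div}_y\varphi=0$ shows that $\xi^-(x,\cdot)$ is curl-free in $y$ on $Y^-$, hence of the form $\nabla_y W(x,y)$ up to a $y$-constant vector, and any such field can be rewritten as $\nabla u(x)+\nabla_y v^-(x,y)$ once $u\in H^1(\Omega)$ is known from the exterior analysis. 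So you should replace the cell-by-cell extension by this duality argument (which is what the paper does), keeping your interface step to close the identification $u^-=u$.
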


\begin{proof}We denote by $\tilde{\cdot}$ the extension by zero of functions on $\Omega_{\varepsilon}^+$
and $\Omega_{\varepsilon}^-$ in the respective domains $\Omega_{\varepsilon}^-$ and $\Omega_{\varepsilon}^+$.

From the previous estimates, $\widetilde{u}_{\varepsilon}^{\pm}$
and $\widetilde{\nabla u}^{\pm}_{\varepsilon}$ are bounded
sequences  in $L^2(\Omega)$. Up to a subsequence, they two-scale
converge to $\tau^{\pm}(x, y)$ and $\xi^{\pm}(x, y)$. Since
$\widetilde{u}_{\varepsilon}^{\pm}$ and $\widetilde{\nabla
u}^{\pm}_{\varepsilon}$ vanish in $\Omega_{\varepsilon}^{\mp}$, so
do $\tau^{\pm}$ and $\xi^{\pm}$.

Consider $\varphi \in \mathcal{D}(\Omega,
C_{\sharp}^{\infty}(Y))^2$ such that $\varphi =0$ for $y \in
\overline{Y^-}$. By integrating by parts, it follows that
\begin{equation*}
\displaystyle \varepsilon \int_{\Omega_{\varepsilon}^+} \nabla u_{\varepsilon}^+(x)\cdot \overline{\varphi}(x, \frac{x}{\varepsilon})dx  = - \int_{\Omega_{\varepsilon}^+} u_{\varepsilon}^+(x) \left(\textrm{div}_y \, \overline{\varphi}(x, \frac{x}{\varepsilon}) + \varepsilon \,\textrm{div}_x \, \overline{\varphi}(x, \frac{x}{\varepsilon})\right)dx.
\end{equation*}
We take the limit of this equality as $\varepsilon \rightarrow 0$:
\begin{equation*}
 \displaystyle 0  = - \int_{\Omega} \int_{Y^+} \tau^+(x,y)\textrm{div}_y \, \overline{\varphi}(x, y)dx dy.
\end{equation*}

Therefore, $\tau^+$ does  not depend on $y$ in $Y^+$, {\it i.e.},
there exists a function $u^+ \in L^2(\Omega)$ such that $\tau^+(x,
y) = \chi_{Y^+}(y) u^+(x)$ for all $(x, y) \in \Omega \times Y$.

Take now $\varphi \in  \mathcal{D}(\Omega,
C_{\sharp}^{\infty}(Y))^2$ such that $\varphi =0$ for $y \in
\overline{Y^-}$ and $\textrm{div}_y \varphi =0$. Similarly, we
have
\begin{equation*}
\displaystyle\int_{\Omega_{\varepsilon}^+} \nabla u_{\varepsilon}^+(x) \cdot
\overline{\varphi}(x, \frac{x}{\varepsilon})dx  = - \int_{\Omega_{\varepsilon}^+}
u_{\varepsilon}^+(x) \textrm{div}_x \, \overline{\varphi}(x, \frac{x}{\varepsilon})dx,
\end{equation*}
and thus
\begin{equation} \label{partint}
 \displaystyle \int_{\Omega} \int_{Y^+} \xi^+(x,y)\cdot \overline{\varphi}(x, y)dx dy  = - \int_{\Omega} \int_{Y^+} u^+(x)\textrm{div}_x \, \overline{\varphi}(x, y)dx dy.
\end{equation}

For $\varphi$ independent of $y$, this implies that $u^+ \in
H^1(\Omega)$. Furthermore, if we integrate by parts the right-hand
side of (\ref{partint}), we get
 \begin{equation*}
 \displaystyle \int_{\Omega} \int_{Y^+} \xi^+(x,y)\cdot \overline{\varphi}(x, y)dx dy  = \int_{\Omega} \int_{Y^+} \nabla u^+(x) \cdot\overline{\varphi}(x, y)dx dy,
\end{equation*}
for all $\varphi \in  \mathcal{D}(\Omega, C_{\sharp}^{\infty}(Y^+))^2$ such that $\textrm{div}_y
\varphi =0$ and $\varphi(x, y)\cdot n(y) = 0$ for $y$ on $\Gamma$.

Since the orthogonal of the divergence-free functions are exactly
the gradients, there exists a function $v^+ \in L^2(\Omega,
H^1_{\sharp}(Y^+))$ such that
\begin{equation*}
\xi^+(x, y) =\chi_{Y^+}(y) \left ( \nabla u^+(x) + \nabla_y v^+(x,
y) \right ),
\end{equation*}
for all $(x, y) \in \Omega \times Y$.

Likewise, there exist functions $u^- \in H^1(\Omega)$ and $v^- \in
L^2(\Omega, H^1_{\sharp}(Y^-))$ such that
\begin{equation*}
\tau^-(x, y) = \chi_{Y^-}(y) u^-(x), \hspace{0.2cm} \textrm{and}
\hspace{0.2cm} \xi^-(x, y) = \chi_{Y^-}(y) \left (\nabla u^-(x) +
\nabla_y v^-(x, y) \right ),
\end{equation*}
for all $(x, y) \in \Omega \times Y$.

Furthermore, thanks to Remark \ref{remhabibi}, we have also
\begin{equation*}
\displaystyle \varepsilon \int_{\Gamma_{\varepsilon}} u_{\varepsilon}^{\pm}(x) \overline{\varphi}(x, \frac{x}{\varepsilon}) dx \xrightarrow[\varepsilon \to 0] {}\int_{\Omega}\int_{\Gamma} u^{\pm}(x, y) \overline{\varphi}(x, y) dxdy,
\end{equation*}
for all $\varphi \in L^2(\Omega, C^{\infty}_{\sharp}(\Gamma))$.

Recall that $u_{\varepsilon}$ is a solution to the following
variational form:
\begin{equation*}
\begin{array}{l}
\vspace{0.3 cm} \displaystyle \int_{\Omega_{\varepsilon}^+} k_0 \nabla u_{\varepsilon}^+(x) \cdot \nabla \overline{\varphi}_{\varepsilon} ^+(x) dx+ \int_{\Omega_{\varepsilon}^-}k_0  \nabla u_{\varepsilon} ^-(x) \cdot \nabla \overline{\varphi}_{\varepsilon}^-(x) dx  \\
\hspace{4.5cm} \displaystyle + \frac{1} {\varepsilon \beta}
\int_{\Gamma_{\varepsilon}} \left ( u_{\varepsilon}^+ -
u_{\varepsilon}^- \right ) \left (
\overline{\varphi}_{\varepsilon}^+ -
\overline{\varphi}_{\varepsilon}^-\right ) ds - k_0 \int_{\partial
\Omega} g \overline{\varphi}_{\varepsilon}^+ ds = 0,
\end{array}
\end{equation*}
\vspace{0.3cm}
for all $(\varphi_{\varepsilon}^+, \varphi_{\varepsilon}^-) \in (H^1(\Omega^+_{\varepsilon}),
H^1(\Omega^-_{\varepsilon}))$.

We multiply this equality by $\varepsilon^2$ and take the limit when $\varepsilon$ goes to $0$.
The first two terms disappear and we obtain, for all $(\varphi^+, \varphi^-)
\in \mathcal{D}(\Omega, C^{\infty}_{\sharp}(Y^+))\times \mathcal{D}(\Omega, C^{\infty}_{\sharp}(Y^-))$:
\begin{equation*}
\displaystyle\frac{1}{\beta} \int_{\Omega}\int_{\Gamma} (u^+(x)
 - u^-(x))(\overline{\varphi}^+(x,y) - \overline{\varphi}^-(x, y))dx dy
 =0.
\end{equation*}

Thus $u^+(x) =u^-(x)$ for all $x \in \Omega$, and
$u_{\varepsilon}$ two-scale converges to $u = u^+=u^- \in
H^1(\Omega)$. This completes the proof.
\end{proof}

Now, we are ready to prove Theorem \ref{thm:homo}. For this, we need
 to show that $u$, $v^{+}$ and $v^-$ are respectively $u_0$,
solution of the homogenized problem (up to a constant),  $u_1^{+}$
defined in (\ref{eq:u_1}) (up to a constant) and $u_1^{-}$ defined
in (\ref{eq:u_1}). The uniqueness of a solution for the
homogenized problem and the cell problems will then allow us to
conclude the convergence, not only up to a subsequence.

\begin{proof} We first want to retrieve the expression of $u_1$ as a test function of
the derivatives of $u_0$ and the cell problem solutions $w_i$.

We choose in the variational formulation (\ref{eq:rpdeweak}) a
function $\varphi_{\varepsilon}$ of the form
$$\varphi_{\varepsilon}(x) = \varepsilon\varphi_1(x, \frac{x}{\varepsilon}),$$
where $\vspace{0.2cm}\varphi_1 \in \mathcal{D}(\Omega,
C^{\infty}_{\sharp}(Y^+))\times \mathcal{D}(\Omega,
C^{\infty}_{\sharp}(Y^-))$.

Lemma \ref{lem:tsc} shows the two-scale convergence of the
following three terms:
\begin{equation*}\fontsize{11pt}{7.2}
\begin{array}{clc}
\vspace{0.3cm} \displaystyle \int_{\Omega_{\varepsilon}^+} k_0 \nabla u_{\varepsilon}^+(x) \cdot \nabla \overline{\varphi}_{\varepsilon}^+(x) dx &\xrightarrow[\varepsilon \to 0]{}&\displaystyle \int_{\Omega}\! \int_{Y^+}\! k_0 \left (\nabla u(x) + \nabla_y v^+(x, y) \right)\cdot\nabla_y \overline{\varphi}_1^+(x, y) dxdy \\
\vspace{0.3cm} \displaystyle  \int_{\Omega_{\varepsilon}^-} k_0 \nabla u_{\varepsilon}^-(x) \cdot \nabla \overline{\varphi}_{\varepsilon}^-(x) dx &\xrightarrow[\varepsilon \to 0]{}&\displaystyle  \int_{\Omega}\! \int_{Y^-} \! k_0 \left (\nabla u(x) + \nabla_y v^-(x, y) \right)\cdot  \nabla_y \overline{\varphi}_1^-(x, y)dxdy\\
\displaystyle\int_{\partial \Omega} g(x)
\overline{\varphi}_{\varepsilon}^+(x)
ds(x)&\xrightarrow[\varepsilon \to 0]{}& 0.
\end{array}
\end{equation*}

We can not take directly the limit as $\varepsilon \to 0$ in the
last term:
\begin{equation*}
\begin{array}{l}
\vspace{0.3cm}\displaystyle \frac{1}{\varepsilon \beta}\int_{\Gamma_{\varepsilon}} (u_{\varepsilon}^+(x) -u_{\varepsilon}^-(x))(\overline{\varphi}_{\varepsilon}^+(x) - \overline{\phi}_{\varepsilon}^-(x))ds(x) \\
\hspace{4cm}\displaystyle=
\frac{1}{\beta}\int_{\Gamma_{\varepsilon}} (u_{\varepsilon}^+(x)
-u_{\varepsilon}^-(x))\left( \overline{\varphi}_1^+(x,
\frac{x}{\varepsilon}) - \overline{\varphi}_1^-(x,
\frac{x}{\varepsilon})\right )ds(x).
\end{array}
\end{equation*}

Lemma \ref{lem:theta} ensures the existence of a function $\theta
\in (\mathcal{D}(\Omega, H^1_{\sharp}(Y^+)) \times
\mathcal{D}(\Omega, H^1_{\sharp}(Y^-)))^2$ such that for all $\psi
\in H^1_{\sharp}(Y^+)/\mathbb{C} \times H^1_{\sharp}(Y^-)$ :
\begin{equation}\label{eq:vftheta}
\begin{array}{l}
\vspace{0.3cm}\displaystyle \int_{Y^+} \nabla \psi^+(y) \cdot \overline{\theta}^+(x, y) dy + \int_{Y^-} \nabla \psi^-(y) \cdot \overline{\theta}^-(x, y) dy \\
\hspace{3.5cm}\displaystyle+ \int_{\Gamma} \left(\psi^+(y)
-\psi^-(y)\right)\left( \overline{\varphi}_1^+(x, y) -
\overline{\varphi}_1^-(x, y)\right)ds(y)=0.
\end{array}
\end{equation}

We make the change of variables $y = \displaystyle
\frac{x}{\varepsilon}$, sum over all $(Y_{\varepsilon, n})_{n \in
N_{\varepsilon}}$, and choose $\psi = u_{\varepsilon}$ to get
\begin{equation*}
\begin{array}{l}
\vspace{0.3cm}\displaystyle\int_{\Gamma_{\varepsilon}} (u_{\varepsilon}^+(x)
-u_{\varepsilon}^-(x))\left( \overline{\varphi}_1^+(x, \frac{x}{\varepsilon})
- \overline{\varphi}_1^-(x, \frac{x}{\varepsilon})\right )ds(x) = \\
\hspace{4.2cm}\displaystyle - \int_{\Omega_{\varepsilon}^+} \nabla
u_{\varepsilon}^+(x, \frac{x}{\varepsilon}) \cdot \theta^+(x,
\frac{x}{\varepsilon}) dx -
\displaystyle\int_{\Omega_{\varepsilon}^-} \nabla
u_{\varepsilon}^-(x, \frac{x}{\varepsilon}) \cdot \theta^-(x,
\frac{x}{\varepsilon}) dx.
\end{array}
\end{equation*}

We can now take the limit as $\varepsilon$ goes to $0$:
\begin{equation*}
\begin{array}{c}
\vspace{0.3cm}\displaystyle\lim_{\varepsilon \to 0} \int_{\Gamma_{\varepsilon}} (u_{\varepsilon}^+(x) -u_{\varepsilon}^-(x))\left( \overline{\varphi}_1^+(x, \frac{x}{\varepsilon}) - \overline{\varphi}_1^-(x, \frac{x}{\varepsilon})\right )ds(x)=\\
\displaystyle- \int_{Y^+} \left(\nabla u(x) + \nabla_y v^+(x,y)
\right) \cdot \overline{\theta}^+(x, y) dxdy -
\displaystyle\int_{Y^-} \left(\nabla u(x) + \nabla_y v^-(x,y)
\right) \cdot \overline{\theta}^-(x, y) dxdy.
\end{array}
\end{equation*}

Finally, the variational formula (\ref{eq:vftheta}) gives us
\begin{equation*}
\begin{array}{l}
\vspace{0.3cm}\displaystyle\lim_{\varepsilon \to 0} \int_{\Gamma_{\varepsilon}} (u_{\varepsilon}^+(x) -u_{\varepsilon}^-(x))\left( \overline{\varphi}_1^+(x, \frac{x}{\varepsilon}) - \overline{\varphi}_1^-(x, \frac{x}{\varepsilon})\right )ds(x) =\\
\hspace{3cm} \displaystyle \int_{\Omega} \int_{\Gamma}\left(v^+(y)
-v^-(y)\right)\left( \overline{\varphi}_1^+(x, y) -
\overline{\varphi}_1^-(x, y)\right)ds(y).
\end{array}
\end{equation*}

For $\vspace{0.1cm}\displaystyle \varphi_{\varepsilon}(x) =
\varepsilon\varphi_1(x, \frac{x}{\varepsilon}),$ with $\varphi_1
 \in \mathcal{D}(\Omega, C^{\infty}_{\sharp}(Y^+))\times
 \mathcal{D}(\Omega, C^{\infty}_{\sharp}(Y^-))$,
 the two-scale limit of the variational formula is
\begin{equation*}
\begin{array}{c}
\vspace{0.3cm}\displaystyle  \int_{\Omega}\! \int_{Y^+}\! k_0 \left (\nabla u(x) + \nabla_y v^+(x, y) \right)\cdot\nabla_y \overline{\varphi}_1^+(x, y) dxdy \\
\vspace{0.3cm}\displaystyle + \int_{\Omega}\! \int_{Y^-} \! k_0
\left
(\nabla u(x) + \nabla_y v^-(x, y) \right)\cdot  \nabla_y \overline{\varphi}_1^-(x, y)dxdy\\
 \displaystyle+ \frac{1}{\beta} \int_{\Omega} \int_{\Gamma}
 \left(v^+(y) -v^-(y)\right)\left( \overline{\varphi}_1^+(x, y)
  - \overline{\varphi}_1^-(x, y)\right)ds(y) =0.
\end{array}
\end{equation*}

By density, this formula hold true for $\varphi_1
 \in L^2(\Omega, H^1_{\sharp}(Y^+))\times
 L^2(\Omega, H^1_{\sharp}(Y^-))$.
One can recognize the formula verified by $u_1^{\pm}$ and the
definition of the cell problems. Hence, separation of variables
and uniqueness of the solutions of the cell problems in $W$ give
\begin{equation*}
\displaystyle v^-(x, y) = u_1^- = \sum_{i=1, 2} \frac{\partial
u_0}{\partial x_i}(x) w_i^{-}(y)
\end{equation*}
and, up to a constant:
\begin{equation*}
\displaystyle v^+(x, y) = u_1^+ = \sum_{i=1, 2} \frac{\partial
u_0}{\partial x_i}(x) w_i^{+}(y).
\end{equation*}

We now choose in the variational formula verified by
$u_{\varepsilon}$ a test function $\vspace{0.1cm} \displaystyle
\varphi_{\varepsilon}(x) = \varphi(x),$ with $\varphi \in
C^{\infty}_c(\overline{\Omega})$.

The limit of (\ref{eq:rpdeweak}) as $\varepsilon$ goes to $0$ is
then given by
\begin{equation*}
\begin{array}{c}
\vspace{0.3cm}\displaystyle  \int_{\Omega}\! \int_{Y^+}\! k_0 \left (\nabla u(x) + \nabla_y v^+(x, y) \right)\cdot\nabla \overline{\varphi}(x) dxdy \\
\vspace{0.3cm}\displaystyle + \int_{\Omega}\! \int_{Y^-} \! k_0 \left (\nabla u(x) + \nabla_y v^-(x, y) \right)\cdot  \nabla\overline{\varphi}(x)dxdy\\
 \displaystyle+  \int_{\partial \Omega} g(x) \overline{\phi}(x) ds(x) =0.
\end{array}
\end{equation*}
By density, this formula hold true for $\varphi \in
H^1(\Omega)$, which leads exactly to the variational formula of the homogenized
problem (\ref{eq:u_0}). Since the solution of this problem is
unique in $H^1_{\mathbb{C}}(\Omega)$, $u_{\varepsilon}$ converges
to $u_0$, not only up to a subsequence. Likewise, $\nabla
u_{\varepsilon}$ two-scale converges to $\nabla u_0 +
\chi_{Y^+}\nabla_y u_1^+ + \chi_{Y^-} \nabla_y u_1^-$.
\end{proof}

\section{Effective admittivity for a dilute suspension}
\label{sec:dilutes}

In general, the effective admittivity given by formula
\eqref{eq:K^*} can not be computed exactly except for a few
configurations. In this section, we consider the problem of
determining the effective property of a suspension of cells when
the volume fraction $|Y^-|$ goes to zero. In other words, the
cells have much less volume than the medium surrounding them. This kind of
suspension is called dilute. Many approximations for the effective
properties of composites are based on the solution for dilute
suspension.

\subsection{Computation of the effective admittivity}

We investigate  the periodic double-layer potential used in
calculating effective permittivity of a suspension of cells. We
introduce the periodic Green function $G_{\sharp}$, for the
Laplace equation in $Y$, given by
\begin{equation*}
\forall x\in Y, \qquad G_{\sharp}(x) = \displaystyle \sum_{n \in
\mathbb{Z}^2\setminus\{0\}} \frac{e^{i2\pi n\cdot
x}}{4\pi^2|n|^2}.
\end{equation*}

The following lemma  from \cite{AKT, book2} plays an essential
role in deriving the effective properties of a suspension in the
dilute limit.
\begin{lem}
The periodic Green function  $G_{\sharp}$ admits the following
decomposition:
\begin{equation}\label{eq:Gperiodic}
\forall x \in Y, \qquad G_{\sharp}(x) = \displaystyle
\frac{1}{2\pi} \ln{|x|} + R_2(x),
\end{equation}
where $R_2$ is a smooth function with the following Taylor expansion at $0$:
\begin{equation}\label{eq:R_2}
R_2(x) = R_2(0) - \displaystyle \frac{1}{4} (x_1^2 + x_2^2) + O(|x|^4).
\end{equation}
\end{lem}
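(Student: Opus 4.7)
The plan is to split $G_\sharp$ into its explicit logarithmic singularity plus a smooth remainder, and then extract the Taylor expansion of that remainder via symmetry together with the PDE it satisfies.

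First I would establish the PDE satisfied by $G_\sharp$ on the torus $Y$. Differentiating the Fourier series term by term in the distributional sense and invoking the Poisson summation identity $\sum_{n\in \mathbb{Z}^2} e^{i2\pi n\cdot x} = \sum_{m\in \mathbb{Z}^2}\delta_m$, one finds
\begin{equation*}
\Delta G_\sharp = \delta_0 - 1 \quad \text{on } Y.
\end{equation*}
Since the $\mathbb{R}^2$-fundamental solution for $\Delta$ is $\frac{1}{2\pi}\ln|x|$ (that is, $\Delta[\tfrac{1}{2\pi}\ln|x|] = \delta_0$), the difference $R_2(x) := G_\sharp(x) - \tfrac{1}{2\pi}\ln|x|$ carries no Dirac mass at $0$; elliptic regularity then promotes $R_2$ to a $C^\infty$ function in a neighborhood of the origin, and subtracting the two equations yields
\begin{equation*}
\Delta R_2(x) = -1 \quad \text{in a neighborhood of } 0.
\end{equation*}

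Next I would exploit the $D_4$ symmetry inherited from the lattice $\mathbb{Z}^2$. Reindexing in the Fourier series shows that $G_\sharp$ is invariant under each reflection $x_i\mapsto -x_i$ and under the swap $x_1\leftrightarrow x_2$; since $\ln|x|$ enjoys the same invariances, so does $R_2$. Taylor-expanding the smooth function $R_2$ about $0$, evenness $R_2(-x)=R_2(x)$ eliminates all odd-order homogeneous terms, and full $D_4$-invariance forces the quadratic part to be a scalar multiple of $x_1^2+x_2^2$. Writing
\begin{equation*}
R_2(x) = R_2(0) + \alpha(x_1^2+x_2^2) + O(|x|^4),
\end{equation*}
and evaluating $\Delta R_2(0) = 4\alpha$ against the PDE above pins down $\alpha = -\tfrac{1}{4}$, which is precisely the expansion \eqref{eq:R_2}.

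The main obstacle is the initial step: the Fourier series defining $G_\sharp$ converges only conditionally near the lattice, so term-by-term differentiation has to be understood in $\mathscr{D}'(Y)$ and the Poisson summation identity must be applied with a little care at the lattice points. Once those are in place the remainder of the argument is a clean combination of elliptic regularity and a symmetry-constrained Taylor expansion, and presents no further computational difficulty.
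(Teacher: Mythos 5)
The paper does not actually prove this lemma---it is imported from \cite{AKT, book2} without argument---so there is no in-paper proof to compare against; your proof is the standard one and is essentially what those references do: identify the PDE satisfied by $G_\sharp$, subtract the free-space fundamental solution, invoke hypoellipticity of $\Delta$ to get smoothness of the remainder near $0$, and use the $D_4$ symmetry of the lattice together with the value of $\Delta R_2(0)$ to pin down the Taylor coefficients.

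One point deserves care, though it traces back to a sign slip in the paper's own definition rather than to a flaw in your strategy. With $G_\sharp$ exactly as printed, term-by-term differentiation gives
\[
\Delta G_\sharp \;=\; \sum_{n\neq 0}\frac{-4\pi^2|n|^2}{4\pi^2|n|^2}\,e^{i2\pi n\cdot x} \;=\; -\sum_{n\neq 0}e^{i2\pi n\cdot x} \;=\; 1-\delta_0 \quad\text{on } Y,
\]
not $\delta_0-1$ as you assert. The decomposition \eqref{eq:Gperiodic}, with a $+\frac{1}{2\pi}\ln|x|$ singularity and quadratic coefficient $-\frac14$, is consistent only with $\Delta G_\sharp=\delta_0-1$, i.e.\ with the conventional definition $G_\sharp=-\sum_{n\neq0}e^{i2\pi n\cdot x}/(4\pi^2|n|^2)$ used in \cite{AKT, book2}; taking the series literally as written in the paper one would instead obtain $G_\sharp=-\frac{1}{2\pi}\ln|x|+\widetilde R$ with quadratic part $+\frac14|x|^2$. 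You have silently adopted the corrected sign, which is the right reading, but the step from the printed definition to your displayed PDE should be made explicit. Everything downstream is correct: invariance under $x_i\mapsto -x_i$ and $x_1\leftrightarrow x_2$ (seen by reindexing the sum) kills the odd-order terms and forces the quadratic form to be $\alpha(x_1^2+x_2^2)$, the quartic terms contribute nothing to $\Delta R_2(0)$, and $4\alpha=\Delta R_2(0)=-1$ yields $\alpha=-\frac14$ as in \eqref{eq:R_2}.
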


Let $L^2_0(\Gamma) := \left\{\varphi \in L^2(\Gamma)\Big|
\displaystyle \int_{\Gamma} \varphi(x) ds(x) =0 \right\}$.

We define the periodic double-layer potential
$\widetilde{\mathcal{D}}_{\Gamma}$ of the density function
$\varphi \in L^2_0(\Gamma)$:
\begin{equation*}
\begin{array}{c}
\displaystyle \widetilde{\mathcal{D}}_{\Gamma}[\varphi](x) =
\int_{\Gamma} \frac{\partial} {\partial n_y}G_{\sharp}(x-y)
\varphi(y) ds(y).
\end{array}
\end{equation*}

The double-layer potential has the following properties
\cite{book2}.
\begin{lem}
Let $\varphi \in L^2_0(\Gamma)$.
$\widetilde{\mathcal{D}}_{\Gamma}[\varphi]$ verifies:
\begin{equation*}
\begin{array}{lcl}
\vspace{0.3cm} \textrm{(i)} \,& \Delta \widetilde{\mathcal{D}}_{\Gamma}[\varphi] = 0 \,&\,\textrm{in} \,Y^+ ,\\
\vspace{0.3cm} &\Delta \widetilde{\mathcal{D}}_{\Gamma}[\varphi] = 0 \,&\,\textrm{in}\, Y^-,\\
\vspace{0.3cm} \textrm{(ii)} \,& \displaystyle \frac{\partial}{\partial n} \widetilde{\mathcal{D}}_{\Gamma}[\varphi]
\Big|_+ =  \frac{\partial}{\partial n} \widetilde{\mathcal{D}}_{\Gamma}[\varphi]\Big|_-\,&\,\textrm{on}\, \Gamma, \\
\textrm{(iii)} \,& \displaystyle
\widetilde{\mathcal{D}}_{\Gamma}[\varphi]\Big|_{\pm} = \left ( \mp
\frac{1}{2} I + \widetilde{\mathcal{K}}_{\Gamma} \right
)[\varphi]\,&\,\textrm{on}\, \Gamma,
\end{array}
\end{equation*}
where $\widetilde{\mathcal{K}}_{\Gamma} : L^2_0(\Gamma) \mapsto
L^2_0(\Gamma)$ is the Neumann--Poincar\'e operator defined by
\begin{equation*}
\displaystyle \forall x \in \Gamma, \qquad
\widetilde{\mathcal{K}}_{\Gamma}[\varphi](x) = \,\int_{\Gamma}
\frac{\partial} {\partial n_y}G_{\sharp}(x-y) \varphi(y) ds(y).
\end{equation*}
\end{lem}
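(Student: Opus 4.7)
The plan is to reduce the periodic double-layer potential to the classical (non-periodic) one by exploiting the decomposition of the periodic Green function provided in the preceding lemma. Writing $G_{\sharp}(x) = \frac{1}{2\pi}\ln|x| + R_2(x)$ with $R_2 \in C^\infty(Y)$ induces a splitting
\begin{equation*}
\widetilde{\mathcal{D}}_{\Gamma}[\varphi] = \mathcal{D}_{\Gamma}[\varphi] + \mathcal{R}_{\Gamma}[\varphi],
\end{equation*}
where $\mathcal{D}_{\Gamma}$ denotes the classical two-dimensional double-layer potential with kernel $\frac{1}{2\pi}\partial_{n_y}\ln|x-y|$, and $\mathcal{R}_{\Gamma}[\varphi](x) := \int_{\Gamma} \partial_{n_y} R_2(x-y)\,\varphi(y)\,ds(y)$ has a smooth kernel. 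Since $R_2 \in C^\infty(Y)$, the operator $\mathcal{R}_{\Gamma}$ produces smooth functions on all of $Y$, so every jump across $\Gamma$ is carried by $\mathcal{D}_{\Gamma}[\varphi]$.

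For part (i), I would differentiate under the integral sign away from $\Gamma$. For the singular piece, $\Delta_x \ln|x-y| = 0$ whenever $x \neq y$, hence $\Delta \mathcal{D}_{\Gamma}[\varphi] \equiv 0$ in $Y^\pm$. For the regular piece, combining the distributional identity $\Delta\bigl(\tfrac{1}{2\pi}\ln|x|\bigr) = \delta_0$ with the torus identity $\Delta G_{\sharp} = \delta_0 - 1$ yields $\Delta R_2 \equiv -1$ on $Y$; consequently $\Delta_x R_2(x-y) \equiv -1$, whose normal derivative in $y$ vanishes, so $\Delta \mathcal{R}_{\Gamma}[\varphi] \equiv 0$ on all of $Y$. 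Summing gives (i).

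For parts (ii) and (iii), I would invoke the classical jump relations valid on a $\mathcal{C}^{2,\eta}$ interface:
\begin{equation*}
\mathcal{D}_{\Gamma}[\varphi]\big|_\pm = \Bigl(\mp \tfrac{1}{2}I + \mathcal{K}_{\Gamma}\Bigr)[\varphi], \qquad \partial_n \mathcal{D}_{\Gamma}[\varphi]\big|_+ = \partial_n \mathcal{D}_{\Gamma}[\varphi]\big|_-,
\end{equation*}
where $\mathcal{K}_{\Gamma}$ is the classical Neumann--Poincar\'e operator on $\Gamma$. Since $\mathcal{R}_{\Gamma}[\varphi]$ extends smoothly across $\Gamma$ with continuous gradient, it contributes no jump in either the trace or the normal derivative. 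Statement (ii) then follows immediately by superposition. For (iii), by definition $\mathcal{K}_{\Gamma}[\varphi] + \mathcal{R}_{\Gamma}[\varphi]\big|_{\Gamma} = \widetilde{\mathcal{K}}_{\Gamma}[\varphi]$, which delivers the stated formula.

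The only step that requires any care is invoking the classical jump relations for the logarithmic double-layer potential, but these are standard for $\mathcal{C}^{2,\eta}$ curves and are available in the layer-potential references already cited in the paper. The hypothesis $\varphi \in L_0^2(\Gamma)$ plays no role in (i)--(iii) themselves; it is natural nonetheless because $G_{\sharp}$ is defined only up to an additive constant on the torus, and that ambiguity drops out of $\widetilde{\mathcal{D}}_{\Gamma}[\varphi]$ precisely when the density has zero mean.
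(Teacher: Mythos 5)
The paper offers no proof of this lemma --- it simply cites \cite{book2} --- and your argument is exactly the standard one used there: split $G_{\sharp}$ into the free-space logarithmic kernel plus the smooth remainder $R_2$, apply the classical jump relations on a $\mathcal{C}^{2,\eta}$ curve to the singular part, and note that the remainder operator has a smooth kernel with $\Delta_x\partial_{n_y}R_2(x-y)=0$, hence is harmonic and contributes no jumps. Your proof is correct; the only cosmetic caveat is that with the Fourier series for $G_{\sharp}$ exactly as printed in the paper one gets $\Delta G_{\sharp}=1-\delta_0$ rather than $\delta_0-1$, but this sign ambiguity lies in the paper's definition, not in your reasoning, and does not affect (i)--(iii).
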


The following integral representation formula holds.
\begin{thm}
Let $w_i$ be the unique solution in $W$ of
(\ref{eq:w_i}) for $i=1,2$. $w_i$ admits the following integral
representation in $Y$:
\begin{equation}\label{eq:w_i2}
w_i = - \beta k_0 \,\widetilde{\mathcal{D}}_{\Gamma} \left ( I +
\beta k_0 \widetilde{\mathcal{L}}_{\Gamma} \right )^{-1}[n_i],
\end{equation}
where $\widetilde{\mathcal{L}}_{\Gamma} = \displaystyle \frac{\partial
\widetilde{D}_{\Gamma}}{\partial n}$ and $n = (n_i)_{i=1,2}$ is the outward unit normal to $\Gamma$.
\end{thm}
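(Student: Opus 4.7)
The plan is to exhibit a candidate $v_i$ through a periodic double-layer potential ansatz, verify directly that it satisfies the cell problem \eqref{eq:w_i}, and then invoke the uniqueness lemma already proved for \eqref{eq:w_i} to conclude $w_i = v_i$ in $W$. Specifically, I set
\[
\varphi_i := \bigl(I + \beta k_0\, \widetilde{\mathcal{L}}_\Gamma\bigr)^{-1}[n_i], \qquad v_i := -\beta k_0\, \widetilde{\mathcal{D}}_\Gamma[\varphi_i],
\]
and aim to show $v_i$ solves \eqref{eq:w_i}.

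First I would check that $\varphi_i$ is a well-defined element of $L^2_0(\Gamma)$. Since $\Gamma$ is the closed boundary of $Y^-$, the divergence theorem gives $\int_\Gamma n_i\, ds = 0$, so $n_i \in L^2_0(\Gamma)$. Applying the divergence theorem to the harmonic function $\widetilde{\mathcal{D}}_\Gamma[\varphi]$ on $Y^-$ shows $\widetilde{\mathcal{L}}_\Gamma$ preserves $L^2_0(\Gamma)$. For the invertibility of $I + \beta k_0 \widetilde{\mathcal{L}}_\Gamma$, I would use the decomposition \eqref{eq:Gperiodic}: the singular part of $G_\sharp$ is the free-space logarithmic kernel, so $\widetilde{\mathcal{L}}_\Gamma$ differs from $\mathcal{L}_\Gamma$ by an operator with smooth kernel coming from $R_2$, hence compact. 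The positivity of $\mathcal{L}_\Gamma$ recalled after \eqref{defL} together with $\Re(\beta k_0) > 0$ (which follows from $\sigma_0,\sigma_m > 0$) then yields invertibility via the Fredholm alternative.

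Next, I would verify each condition of \eqref{eq:w_i} for $v_i$. Periodicity is automatic since $G_\sharp$ is $Y$-periodic. Property (i) of the double-layer lemma gives $\Delta v_i = 0$ in both $Y^+$ and $Y^-$, hence $\nabla\cdot k_0\nabla(v_i+y_i)=0$ on both sides. Property (ii) yields continuity of $\partial v_i/\partial n$ across $\Gamma$, hence continuity of $k_0\,\partial(v_i+y_i)/\partial n$. The heart of the matter is the jump condition: by property (iii),
\[
v_i\bigr|_+ - v_i\bigr|_- = -\beta k_0\Bigl[\bigl(-\tfrac12 I + \widetilde{\mathcal{K}}_\Gamma\bigr) - \bigl(\tfrac12 I + \widetilde{\mathcal{K}}_\Gamma\bigr)\Bigr][\varphi_i] = \beta k_0\, \varphi_i,
\]
while $\partial v_i/\partial n = -\beta k_0\, \widetilde{\mathcal{L}}_\Gamma[\varphi_i]$ and $\partial y_i/\partial n = n_i$. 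Substituting and using the definition of $\varphi_i$,
\[
v_i\bigr|_+ - v_i\bigr|_- - \beta k_0\,\frac{\partial(v_i+y_i)}{\partial n} = \beta k_0\Bigl[\bigl(I + \beta k_0\,\widetilde{\mathcal{L}}_\Gamma\bigr)[\varphi_i] - n_i\Bigr] = 0,
\]
which is exactly the required transmission condition. Uniqueness in $H^1_\sharp(Y^+)/\mathbb{C}\times H^1_\sharp(Y^-)$ then forces $w_i = v_i$.

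The main obstacle I anticipate is the rigorous justification of the mapping and invertibility properties of $I + \beta k_0\,\widetilde{\mathcal{L}}_\Gamma$ on $L^2_0(\Gamma)$ (or on a suitable Hölder space if one prefers classical jump relations). Once the periodic Neumann--Poincaré-type framework is set up by isolating the logarithmic singularity via \eqref{eq:Gperiodic} and transporting the free-space arguments of \cite{14,2}, the rest of the proof is a clean application of the jump relations already recorded in the preceding lemma.
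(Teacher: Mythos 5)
Your proposal is correct and follows essentially the same route as the paper: define the density $\varphi_i$ via $(I+\beta k_0\widetilde{\mathcal{L}}_\Gamma)^{-1}[n_i]$, form the periodic double-layer potential, check membership in $L^2_0(\Gamma)$ and the transmission conditions via the jump relations, and conclude by uniqueness of the cell problem in $W$. Your verification of the jump condition and your remarks on the invertibility of $I+\beta k_0\widetilde{\mathcal{L}}_\Gamma$ (by comparing with $\mathcal{L}_\Gamma$ through the smooth remainder $R_2$) are in fact more explicit than the paper's own argument, which states the transmission system solved by $\widetilde{\mathcal{D}}_\Gamma[\varphi]$ and identifies it with \eqref{eq:w_i} without spelling out these steps.
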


\begin{proof}
Let $\varphi := - \beta k_0 \left ( I + \beta k_0
\widetilde{\mathcal{L}} \right )^{-1}[n_i]$.
$\varphi$ verifies :
\begin{equation*}
 \displaystyle \int_{\Gamma} \varphi(y) ds(y) = -
 \beta k_0 \int_{\Gamma} \frac{\partial}{\partial n} (\widetilde{\mathcal{D}}_{\Gamma}[\varphi](y) +
 y_i) ds(y) = 0.
\end{equation*}
The first equality comes from the definition of $\varphi$ and the second from an integration
 by parts and the fact that $\widetilde{\mathcal{D}}_{\Gamma}[\varphi]$ and $I$ are harmonic. Consequently,
 $\varphi \in L^2_0(\Gamma)$.

We now introduce $\vspace{0.3cm}V_i := \widetilde{\mathcal{D}}_{\Gamma} [\varphi].$
$V_i$ is solution to the following problem:
\begin{equation*}
\left \{
\begin{array}{ll}
\vspace{0.3cm} \nabla \cdot k_0 \nabla V_i = 0 \,&\, \textrm{in} \,\,Y^+,\\
\vspace{0.3cm}  \nabla \cdot k_0 \nabla V_i = 0 \,&\, \textrm{in} \,\,Y^-,\\
\vspace{0.3cm} \displaystyle k_0 \frac{\partial V_i}{\partial n}\Big|_+  =
k_0 \frac{\partial V_i}{\partial n}\Big|_-\,&\, \textrm{on}\, \,\Gamma, \\
\vspace{0.3cm} V_{i}|_+- V_{i}|_- = \varphi \hspace{0.7cm}\,&\, \textrm{on} \,\,\Gamma, \\
y \longmapsto V_{i}(y)  \, \, Y\textrm{-periodic}. &
\end{array}
\right .
\end{equation*}
We use the definitions of $\varphi$ and $V_i$ and recognize that the last problem is exactly problem (\ref{eq:w_i}). The uniqueness of the solution in $W$ gives us the wanted result.
\end{proof}

From Theorem \ref{thm:homo}, the effective admittivity of the
medium $K^*$ is given by
\begin{equation*}
\forall (i, j) \in \{1, 2\}^2, \qquad \displaystyle K^* _{i, j} =
k_0 \left (\delta_{ij} + \int_{Y} \nabla w_i \cdot e_j\right ).
\end{equation*}

After an integration by parts, we get
\begin{equation*}
\forall (i, j) \in \{1, 2\}^2, \qquad \displaystyle K^* _{i, j} =
k_0 \left(\delta_{ij} + \int_{\partial Y} w_i(y) n_j(y) \,ds(y) -
\int_{\Gamma} \left ( w_i^+ - w_i^- \right) n_j(y) \,ds(y) \right).
\end{equation*}

Because of the $Y$-periodicity of $w_i$, we have: $\displaystyle
\int_{\partial Y} w_i(y) n_j \,ds(y) = 0$.

Finally, the integral representation \ref{eq:w_i2} gives us  that
\begin{equation*}
\forall (i, j) \in \{1, 2\}^2, \qquad \displaystyle
 K^* _{i, j} = k_0 \left(\delta_{ij} - (\beta k_0) \int_{\Gamma}
  \left ( I + \beta k_0 \widetilde{\mathcal{L}}_{\Gamma} \right )^{-1}[n_i] n_j \,ds(y) \right).
\end{equation*}

We consider that we are in the context of a dilute suspension,
{\it i.e.}, the size of the cell is small compared to the square:
$\big|Y^-\big| \ll |Y| =1$. We perform the change of variable:
$\displaystyle z=\rho^{-1}y$ with $\rho =|Y^-|^{\frac{1}{2}}$ and
obtain that
\begin{equation*}
\forall (i, j) \in \{1, 2\}^2, \qquad \displaystyle K^* _{i, j}
=k_0 \left( \delta_{ij} - \rho^2 (\beta k_0)\int_{\rho^{-1}\Gamma}
\left ( I + \rho \beta k_0
 \widetilde{\mathcal{L}}_{\Gamma} \right )^{-1}[n_i](\rho z) n_j(z) \,ds(z)\right),
\end{equation*}
where $n$ is the outward unit normal to $\Gamma$. Note that, in
the same way as before, $\beta$ becomes $\rho \beta$ when we
rescale the cell.

Let us introduce $\varphi_i = - \left ( I +\rho \beta k_0
\widetilde{\mathcal{L}}_{\Gamma} \right )^{-1}[n_i]$ and
$\psi_i(z) = \varphi_i(\rho z)$ for all $z \in \rho^{-1}\Gamma$.
From (\ref{eq:Gperiodic}), we get, for any $z\in \rho^{-1}\Gamma$,
after changes of variable in the integrals:
\begin{equation*}
\widetilde{\mathcal{L}}_{\Gamma}[\varphi_i](\rho z) =
\displaystyle \frac{\partial}{\partial
n}\widetilde{\mathcal{D}}_{\Gamma}[\varphi_i](\rho z) = \rho^{-1}
\frac{\partial}{\partial
n}\mathcal{D}_{\rho^{-1}\Gamma}[\psi_i](z) +
\frac{\partial}{\partial n(z)} \int_{\rho^{-1}\Gamma}
\frac{\partial}{\partial n(y)}R_2(\rho z - \rho y) \varphi(\rho y)
ds(y).
\end{equation*}

Besides, the expansion (\ref{eq:R_2}) gives us that the estimate
\begin{equation*}
\nabla R_2(\rho(z-y)) \cdot n(y) = - \frac{\rho}{2} (z-y) \cdot
n(y) + O(\rho^3),
\end{equation*}
holds uniformly in $z,y \in \rho^{-1}\Gamma$.

We thus get the following expansion:
\begin{equation*}
\displaystyle \widetilde{\mathcal{L}}_{\Gamma}[\varphi_i](\rho z)
= \rho^{-1} \mathcal{L}_{\rho^{-1}\Gamma}[\psi_i](z)  -
\frac{\rho}{2}\sum_{j=1, 2} n_j \int_{\rho^{-1}\Gamma} n_j
\psi_i(y) ds(y) + O(\rho^4).
\end{equation*}

Using $\psi_i^*$ defined by \eqref{defpsii} we  get  on
$\rho^{-1}\Gamma$:
\begin{equation}\label{eq:psi_i}
\displaystyle \psi_i = \psi_i^* + \beta k_0
\frac{\rho^2}{2}\sum_{j=1, 2} \psi_j^* \int_{\rho^{-1}\Gamma} n_j(y)
\psi_i(y) ds(y) + O(\rho^4).
\end{equation}

By iterating the formula (\ref{eq:psi_i}), we obtain on
$\rho^{-1}\Gamma$ that
\begin{equation*}
\displaystyle \psi_i = \psi_i^* + \beta k_0
\frac{\rho^2}{2}\sum_{j=1, 2} \psi_j^* \int_{\rho^{-1}\Gamma} n_j(y)
\psi_i^*(y) ds(y) + O(\rho^4).
\end{equation*}

Therefore, one can easily see that Theorem \ref{mainhomog} holds.

\subsection[Maxwell-Wagner-Fricke formula]{Case of concentric circular-shaped cells: the Maxwell-Wagner-Fricke formula}

We consider in this section that the cells are disks of radius
$r_0$. $\rho^{-1}\Gamma$ becomes a circle of radius $r_0$.

For all $g \in L^2(]0, 2\pi[)$, we introduce the Fourier coefficients:
\begin{equation*}
 \forall m \in \mathbb Z,  \quad  \hat{g}(m) = \displaystyle \frac{1}{2\pi} \int_0^{2\pi} g(\varphi) e^{-im\varphi} d\varphi,
\end{equation*}
and have then for all $\varphi \in ]0, 2\pi[$ :
\begin{equation*}
 g(\varphi) = \sum_{m=-\infty}^{\infty} \hat{g}(m) e^{im\varphi}.
\end{equation*}

For $f \in \mathcal{C}^{2, \eta}(\rho^{-1}\Gamma)$, we obtain
after a few computations:
\begin{equation*}
\forall \theta \in  ]0, 2\pi[, \quad (I + \beta k_0
\mathcal{L}_{\rho^{-1}\Gamma})^{ -1}[f](\theta) =  \displaystyle
\sum_{n \in \mathbb{Z}^*}\left( 1 + \beta k_0\, \frac{|n|}{2 r_0}
\right ) ^{-1} \hat{f}(n)\, e^{in\theta}.
\end{equation*}

For $p=1,2$, $\psi_p^* = - (I + \beta k_0
\mathcal{L}_{\rho^{-1}\Gamma})^{ -1}[n_p]$ then have the following
expression:
\begin{equation*}
\forall \theta \in  ]0, 2\pi[, \quad \psi_p^* = - \displaystyle
\left( 1 + \, \frac{\beta k_0}{2 r_0}
 \right ) ^{-1} n_p .
\end{equation*}

Consequently, we get for $(p, q) \in \{1, 2\}^2$ :
\begin{equation*}
M_{p, q} = - \delta_{pq} \displaystyle \frac{\beta k_0 \pi r_0}{1
+ \, \displaystyle \frac{\beta k_0}{2 r_0}},
\end{equation*}
and hence,
\begin{equation} \label{im_m}
\Im M_{p, q} = \displaystyle \delta_{p,q} \frac{\pi r_0 \delta
\omega (\epsilon_m \sigma_0 - \epsilon_0
\sigma_m)}{\displaystyle(\sigma_m +  \sigma_0 \,\frac{\delta}{2 r_0})^2 + \omega^2
(\epsilon_m + \epsilon_0 \,\frac{\delta}{2 r_0})^2}.
\end{equation}

Formula (\ref{im_m}) is the two-dimensional version of the
Maxwell-Wagner-Fricke formula, which gives the effective
admittivity of a dilute suspension of spherical cells covered by a
thin membrane.

An explicit formula for the case of elliptic  cells can be derived
by using the spectrum of the integral operator
$\mathcal{L}_{\rho^{-1}\Gamma}$, which can be identified by
standard Fourier methods \cite{KPS}.

\subsection{Debye relaxation times}
From
\eqref{im_m}, it follows that the imaginary part of the membrane
polarization attains its maximum with respect to the frequency  at
$$\displaystyle \frac{1}{\tau} = \frac{\displaystyle\sigma_m + \sigma_0\, \frac{\delta
}{2 r_0}}{\displaystyle\epsilon_m +  \epsilon_0\, \frac{\delta}{2
r_0}}.
$$ This dispersion phenomenon due to the membrane polarization is well known and referred to as the
 $\beta$-dispersion. The associated characteristic time $\tau$ corresponds to a Debye relaxation time.

For arbitrary-shaped cells, we define the first and second Debye
 relaxation times, $\tau_i, i=1,2$, by
\begin{equation} \label{deftaui} \frac{1}{\tau_i}:= \mathrm{arg}\max_{\omega}
| \lambda_i(\omega)|, \end{equation} where $\lambda_1 \leq
\lambda_2$ are the eigenvalues of the imaginary part of the
membrane polarization tensor $M(\omega)$. Note that if the cell is
of circular shape, $\lambda_1=\lambda_2$.

As it will be shown later, the Debye  relaxation times can be used
for identifying the microstructure.

\subsection{Properties of the membrane polarization tensor and the Debye  relaxation times}

In this subsection, we derive important properties of the membrane
polarization tensor and the Debye  relaxation times defined
respectively by \eqref{defM} and \eqref{deftaui}. In particular,
we prove that the Debye relaxation times are invariant with
respect to translation, scaling, and rotation of the cell.

First, since the kernel of $\mathcal{L}_{\rho^{-1}\Gamma}$ is
invariant with respect to translation, it follows that $M(C, \beta
k_0)$ is invariant with respect to translation of the cell $C$.

Next, from the scaling properties of the kernel of
$\mathcal{L}_{\rho^{-1}\Gamma}$ we have
$$
M(s C, \beta k_0) = s^2 M(C, \frac{\beta k_0}{s})$$ for any
scaling parameter $s>0$.

Finally, we have $$ M(\mathcal{R} C, \beta k_0) = \mathcal{R}
M(C,\beta k_0) \mathcal{R}^t \quad \mbox{for any rotation }
\mathcal{R},$$ where $t$ denotes the transpose.

Therefore, the Debye  relaxation times are translation and
rotation invariant. Moreover, for scaling, we have
$$
\tau_i(h C, \beta k_0) =  \tau_i(C, \frac{\beta k_0}{h}), \qquad
i=1,2, \quad h>0.$$ Since $\beta$ is proportional to the thickness
of the cell membrane, $\beta/h$ is nothing else than the real
rescaled coefficient $\beta$ for the cell $C$. The Debye
 relaxation times $(\tau_i)$ are therefore invariant by scaling.

Since $\mathcal{L}_{\rho^{-1}\Gamma}$ is self-adjoint, it follows
that $M$ is symmetric. Finally, we show positivity of the
imaginary part of the matrix $M$ for $\delta$ small enough. 

We consider that the cell contour $\Gamma$ can be parametrized by polar coordinates. We have, up to $O(\delta^3)$,
\begin{equation} \label{mbetadelta}
M +  \beta \rho^{-1} |\Gamma| =  - \beta^2 \int_{\rho^{-1} \Gamma}
n \mathcal{L}_{\rho^{-1}\Gamma}[n] \, ds,\end{equation} where
again we have assumed that $\sigma_0=1$ and $\epsilon_0=0$.

Recall that $$ \beta = \frac{\delta \sigma_m}{\sigma_m^2 +
\omega^2 \varepsilon_m^2} - i \frac{\delta \omega
\varepsilon_m}{\sigma_m^2 + \omega^2 \varepsilon_m^2}.$$ Hence,
the positivity of $\mathcal{L}_{\rho^{-1}\Gamma}$ yields $$ \Im\,
M \geq \frac{\delta \omega \varepsilon_m}{ 2\rho (\sigma_m^2 +
\omega^2 \varepsilon_m^2)}  |\Gamma| I
$$ for $\delta$ small enough, where $I$ is the identity matrix.

Finally, by using \eqref{mbetadelta} one can see that the
eigenvalues of $\Im \, M$ have one maximum each with respect to
the frequency. Let $l_i, i=1,2$, $l_1\geq l_2$, be the eigenvalues
of $\int_{\rho^{-1}\Gamma} n \mathcal{L}_{\rho^{-1}\Gamma}[n] ds$.
 We have
\begin{equation} \label{formli}
\lambda_i = \frac{\delta \omega \varepsilon_m}{ \rho (\sigma_m^2 +
\omega^2 \varepsilon_m^2)}  |\Gamma|  - \frac{2\delta^2 \omega
\varepsilon_m \sigma_m}{(\sigma_m^2 + \omega^2 \varepsilon_m^2)^2}
l_i, \quad i=1,2.\end{equation} Therefore, $\tau_i$ is the inverse
of the positive root of the following polynomial in $\omega$:
$$
- \varepsilon_m^4 |\Gamma| \omega^4 + 6 \delta \varepsilon_m^2
\sigma_m l_i \rho \omega^2 + \sigma_m^4 |\Gamma|.
$$

\subsection{Anisotropy measure} \label{subsectionanistropy}
Anisotropic electrical properties can be found in biological
tissues such as muscles and nerves. In this subsection, based on
formula (\ref{dilutethmf}), we introduce a natural measure of the
conductivity anisotropy and derive its dependence on the frequency
of applied current. Assessment of electrical anisotropy of muscle
may have useful clinical application. Because neuromuscular
diseases produce substantial pathological changes, the anisotropic
pattern of the muscle is likely to be highly disturbed
\cite{rutkov1,rutkov2}. Neuromuscular diseases could lead to a
reduction in anisotropy for a range of frequencies as the muscle
fibers are replaced by isotropic tissue.

Let $\lambda_1 \leq \lambda_2$ be the eigenvalues of the imaginary
part of the membrane polarization tensor $M(\omega)$. The function
$$\omega \mapsto \frac{\lambda_1(\omega)}{\lambda_2(\omega)}$$
can be used as a measure of the anisotropy of the conductivity of
a dilute suspension. Assume $\epsilon_0=0$. As frequency
$\omega$ increases, the factor $\beta k_0$ decreases. Therefore,
for large $\omega$, using the expansions in (\ref{formli}) we
obtain that
\begin{equation} \label{fanisotropic}
\frac{\lambda_1(\omega)}{\lambda_2(\omega)} = 1 + (l_1-l_2)
\frac{2 \delta \sigma_m \rho}{(\sigma_m^2 + \omega^2
\varepsilon_m^2) |\Gamma| } + O(\delta^2),
\end{equation}
where $l_1 \leq l_2$ are the eigenvalues of
$\int_{\rho^{-1}\Gamma} n \mathcal{L}_{\rho^{-1}\Gamma}[n] ds$.

Formula (\ref{fanisotropic}) shows that as the frequency
increases, the conductivity anisotropy decreases. The anisotropic
information can not be captured for
$$
\omega \gg \frac{1}{\varepsilon_m} ((l_1-l_2) \frac{2 \delta
\sigma_m \rho}{|\Gamma| } - \sigma_m^2)^{1/2}.
$$

\section{Spectroscopic imaging of a dilute suspension}
\label{sec:spectro}

\subsection{Spectroscopic conductivity imaging}
We now make use of the asymptotic expansion of the effective
admittivity in terms of the volume fraction $f=\rho^2$ to image a
permittivity inclusion. Consider $D$ to be a bounded domain in
$\Omega$ with admittivity $1+ f M(\omega)$, where $M(\omega)$ is a
membrane polarization tensor and $f$ is the volume fraction of the
suspension in $D$. The inclusion $D$ models a suspension of cells
in the background $\Omega$. For simplicity, we neglect the
permittivity $\epsilon_0$ of $\Omega$ and assume that its
conductivity $\sigma_0=1$. We also assume that $M(\omega)$ is
isotropic. At the macroscopic scale, if we inject a current $g$ on
$\partial \Omega$, then the electric potential satisfies:
\begin{equation} \label{eqspectro} \left \{
\begin{array}{ll}
\vspace{0.3cm} \nabla \cdot ( 1 + f M(\omega) \chi_{D}) \nabla u
= 0 \,&\,
 \textrm{in}\, \,\Omega,\\
\vspace{0.3cm} \displaystyle \frac{\partial u }{\partial n}
\Big|_{\partial \Omega} = g, \hspace{0.3cm}\displaystyle
\int_{\partial \Omega} g(x) ds(x)= 0, \hspace{0.3cm}\displaystyle
\int_{\Omega} u(x) dx= 0.
\end{array}
\right .
\end{equation}
The imaging problem is to detect and characterize $D$ from
measurements of $u$ on $\partial \Omega$.

Integrating by parts and using the trace theorem for the
double-layer potential \cite{14,2}, we obtain,  $\forall \, x\in
\partial \Omega$,
\begin{equation} \label{handside}
\begin{array}{l}
\displaystyle \frac{1}{2} u(x) + \frac{1}{2\pi} \int_{\partial
\Omega} \frac{(x-y)\cdot n(x)}{|x-y|^2} u(y)ds(y) +
\frac{1}{2\pi} \int_{\partial \Omega} g(y) \ln |x-y| ds(y) \\
\nm \qquad \displaystyle = \frac{f}{2\pi} M(\omega) \int_D \nabla
u(y)\cdot \frac{(x-y)}{|x-y|^2} dy. \end{array}
\end{equation} Since $f$ is small,
$$
\int_D \nabla u(y)\cdot \frac{(x-y)}{|x-y|^2} dy \simeq \int_D
\nabla U(y)\cdot \frac{(x-y)}{|x-y|^2} dy$$ holds uniformly for
$x\in
\partial \Omega$, where $U$ is the background solution, that is,
$$\left \{
\begin{array}{ll}
\vspace{0.3cm} \Delta U = 0 \,&\,
 \textrm{in}\, \,\Omega,\\
\vspace{0.3cm} \displaystyle \frac{\partial U }{\partial n}
\Big|_{\partial \Omega} = g, \hspace{0.3cm}\displaystyle
\int_{\Omega} U(x) dx= 0,
\end{array}
\right .
$$
Therefore, taking the imaginary part of \eqref{handside} yields
\begin{equation} \label{handside2}
\begin{array}{l}
\displaystyle  \frac{1}{2} \Im u(x) + \frac{1}{2\pi}
\int_{\partial \Omega} \frac{(x-y)\cdot n(x)}{|x-y|^2} \Im
u(y)ds(y)  \simeq \frac{f}{2\pi} \Im M(\omega)
\int_D \nabla U(y)\cdot \frac{(x-y)}{|x-y|^2} dy,
\end{array}
\end{equation}
uniformly for  $x\in
\partial \Omega$, provided that $g$ is real.  Finally, taking the argument of the
maximum of the right-hand side in \eqref{handside2} with respect
to the frequency $\omega$ gives the Debye  relaxation time of the
suspension in $D$.

\subsection{Selective spectroscopic imaging}

A challenging applied problem is to design a selective
spectroscopic imaging approach for suspensions of cells. Using a
pulsed imaging approach \cite{pulse2, pulse}, we propose a simple
way to selectively image dilute suspensions. Again, we assume for
the sake of simplicity that $\epsilon_0=0$ and $\sigma_0=1$.

In the time-dependant regime, the electrical model for the cell
\eqref{modelcell} is replaced with
$$
u(x,t)=\int \hat{h}(\omega) \hat{u}(x,\omega) e^{i \omega t} d\omega,
$$
where $\hat{u}(x,\omega)$ is the solution to
\begin{equation} \label{modelcellpusle}
\left \{
\begin{array}{l}
\begin{array}{ll}
\vspace{0.3cm}  \Delta \hat{u}(\cdot,\omega) = 0 \,&\, \textrm{in}\, \,D\setminus \overline{C},\\
\vspace{0.3cm}  \Delta  \hat{u}(\cdot,\omega) = 0 \,&\, \textrm{in}\, \,C,\\
\vspace{0.3cm} \displaystyle  \frac{\partial
\hat{u}(\cdot,\omega)}{\partial n}\Big|_+ =
 \frac{\partial  \hat{u}(\cdot,\omega)}{\partial n}\Big|_- \,&\, \textrm{on}\,\, \Gamma,\\
\vspace{0.3cm}  \hat{u}(\cdot,\omega)|_+- \hat{u}(\cdot,\omega)|_-
-\,\beta(\omega)  \displaystyle \frac{\partial
\hat{u}(\cdot,\omega)}{\partial n} = 0 \hspace{0.7cm}\,&\,
\textrm{on}\, \,\Gamma,
\end{array} \\
\displaystyle \frac{\partial \hat{u}(\cdot,\omega)}{\partial n}
\Big|_{\partial D} = f, \hspace{0.3cm}\displaystyle \int_{\partial
D} \hat{u}(\cdot,\omega) ds =0,
\end{array}
\right .\end{equation} and $$h(t) = \int \hat{h}(\omega)
e^{i\omega t} d\omega$$ is the pulse shape. The support of $h$ is
assumed to be compact.

At the macroscopic scale, if we inject a pulsed current,
$g(x)h(t)$, on $\partial \Omega$, then the electric potential
$u(x,t)$ in the presence of a suspension occupying $D$ is given by
$$
u(x,t)=\int \hat{h}(\omega) \hat{u}(x,\omega) e^{i \omega t} d\omega,
$$
where
$$\left \{
\begin{array}{ll}
\vspace{0.3cm} \nabla \cdot ( 1 + f M(\omega) \chi_{D}) \nabla
\hat{u}(\cdot, \omega) = 0 \,&\,
 \textrm{in}\, \,\Omega,\\
\vspace{0.3cm} \displaystyle \frac{\partial \hat{u}(\cdot, \omega)
}{\partial n} \Big|_{\partial \Omega} = g,
\hspace{0.3cm}\displaystyle \int_{\partial \Omega} \hat{u}(\cdot,
\omega) ds= 0.
\end{array}
\right .
$$
Assume that we are in the presence of two suspensions occupying
the domains $D_1$ and $D_2$ inside $\Omega$. From \eqref{handside}
it follows that
\begin{equation} \label{handside3}
\begin{array}{l}
\displaystyle \frac{1}{2} \hat{u}(x,\omega) + \frac{1}{2\pi}
\int_{\partial \Omega} \frac{(x-y)\cdot n(x)}{|x-y|^2}
\hat{u}(y,\omega)ds(y) +
\frac{1}{2\pi} \int_{\partial \Omega} g(y) \ln |x-y| ds(y) \\
\nm \qquad \displaystyle  \simeq \frac{f_1}{2\pi}  M_1(\omega)
\int_{D_1} \nabla U(y)\cdot \frac{(x-y)}{|x-y|^2} dy +
\frac{f_2}{2\pi} M_2(\omega) \int_{D_2} \nabla U(y)\cdot
\frac{(x-y)}{|x-y|^2} dy,
\end{array}
\end{equation}
and therefore,
\begin{equation} \label{handside4}
\begin{array}{l}
\displaystyle \frac{1}{2} u(x,t) + \frac{1}{2\pi} \int_{\partial
\Omega} \frac{(x-y)\cdot n(x)}{|x-y|^2} u(y,t)ds(y) +
\frac{1}{2\pi} h(t) \int_{\partial \Omega} g(y) \ln |x-y| ds(y) \\
\nm \qquad \displaystyle   \simeq \frac{f_1}{2\pi}
\mathcal{M}_1(t) \int_{D_1} \nabla U(y)\cdot \frac{(x-y)}{|x-y|^2}
dy + \frac{f_2}{2\pi} \mathcal{M}_2(t) \int_{D_2} \nabla U(y)\cdot
\frac{(x-y)}{|x-y|^2} dy,
\end{array}
\end{equation}
uniformly in $x \in \partial \Omega$ and $t \in \mathrm{supp}\,
h$, where
$$
\mathcal{M}_i(t) := \int \hat{h}(\omega) M_i(\omega) e^{i \omega
t} d\omega, \quad i=1,2.
$$
As it will be shown in section \ref{sect:numer}, by comparing the
Debye relaxation times associated to $M_1$ and $M_2$, one can
design the pulse shape $h$ in order to image selectively $D_1$ or
$D_2$.  For example, one can selectively image $D_1$ by taking
$\hat{h}(\omega)$ close to zero around the Debye  relaxation time
of $M_2$ and close to one around the Debye relaxation time of
$M_1$.

\subsection{Spectroscopic measurement of anisotropy}
In this subsection we assume that $M$ is anisotropic and consider
the solution $u$ to (\ref{eqspectro}). We want to assess the
anisotropy of the inclusion $D$ of admittivity $1 + f M(\omega)$
from measurements of $u$ on the boundary $\partial \Omega$.

From (\ref{handside2}) it follows that
\begin{equation} \label{handside22}
\begin{array}{l}
\displaystyle \int_{\partial \Omega} g(x) \bigg[\frac{1}{2} \Im
u(x) + \frac{1}{2\pi} \int_{\partial \Omega} \frac{(x-y)\cdot
n(x)}{|x-y|^2} \Im u(y)ds(y) \bigg] ds(x) \\ \nm \qquad
\displaystyle  \simeq \frac{f}{2\pi} \int_D \Im M(\omega) \nabla
U(y)\cdot  \nabla U(y) dy,
\end{array}
\end{equation}
provided that $g$ is real. Now, taking constant current sources
corresponding to $g= a \cdot n$, where $a\in \R^2$ is a unit
vector, yields
$$
\displaystyle \mathcal{S}[a]:=\int_{\partial \Omega} g(x)
\bigg[\frac{1}{2} \Im u(x) + \frac{1}{2\pi} \int_{\partial \Omega}
\frac{(x-y)\cdot n(x)}{|x-y|^2} \Im u(y)ds(y) \bigg] ds(x) \simeq
\frac{f}{2\pi} \Im M(\omega) |a|^2|D|.$$ Since
$$
\frac{\min_a \mathcal{S}[a]}{\max_a \mathcal{S}[a]} \simeq
\frac{\lambda_1(\omega)}{\lambda_2(\omega)},$$ where  $\lambda_1$
and $\lambda_2$ (with $\lambda_1\leq \lambda_2$) are the
eigenvalues of $\Im M$, it follows from subsection
\ref{subsectionanistropy} that
$$
\omega \mapsto \frac{\min_a \mathcal{S}[a]}{\max_a
\mathcal{S}[a]}$$ is a natural measure of conductivity anisotropy.
This measure may be used for the detection and classification of
neuromuscular diseases via measurement of muscle anisotropy
\cite{rutkov1,rutkov2}.

\section{Stochastic homogenization of randomly deformed
conductivity resistant membranes} \label{sec:stoch}

The first main result of this section is to show that a rigorous
homogenization theory can be derived when the cells (and hence
interfaces) are randomly deformed from a periodic structure, and
the random deformation is ergodic and stationary in the sense of
\eqref{eq:stati}.

%%%%%%%%%%%%%%%%%%%%%%%
% Extension lemmas

%%%%%%%%%%%%%%%%%%%%%%%
% Analysis and A priori Estimates

%%%%%%%%%%%%%%%%%
% The auxiliary problem
\subsection{Auxiliary problem: proof of Theorem
\ref{thm:auxiliary}}

In this subsection, we prove Theorem \ref{thm:auxiliary}, that is
the existence and uniqueness of the auxiliary problem. As in many
stochastic homogenization problems, this is the key step. The main
difficulty, as usual, lies in the loss of compactness.

Our strategy is as follows: First, an absorption term is added to
regularize the problem which gains back some compactness: the
sequence of regularized solutions, which correspond to a sequence
of vanishing regularization, have a converging gradient. Secondly,
the potential field corresponds to the limiting gradient is shown
to be a solution to the auxiliary problem. Finally, using
regularity results and sub-linear growth of potential field with
stationary gradient, we prove that the solution to the auxiliary
problem is unique.
\smallskip

\begin{proof}[Proof of Theorem \ref{thm:auxiliary}] {\it Step 1: The regularized auxiliary problem.}
Fix $p \in \R^2$. Consider the following regularized problem where
an absorption $\alpha > 0$ is added.
\begin{equation}
\left\{
\begin{aligned}
\nabla\cdot k_0 (\nabla w^\pm_{p,\alpha} (y) + p) + \alpha w^\pm_{p,\alpha}= 0 &\quad
\text{ in } &&\Phi(\R^\pm_2,\gamma),\\
n \cdot k_0 \nabla w^-_{p,\alpha}(y) = n \cdot k_0\nabla
w^-_{p,\alpha}(y), &\quad \text{and }&& w^+_{p,\alpha} -
w^-_{p,\alpha} = \beta k_0 n \cdot \nabla w^-_{p,\alpha} \text{ in
}
 \Phi(\Gamma_2,\gamma),\\
w^\pm_{p,\alpha} (y,\gamma) = \widetilde{w}^\pm_{p,\alpha}
(\Phi^{-1}(y,\gamma),\gamma), &\quad \text{and }&&
\widetilde{w}^\pm_{p,\alpha} \text{ are stationary}.
\end{aligned}
\right. \label{eq:regaux}
\end{equation}
Define the space $\mathcal{H} = \{w = \widetilde{w} \circ
\Phi^{-1} ~|~  \widetilde{w} \in H^1_{\rm loc}(\R_2^+) \times
H^1_{\rm loc}(\R_2^+), \widetilde{w} \text{ is stationary}\}$.
More precisely, this means the space of functions $w =
\widetilde{w}\circ \Phi^{-1}$ where $\widetilde{w}$ restricted in
$\R_2^+$ (respectively $\R_2^-$) is in $\in H^1_{\rm loc}(\R_2^+)$
(respectively $H^1_{\rm loc}(\R_2^-)$), and in addition,
$\widetilde{w}^\pm$ are stationary.

Equip $\mathcal{H}$ with the inner product
\begin{equation}
(u,v)_{\mathcal{H}} = \E \left(\int_{Y^+} \nabla \tilde{u} \cdot
\nabla \overline{\tilde{v}} dx + \int_{Y^-} \nabla \tilde{u}\cdot
\nabla \overline{\tilde{v}} dx + \int_Y \tilde{u}
\overline{\tilde{v}} dx \right).
\end{equation}
Then $\mathcal{H}$ is a Hilbert space. Define the bilinear form
\begin{equation*}
\begin{aligned}
A_\alpha(u,v) = \ &\E \left(\int_{\Phi(Y^+)} k_0 \nabla u^+ \cdot \overline{\nabla v^+} dx + \int_{\Phi(Y^-)}k_0 \nabla u^- \cdot \overline{\nabla v^-} dx\right.\\
& \quad\quad\quad + \left.\alpha \int_{\Phi(Y)} u \tilde{v} dx +
\frac{1}{\beta} \int_{\Phi(\Gamma_0)} (u^+ - u^-) \overline{(w^+ -
w^-)} ds \right),
\end{aligned}
\end{equation*}
and the linear functional
\begin{equation*}
b_p(v) = -k_0\E\left( \int_{\Phi(Y^+)} p\cdot \overline{\nabla
v^+} dx + \int_{\Phi(Y^-)} p \cdot \overline{\nabla v^-} dx +
\int_{\Phi(\Gamma_0)} (n(x) \cdot p) \overline{(v^+ - v^-)}(x)
ds(x) \right).
\end{equation*}
For a fixed $\alpha>0$, we verify that $A_\alpha$ is bounded and
coercive, and $b_p$ is bounded. By the Lax--Milgram theorem, there
exists a unique $w_{p,\alpha} \in \mathcal{H}$ such that
\begin{equation}
A_\alpha(w_{p,\alpha}, \varphi) = b_p(\varphi), \quad \forall
\varphi \in \mathcal{H}. \label{eq:A=b}
\end{equation}
In fact, the solution satisfies \eqref{eq:regaux} in the
distributional sense $\Pb$-a.s.~in $\mathcal{O}$. Furthermore, the
following estimates are immediate:
\begin{equation}
\E \int_{Y^\pm} |\nabla \tilde{w}^\pm_{p,\alpha} |^2 \le C, \quad
\E \int_{\Gamma_0}
|\tilde{w}^+_{p,\alpha}-\tilde{w}^-_{p,\alpha}|^2 \le C, \quad \E
\int_{Y^\pm} |\tilde{w}^\pm_{p,\alpha}|^2 \le \frac{C}{\alpha}.
\label{eq:retabdd1}
\end{equation}
Apply the extension operators in Corollary \ref{cor:extRdm} and
Corollary \ref{cor:extPhiRdm}. We get the sequences
$\{\tilde{w}^{\rm ext}_{p,\alpha} = P\tilde{w}^+_{p,\alpha}\}
\subset H^1_{\rm loc}(\R^2)$ and $\{w^{\rm ext}_{p,\alpha} =
P_\gamma w^+_{p,\alpha}\}$. Further, $\{\tilde{w}^{\rm
ext}_{p,\alpha}\}$ are stationary. They satisfy that $w^{\rm
ext}_{p,\alpha} = \tilde{w}^{\rm ext}_{p,\alpha}\circ \Phi^{-1}$
and that
\begin{equation}
\E \int_{Y} |\nabla \tilde{w}^{\rm ext}_{p,\alpha} |^2 \le C,
\quad \E \int_{\Gamma_0} |\tilde{w}^{\rm
ext}_{p,\alpha}-\tilde{w}^-_{p,\alpha}|^2 \le C, \quad \E \int_{Y}
|\tilde{w}^{\rm ext}_{p,\alpha}|^2 \le \frac{C}{\alpha}.
\label{eq:retabdd}
\end{equation}
\smallskip

{\it Step 2: Converging subsequences as the regularization
parameter vanishes.} Thanks to the above estimates, there exists
some subsequence, still denoted by $\nabla \tilde{w}^{\rm
ext}_{p,\alpha}$, which converges weakly as $\alpha \downarrow 0$
to a function $\tilde{\eta}^{\rm ext}_p \in [L^2_{\rm loc}(\R^2,
L^2(\mathcal{O}))]^2$, where $\tilde{\eta}^{\rm ext}_p$ is
stationary. By a change of variable, we also have that $\nabla
w^{\rm ext}_{p,\alpha}$ converges in $[L^2_{\rm
loc}(\R^2,L^2(\mathcal{O}))]^2$ to $\eta^{\rm ext}_p$ and
\begin{equation}
\eta^{\rm ext}_p(y,\gamma) = \nabla_y \Psi(y,\gamma)
\tilde{\eta}^{\rm ext}_p(\tilde{y},\gamma), \label{eq:gxigxit}
\end{equation}
where $\Psi = \Phi^{-1}$ and $\tilde{y} = \Psi(y)$. Moreover, as
gradients, $\nabla_{\tilde{y}} \tilde{w}^{\rm ext}_{p,\alpha}$ and
$\nabla_y w^{\rm ext}_{p,\alpha}$ are curl free. This property is
preserved by their limits:
\begin{equation}
\partial_{y_i} (\eta^{\rm ext}_p)_j = \partial_{y_j} (\eta^{\rm ext}_p)_i, \quad \partial_{\tilde{y}_i} (\tilde{\eta}^{\rm ext}_p)_j = \partial_{\tilde{y}_j} (\tilde{\eta}^{\rm ext}_p)_i, \quad i,j=1,\cdots,d.
\end{equation}
That is to say, $\eta^{\rm ext}_p$ and $\tilde{\eta}^{\rm ext}_p$
are also gradient functions. Consequently, there exist $w_p^{\rm
ext}$ and $\tilde{w}^{\rm ext}_p$ such that $\eta^{\rm ext}_p =
\nabla_y w_p^{\rm ext}$ and $\tilde{\eta}^{\rm ext}_p =
\nabla_{\tilde{y}} \tilde{w}^{\rm ext}_p$. The relation
\eqref{eq:gxigxit} implies that $w^{\rm ext}_p(y) = \tilde{w}^{\rm
ext}_p(\Psi(y,\gamma),\gamma) + C_p(\gamma)$ where $C_p(\gamma)$
is a random constant. We hence re-define $\tilde{w}^{\rm ext}_p$
by adding to it the random variable $C_p$ so that $w^{\rm ext}_p =
\tilde{w}^{\rm ext}_p \circ \Psi$. By the same token, we have that
$\nabla \tilde{w}^-_{p,\alpha}$ and $\nabla w^-_{p,\alpha}$
converge (along the above subsequence) to $\tilde{\eta}^-_p \in
[L^2_{\rm loc}(\R_2^-)]^2$ and $\eta^-_p \in [L^2_{\rm
loc}(\Phi(\R_2^-))]^2$ respectively. Further, $\eta^-_p$ is
stationary; in addition, for some $\tilde{w}^-_p \in H^1_{\rm
loc}(\R_2^-)$ and $w^-_p \in H^1_{\rm loc}(\R_2^-)$ satisfying
that $w^-_p = \tilde{w}^-_p \circ \Psi$, we have $\tilde{\eta}^-_p
= \nabla \tilde{w}^-_p$ and $\eta^-_p = \nabla w^-_p$.

Repeating the above argument with the help of the second
inequality in \eqref{eq:retabdd}, one observes that
$\{\tilde{w}^{\rm ext}_{p,\alpha} - \tilde{w}^-_{p,\alpha}\}$
restricted to the interface $\GGamma$ converges to some
$\tilde{\zeta}_p \in L^2_{\rm loc}(\GGamma)$ and $\tilde{\zeta}_p$
is stationary. Similarly, by a change of variable, $w^{\rm
ext}_{p,\alpha} - w^-_{p,\alpha}$ converges to $\zeta_p =
\tilde{\zeta}_p \circ \Psi$ and $\zeta_p \in L^2_{\rm
loc}(\Phi(\GGamma))$.

Since $\tilde{w}^{\rm ext}_{p,\alpha}$ is an extension of
$\tilde{w}^+_{p,\alpha}$, the inequality \eqref{eq:extRdm} holds.
Also, since $\tilde{w}^{\rm ext}_{p,\alpha}$ is stationary, one
has $\E \int_Y \nabla \tilde{w}^{\rm ext}_{p,\alpha} d\tilde{y} =
0$. Passing to the limit, we get
\begin{equation}
\E \int_Y \nabla_{\tilde{y}} \tilde{w}^{\rm ext}_p(\tilde{y})
d\tilde{y} = 0, \quad \text{and} \quad \E\int_Y
|\nabla_{\tilde{y}} \tilde{w}^{\rm ext}_p|^s d\tilde{y} \le C
\E\int_{Y^+}  |\nabla_{\tilde{y}} \tilde{w}^{\rm ext}_p|^s
d\tilde{y}, \label{eq:meanzero}
\end{equation}
where $C$ depends on the same parameters as in \eqref{eq:extRdm}.
Similarly, we also have that
\begin{equation}
 \E\int_{\Phi(Y)} |\nabla_y w^{\rm ext}_p|^s dy
\le C \E\int_{\Phi(Y^+)}  |\nabla_y w^{\rm ext}_p|^s dy,
\end{equation}
where $C$ depends on the same parameters as in
\eqref{eq:extPhiRdm}. Here and above, $s\ge 1$ is some parameter
so that the right-hand sides are finite.
\smallskip

{\itshape Step 3: The limit solves the auxiliary problem.} Take
the limiting functions $w^{\rm ext}_p$ and $\tilde{w}^{\rm ext}_p$
from last step. Let $w^+_p$ be the restriction of $w^{\rm ext}_p$
to $\Phi(\R_2^+)$, $\tilde{w}^+_p$ be the restriction of
$\tilde{w}^{\rm ext}_p)$ to $\R_2^+$. Then by the above
construction and \eqref{eq:meanzero}, the last two equations of
\eqref{eq:auxiliary} are satisfied.

Let us verify that $(w^+_p, w^-_p)$ satisfies the equation and the
boundary conditions in \eqref{eq:auxiliary}. Recall the weak
formulation \eqref{eq:A=b} for the regularized equation
\eqref{eq:regaux}. Pass to the limit $\alpha \to 0$ along the
subsequence found above. In particular, we observe that
Cauchy--Schwarz and the last inequality of \eqref{eq:retabdd1}
imply that
\begin{equation*}
\left| \alpha\ \E \int_{\Phi(Y)} w_{p,\alpha} \overline{\varphi}\
dy \right| \le \sqrt{\alpha} \left(\E \int_{\Phi(Y)} \alpha
|w_{p,\alpha}|^2 dy\right)^{\frac 1 2} \left(\E \int_{\Phi(Y)}
|\varphi|^2 dy\right)^{\frac 1 2} \longrightarrow 0.
\end{equation*}
As a result, we obtain in the limit that for all $(\varphi^+,
\varphi^-) \in \mathcal{H}$,
\begin{equation}
\begin{aligned}
\E\left(\int_{\Phi(Y^+)} k_0(p+ \nabla w^+_p) \cdot \overline{\nabla \varphi^+} dx\right. &+ \left.\int_{\Phi(Y^-)}k_0 (p+ \nabla w^-_p) \cdot \overline{\nabla \varphi^-} dx\right) \hfill \\
&+ \frac{1}{\beta} \E\left(\int_{\Phi(\Gamma_0)} \zeta_p
\overline{(\varphi^+ - \varphi^-)} ds \right) = 0
\label{eq:proaux1}
\end{aligned}
\end{equation}
By letting $\varphi$ in $C^\infty_0(\Phi(Y^+)) \cap \mathcal{H}$
and $C^\infty_0(\Phi(Y^-)) \cap \mathcal{H}$ respectively, we see
that the first line of \eqref{eq:auxiliary} is satisfied in the
distributional sense. It is worth mentioning that
$C^\infty_0(\Phi(Y^\pm)) \cap \mathcal{H}$ is the set of test
functions in $\mathcal{H}$ (being stationary) and whose
restrictions to the unit cell is in the first space,
$C_0^\infty(\Phi(Y^\pm))$, but the function itself is not
compactly supported there. To verify the second line of
\eqref{eq:auxiliary}, we first take $\varphi \in
C^\infty_0(\Phi(Y)) \cap \mathcal{H}$ and apply integration by
parts to get
\begin{equation*}
\E \int_{\Phi(\Gamma_0)} k_0 (n \cdot \nabla w^+_p - n \cdot
\nabla w^-_p) \overline{\varphi} ds = 0, \quad\quad \forall
\varphi \in C^\infty_0(\Phi(Y)) \cap \mathcal{H}.
\end{equation*}
This implies the first boundary condition. For the second
condition, consider arbitrary $\varphi \in
C^\infty(\overline{\Phi(Y^-)}) \cap \mathcal{H}$ and let the test
function in \eqref{eq:proaux1} be $\varphi \chi_{\Phi(\R_2^-)}$.
By the first boundary condition and integration by parts formula,
we get
$$
\E \int_{\Phi(\Gamma_0)} \left[k_0 n \cdot (p + \nabla w^-_p) -
\frac{1}{\beta} \zeta_p\right] \overline{\varphi} ds = 0,
\quad\quad \forall \varphi \in C^\infty(\overline{\Phi(Y^-)}) \cap
\mathcal{H}.
$$
This implies that $\zeta_p = \beta k_0 n(x) \cdot(p+\nabla
w^+_p)$. It suffices to link $\zeta_p$ with $w^+_p - w^-_p$ on
$\Phi(\Gamma_0)$. Hence fix an arbitrary $\phi$ in
$C^\infty(\Phi(\Gamma_0)) \cap \mathcal{H}$ such that
$\int_{\Phi(\Gamma_0)} \phi ds = 0$ and $\phi\circ \Phi$ is
stationary. Then we can construct $\varphi = \varphi^+
\chi_{\Phi(Y^+)} + \varphi^- \chi_{\Phi(Y^-)}$ in $\mathcal{H}$ by
solving:
\begin{equation*}
\left\{
\begin{aligned}
\Delta \varphi^+ = 0 \text{ in } \Phi(Y^+), &\quad\text{and}\quad& \Delta \varphi^- = 0 \text{ in } \Phi(Y^-),\\
\varphi^+ = 0 \text{ on } \partial \Phi(Y,) &\quad\text{and}\quad&
n(x) \cdot \nabla \varphi^{\pm} = \phi \text{ on } \Phi(\Gamma_0).
\end{aligned}
\right.
\end{equation*}
Using $\varphi$ as test function in \eqref{eq:A=b} and by
integration by parts, one obtains
\begin{equation*}
\E \int_{\Phi(Y^+)} \nabla w^+_{p,\alpha} \cdot \nabla
\overline{\varphi^+} dx + \E \int_{\Phi(Y^-)} \nabla
w^-_{p,\alpha} \cdot \nabla \overline{\varphi^-} dx = \E
\int_{\Phi(\Gamma_0)} (-w^+_{p,\alpha} + w^-_{p,\alpha})
\overline{\phi} ds.
\end{equation*}
Pass to the limit and recall that $w^+_{p,\alpha} -
w^-_{p,\alpha}$ converges to $\zeta_p$; we obtain
$$
\E \int_{\Phi(Y^+)} \nabla w^+_p \cdot \nabla \overline{\varphi^+}
dx + \E \int_{\Phi(Y^-)} \nabla w^-_p \cdot \nabla
\overline{\varphi^-} dx = -\E \int_{\Phi(\Gamma_0)} \zeta_p
\overline{\phi} ds.
$$
Since $\varphi$ is constructed so that $\Delta \varphi^\pm = 0$ in
$\Phi(Y^\pm)$, we have also that
$$
\E \int_{\Phi(Y^+)} \nabla w^+_p \cdot \nabla \overline{\varphi^+}
dx + \E \int_{\Phi(Y^-)} \nabla w^-_p \cdot \nabla
\overline{\varphi^-} dx = - \E \int_{\Phi(\Gamma_0)} (w^+_p -
w^-_p) \overline{\phi} ds.
$$
It follows that
\begin{equation*}
\E \int_{\Phi(\Gamma_0)} (w^+_p - w^-_p - \zeta_p) \overline{\phi}
ds = 0, \quad \forall \phi \in C^\infty(\Phi(\Gamma_0)) \text{
s.t. } \int_{\Phi(\Gamma_0)} \phi ds = 0.
\end{equation*}
Note that both $\zeta_p$ and $w^+_p - w^+_p$ are stationary. The
above identity shows that $\zeta_p = w^+_p - w^-_p + C_2(\gamma)$
where $C_2(\gamma)$ is a random constant. Re-define $w^-_p$ by
subtracting from it the constant $C_2$; then the second boundary
condition in the second line of \eqref{eq:auxiliary} is satisfied.
Note that, by subtracting the same constant from $\tilde{w}^-_p$, the
change of variable $w^-_p = \tilde{w}^-_p \circ \Psi$ remains
valid. We summarize that $(w^+_p, w^-_p)$ obtained above provides
a solution to the auxiliary problem \eqref{eq:auxiliary}.
\smallskip

{\it Step 4: Uniqueness of the auxiliary problem}. Suppose
otherwise, then there exist $v^+_0$ and $v^-_0$ satisfying
\eqref{eq:auxiliary} with $p=0$. In addition, there is an
extension of $\tilde{v}^+_0$ denoted by $\tilde{v}^{\rm ext}_0$,
such that
\begin{equation}
\nabla \tilde{v}^{\rm ext}_0 \text{ is stationary}, \quad
\text{and} \quad \E \int_Y \nabla \tilde{v}^{\rm ext}_0 dx = 0.
\end{equation}

On the one hand, by the standard elliptic regularity theory, we
know that $v^+_0$ and $v^-_0$ are in $W^{1,\infty}_{\rm
loc}(\Phi(\R_2^+))$ and $W^{1,\infty}_{\rm loc}(\R_2^-)$. Then
this is true also for $\tilde{v}^+_0$ and $\tilde{v}^-_0$.
Consequently, we have that: $\nabla \tilde{v}^{\rm ext}_0$ is
stationary; $\E \|\nabla \tilde{v}^{\rm ext}_0\|^s_{L^s(Y)} \le C$
for some $s > 2$. These properties of $\tilde{v}^{\rm ext}_0$
imply that it grows sub-linearly, thanks to \cite[Lemma
A.5]{armstrong}.

Let us take the weak formulation of the equations satisfied by $(v^+_0,
v^-_0)$, and take this function itself as the test function.
Integrate over $\Phi(NY)$ for a large integer $N$. We get
\begin{equation*}
\begin{aligned}
\int_{\Phi(NY\cap \R_2^+)} k_0 |\nabla v^+_0|^2 dx &+ \int_{\Phi(NY\cap \R_2^-)} k_0 |\nabla v^-_0|^2 dx\\
&+ \beta^{-1} \int_{\Phi(NY\cap \GGamma)} |v^+_0 - v^-_0|^2 ds =
\int_{\partial \Phi(NY)} k_0 n \cdot \nabla v^+_0 \overline{v^+_0}
ds.
\end{aligned}
\end{equation*}
Since $v^+_0$ grows sub-linearly at infinity, for sufficiently
large $N$, one has $|v^+_0| = o(N)$. Consequently, the right-hand
side is of order $o(N^2)$. Take the real part of the left-hand
side and divided it by $N^2$, we have
\begin{equation*}
\frac{1}{N^2} \sum_{n \in \mathcal{I}(N)} \left[\int_{\Phi(Y_n^+)}
\sigma_0 |\nabla v^+_0|^2 dx + \int_{\Phi(Y^-_n)} \sigma_0 |\nabla
v^-_0|^2 dx + \Re \beta^{-1} \int_{\Phi(\Gamma_n)} |v^+_0 -
v^-_0|^2 ds\right] \longrightarrow 0,
\end{equation*}
where $\mathcal{I}(N)$ are the indices of cubes $\{Y_n \subset
NY\}$. By a change of variable with bounds \eqref{eq:Phic2} and
\eqref{eq:Phic3}, we also have that
 \begin{equation*}
\frac{1}{N^2} \sum_{n \in \mathcal{I}(N)} \left[\int_{Y_n^+}
\sigma_0 |\nabla \tilde{v}^+_0|^2 d\tilde{x} + \int_{Y^-_n}
\sigma_0 |\nabla \tilde{v}^-_0|^2 d\tilde{x} + \beta^{-1}
\int_{\Gamma_n} |\tilde{v}^+_0 - \tilde{v}^-_0|^2(\tilde{x})
ds(\tilde{x})\right] \longrightarrow 0.
\end{equation*}
Since the integrands above are stationary and the number of
elements in $\mathcal{I}(N)$ is also $N^2$, we can apply ergodic
theorem and conclude that
\begin{equation*}
\E \int_{Y} |\nabla \tilde{v}^+_0|^2 d\tilde{x} = \E \int_{Y^-}
|\nabla \tilde{v}^-_0|^2 d\tilde{x} = \E \int_{\Gamma_0}
|\tilde{v}^+_0 - \tilde{v}^-_0|^2(\tilde{x}) ds(\tilde{x}) = 0.
\end{equation*}
This implies that $\tilde{v}^+_0 = \tilde{v}^-_0 = C$ for some
constant. By the change of variables, $v^+_0 = v^-_0 = C$ as well,
proving the uniqueness.
\end{proof}

%%%%%%%%%%%%%%%%%
% Homogenization

\subsection{Proof of the homogenization theorem} \label{sec:proof}

In this section, we prove the homogenization theorem using the
energy method, {\it i.e.}, the method of oscillating test
functions \cite{murat}.

\subsubsection{Oscillating test functions}

We first build the oscillating test functions using the solutions
to the auxiliary problem. Fix a vector $p \in \R^2$. Let
$(w^+_p,w^-_p)$ be the unique solution to the auxiliary problem
\eqref{eq:auxiliary}. In particular, $w^+_p$ has an extension
$w^{\rm ext}_p$. We define
\begin{equation}
\label{eq:osctest} \left\{
\begin{aligned}
w^\eps_{1p}(x,\gamma) =  x\cdot p + \eps w^{\rm ext}_p (\frac{x}{\eps},\gamma), &\quad&& x \in \R^2,\\
w^\eps_{2p}(x,\gamma) =  x\cdot p + \eps Qw^-_p
(\frac{x}{\eps},\gamma), &\quad&& x \in \R^2.
\end{aligned}
\right.
\end{equation}
Here and in the sequel, $Q$ denotes the trivial extension operator
which sets $Qf=0$ outside the domain of $f$. By scaling the
auxiliary problem, we verify that $(w^{\eps+}_p, w^{\eps-}_p)$,
where $w^{\eps+}_p$ is the restriction of $w^\eps_{1p}$ in
$\eps\Phi(\R_2^+)$ and $w^{\eps-}_p$ is the restriction of
$w^\eps_{2p}$ in $\eps\Phi(\R_2^-)$, satisfies
\begin{equation*}
\left\{
\begin{aligned}
\nabla \cdot k_0 \nabla w^{\eps+}_p = 0 &\quad\text{and}\quad&
\nabla \cdot k_0 \nabla w^{\eps-}_p = 0 \text{ in } \eps\Phi(\R_2^\pm),\\
k_0 n \cdot \nabla w^{\eps+}_p = k_0 n \cdot \nabla w^{\eps-}_p
&\quad\text{and}\quad& w^{\eps+}_p - w^{\eps-}_p = \eps \beta k_0
n\cdot \nabla w^{\eps-}_p \text{ on } \eps\Phi(\GGamma).
\end{aligned}
\right.
\end{equation*}
In particular, for any bounded open set $\mathscr{O} \subset \R^2$
and any test function $\varphi = (\varphi^+,\varphi^-)$ such that
$\varphi^\pm \in H^1_{\rm loc}(\mathscr{O}\cap \Phi(\R_2^\pm))$
and $\varphi$ supported in $\mathscr{O}$, we have that
\begin{equation}
\begin{aligned}
\int_{\mathscr{O}\cap \eps\Phi(\R_2^+)} k_0 \nabla w^{\eps+}_p \cdot \overline{\nabla \varphi^+} dx  &+ \int_{\mathscr{O}\cap \eps\Phi(\R_2^-)} k_0 \nabla w^{\eps-}_p \cdot \overline{\nabla \varphi^-} dx\\
&+ (\eps\beta)^{-1} \int_{\mathscr{O}\cap \eps\Phi(\GGamma)}
(w^{\eps-}_p - w^{\eps-}_p) \overline{(\varphi^+- \varphi^-)} ds =
0.
\end{aligned}
\label{eq:wep12weak}
\end{equation}

Define also the vector fields $\eta^{\eps\pm}_p = k_0 \nabla
w^{\eps\pm}_p$. We derive the following convergence results.

\begin{lem} \label{lem:osctest} Let $w^{\eps \pm}_p$ and the vector fields $\eta^{\eps\pm}_p$ be defined as above and let $\mathscr{O}
 \subset \R^2$ be a bounded open set. Then as $\eps \to 0$, we have the following:
\begin{equation}
w^\eps_{1 p} \rightarrow x\cdot p, \quad \text{ uniformly in }
\mathscr{O} \text{ a.s.~in } \mathcal{O}; \label{eq:limweext}
\end{equation}
\begin{equation}
w^\eps_{2p} \rightarrow x\cdot p, \quad \text{ in }
L^2(\mathscr{O}) \text{ a.s.~in } \mathcal{O}. \label{eq:limwem}
\end{equation}
\begin{equation}
Q\eta^{\eps\pm}_p \rightharpoonup \det\left(\E \int_Y \nabla
\Phi(x,\cdot) dx\right)^{-1} \E \int_{\Phi(Y^\pm)} k_0
\left(\nabla w^\pm_p(x,\cdot) + p\right)dx \text{ in }
[L^2(\mathscr{O})]^2 \text{ a.s.~in } \mathcal{O}.
\label{eq:limgwe}
\end{equation}
\end{lem}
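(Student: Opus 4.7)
My strategy treats the three limits in turn, with the common tools being the stationarity built into the auxiliary problem \eqref{eq:auxiliary} (via pulling back by $\Psi = \Phi^{-1}$), the ergodic theorem stated as a proposition earlier in the paper, and the sublinear growth of a scalar potential whose gradient is stationary with vanishing mean (the result \cite[Lemma A.5]{armstrong} already invoked in Step 4 of the proof of Theorem \ref{thm:auxiliary}).

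For \eqref{eq:limweext}, the identity $w^\eps_{1p}(x,\gamma) - x\cdot p = \eps \tilde w^{\rm ext}_p\bigl(\Psi(x/\eps,\gamma),\gamma\bigr)$ reduces the task to showing that $\tilde w^{\rm ext}_p$ grows sublinearly. This follows from \cite[Lemma A.5]{armstrong} because $\nabla \tilde w^{\rm ext}_p$ is stationary with $\E \int_Y \nabla \tilde w^{\rm ext}_p = 0$ (recall \eqref{eq:meanzero}). Combining with $\|\Psi - \mathbf{I}\|_\infty \le C$ from \eqref{eq:Phic4}, we obtain $\sup_{x\in\mathscr{O}} \eps |\tilde w^{\rm ext}_p(\Psi(x/\eps,\gamma))| \to 0$ almost surely, proving uniform convergence.

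For \eqref{eq:limwem}, successive changes of variables $y = x/\eps$ and $\tilde y = \Psi(y,\gamma)$ combined with the Jacobian bounds \eqref{eq:Phic2}-\eqref{eq:Phic3} give
\[
\|\eps Q w^-_p(\cdot/\eps,\gamma)\|_{L^2(\mathscr{O})}^2 \le C \eps^4 \int_{\Psi(\mathscr{O}/\eps,\gamma)\cap \R_2^-} |\tilde w^-_p(\tilde y,\gamma)|^2\, d\tilde y.
\]
Since $|\Psi(\mathscr{O}/\eps,\gamma)| = O(\eps^{-2})$, it is enough to show that $\tilde w^-_p$ grows sublinearly on $\R_2^-$. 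Within each connected cell $n + Y^-$, $\tilde w^-_p$ solves an elliptic equation with Dirichlet trace on $n + \Gamma$ prescribed by the transmission condition $\tilde w^-_p = \tilde w^+_p - \beta k_0\, \partial_n \tilde w^+_p$. The maximum principle (on the real and imaginary parts separately) combined with Schauder regularity for $\tilde w^+_p$ and the sublinear growth of $\tilde w^+_p = \tilde w^{\rm ext}_p|_{\R_2^+}$ established for \eqref{eq:limweext} propagates sublinear growth to $\tilde w^-_p$. This cellwise transfer is the main technical obstacle, since $\tilde w^-_p$ is not the trace of a globally mean-zero stationary potential and cannot be handled by a direct appeal to the sublinear growth lemma.

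For \eqref{eq:limgwe}, test against $\varphi \in C_c(\mathscr{O})$. The changes of variables $y = x/\eps$, then $\tilde y = \Psi(y,\gamma)$, then $\zeta = \eps \tilde y$ give
\[
\int_\mathscr{O} Q\eta^{\eps+}_p(x,\gamma)\, \varphi(x)\, dx = \int_{\eps\Psi(\mathscr{O}/\eps,\gamma)} \tilde G(\zeta/\eps,\gamma)\, \varphi\bigl(\eps\Phi(\zeta/\eps,\gamma)\bigr)\, d\zeta,
\]
where
\[
\tilde G(\tilde y,\gamma) := k_0\bigl(p + (\nabla\Phi(\tilde y,\gamma))^{-T} \nabla \tilde w^+_p(\tilde y,\gamma)\bigr)\, |\det \nabla\Phi(\tilde y,\gamma)|\, \chi_{\R_2^+}(\tilde y).
\]
By \eqref{eq:Phic1} and the stationarity of $\nabla \tilde w^+_p$, $\tilde G$ is stationary; the ergodic theorem then yields $\tilde G(\cdot/\eps,\gamma) \rightharpoonup \E \int_Y \tilde G\, d\tilde y$ weakly in $L^2_{\rm loc}$ almost surely. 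Sublinear growth of $\Phi - \bigl(\E\int_Y \nabla\Phi\bigr)\mathbf{I}$ identifies the limits $\eps\Phi(\zeta/\eps,\gamma) \to \bigl(\E\int_Y \nabla\Phi\bigr)\zeta$ and $\eps\Psi(\mathscr{O}/\eps,\gamma) \to \bigl(\E\int_Y \nabla\Phi\bigr)^{-1}\mathscr{O}$ uniformly on bounded sets. Passing to the limit and undoing the $\tilde y \mapsto y$ change of variables in the mean produces the Jacobian factor $\det\bigl(\E\int_Y \nabla\Phi\bigr)^{-1}$ and rewrites $\E\int_Y \tilde G\, d\tilde y$ as $\E\int_{\Phi(Y^+)} k_0(p + \nabla w^+_p)\, dy$, yielding \eqref{eq:limgwe}. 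The case of $\eta^{\eps-}_p$ is identical with $Y^-$ replacing $Y^+$.
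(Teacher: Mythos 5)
Your treatment of \eqref{eq:limweext} coincides with the paper's (both reduce to the sub-linear growth of $\tilde{w}^{\rm ext}_p$ already secured in Step 4 of the proof of Theorem \ref{thm:auxiliary}), and your treatment of \eqref{eq:limgwe} is a correct by-hand reproduction of the lemma of Blanc--Le Bris--Lions that the paper simply cites at this point; nothing to object to there.

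The issue is \eqref{eq:limwem}, where you depart from the paper and leave a genuine gap at exactly the point you flag as ``the main technical obstacle.'' Your plan needs $\sup_{B_R\cap \R_2^-}|\tilde{w}^-_p| = o(R)$, and the transfer from $\tilde{w}^+_p$ to $\tilde{w}^-_p$ via the maximum principle requires a pointwise bound on $\partial w^+_p/\partial n$ along each interface $\Phi(n+\Gamma)$. Elliptic regularity gives such a bound cell by cell (and note it must be regularity up to the interface for a transmission problem, not interior regularity), but the resulting quantity $X_n := \|\nabla w^+_p\|_{L^\infty(\Phi(n+\Gamma))}$ is only a stationary random field in $n$: for a fixed realization it is not uniformly bounded, and to conclude you must show $\max_{|n|_\infty\le N} X_n = o(N)$ a.s., which requires a moment bound $\E X_0^s<\infty$ for some $s>2$ together with a Borel--Cantelli argument --- none of which appears in your proposal. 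The paper avoids all of this: it splits $w^\eps_{2p}-x\cdot p$ into $\eps\,(w^-_p - w^{\rm ext}_p)\chi_{\eps\Phi(\R_2^-)}$ plus $\eps\, w^{\rm ext}_p\chi_{\eps\Phi(\R_2^-)}$, handles the second piece by the sub-linear growth already proved, and estimates the first piece in $L^2$ cell by cell using the trace-type inequality \eqref{eq:Ymest}, whose right-hand side involves only the interface jump $\tilde{w}^+_p-\tilde{w}^-_p$ and the gradients --- stationary quantities with finite second moments by \eqref{eq:retabdd1} and \eqref{eq:retabdd} --- so that the ergodic theorem closes the estimate with the prefactor $\eps^2$. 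Since only an $L^2$-averaged statement is needed, the paper's route delivers it with no pointwise control at all; either adopt it, or supply the missing moment and Borel--Cantelli step if you keep yours.
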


\begin{proof} The first limit holds since $w^{\rm ext}_p(x)$ grows sub-linearly as $|x|$ tends to infinity, a fact we have already proved in Step 4 in the proof of Theorem \ref{thm:auxiliary}. Indeed, we have
\begin{equation*}
w^\eps_{1 p} - x\cdot p = |x|\left(|\eps^{-1}x|^{-1} w^{\rm
ext}_p(\eps^{-1}x)\right) \to 0 \text{ uniformly in } \mathscr{O}.
\end{equation*}

To prove the second convergence result, we write
$$
w^\eps_{2p} - x\cdot p = \eps \left(w^-_p(\frac{x}{\eps}) - w^{\rm
ext}_p(\frac{x}{\eps}) \right) \chi_{\eps\Phi(\R_2^-)} + \eps
w^{\rm ext}_p(\frac{x}{\eps}) \chi_{\eps\Phi(\R_2^-)}.
$$
The second item on the right converges uniformly in $\mathscr{O}$
to zero. Therefore, it suffices to prove that $J_\eps : = \|\eps
w^-_p(\eps^{-1}x) - \eps w^{\rm ext}_p(\eps^{-1}
x)\|_{L^2(\eps\Phi(\R_2^-) \cap \mathscr{O})}$ converges to zero.
Given $\mathscr{O}$ and $\eps$, we can find
$\mathcal{I}_\eps(\mathscr{O}) \subset \Z^2$ such that
$\mathscr{O} \subset \cup_{k \in \mathcal{I}_\eps} \eps \Phi(Y_n)$
and $|\mathcal{I}_\eps| \lesssim C(\mathscr{O}) \eps^{-d}$. Then
\begin{equation*}
\begin{aligned}
J_\eps &\le \sum_{n \in \mathcal{I}_\eps} \int_{\eps\Phi(Y^-_n)} \eps^2
 \left|w^{\rm ext}_p(\frac{x}{\eps}) - w^-_p(\frac{x}{\eps})\right|^2 dx = \eps^{2+d} \sum_{n
  \in \mathcal{I}_\eps} \int_{\Phi(Y^-_n)} \left|w^{\rm ext}_p(x) - w^-_p(x)\right|^2 dx\\
&\le C\eps^{2+d} \sum_{n \in \mathcal{I}_\eps} \int_{Y^-_n}
\left|\tilde{w}^{\rm ext}_p(\tilde{x}) -
\tilde{w}^-_p(\tilde{x})\right|^2 d\tilde{x}.
\end{aligned}
\end{equation*}
In the last inequality, we used the change of variable $\tilde{x}
= \Phi^{-1}(x)$ and the bounds \eqref{eq:Phic2} and
\eqref{eq:Phic3}. Using the estimate \eqref{eq:Ymest}, we have
\begin{equation*}
J_\eps \le C\eps^2 \left[\frac{1}{|\mathcal{I}_\eps|} \sum_{n \in
\mathcal{I}_\eps} \left(\int_{\Gamma_n}
\left|\tilde{w}^+_p(\tilde{x}) - \tilde{w}^-_p(\tilde{x})\right|^2
ds(\tilde{x}) + \int_{Y^-_n} \left| \nabla \tilde{w}^{\rm
ext}_p(\tilde{x}) - \nabla \tilde{w}^-_p(\tilde{x})\right|^2
d\tilde{x} \right)\right].
\end{equation*}
Note that the integrands above are stationary and the item inside
the bracket is ready for applying ergodic theorem. This item
converges to
\begin{equation*}
\E \int_{\Gamma_0} |\tilde{w}^+_p - \tilde{w}^-_p|^2(\tilde{x})
ds(\tilde{x}) + \E \int_{Y} |\nabla \tilde{w}^{\rm ext}_p - \nabla
\tilde{w}^-_p|^2 d\tilde{x},
\end{equation*}
which is bounded for example by \eqref{eq:retabdd1} and
\eqref{eq:retabdd}. Consequently, $J_\eps \to 0$, proving
\eqref{eq:limwem}.

Now we prove the last convergence result. Let
$\tilde{\eta}^{\pm}_p = (k_0 [p + (\nabla \Phi)^{-1}\nabla
\tilde{w}^\pm_p])\chi_{\Phi(\R_2^\pm)}$; then they are stationary
and the relation $\eta^{\eps\pm}_p = \tilde{\eta}^{\pm}_p
\left(\Phi^{-1} (\frac{x}{\eps},\gamma) \right)$ holds. By Lemma
2.2. of \cite{BLBL06}, we obtain \eqref{eq:limgwe}.
\end{proof}

%%%%%%%%%%%%%%
\subsubsection{Proof of the homogenization theorem}

In this subsection we prove the homogenization theorem using
Tartar's energy method. There are two main steps. In the first
step, we use the energy estimates to extract converging
subsequences. In the second step, we identify the limit as a
solution to a homogenized equation which has unique solution.
\smallskip

\begin{proof}[Proof of Theorem \ref{thm:homog}] {\itshape Step 1: Extraction of converging subsequences.}  Let $(\uepsp,\uepsm)$ be the solution to the
 heterogeneous problem \eqref{eq:u_{epsilon}}. In particular, $\uepsp$ has an extension $u^{\rm ext} \in H^1(\Omega)$. Let the vector fields $\xieps^\pm$ be $k_0\nabla \ueps^\pm$. Then the estimates \eqref{eq:BddExt} and \eqref{eq:GradientEst} show that
\begin{equation*}
\|\ueps^{\rm ext}\|_{H^1(\Omega)} + \|Q\xiepsp\|_{[L^2(\Omega)]^2}
+ \|Q\xiepsp\|_{[L^2(\Omega)]^2} \le C.
\end{equation*}
Consequently, there exists a subsequence and functions $u_0 \in
H^1(\Omega)$ and $\xi_1, \xi_2 \in [L^2(\Omega)]^2$, such that
\begin{equation}
\begin{aligned}
&\ueps^{\rm ext} \rightharpoonup u_0 \text{ weakly in } H^1(\Omega), \quad &&\ueps^{\rm ext} \rightarrow u_0 \text{ strongly in }
L^2(\Omega);\\
&Q\xiepsp \rightharpoonup \xi_1 \text{ weakly in }
[L^2(\Omega)]^2, \quad &&Q\xiepsm \rightharpoonup \xi_2 \text{
weakly in } [L^2(\Omega)]^2.
\end{aligned}
\label{eq:usubseq}
\end{equation}
In the proof of Proposition \ref{prop:extlim}, we also proved that
\begin{equation}
\ueps^{\rm ext} \chi_{\eps}^- - Qu^-_\eps \rightarrow 0 \text{
strongly in } L^2(\Omega). \label{eq:usubseq1}
\end{equation}

Now fix an arbitrary test function $\varphi \in
C^\infty_0(\Omega)$. Take $(\varphi \chi_{\eps}^+,
\varphi\chi_{\eps}^-)$ as a test function in \eqref{eq:rpdeweak}.
Then the interface term disappears and we get
\begin{equation*}
\int_{\Omega} k_0 (Q\xiepsp) \cdot \overline{\nabla \varphi} dx +
\int_{\Omega} k_0 (Q\xiepsm) \cdot \overline{\nabla \varphi} dx =
0.
\end{equation*}
Passing to the limit $\eps \to 0$ along the subsequence above, one
finds
\begin{equation}
\int_\Omega (\xi_1 + \xi_2) \cdot \overline{\nabla \varphi} dx =
0, \quad \forall \varphi \in C^\infty_0(\Omega). \label{eq:xilim}
\end{equation}
Therefore, the limiting vector field $\xi_1 + \xi_2$ satisfies
that
\begin{equation}
\nabla \cdot (\xi_1 + \xi_2) = 0, \quad \text{ in }
\mathcal{D}'(\Omega), \label{eq:divxiEq0}
\end{equation}
where $\mathcal{D}'(\Omega)$ denotes the space of tempered
distributions on $\Omega$. Now for any $\phi \in C^\infty(\partial
\Omega)$, we may lift it to a smooth function $\varphi \in
C^\infty(\overline{\Omega})$ such that $\varphi = \phi$ on
$\partial \Omega$. Take $(\varphi \chi_{\eps}^+,
\varphi\chi_{\eps}^-)$ as the test function in \eqref{eq:rpdeweak}
and pass to the limit; we get
\begin{equation*}
\int_\Omega (\xi_1 + \xi_2) \cdot \overline{\nabla \varphi} dx =
\int_{\partial \Omega} g \overline{\phi}\ ds.
\end{equation*}
Since $\xi_1 + \xi_2 \in L^2(\Omega)$ and $\nabla \cdot (\xi_1 +
\xi_2) \in L^2(\Omega)$, the trace $n \cdot (\xi_1 + \xi_2)$ on
the boundary $\partial \Omega$ is well defined. Applying the
divergence theorem and \eqref{eq:divxiEq0} we get
\begin{equation*}
\int_{\partial \Omega} n \cdot (\xi_1 + \xi_2) \overline{\phi}\ ds
= \int_{\partial \Omega} g \overline{\phi}\ ds, \quad \forall \phi
\in C^\infty(\overline{\Omega}).
\end{equation*}
This shows that, $n \cdot (\xi_1 + \xi_2) = g$ at $\partial
\Omega$. Further, since the trace of $Q\xiepsm$ is zero for all
$\eps$, the same argument above shows that $n \cdot \xi_2 = 0$ at
$\partial \Omega$. We hence get
\begin{equation}
n \cdot \xi_1 = g \quad \text{ at } \partial \Omega.
\label{eq:xibc}
\end{equation}

{\itshape Step 2: Weak convergence of $Q\uepsm$.} We can write
$Q\uepsm$ as $\ueps^{\rm ext} \chi_{\eps}^- + (Q\uepsm -
\ueps^{\rm ext} \chi_{\eps}^-)$. Due to \eqref{eq:usubseq1} and
the fact that $\ueps^{\rm ext}$ converges strongly to $u_0$, we
only need to verify that $\chi_{\eps}^-$ converges weakly to
$\theta$. To this purpose, fix an arbitrary open set $K$ compactly
supported in $\Omega$, and observe that for sufficiently small
$\eps$, $K$ is compactly supported in $E_\eps$ defined in
\eqref{eq:KEdef}. Then we have
\begin{equation*}
\int_K \chi_{\Omega_\eps^-} dx = \int_{K\cap \eps \Phi(\R_2^-)} dx
= \int_{\eps \Phi^{-1} \left(\frac{K}{\eps}\right)} \chi_{\R_2^-}
(\frac{z}{\eps}) \det \nabla \Phi(\frac{z}{\eps},\gamma) dz.
\end{equation*}
In \cite{BLBL06,BLBL07}, it is shown that the characteristic
function $\eps \Phi^{-1}\left(\frac{K}{\eps}\right)$ converges
strongly in $L^1(\R^2)$ to that of the set $[\E \int_Y \nabla
\Phi(y,\cdot) dy]^{-1}K$. On the other hand, since the function
$\chi_{\R_2^-} {\rm det} \nabla \Phi$ is stationary, by ergodic
theorem, we have
\begin{equation*}
\chi_{\R_2^-}(\frac{z}{\eps}) \det \nabla
\Phi(\frac{z}{\eps},\gamma) \overset{*}{\rightharpoonup}
\E\int_{Y} \chi_{\R_2^-} \det \nabla \Phi(z,\gamma) dz = \theta
\varrho^{-1}, \quad \text{in } L^\infty(\R^2).
\end{equation*}
Consequently, we observe that for any open set $K$ compactly
supported in $\Omega$, we have
\begin{equation*}
\int \chi_{K} \chi_{\Omega_\eps^-} dx \rightarrow \theta
\varrho^{-1} \int_{[\E \int_Y \nabla \Phi(y,\cdot) dy]^{-1}K} dx =
\theta \varrho^{-1} \det \left(\E \int_Y \nabla \Phi(y,\cdot)
dy\right)^{-1} |K| = \theta |K|.
\end{equation*}
Here, we used the fact that $\det \left(\E \int_Y \nabla
\Phi(y,\cdot) dy\right) = \varrho^{-1}$, a fact also proved in
\cite{BLBL06,BLBL07}. Since linear combinations of characteristic
functions of compact sets in $\Omega$ are dense in $L^2(\Omega)$,
we get the desired result. The fact that $\theta < 1$ is easily
deduced from the assumption on $Y^-$ and the assumption
\eqref{eq:Phic4}, we omit the proof. This completes the proof of
item two of the theorem up to a subsequence.
\smallskip

{\itshape Step 3: Identifying the limit.} Fix an arbitrary test
function $\varphi \in C^\infty_0(\Omega)$. By the constructions of
$\Depsm, K_\eps$ and $E_\eps$ defined in \eqref{eq:Depsmdef} and
\eqref{eq:KEdef}, for sufficiently small $\eps$, the function
$\varphi$ is compactly supported in $E_\eps$.

Fix a $p \in \R^2$. Let $w^\eps_{1p}$ and $w^\eps_{2p}$ be as in
\eqref{eq:osctest}. In the weak formulation \eqref{eq:wep12weak}
of the equations satisfied by them, take $(\varphi
\overline{\uepsp}, \varphi \overline{\uepsm})$ as a test function;
we get
\begin{equation*}
\int_\Omega (Q\eta^{\eps+}_p) \cdot \nabla(\overline{\varphi}
\uepsp) dx + \int_\Omega (Q\eta^{\eps-}_p) \cdot
\nabla(\overline{\varphi} \uepsm) dx + \frac{1}{\eps\beta}
\int_{\interface} (w^\eps_{1p} - w^\eps_{2p})
\overline{\varphi}(\uepsp - \uepsm) ds = 0.
\end{equation*}
Similarly, in the weak formulation \eqref{eq:rpdeweak}, take
$(\varphi \overline{w^\eps_{1p}}, \varphi \overline{w^\eps_{2p}})$
as the test function; we get
\begin{equation*}
\int_\Omega (Q\xiepsp) \cdot \nabla(\overline{\varphi}
w^\eps_{1p}) dx + \int_\Omega (Q\xiepsm) \cdot
\nabla(\overline{\varphi} w^\eps_{2p}) dx + \frac{1}{\eps\beta}
\int_{\interface} (\uepsp - \uepsm)\overline{\varphi}(w^\eps_{1p}
- w^\eps_{2p}) ds = 0.
\end{equation*}
Note that the integrating domains in the first formula can be
taken as above because $\varphi$ is compactly supported in
$E_\eps$, which implies that $\eps\Phi(\GGamma) \cap \text{supp }
\varphi = \interface \cap \text{supp } \varphi$. Subtracting the
two formulas above and noticing in particular that the interface
terms cancel out, we get
\begin{equation*}
\begin{aligned}
\int_\Omega (Q\eta^{\eps+}_p) \cdot \overline{\nabla \varphi} \ueps^{\rm ext} dx + &\int_\Omega (Q\eta^{\eps-}_p) \cdot \overline{\nabla \varphi} \ueps^{\rm ext} dx
+ \int_\Omega (Q\eta^{\eps-}_p) \cdot \overline{\nabla \varphi} (Q\ueps^- - \ueps^{\rm ext}\chi_{\eps}^-) dx\\
- &\int_\Omega (Q\xiepsp) \cdot \overline{\nabla \varphi}
w^\eps_{1p} dx - \int_\Omega (Q\xiepsm) \cdot \overline{\nabla
\varphi} w^\eps_{2p} dx =0.
\end{aligned}
\end{equation*}
By the convergence results \eqref{eq:limgwe}, \eqref{eq:limweext},
\eqref{eq:limwem}, \eqref{eq:usubseq} and \eqref{eq:usubseq1}, we
observe that each integrand above is a product of a strong
converging term with a weak converging term. Therefore, we can
pass the above to the limit $\eps \to 0$ and get
\begin{equation*}
\int_\Omega (\eta_{1p} + \eta_{2p}) u_0 \cdot \overline{ \nabla
\varphi} dx = \int_\Omega (\xi_1 + \xi_2) (x\cdot p) \cdot
\overline{\nabla \varphi} dx,
\end{equation*}
where $\eta_{1p}$ (resp. $\eta_{2p}$) is defined as the right-hand
side of \eqref{eq:limgwe} with the "$+$" (resp. "$-$") sign. The
integral on the right can be written as
\begin{equation*}
\int_\Omega (\xi_1 + \xi_2) \cdot [\overline{\nabla (\varphi
x\cdot p )} - p \overline{\varphi}] dx = -\int_\Omega (\xi_1 +
\xi_2) \cdot p\ \overline{\varphi} dx,
\end{equation*}
where we have used \eqref{eq:xilim}. For the integral involving
$\eta_{1p} + \eta_{2p}$, we check that the derivation of
\eqref{eq:divxiEq0} works for $\eta_{1p}+\eta_{2p}$, which shows
that the integral can be written as
\begin{equation*}
-\int_\Omega [\nabla \cdot (\eta_{1p} + \eta_{2p}) u_0 +
(\eta_{1p} + \eta_{2p}) \nabla u_0]\overline{\varphi} dx =
-\int_\Omega [(\eta_{1p} + \eta_{2p}) \nabla u_0]
\overline{\varphi} dx.
\end{equation*}
Due to the linearity in $p$ of the auxiliary problem
\eqref{eq:auxiliary} in $p$, we verify that its solutions satisfy
that $w^\pm_p = \sum_{j=1}^2 w^\pm_{e_j} p_j$, where
$\{e_j\}_{j=1}^2$ is the Euclidean basis of $\R^2$. Consequently,
we have that
\begin{equation*}
\begin{aligned}
&e_i \cdot (\eta_{1p} + \eta_{2p}) = k_0 \det\left(\E \int_Y \nabla \Phi(x,\cdot) dx\right)^{-1} \\
&\quad\quad \times \sum_{j=1}^2 \left[\E \int_{\Phi(Y^+)}
\left(e_i \cdot \nabla w^+_{e_j} (x,\cdot) + \delta_{ij} \right)dx
+ \E \int_{\Phi(Y^-)} \left(e_i \cdot \nabla w^-_{e_j} (x,\cdot) +
\delta_{ij}\right) dx \right] p_j.
\end{aligned}
\end{equation*}
Recall the definition of the matrix $(K^*_{ij})$ in
\eqref{eq:Khomog}. We check that $\eta_{1p}+\eta_{2p} = (K^*)^t p$
where $(K^*)^t$ denotes the transpose of $K^*$. Combining the
above formulas, we finally obtain that
\begin{equation*}
\int_\Omega p^t (\xi_1 + \xi_2 - K^*\nabla u_0) \overline{\varphi}
dx = 0, \quad \forall \varphi \in C^\infty_0(\Omega), \ p \in
\R^2.
\end{equation*}
We hence conclude that $\xi_1 + \xi_2 = K^* \nabla u_0$, and by
\eqref{eq:divxiEq0} we verify the first line of \eqref{eq:hpde}.
Similarly, by \eqref{eq:xibc}, we verify the boundary condition in
\eqref{eq:hpde}. Finally, the facts that
\begin{equation*}
\int_\Omega Q\uepsp dx = 0, \quad\text{and}\quad Q\uepsm
\rightharpoonup \theta u_0 \text{ weakly in } L^2
\end{equation*}
indicate that
\begin{equation*}
\int_\Omega u_0 dx = \lim_{\eps \to 0} \int_\Omega (Q\uepsp +
Q\uepsm) dx = \lim_{\eps \to 0} \int_\Omega Q\uepsm dx = \theta
\int_\Omega u_0 dx.
\end{equation*}
Since $\theta < 1$, we see that the integral condition in
\eqref{eq:auxiliary} is satisfied. Therefore, the subsequence
obtained in step one satisfies the homogenized problem
\eqref{eq:hpde}. Since this problem has a unique solution, all
converging subsequences converge to the unique solution. Finally,
the whole sequence $(\uepsp,\uepsm)$ converges, completing the
proof.
\end{proof}

%%%%%%%%%%%%%%%
% Dilute limit
\subsection{Effective admittivity of a dilute suspension}
\label{sec:dilute}

In this subsection, we consider the case when the cells are
dilute. We aim to derive a formal first order asymptotic expansion
of the effective admittivity in terms of the volume fraction of
the dilute cells.

In the formula of the homogenized coefficient \eqref{eq:Khomog},
the integral term has the form
\begin{equation*}
J_{ij} = \E\int_{\Phi(Y^+)} e_j \cdot \nabla w^+_{e_i}(y,\cdot) \
dy + \E\int_{\Phi(Y^-)} e_j \cdot \nabla w^-_{e_i}(y,\cdot) \ dy.
\end{equation*}
Thanks to the ergodic theorem, $J_{ij}$ also takes the form
\begin{equation*}
J_{ij} = \lim_{N\to \infty} \frac{1}{N^2} \sum_{n\in
\mathcal{I}(N)} \left( \int_{\Phi(Y_n^+)} e_j \cdot \nabla
w^+_{e_i}(y,\cdot)\ dy + \int_{\Phi(Y_n^-)} e_j \cdot \nabla
w^-_{e_i}(y,\cdot)\ dy\right).
\end{equation*}
Here, $\mathcal{I}(N)$ is the indices for the cubes $\{Y_n\}$
inside the big cube $NY$. Now using integration by parts, we
simplify the above expression to
\begin{equation*}
J_{ij} = \lim_{N\to \infty} \frac{1}{N^2} \left( \int_{\partial
\Phi(NY)} n_j w^+_{e_i}(y,\cdot)\ ds(y) - \sum_{n\in
\mathcal{I}(N)} \int_{\Gamma_n} (w^+_{e_i} - w^-_{e_i})(y,\cdot)
n_j \ ds(y) \right).
\end{equation*}
Here, $n$ denotes the outer normal vector along the boundary of
$\Phi(NY)$ and $\Phi(Y_n^-)$, $n\in \mathcal{I}(N)$; $n_j = n\cdot
e_j$ denotes its $j$-th component. Note that the boundary terms at
$\{\partial \Phi(Y_n)\}_{n\in \mathcal{I}(N)} \cap \Phi(NY)$ are
cancelled because two adjacent cubes share the same outer normal
vector at their common boundary except for reversed signs.

Finally, we have seen that $w^+_{e_i}$ has sub-linear growth.
Since the surface $\Phi(NY)$ has volume of order $O(N)$, the
sub-linear growth indicates that the boundary integral at
$\partial \Phi(NY)$ is of order $o(N^2)$. Consequently, when
divided by $N^2$ this term goes to zero. By applying the ergodic
theorem again, we obtain that
\begin{equation}
J_{ij} = \lim_{N\to \infty} \frac{1}{N^2} \sum_{n\in
\mathcal{I}(N)} \int_{\Gamma_n} (w^-_{e_i} - w^+_{e_i})(y,\cdot)
n_j \ ds(y) = \E \int_{\partial \Phi(Y^-)} (w^-_{e_i} -
w^+_{e_i})(y,\cdot) n_j \ ds(y). \label{eq:Jijlim}
\end{equation}
In the next subsection, we investigate this integral further by
deriving a formal representation for the jump $w^+_{e_i} -
w^-_{e_i}$ in the case when the inclusions are dilute, {\it i.e.},
small and far away from each other.
%%%%%%%%%%%%%%%
%\subsubsection{Formulation of the dilute suspension}

To model the dilute suspension, we assume that the reference cell
$Y^-$ is of the form of $\rho B$, where $B$ is a domain of unit
length scale, and $\rho \ll 1$ denotes the small length scale of
the dilute inclusions. Due to the assumptions \eqref{eq:Phic2} and
\eqref{eq:Phic3}, the length scale of the cell $\Phi(Y^-)$ is
still of order $\rho$. Further, due to the assumption
\eqref{eq:Phic4}, the distance of the cell $\Phi(Y^-)$ from the
``boundary'' $\partial \Phi(Y)$ is of order one, which is much
larger than the size of the inclusion.

Since the distances between the inclusions are much larger than
their sizes, we may use the single inclusion approximation. That
is, $w^\pm_{e_i}$ can be approximated by the solutions to the
following interface problem:
\begin{equation*}
\left\{
\begin{aligned}
&\nabla \cdot k_0 \nabla w^\pm_{e_i} = 0 \text{ in } \Phi(Y^-) \text{ and } \R^2 \setminus \Phi(Y^-),\\
&\frac{\partial w^+_{e_i}}{\partial n} = \frac{\partial
w^-_{e_i}}{\partial n}, \text{ and } w^+_{e_i} - w^-_{e_i} = \rho
\beta k_0 (\frac{\partial w^-_{e_i}}{\partial n} + n \cdot e_i)
\text{ on } \Upsilon,\\
& w^+_{e_i} \rightarrow 0 \text{ at }  \infty.
\end{aligned}
\right.
\end{equation*}
Here, $\Upsilon$ denotes the boundary of the inclusion. Note that
the extra $\rho$ in the jump condition is due to the fact that the
length scale of the inclusion $\Phi(Y^-)$ is of order $\rho$.
Using double layer potentials, we represent $w^+_{e_i}$ and
$w^-_{e_i}$ as $\mathcal{D}_{\Upsilon}[\phi_i]$ restricted to
$\Phi(Y^-)$ and $\R^2\setminus \Phi(Y^-)$ respectively. Due to the
trace formula of $\mathcal{D}_{\Upsilon}$ and the jump conditions
above, the function $\phi_i$ is determined by
\begin{equation}
-\phi_i = \rho\beta k_0 (\frac{\partial \mathcal{D}_{\Upsilon}
[\phi_i]}{\partial n} + n_i). \label{eq:phiidef}
\end{equation}
Let us define the operator $\mathcal{L}_{\Upsilon}$ by
$\frac{\partial \mathcal{D}_{\Upsilon}}{\partial n}$, then we have
that
\begin{equation*}
w^+_{e_i} - w^-_{e_i} = - \phi_i = \rho\beta k_0 (I+\rho\beta k_0
\mathcal{L}_{\Upsilon})^{-1} [n_i], \quad \text{ on } \Upsilon.
\end{equation*}
As a consequence, we have also that
\begin{equation*}
J_{ij} \simeq - \rho \beta k_0 \E \int_{\Upsilon} (I+\rho\beta k_0
\mathcal{L}_{\Upsilon})^{-1} [n_i] n_j ds.
\end{equation*}
Let us define $\psi_i$ to be $-(I+\rho\beta k_0
\mathcal{L}_{\Upsilon})^{-1} [n_i]$, that is $\psi_i + \rho \beta
k_0 n \cdot \nabla \mathcal{D}_{\Upsilon}[\psi_i](x) = -n_i$.
Define the scaled function $\tilde{\psi}_i(\tilde{x}) =
\psi_i(\rho \tilde{x})$ on the scaled curve $\rho^{-1} \Upsilon$.
Using the homogeneity of the gradient of the Newtonian potential,
we verify that
\begin{equation*}
\mathcal{D}_{\Upsilon}[\psi_i](x) = \mathcal{D}_{\rho^{-1}
\Upsilon} [\tilde{\psi}_i] (\tilde{x}), \quad\text{and}\quad \rho
n \cdot \nabla \mathcal{D}_{\Upsilon}[\psi_i](x) = n \cdot \nabla
\mathcal{D}_{\rho^{-1} \Upsilon}[\tilde{\psi}_i](\tilde{x}),
\end{equation*}
where  $\tilde{x} = \rho^{-1} x$. This shows that $\tilde{\psi}_i
= -(I+\beta k_0 \mathcal{L}_{ \rho^{-1}\Upsilon})^{-1}[n_i]$.
Using the change of variable $y \to \rho \tilde{y}$ in the
previous integral representation of $J_{ij}$, we rewrite it as
\begin{equation*}
J_{ij} \simeq \rho\beta k_0 \E \int_{\rho^{-1}\Upsilon}
\psi_i(\rho \tilde{y}) n_j ds(\rho \tilde{y}) = \rho^2 \beta k_0
\E \int_{\rho^{-1} \Phi(\Gamma)} \tilde{\psi}_i n_j ds(\tilde{y}).
\end{equation*}
Finally, the approximation \eqref{eq:Kdilute} of the effective
permittivity for the dilute suspension holds, where $f = \varrho
\rho^2$ is the volume fraction where $\varrho$ accounts for the
averaged change of volume due to the random diffeomorphism; the
polarization matrix $M$ is defined by \eqref{eq:Mijdef} and is
associated to the deformed inclusion scaled to the unit length
scale. Note that the imaging approach developed in subsection
\ref{handside} can be applied here as well.
%%%%%%%%%%%%%%%
% Appendix

\section{Numerical simulations} \label{sect:numer}

We present in this section some numerical simulations to
illustrate the fact that the Debye relaxation times are
characteristics of the microstructure of the tissue.

We use for the different parameters the following realistic
values:
\begin{itemize}
\item the typical size of eukaryotes cells: $\rho \simeq10-100$
$\mu$m; \item the ratio between the membrane thickness and the
size of the cell: $\delta / \rho = 0.7 \cdot 10^{-3}$; \item the
conductivity of the medium and the cell: $\sigma_0 =0.5$
$\textrm{S.m}^{-1}$; \item the membrane conductivity: $\sigma_m =
10^{-8}$ $\textrm{S.m}^{-1}$; \item the permittivity of the medium
and the cell: $\epsilon_0 = 90 \times8.85\cdot10^{-12}$
$\textrm{F.m}^{-1}$; \item the membrane permittivity:
$\varepsilon_m =3.5 \times8.85\cdot10^{-12}$ $\textrm{F.m}^{-1}$;
\item the frequency: $\omega \in [10^4, 10^9]$ Hz.
\end{itemize}

Note that the assumptions of our model $\delta \ll \rho$ and
$\sigma_m \ll \sigma_0$ are verified.

We first want to retrieve the invariant properties of the Debye
relaxation times. We consider (Figure \ref{fig1}) an elliptic cell
(in green) that we translate (to obtain the red one), rotate (to
obtain the purple one) and scale (to obtain the dark blue one). We
compute the membrane polarization tensor, its imaginary part, and
the associated eigenvalues which are plotted as a function of the
frequency (Figure \ref{fig2}). The frequency is here represented
on a logarithmic scale. One can see that for the two eigenvalues
the maximum of the curves occurs at the same frequency, and hence
that the Debye relaxation times are identical for the four
elliptic cells. Note that the red and green curves are even
superposed; this comes from the fact that $M$ is invariant by
translation.

Next, we are interested in the effect of the shape of the cell on
the Debye relaxation times. We consider for this purpose, (Figure
\ref{fig3}) a circular cell (in green),  an elliptic cell (in red)
and a very elongated elliptic cell (in blue). We compute
similarly as in the preceding case, the polarization tensors
associated to the three cells, take their imaginary part and plot
the two eigenvalues of these imaginary parts with respect to the
frequency. As shown in Figure \ref{fig4}, the maxima occur at
different frequencies for the first and second eigenvalues. Hence,
we can distinguish with the Debye relaxation times between these
three shapes.

Finally, we study groups of one (in green), two (in blue) and
three cells (in red) in the unit period (Figure \ref{fig5}) and
the corresponding polarization tensors for the homogenized media.
The associated relaxation times are different in the three
configurations (Figure \ref{fig6}) and hence can be used to
differentiate tissues with different cell density or organization.

These simulations prove that the Debye relaxation times are
characteristics of the shape and organization of the cells. For a
given tissue, the idea is to obtain by spectroscopy the frequency
dependence spectrum of its effective admittivity. One then has
access to the membrane polarization tensor and the spectra of the
eigenvalues of its imaginary part. One compares the associated
Debye relaxation times to the known ones of healthy and cancerous
tissues at different levels. Then one would be able to know using
statical tools with which probability the imaged tissue is
cancerous and at which level.

%The typical size $R$ of eukaryotes cell is $10$ to $100
%\,{\mu}\textrm{m}$. We use for the different parameters the
%following realistic values: $\delta/R= 0.7 10^{-3}$; $\sigma_0=
%0.5$; $\sigma_m = 10^{-8}$; $\varepsilon_0 = 90 \times 8.85x
%10^{-12}$;  $\varepsilon_m = 0.3 \times 11.3 \times 8.85 \times
%10^{-12}$, and $\omega \in [10^4, 10^9]$.

%Figure~\ref{fig1} shows 4 elliptical-shaped cells.
%Figure~\ref{fig2} illustrates the invariance properties of the
%Debye relaxation times. It shows the eigenvalues of $\Im M$ for
%the $4$ ellipses as functions of the frequency. In
%Figure~\ref{fig3}, three different cell shapes are considered.
%Figure~\ref{fig4} shows the Debye relaxation times for the three
%different cell shapes in Figure~\ref{fig3}. Figure~\ref{fig6} also
%illustrates that the microstructure can be identified from the
%Debye relaxation times. It gives the behavior of the eigenvalues
%of $\Im M$ for the $3$ different microstructures obtained by
%repeating periodically one, two, or three cells;  see
%Figure~\ref{fig5}.

\begin{figure}
\centering
\includegraphics[scale=0.4]{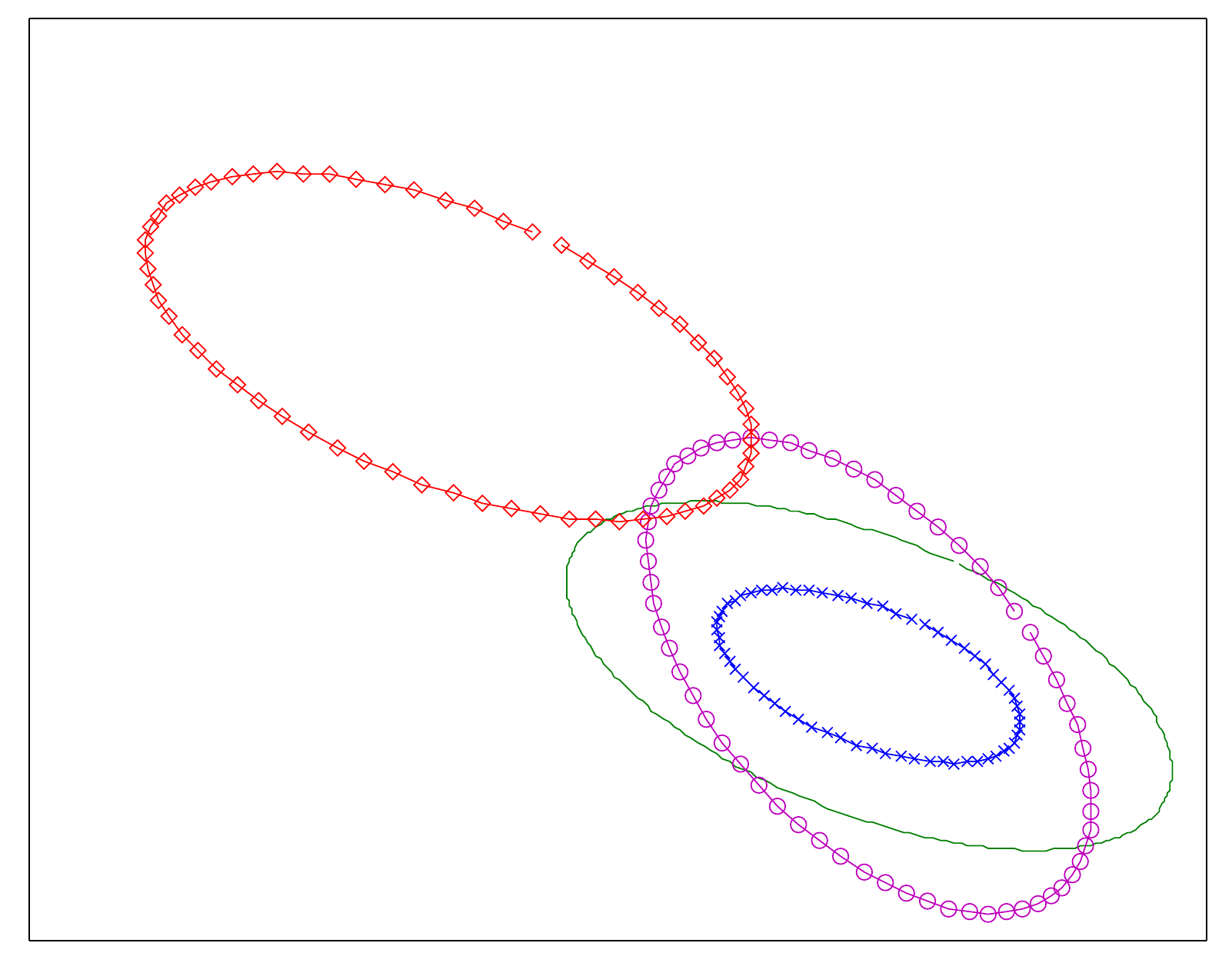}
 \caption{\it{An ellipse translated, rotated and scaled.} \label{fig1}}
\end{figure}

\begin{figure}
\centering
\includegraphics[scale=0.43]{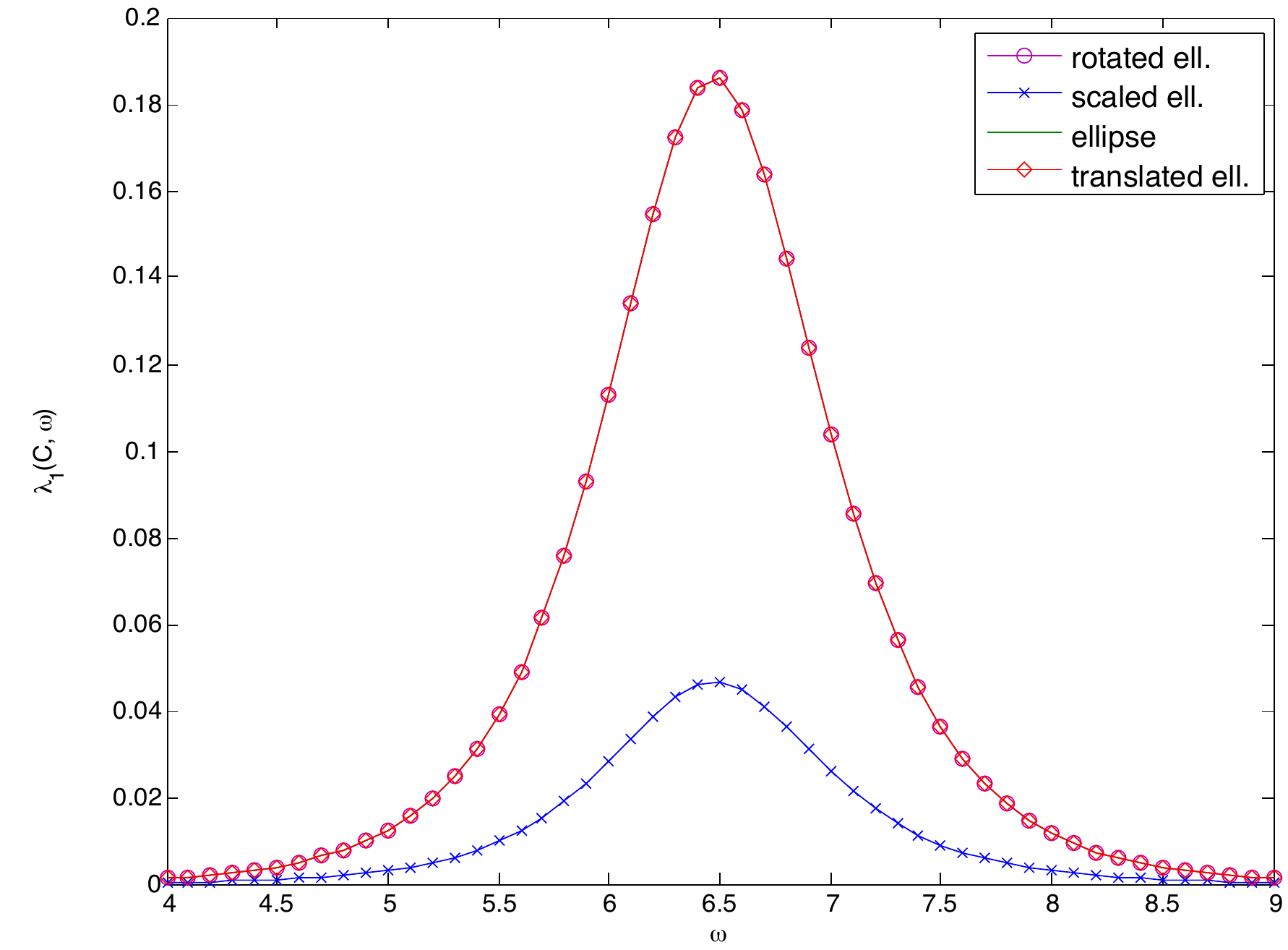}
\includegraphics[scale=0.452]{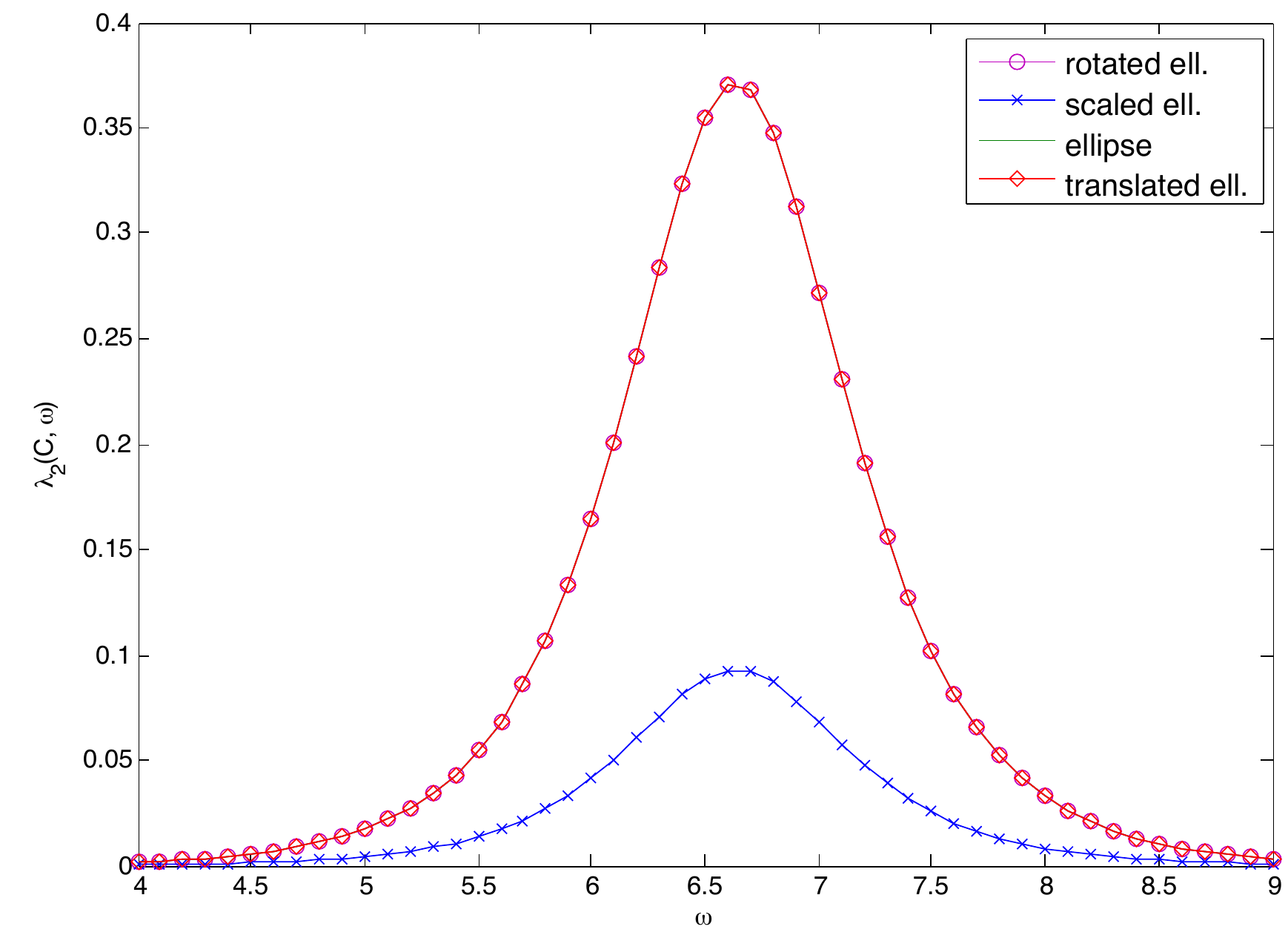}
 \caption{\it{Frequency dependence of the eigenvalues of $\Im M$ for the $4$ ellipses in Figure~\ref{fig1}.}\label{fig2}}
\end{figure}

\begin{figure}
\centering
\includegraphics[scale=0.4]{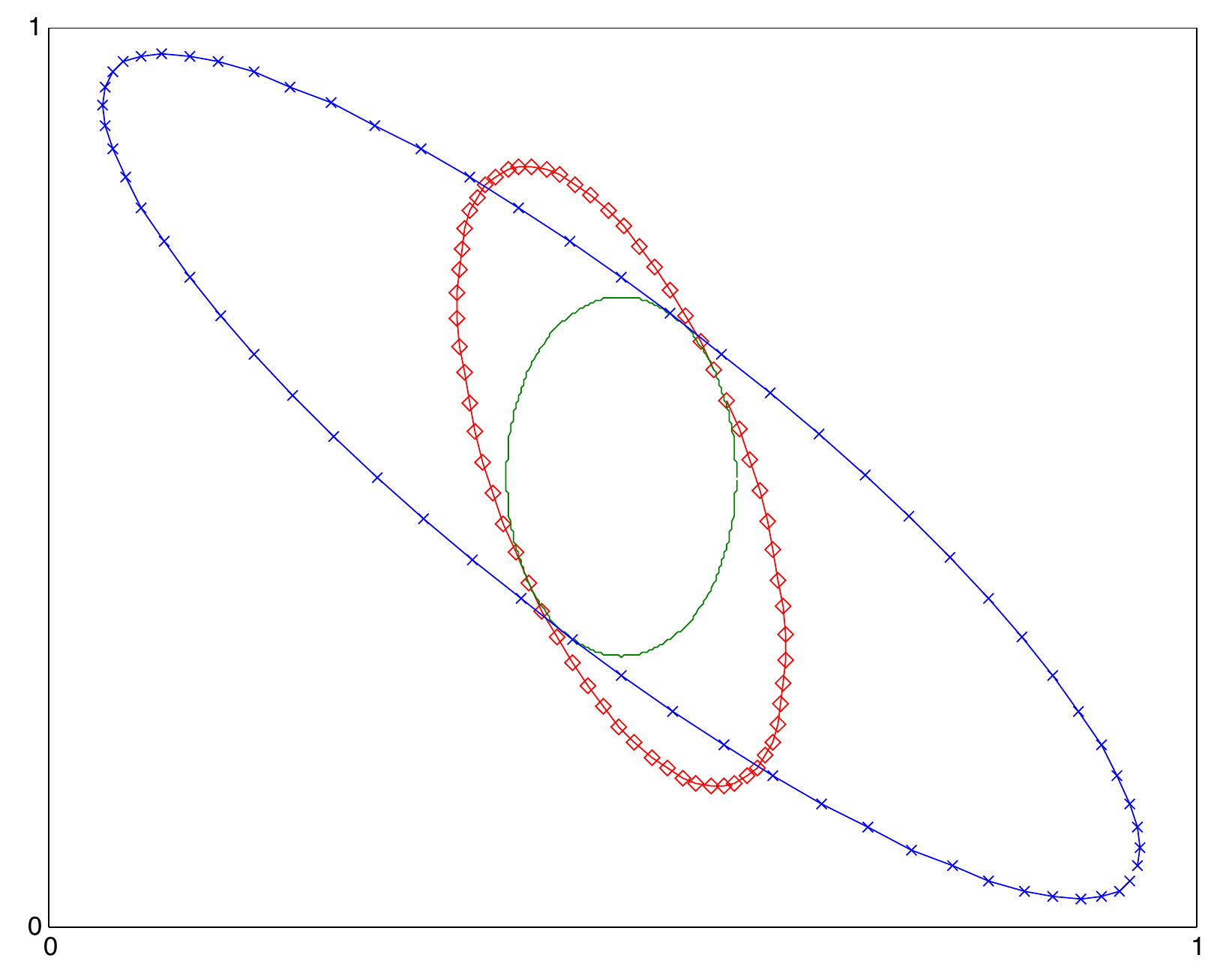}
 \caption{\it{A circle, an ellipse and a very elongated ellipse}.\label{fig3}}
\end{figure}

\begin{figure}
\centering
\includegraphics[scale=0.41]{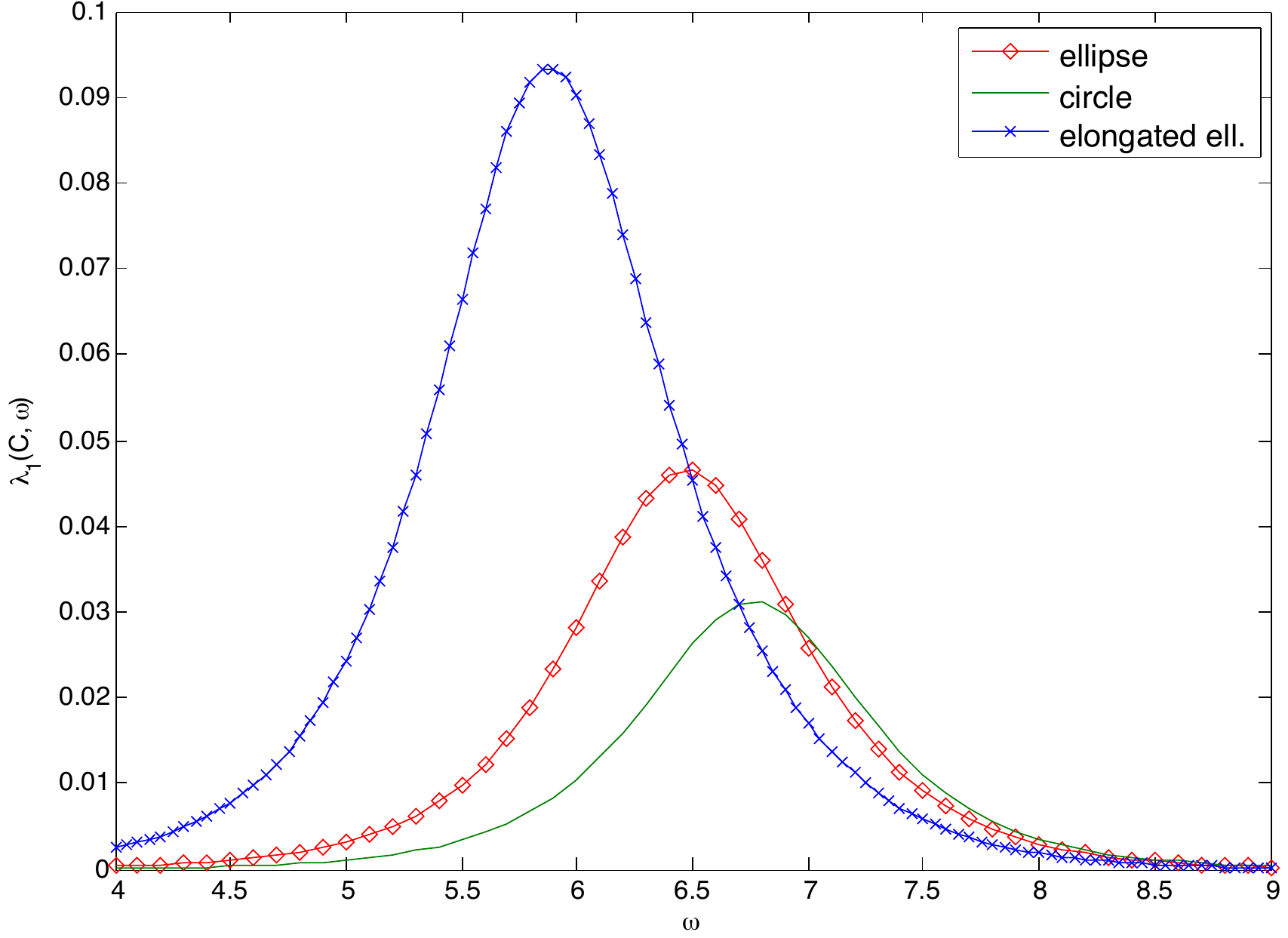}
\includegraphics[scale=0.41]{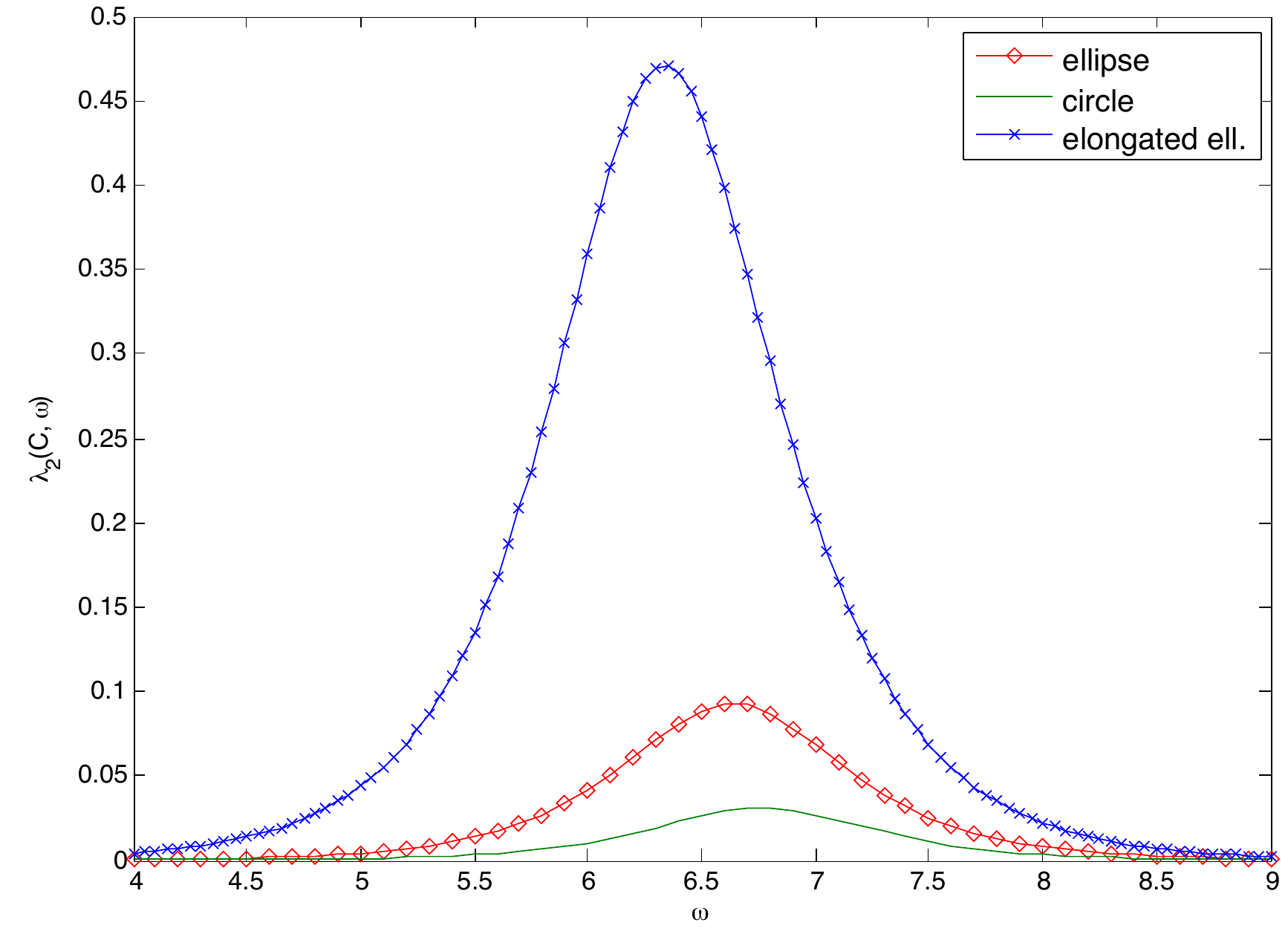}
 \caption{\it{Frequency dependence of the eigenvalues of $\Im M$ for the $3$ different cell shapes in Figure~\ref{fig3}.}\label{fig4}}
\end{figure}

\begin{figure}
\centering
\includegraphics[scale=0.283]{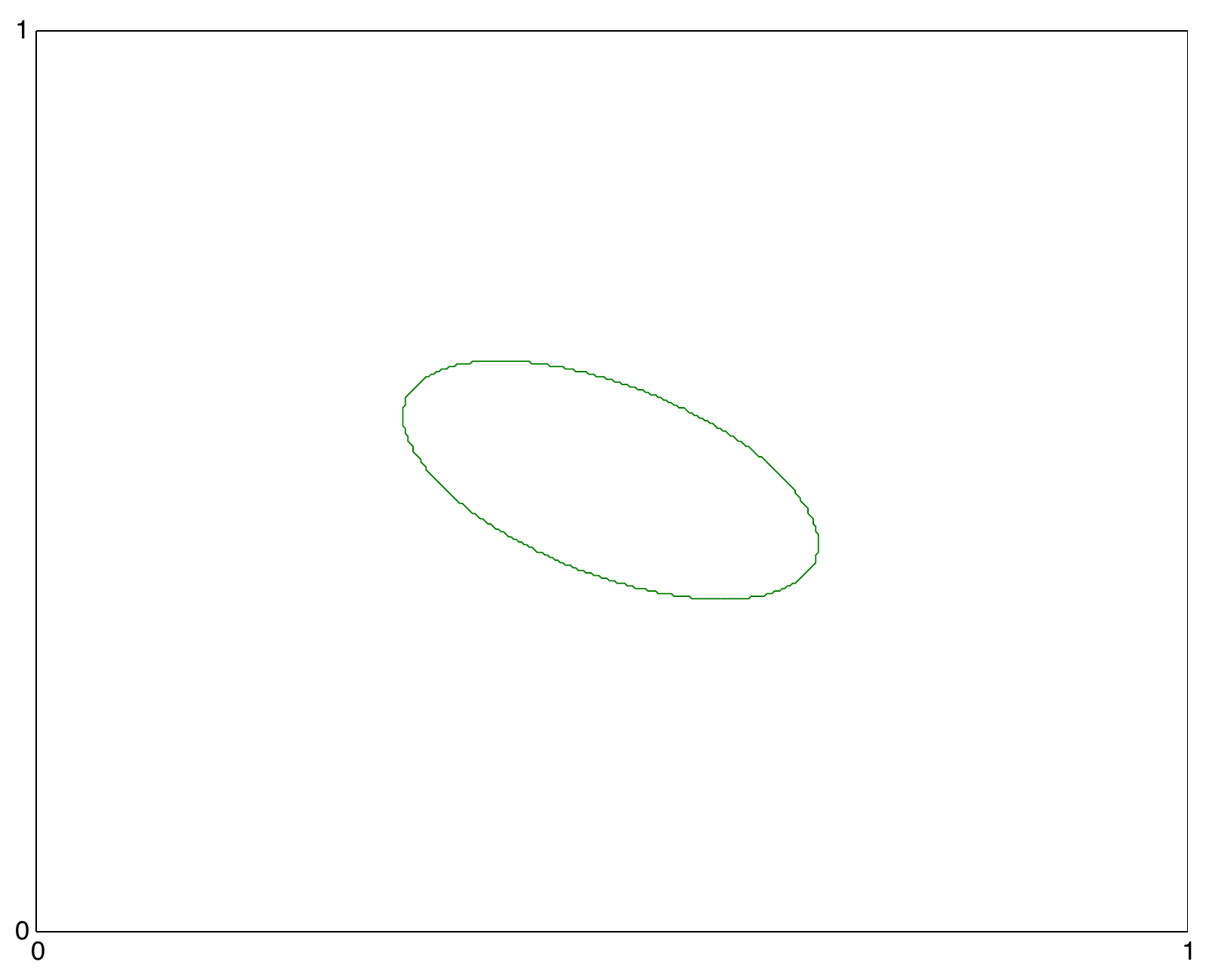}
\includegraphics[scale=0.3]{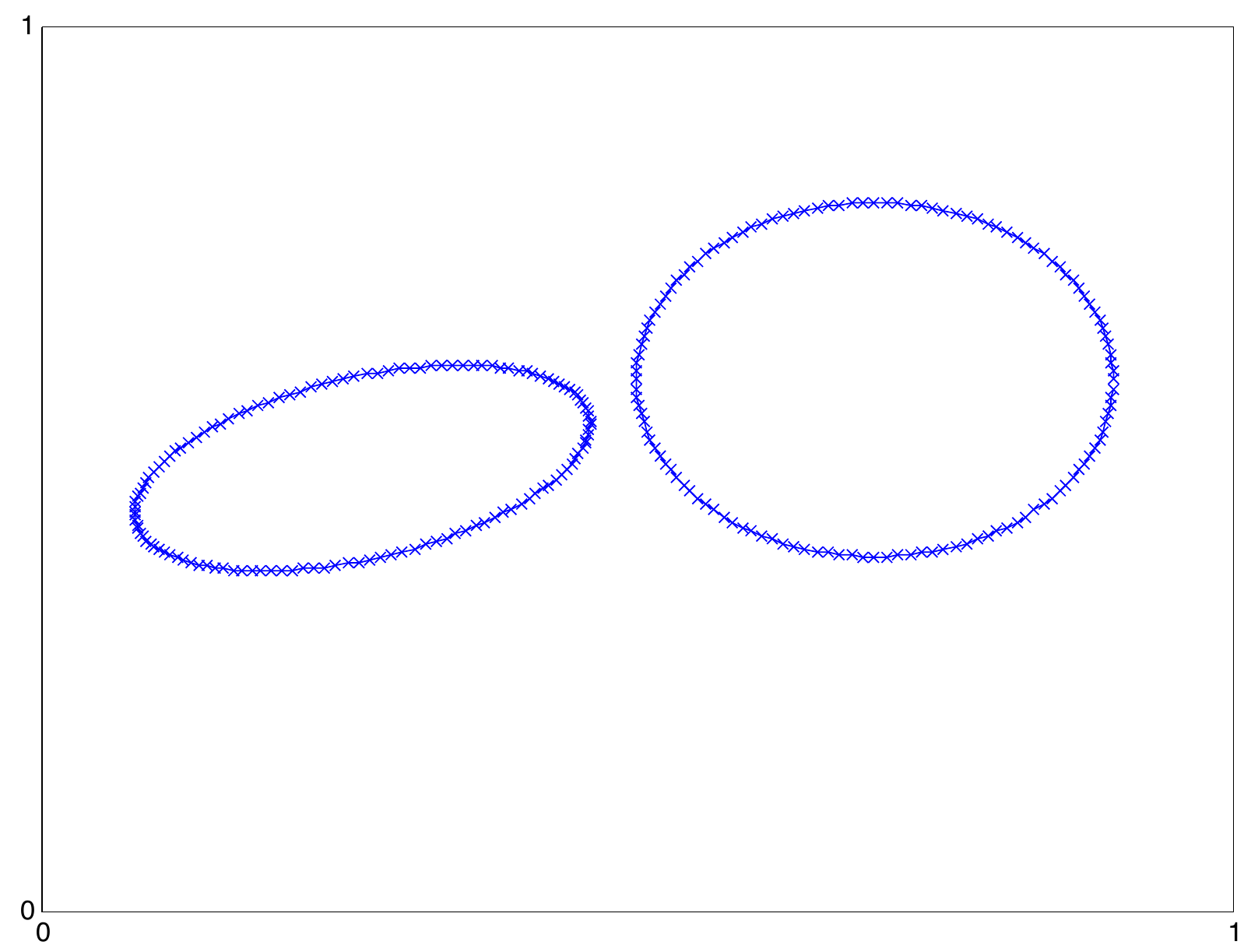}
\includegraphics[scale=0.3]{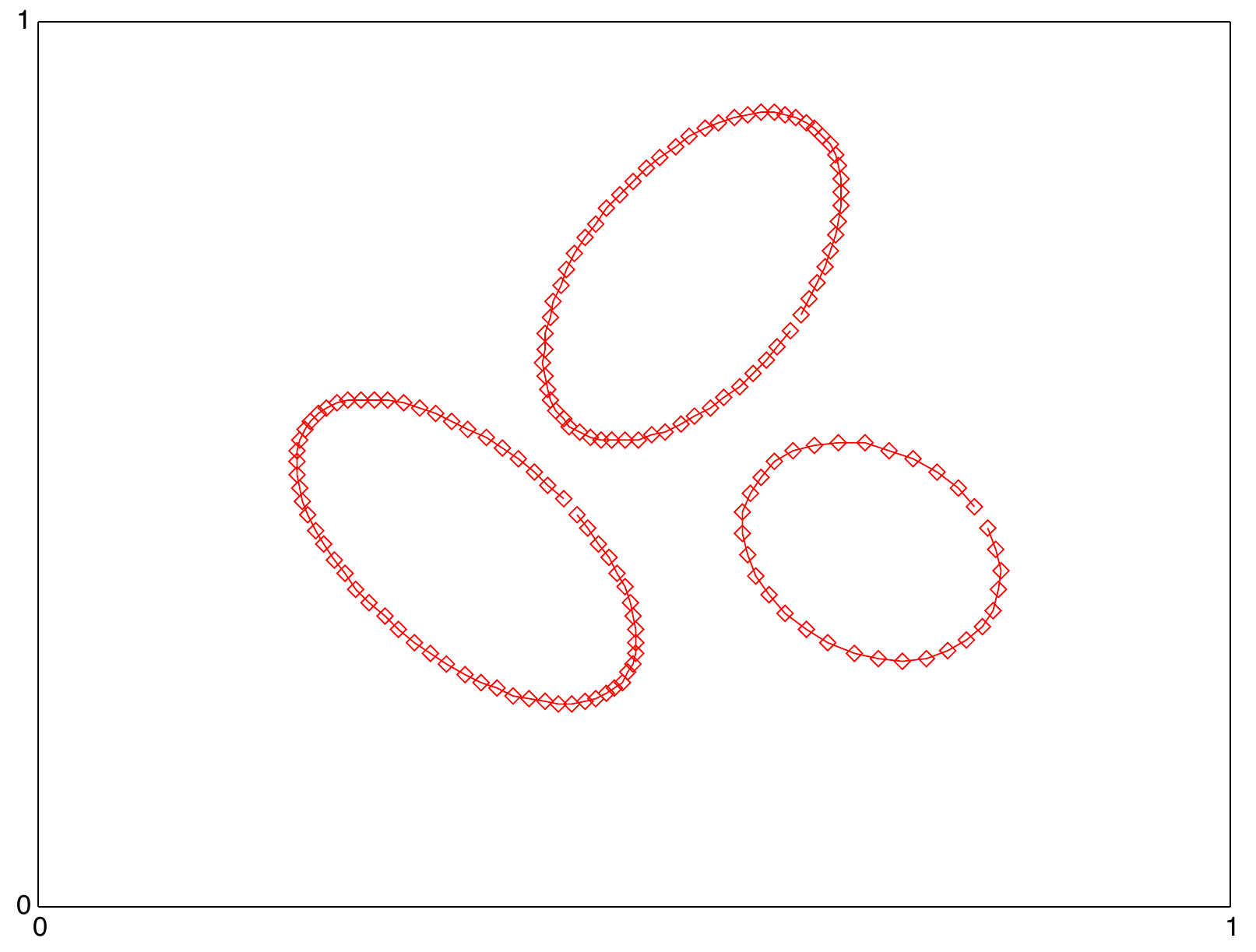}
 \caption{\it{Groups of one, two and three cells.} \label{fig5}}
\end{figure}

\begin{figure}
\centering
\includegraphics[scale=0.45]{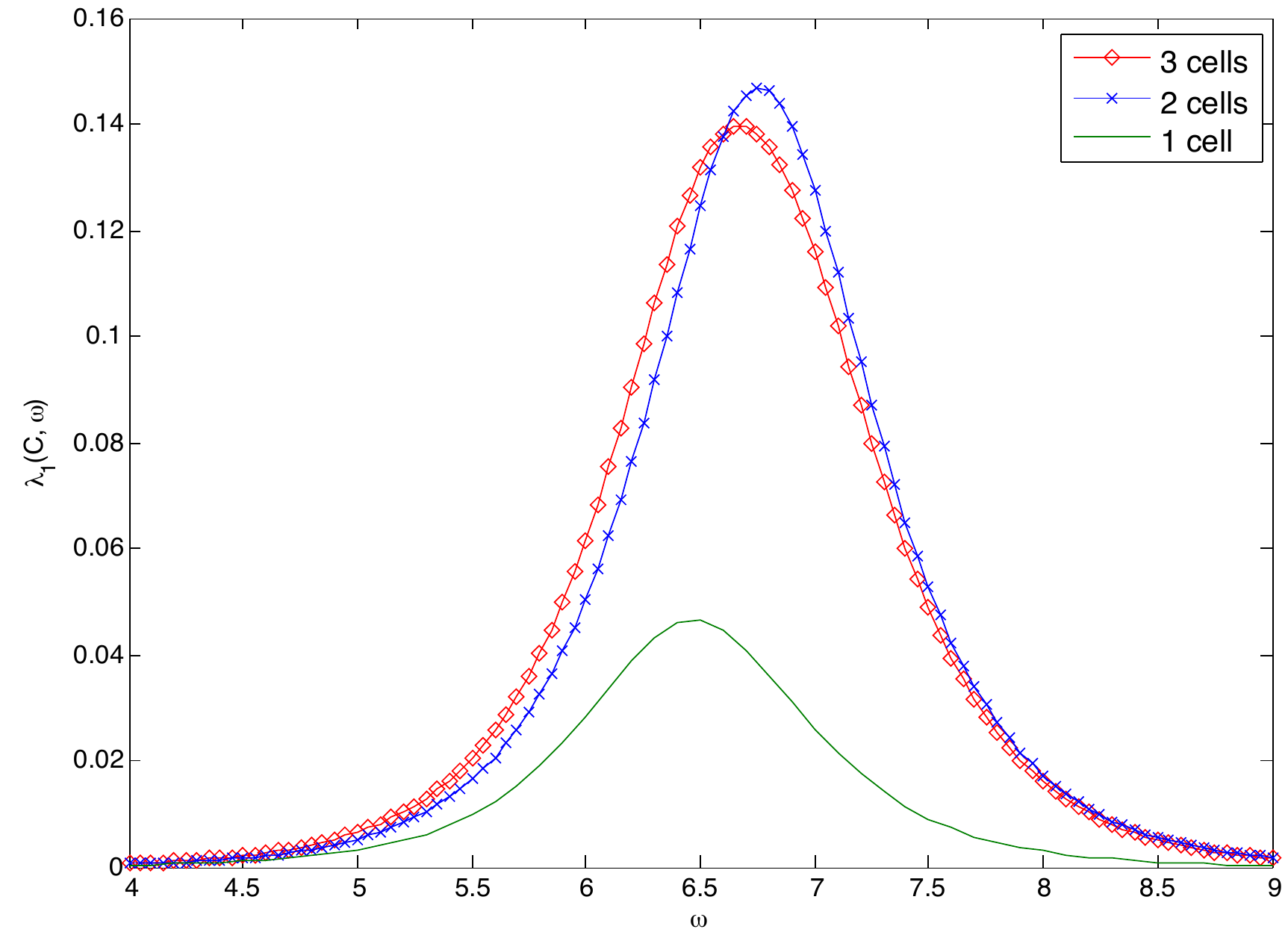}
\includegraphics[scale=0.45]{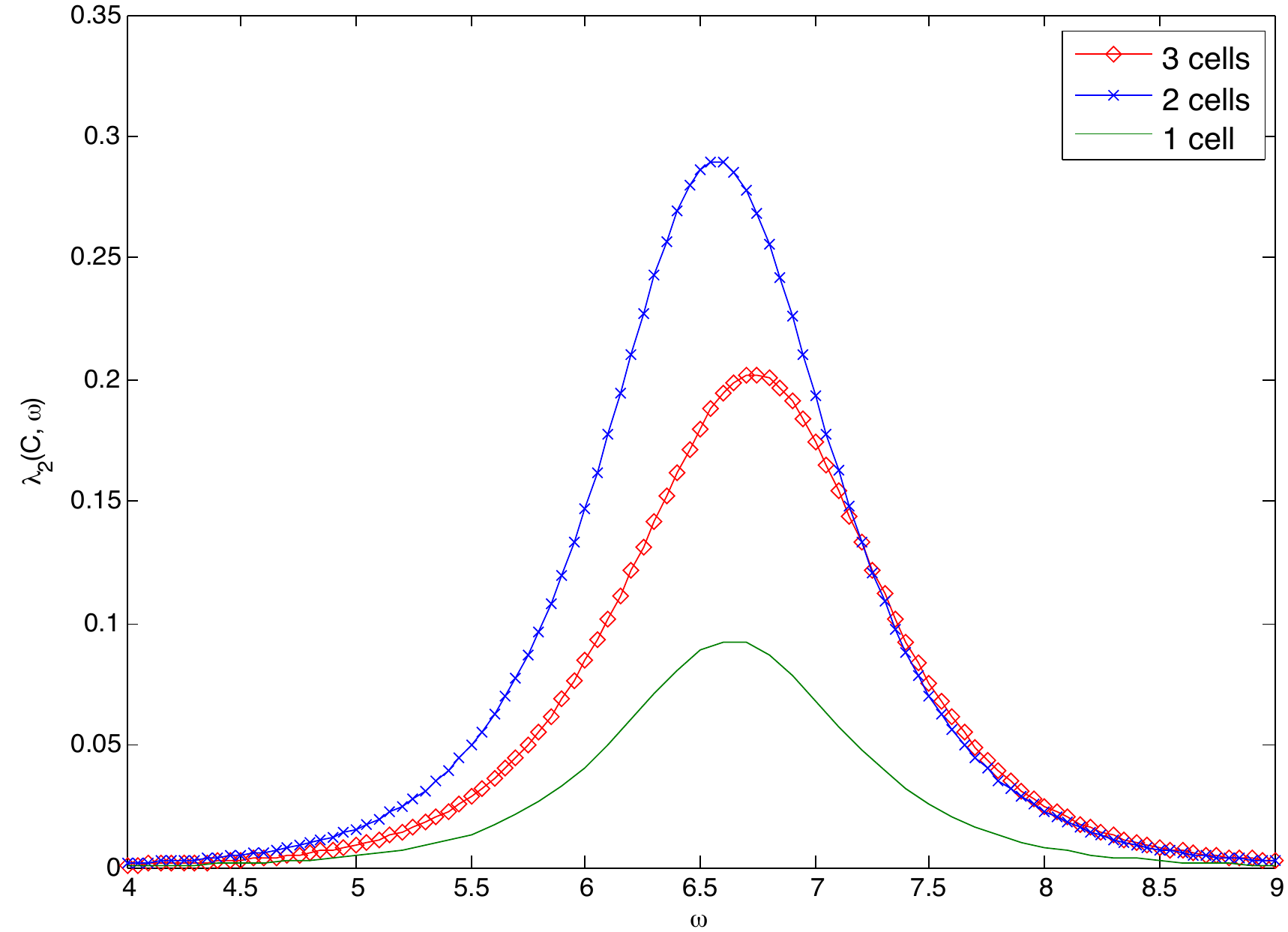}
 \caption{\it{Frequency dependence of the eigenvalues of $\Im M$ in the $3$ different cases.} \label{fig6}}
\end{figure}

\section{Concluding remarks}

In this paper we derived new formulas for the effective
admittivity of suspensions of cells and characterized their
dependance with respect to the frequency in terms of membrane
polarization tensors. We applied the formulas in the dilute case
to image suspensions of cells from electrical boundary
measurements. We presented numerical results to illustrate the use
of the Debye relaxation time in classifying microstructures. We
also developed a selective spectroscopic imaging approach. We
showed that specifying the pulse shape in terms of the relaxation
times of the dilute suspensions gives rise to selective imaging.

A challenging problem is to extend our results to elasticity
models of the cell. In \cite{mikyoung, pierre}, formulas for the
effective shear modulus and effective viscosity of dilute
suspensions of elastic inclusions were derived. On the other hand,
it was observed experimentally that the dependance of the
viscosity of a biological tissue with respect to the frequency
characterizes the microstructure \cite{tanter1,tanter2}. A
mathematical justification and modeling for this important finding
are under investigation and would be the subject of a forthcoming
paper.

\appendix

\section{Extension lemmas} \label{appendixa}

Due to the problem settings of this paper, we need to study
convergence properties of functions that are defined on the
multiple connected sets $\R_2^+$, $\Phi(\R_2^+)$ and $\eps
\Phi(\R+d^-)$. Extension operators becomes useful to treat such
functions.

Consider two open sets $U, V \subset \R^2$ with the relation $U
\subset V$, and two Sobolev spaces $W^{1,p}(U)$ and $W^{1,p}(V)$,
$p \in [1,\infty]$. What we call an {\itshape extension operator}
is a bounded linear map $P: W^{1,p}(U) \to W^{1,p}(V)$, such that
$Pu = u$ a.e. on $U$ for all $u \in W^{1,p}(U)$. In this section,
we introduce several extension operators of this kind that are
needed in the paper. They extend functions that are defined on
$Y^-$, $\R_2^+$, $\Phi(\R_2^+)$ and $\eps \Phi(\R_2^+)$ (hence
$\Depsp$) respectively.

Throughout this section, the short hand notion $\mean_A(f)$ for a
measurable set $A \subset \R^2$ with positive volume and a
function $f \in L^1(A)$ denotes the mean value of $f$ in $A$, that
is
\begin{equation}
\mean_A(f) = \frac{1}{|A|} \int_A f(x) dx. \label{eq:meandef}
\end{equation}
We start with an extension operator inside the unit cube $Y$.
Since $Y^-$ has smooth boundary, there exists an extension
operator $S: W^{1,p}(Y^+) \to W^{1,p}(Y)$ such that for all $f \in
W^{1,p}(Y^+)$ and $p \in [1,\infty)$,
\begin{equation}
\|Sf\|_{L^p(Y)} \le C\|f\|_{L^p(Y^+)}, \quad \|Sf\|_{W^{1,p}(Y)}
\le C\|f\|_{W^{1,p}(Y^+)}, \label{eq:extEvans}
\end{equation}
where $C$ only depends on $p$ and $Y^-$. Such an $S$ is given in
\cite[section 5.4]{Evans}, where the second estimate above is
given; the first estimate easily follows from their construction
as well. Cioranescu and Saint Paulin \cite{Ciopau} constructed
another extension operator which refines the second estimate
above. For the reader's convenience, we state and prove their
result in the following. Similar results can be found in
\cite{Jikov_book} as well.

\begin{thm}\label{thm:Yext} Let $Y,Y^+$ and $Y^-$ be as defined in section \ref{sec:setting}; in particular, $\partial Y^-$ is smooth. Then there exists an extension operator $P: H^1(Y^+) \to H^1(Y)$ satisfying that for any $f \in H^1(Y^+)$ and $p \in [1,\infty)$,
\begin{equation}
\|\nabla Pf\|_{L^p(Y)} \le C\|\nabla f\|_{L^p(Y^+)}, \quad
\|Pf\|_{L^p(Y)} \le C\|f\|_{L^p(Y^+)}, \label{eq:lem:Yext1}
\end{equation}
where $C$ only depends on the dimension and the set $Y^-$.
\end{thm}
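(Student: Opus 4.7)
The plan is to combine a standard Sobolev extension operator with a mean-value subtraction trick, which decouples the gradient estimate from the $L^p$ norm of $f$ itself. The key observation is that an Evans-type extension $S: W^{1,p}(Y^+) \to W^{1,p}(Y)$ satisfying \eqref{eq:extEvans} is readily available because $\partial Y^-$ is smooth, but its gradient bound involves the full $W^{1,p}$ norm of $f$, not just $\|\nabla f\|_{L^p(Y^+)}$. To sharpen this, one applies $S$ to a function with zero mean on $Y^+$.

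Concretely, I would define
\begin{equation*}
P f := S\bigl(f - \mean_{Y^+}(f)\bigr) + \mean_{Y^+}(f),
\end{equation*}
where $\mean_{Y^+}(f)$ is the mean value defined in \eqref{eq:meandef}. Since $S$ preserves functions on $Y^+$, the sum equals $f$ on $Y^+$, so $P$ is indeed an extension operator. Because the constant $\mean_{Y^+}(f)$ has vanishing gradient, $\nabla Pf = \nabla S(f - \mean_{Y^+}(f))$, and the gradient estimate of \eqref{eq:extEvans} yields
\begin{equation*}
\|\nabla Pf\|_{L^p(Y)} \le C\|f - \mean_{Y^+}(f)\|_{W^{1,p}(Y^+)}.
\end{equation*}
Since $Y^-$ is a smooth simply connected domain strictly contained in $Y$, the set $Y^+$ is a connected Lipschitz domain, so the Poincar\'{e}--Wirtinger inequality gives $\|f - \mean_{Y^+}(f)\|_{L^p(Y^+)} \le C\|\nabla f\|_{L^p(Y^+)}$, producing the desired bound on $\|\nabla Pf\|_{L^p(Y)}$ in terms of $\|\nabla f\|_{L^p(Y^+)}$ only.

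For the $L^p$ estimate, I would split $\|Pf\|_{L^p(Y)} \le \|S(f-\mean_{Y^+}(f))\|_{L^p(Y)} + |\mean_{Y^+}(f)|\,|Y|^{1/p}$. The first term is bounded using the $L^p$ part of \eqref{eq:extEvans} followed again by Poincar\'{e}--Wirtinger. For the second term, H\"{o}lder's inequality gives $|\mean_{Y^+}(f)| \le |Y^+|^{-1/p}\|f\|_{L^p(Y^+)}$, so altogether $\|Pf\|_{L^p(Y)} \le C\|f\|_{L^p(Y^+)}$, with a constant depending only on $p$, $|Y|$, $|Y^+|$, and the geometry of $Y^-$ through $S$.

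The construction is essentially soft and involves no hard obstacle; the main thing to verify carefully is that the Poincar\'{e}--Wirtinger constant on $Y^+$ is well-defined (using connectedness of $Y^+$, which follows from $Y^- \subset\subset Y$ being smooth and simply connected) and that $S$ can be chosen simultaneously bounded on $L^p$ and $W^{1,p}$ for all $p \in [1,\infty)$, which follows from the construction in \cite[section 5.4]{Evans} by reflection across the smooth boundary $\partial Y^-$. The resulting operator $P$ is linear and bounded from $H^1(Y^+)$ to $H^1(Y)$ as required.
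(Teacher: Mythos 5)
Your construction $Pf = S(f-\mean_{Y^+}(f)) + \mean_{Y^+}(f)$ and the subsequent argument (Poincar\'e--Wirtinger on the mean-zero part for the gradient bound, H\"older for the mean in the $L^p$ bound) are exactly the paper's proof, which uses the same definition \eqref{eq:Pdef} and the same two estimates. The proposal is correct and essentially identical to the paper's argument.
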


\begin{proof} Recall the mean operator $\mean$ in \eqref{eq:meandef} and the extension operator $S$ in \eqref{eq:extEvans}. Given $f$, we define $Pf$ by
\begin{equation}
Pf = \mean_{Y^+}(f) + S(f-\mean_{Y^+}(f)). \label{eq:Pdef}
\end{equation}
Then by setting $\psi = f - \mean_{Y^+}(f)$, we have that
\begin{equation*}
\|\nabla Pf\|_{L^p(Y)} = \|\nabla S\psi\|_{L^p(Y)} \le
C\|\psi\|_{W^{1,p}(Y^+)} \le C\|\nabla \psi\|_{L^p(Y^+)} =
C\|\nabla f\|_{L^p(Y^+)}.
\end{equation*}
In the second inequality above, we used the Poincar\'e--Wirtinger
inequality for $\psi$ and the fact that $\psi$ is mean-zero on
$Y^+$. The $L^2$ bound of $Pf$ follows from the observation
\begin{equation*}
\|\mean_{Y^+} (f)\|_{L^p(Y)} \le
\left(\frac{|Y|}{|Y^+|}\right)^{\frac 1 p} \|f\|_{L^p(Y^+)}
\end{equation*}
and the $L^p$ estimate of $Sf$ in \eqref{eq:extEvans}. This
completes the proof.
\end{proof}

Apply the extension operator on each translated cubes in $\R_2^+$,
we get the following.

\begin{cor}\label{cor:extRdm} Recall the definition of $Y_n, Y_n^+$ and $Y_n^-$ in section \ref{sec:setting}.
Abuse notations and define
\begin{equation}
(Pu)|_{Y_n} = P(u|_{Y_n^+}), \quad n \in \Z^2, u \in W^{1,p}_{\rm
loc}(\R_2^+). \label{eq:extRdmdef}
\end{equation}
Then $P$ is an extension operator from $W^{1,p}_{\rm loc}(\R_2^+)$
to $W^{1,p}_{\rm loc}(\R^2)$. Further, with the same positive
constant $C$ in \eqref{eq:lem:Yext1} and for any $n\in \Z^2$, we
have
\begin{equation}
\|\nabla Pu\|_{L^p(Y_n)} \le C\|\nabla u\|_{L^p(Y_n^-)}, \quad
\|Pu\|_{L^p(Y_n)} \le C\|u\|_{L^p(Y_n^-)}. \label{eq:extRdm}
\end{equation}
\end{cor}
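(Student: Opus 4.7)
\smallskip

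The plan is to show that the corollary follows by a cube-by-cube application of Theorem \ref{thm:Yext} combined with a verification that the pieces glue together correctly across the faces of the lattice. First I would observe that for each $n \in \Z^2$, the restriction $u|_{Y_n^+}$ belongs to $W^{1,p}(Y_n^+)$ because $u \in W^{1,p}_{\rm loc}(\R_2^+)$. Since $(Y_n, Y_n^+, Y_n^-)$ is just a translate of $(Y,Y^+,Y^-)$, composing with the translation $y\mapsto y - n$ turns Theorem \ref{thm:Yext} into an extension operator $P_n : W^{1,p}(Y_n^+) \to W^{1,p}(Y_n)$ that satisfies the two bounds in \eqref{eq:lem:Yext1} with exactly the same constant $C$. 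Defining $(Pu)|_{Y_n}:=P_n(u|_{Y_n^+})$ as in \eqref{eq:extRdmdef} then immediately yields the local estimate \eqref{eq:extRdm} on every cube.

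Next I would check that $Pu$ actually lies in $W^{1,p}_{\rm loc}(\R^2)$, which is the main point where one has to be a little careful: the definition is only local on each $Y_n$, so a priori one could worry about jumps across the shared faces $\partial Y_n \cap \partial Y_m$ of neighbouring cubes. This is where the hypothesis $\mathrm{dist}(Y^-,\partial Y) > 0$ becomes crucial. Indeed, on each cube one has $\partial Y_n \subset \overline{Y_n^+}$, and the extension $P_n$ provided by Theorem \ref{thm:Yext} satisfies $P_n(u|_{Y_n^+}) = u$ on $Y_n^+$, so in particular the trace of $Pu$ on $\partial Y_n$ from inside $Y_n$ agrees with the trace of $u$ there. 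The same holds from inside the neighbouring cube $Y_m$, and hence the traces of $(Pu)|_{Y_n}$ and $(Pu)|_{Y_m}$ coincide on the common face as elements of $W^{1-1/p,p}$. A standard gluing argument then shows that $Pu \in W^{1,p}_{\rm loc}(\R^2)$ with $\nabla Pu = \nabla(P_n u)$ a.e.\ on each $Y_n$.

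Finally, the identity $Pu = u$ a.e.\ on $\R_2^+$ is built into the construction, so $P$ is indeed an extension operator. The summability and local-to-global nature of the estimate is immediate since each cube bound is uniform in $n$. I expect the only mildly delicate step to be the trace-matching argument in the middle paragraph; the rest is essentially a bookkeeping exercise.
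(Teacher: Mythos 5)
Your proof is correct and is exactly the argument the paper intends: the corollary is stated there without proof, as the immediate consequence of applying Theorem \ref{thm:Yext} on each translated cube, and your trace-matching/gluing step (valid because $\mathrm{dist}(Y^-,\partial Y)>0$ places a full neighbourhood of every shared cube face inside $\R_2^+$, where the extension coincides with $u$) is the right way to make the local-to-global passage rigorous. Note only that the right-hand sides of \eqref{eq:extRdm} should read $Y_n^+$ rather than $Y_n^-$ (a typo in the paper), which is in effect what you prove by invoking \eqref{eq:lem:Yext1}.
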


Given a diffeomorphism, the extension operator $P$ can be
transformed as follows. In the same manner, under the map of
scaling, the extension operator is naturally defined.

\begin{cor}\label{cor:extPhiRdm} Let $\Phi(\cdot,\gamma)$ be a random diffeomorphism satisfying \eqref{eq:Phic2} and \eqref{eq:Phic3}. Denote the inverse function $\Phi^{-1}$ by $\Psi$.
Define $P_{\gamma}$ as
\begin{equation}
P_\gamma u =  [P(u\circ \Phi)]\circ \Psi, \quad u \in W^{1,p}_{\rm
loc}(\Phi(\R_2^+)).
\end{equation}
Then $P_\gamma$ is an extension operator from $W^{1,p}_{\rm
loc}(\Phi(\R_2^+))$ to $W^{1,p}_{\rm loc}(\Phi(\R^2))$ which
satisfies that
\begin{equation}
\|\nabla P_\gamma u\|_{L^p(\Phi(Y_n))} \le C\|\nabla
u\|_{L^p(\Phi(Y_n^-))}, \quad \|P_\gamma u\|_{L^p(\Phi(Y_n))} \le
C\|u\|_{L^p(\Phi(Y_n^-))}, \label{eq:extPhiRdm}
\end{equation}
where the constant $C$ depends further on the constants in
\eqref{eq:Phic2} and \eqref{eq:Phic3}.
\end{cor}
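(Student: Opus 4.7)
The plan is to verify that the three-step composition—pull back by $\Phi$, extend by the operator $P$ from Corollary \ref{cor:extRdm}, then push forward by $\Psi = \Phi^{-1}$—indeed produces a bounded extension operator on the claimed spaces, and to derive the cell-wise estimates by change of variables together with the Jacobian bounds \eqref{eq:Phic2}--\eqref{eq:Phic3}.

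First, I would check that $P_\gamma$ is well defined and is genuinely an extension. The bounds \eqref{eq:Phic2}--\eqref{eq:Phic3} make $\Phi(\cdot,\gamma)$ a bi-Lipschitz diffeomorphism of $\R^2$, so pull-back and push-forward preserve local Sobolev regularity: $u \in W^{1,p}_{\rm loc}(\Phi(\R_2^+))$ implies $u \circ \Phi \in W^{1,p}_{\rm loc}(\R_2^+)$, then $P(u\circ \Phi) \in W^{1,p}_{\rm loc}(\R^2)$, and composing with $\Psi$ returns a function in $W^{1,p}_{\rm loc}(\R^2)$. For $x \in \Phi(\R_2^+)$, since $\Psi(x) \in \R_2^+$ and $P$ is an extension, $P_\gamma u(x) = P(u\circ \Phi)(\Psi(x)) = (u\circ \Phi)(\Psi(x)) = u(x)$, so $P_\gamma$ is indeed an extension operator.

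Next, for the cell-wise norm estimates, I would apply the change of variables $x = \Phi(\tilde y)$ on each piece $\Phi(Y_n)$. This converts an integral over $\Phi(Y_n)$ into an integral over $Y_n$ with Jacobian $\det \nabla \Phi(\tilde y)$, which by \eqref{eq:Phic2}--\eqref{eq:Phic3} is bounded above and below by constants depending only on $\kappa,\kappa'$. Applying the cell-wise estimate \eqref{eq:extRdm} to $P(u\circ \Phi)$ reduces the problem to an integral of $u\circ \Phi$ on $Y_n^-$, and a reverse change of variables moves us back to $\Phi(Y_n^-)$. This directly gives the $L^p$ bound. For the gradient bound I would use the chain rule
\[
\nabla P_\gamma u(x) = (\nabla \Psi(x))^T \bigl(\nabla P(u\circ \Phi)\bigr)(\Psi(x))
\]
together with the symmetric identity $\nabla(u\circ \Phi)(\tilde y) = (\nabla \Phi(\tilde y))^T (\nabla u)(\Phi(\tilde y))$ on the other side, so that two extra factors of $\nabla \Psi$ or $\nabla \Phi$ appear and need to be absorbed into constants.

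The main technical point—more of a verification than a real obstacle—will be controlling $|\nabla \Psi|$. Using $\nabla \Psi(x) = (\nabla \Phi(\Psi(x)))^{-1}$ and the Cramer formula in two dimensions, one has $|\nabla \Psi|_F \le C(\kappa)|\nabla \Phi|_F \le C(\kappa,\kappa')$ thanks to the lower bound on $\det \nabla \Phi$ in \eqref{eq:Phic2} and the upper bound on $|\nabla \Phi|_F$ in \eqref{eq:Phic3}. With this estimate and the two-step change of variables described above, both inequalities in \eqref{eq:extPhiRdm} follow, with a constant that depends only on the dimension, on the set $Y^-$ (through \eqref{eq:lem:Yext1}), and on the bounds $\kappa,\kappa'$, as claimed.
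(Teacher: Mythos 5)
Your proposal is correct and follows essentially the same route as the paper: the paper leaves this corollary's proof implicit and instead writes out the identical computation for the scaled analogue (Corollary \ref{cor:extPhieps}), namely the change of variables $x=\Phi(\tilde y)$ with Jacobian controlled above and below by \eqref{eq:Phic2}--\eqref{eq:Phic3}, the chain rule with $\nabla\Psi=(\nabla\Phi)^{-1}$, and an appeal to the cell-wise estimate \eqref{eq:extRdm}. Your additional verification that $P_\gamma$ is genuinely an extension and your explicit bound on $|\nabla\Psi|_F$ via the cofactor formula are exactly the details the paper takes for granted.
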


\begin{cor}\label{cor:extPhieps} Let $\Phi(\cdot,\gamma)$ and $\Psi$ be as above. For each $\eps > 0$, define
$P^\eps_{\gamma}$ as follows: for any $u \in W^{1,p}_{\rm
loc}(\eps \Phi(\R_2^+))$, $P^\eps_{\gamma} u$ is defined on each
deformed and scaled cube $\eps \Phi(Y_n)$ by
\begin{equation}
P^\eps_\gamma u (x) =  \eps P \tilde{u}(\Psi(\frac{x}{\eps})),
\label{eq:Pwedef}
\end{equation}
where $\tilde{u} = \eps^{-1} u\circ \eps\Phi$ and $P$ is as in
\eqref{eq:extRdm}. Then $P^\eps_\gamma$ is an extension operator
from $W^{1,p}_{\rm loc}(\eps\Phi(\R_2^+))$ to $W^{1,p}_{\rm
loc}(\eps\Phi(\R^2))$ which satisfies that for any $n\in \Z^2$,
\begin{equation}
\|\nabla P^\eps_\gamma u\|_{L^p(\eps\Phi(Y_n))} \le C\|\nabla
u\|_{L^p(\eps\Phi(Y_n^-))}, \quad \|P^\eps_\gamma
u\|_{L^p(\eps\Phi(Y_n))} \le C\|u\|_{L^p(\eps\Phi(Y_n^-))},
\label{eq:extPhieps}
\end{equation}
where the constant $C$ depends on the same parameters as stated
below \eqref{eq:extPhiRdm}.
\end{cor}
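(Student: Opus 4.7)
The plan is to reduce Corollary \ref{cor:extPhieps} to Corollary \ref{cor:extRdm} by two successive changes of variables that undo the scaling by $\eps$ and then the diffeomorphism $\Phi$. The key observation is that the specific choice $\tilde u = \eps^{-1} u \circ \eps\Phi$ is the ``natural'' pullback of $u$ to the reference geometry, since a chain-rule computation gives $\nabla_z \tilde u(z) = \nabla u(\eps\Phi(z))\, \nabla \Phi(z)$: the $\eps$ produced by differentiating $\eps\Phi$ cancels the $\eps^{-1}$ prefactor in $\tilde u$, so $\tilde u$ inherits the $W^{1,p}$ norms of $u$ cleanly (modulo constants from $\nabla \Phi$).

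First I would verify the extension property. For $x \in \eps\Phi(\R_2^+)$, set $z = \Psi(x/\eps) \in \R_2^+$; since $P$ acts as the identity on $\R_2^+$ by Corollary \ref{cor:extRdm}, we obtain $P\tilde u(z) = \tilde u(z) = \eps^{-1} u(\eps\Phi(z)) = \eps^{-1} u(x)$, so $P^\eps_\gamma u(x) = \eps \cdot \eps^{-1} u(x) = u(x)$.

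Next I would establish the gradient bound. Differentiating the definition gives $\nabla P^\eps_\gamma u(x) = \nabla P\tilde u(\Psi(x/\eps))\,\nabla \Psi(x/\eps)$. Changing variables $y = x/\eps$ and then $z = \Psi(y)$ (with Jacobian $|\det \nabla \Phi(z)|$) yields
\begin{equation*}
\int_{\eps\Phi(Y_n)} |\nabla P^\eps_\gamma u|^p dx
= \eps^2 \int_{Y_n} |\nabla P\tilde u(z)\, (\nabla\Phi(z))^{-1}|^p\, |\det \nabla \Phi(z)|\, dz.
\end{equation*}
The assumptions \eqref{eq:Phic2}--\eqref{eq:Phic3} control $|(\nabla\Phi)^{-1}|$ and $|\det \nabla \Phi|$ by constants depending only on $\kappa, \kappa^\prime$, so the right-hand side is bounded by $C\eps^2 \|\nabla P\tilde u\|_{L^p(Y_n)}^p$. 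Applying Corollary \ref{cor:extRdm} gives $C\eps^2 \|\nabla \tilde u\|_{L^p(Y_n^+)}^p$, and reversing the two changes of variable on the cell $Y_n^+$ (using $\nabla \tilde u(z) = \nabla u(\eps\Phi(z))\,\nabla\Phi(z)$ and the same bounds on $\nabla\Phi$ and $|\det \nabla \Phi|$) converts this into $C\|\nabla u\|_{L^p(\eps\Phi(Y_n^+))}^p$, with a factor $\eps^{-2}$ from the Jacobian cancelling the initial $\eps^2$. The $L^p$ estimate for $P^\eps_\gamma u$ itself follows by the same sequence of changes of variable, except the prefactor $\eps$ in the definition of $P^\eps_\gamma u$ contributes an extra $\eps^p$ which is absorbed by the corresponding $\eps^{-p}$ appearing when one rewrites $\|\tilde u\|_{L^p(Y_n^+)}^p = \eps^{-p}\,\eps^{-2}\|u\|_{L^p(\eps\Phi(Y_n^+))}^p$ via $\tilde u = \eps^{-1}u \circ \eps\Phi$.

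The only real obstacle is bookkeeping: confirming that the various powers of $\eps$ arising from the two Jacobians, from the prefactor $\eps$ in $P^\eps_\gamma u$, and from the $\eps^{-1}$ in $\tilde u$ balance exactly so that the final estimate is $\eps$-independent. Once this is checked, the corollary follows routinely from Corollary \ref{cor:extRdm} together with the uniform bounds \eqref{eq:Phic2}--\eqref{eq:Phic3}; no new analytic ingredients are needed.
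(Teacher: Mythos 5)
Your proof is correct and follows essentially the same route as the paper's: differentiate the definition, pull back to the reference cube via the two changes of variables, apply the unit-cell extension bound, and push forward, with \eqref{eq:Phic2}--\eqref{eq:Phic3} controlling the Jacobian factors uniformly in $\eps$ and $\gamma$. Your explicit verification of the extension property and of the $\eps$-bookkeeping for the zeroth-order $L^p$ estimate (which the paper omits as ``simpler'') is a welcome addition, and your final bound over $\eps\Phi(Y_n^+)$ is the correct reading of the statement's $\eps\Phi(Y_n^-)$, which is a typo carried over from \eqref{eq:extRdm}.
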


\begin{proof}
We focus on proving \eqref{eq:extPhieps}. Under the change of
variable $x = \eps \Phi(y)$, we have
\begin{equation*}
\nabla_x P^\eps_\gamma u(x) = \nabla\Psi(\frac{x}{\eps}) \nabla_y
P\tilde{u} (\Phi^{-1}(\frac{x}{\eps}))= \nabla\Psi(\Phi(y))
\nabla_y P\tilde{u}(y),
\end{equation*}
On each deformed and scaled cube $\eps\Phi(Y_n)$, we calculate
\begin{equation*}
\begin{aligned}
\|\nabla P^\eps_\gamma u\|_{L^p(\eps \Phi(Y_n))}^p &= \int_{\eps
\Phi(Y_n)} |\nabla_x P^\eps_\gamma u(x)|^p dx = \int_{Y_n}
 |\nabla\Psi(\Phi(y)) \nabla_y P\tilde{u}(y)|^p \eps^2 \det(\nabla\Phi(y)) dy\\
&\le \eps^2 \int_{Y_n} |\nabla\Psi(\Phi(y))|^p |\nabla_y
P\tilde{u}(y)|^p \det(\nabla\Phi(y)) dy \le C\eps^2 \int_{Y_n}
|\nabla_y P \tilde{u}(y)|^p dy.
\end{aligned}
\end{equation*}
Here, we have used the Cauchy--Schwarz inequality and the bounds
\eqref{eq:Phic2}-\eqref{eq:Phic3} on the Jacobian matrix and its
determinant. Upon applying \eqref{eq:lem:Yext1}, we get
\begin{equation*}
\|\nabla P^\eps_\gamma u\|_{L^p(\eps \Phi(Y_n))}^p \le C\eps^2
\|\nabla_y \tilde{u}\|^p_{L^2(Y_n^+)}.
\end{equation*}
Since $\tilde{u}(y) = \frac{1}{\eps} u(\eps \Phi(y))$, we have
$\nabla_y \tilde{u}(y) = \nabla_y\Phi(y)\nabla_x u(\eps \Phi(y))$.
Change variables in the last integral and repeat the analysis
above to get
\begin{equation*}
\|\nabla_y \tilde{u}\|_{L^p(Y_n^+)}^p \le C\eps^{-d} \|\nabla_x
u\|_{L^p(\eps\Phi(Y_n^+))}^p.
\end{equation*}
Combining the above estimates, one finds some $C$ independent of
$\eps$ or $\gamma$ such that \eqref{eq:extPhieps} holds. Moreover,
the constant $C$ is uniform for all $\eps\Phi(Y_n)$. The $L^2$
estimate for $P^\eps_\gamma u$ is simpler and ignored. This
completes the proof.
\end{proof}

Finally, we define the extension operator from $W^{1,p}(\Depsp)$
to $W^{1,p}(\Omega)$. This is essentially the same operator in
Corollary \ref{cor:extPhieps}. Indeed, recall that $\Omega$ is
decomposed to the cushion $K_\eps$ and the cell containers
$E_\eps$; see \eqref{eq:KEdef}. We only need to apply
$P^\eps_\gamma$ in $E_\eps$.

\begin{thm}\label{thm:Dext} Let the domains $\Deps^{\pm}$, $K_\eps$ and $E_\eps$ be as defined in section \ref{sec:setting}. Let $\Phi(\cdot,\gamma)$ be a random diffeomorphism satisfying \eqref{eq:Phic2}-\eqref{eq:Phic4}. Define the linear operator $P^\eps_\gamma$ as follows: for $u \in W^{1,p}(\Depsp)$, let $P^\eps_\gamma u$ be given by \eqref{eq:Pwedef} in $E_\eps$, and let $P^\eps_\gamma u = u$ in $K_\eps$.
Then $P_\gamma^\eps$ is an extension operator from
$W^{1,p}(\Depsp)$ to $W^{1,p}(\Omega)$ and it satisfies
\begin{equation}
\label{eq:lem:Dext} \|\nabla P^\eps_\gamma u\|_{L^p(\Omega)} \le
C\|\nabla u\|_{L^p(\Depsp)}, \quad \|P^\eps_\gamma
u\|_{L^p(\Omega)} \le C\|u\|_{L^p(\Depsp)},
\end{equation}
where the constants $C$'s do not depend on $\eps$ or $\gamma$.
\end{thm}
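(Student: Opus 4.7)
The plan is to decompose $\Omega = E_\eps \cup K_\eps$ according to \eqref{eq:KEdef}, define $P^\eps_\gamma u$ separately on the two pieces via the prescription in the statement, verify that these pieces glue into a genuine $W^{1,p}(\Omega)$ function, and finally sum the cube-wise bounds from Corollary \ref{cor:extPhieps} to obtain the global estimate \eqref{eq:lem:Dext}.

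First I would extract the crucial compatibility property of the local extension: since the Evans-type operator $S$ of \eqref{eq:extEvans} satisfies $Sf = f$ on $Y^+$, the operator $P$ defined in \eqref{eq:Pdef} is the identity on $Y^+$. Transporting this through $\eps\Phi$ yields $P^\eps_\gamma u = u$ throughout $\eps\Phi(Y_n^+)$ for every $n \in \mathcal{I}_\eps$. In particular $P^\eps_\gamma u = u$ on each outer boundary $\eps\Phi(\partial Y_n)$, because the bound \eqref{eq:Phic4} forces $\Phi(\partial Y_n) \subset Y_n^+$: a point of $\partial Y_n$ is displaced by at most $\mathrm{dist}(Y^-,\partial Y)/2$, so it stays strictly outside the reference cell $Y_n^-$.

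This matching identity is exactly what is needed to glue. On the common boundary $\partial K_\eps \cap \partial E_\eps$, the $K_\eps$-side trace is $u$ by construction, and the $E_\eps$-side trace equals $u$ by the preceding paragraph. On any internal interface $\eps\Phi(\partial Y_n) \cap \eps\Phi(\partial Y_m)$ shared by two adjacent deformed containers, both one-sided traces again equal $u$. By the standard criterion that a function which is $W^{1,p}$ piecewise on a finite partition into Lipschitz subdomains and whose traces agree on every common interface belongs to the global $W^{1,p}$, we conclude $P^\eps_\gamma u \in W^{1,p}(\Omega)$, and of course $P^\eps_\gamma u|_{\Depsp} = u$.

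The quantitative estimate then reduces to bookkeeping. On $K_\eps$ the identity gives $\|\nabla P^\eps_\gamma u\|_{L^p(K_\eps)} = \|\nabla u\|_{L^p(K_\eps)}$. On $E_\eps$, raising \eqref{eq:extPhieps} to the $p$-th power and summing over $n \in \mathcal{I}_\eps$ yields
\begin{equation*}
\|\nabla P^\eps_\gamma u\|_{L^p(E_\eps)}^p \;\le\; C \sum_{n \in \mathcal{I}_\eps} \|\nabla u\|_{L^p(\eps\Phi(Y_n^+))}^p,
\end{equation*}
with $C$ independent of $\eps$ and $\gamma$, thanks to Corollary \ref{cor:extPhieps}. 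Since $K_\eps$ and the disjoint pieces $\{\eps\Phi(Y_n^+)\}_{n \in \mathcal{I}_\eps}$ all lie inside $\Depsp$, summing the two contributions delivers the first bound of \eqref{eq:lem:Dext}; the $L^p$-bound on $P^\eps_\gamma u$ itself follows identically. The only delicate point, and the only place where hypothesis \eqref{eq:Phic4} genuinely bites, is the gluing step: one must know that the deformation is tame enough for $\eps\Phi(\partial Y_n)$ to lie in the safe zone where $P^\eps_\gamma$ acts as the identity, otherwise traces from adjacent pieces could disagree and $P^\eps_\gamma u$ would fail to be weakly differentiable across interfaces. Everything else is a uniform-in-$\eps$ summation of estimates already in hand.
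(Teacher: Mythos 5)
Your proposal is correct and follows essentially the same route as the paper: the estimate is obtained by leaving $u$ untouched on $K_\eps$ and summing the uniform cube-wise bounds of Corollary \ref{cor:extPhieps} over $n \in \mathcal{I}_\eps$. The only difference is that you explicitly verify the trace-matching needed to glue the pieces into a $W^{1,p}(\Omega)$ function (a step the paper leaves implicit); note, though, that the matching on $\partial(\eps\Phi(Y_n)) \subset \overline{\eps\Phi(Y_n^+)}$ already follows from $\Phi$ being a diffeomorphism, while \eqref{eq:Phic4} is really what guarantees $E_\eps \subset \Omega$ and the cells stay away from $\partial\Omega$.
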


\begin{proof} Since $P_\gamma^\eps$ leaves $u$ unchanged in $K_\eps$ and it satisfies the estimates \eqref{eq:extPhieps} uniformly in the cubes $E_\eps =
\cup_{n\in \mathcal{I}_\eps} \eps\Phi(Y_n)$, we have the
following:
\begin{equation*}
\begin{aligned}
\|\nabla P^\eps_\gamma u\|_{L^p(\Omega)}^p &= \|\nabla
f\|_{L^p(K_\eps)}^p + \sum_{n \in \mathcal{I}_\eps}
\|\nabla P^\eps_\gamma u\|_{L^p(\eps \Phi(Y_n))}^p\\
&\le \|\nabla u\|_{L^p(K_\eps)}^p + C\sum_{n\in \mathcal{I}_\eps}
\|\nabla u\|_{L^p(\eps \Phi(Y_n^+))}^p \le C\|\nabla
u\|_{L^p(\Depsp)}^p.
\end{aligned}
\end{equation*}
This completes the proof of the first estimate in
\eqref{eq:lem:Dext}. The second estimate follows in the same
manner, completing the proof.
\end{proof}

\section{Poincar\'e--Wirtinger inequality} \label{appendixb}
Our next goal is to derive a Poincar\'e--Wirtinger inequality for
functions in $H^1(\Depsp)$ with a constant independent of $\eps$
and $\gamma$. The following fact of the fluctuation of a function
is useful.
\begin{lem}\label{lem:meanineq} Let $X \subset \R^2$ be an open bounded domain with positive volume and $f \in L^1(X)$. Assume
 that $X_1 \subset X$ is a subset with positive volume, then we have
\begin{equation}
\|f - \mean_{X_1}(f)\|_{L^2(X_1)} \le \|f - \mean_X(f)\|_{L^2(X)}.
\label{eq:meanineq}
\end{equation}
\end{lem}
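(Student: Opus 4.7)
The plan is to reduce the inequality to two elementary observations: first, that the mean of $f$ over $X_1$ is the $L^2(X_1)$-best constant approximation to $f$, and second, that the $L^2$-norm of any function restricted to a smaller domain is no larger than the norm on the full domain.

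More concretely, I would first expand the square and compute
\begin{equation*}
\|f - c\|_{L^2(X_1)}^2 = \|f - \mean_{X_1}(f)\|_{L^2(X_1)}^2 + |X_1|\,|c - \mean_{X_1}(f)|^2
\end{equation*}
for any constant $c \in \C$, using the fact that $f - \mean_{X_1}(f)$ has zero integral over $X_1$, which kills the cross term. This identity makes it transparent that the minimum of $c \mapsto \|f - c\|_{L^2(X_1)}$ is attained at $c = \mean_{X_1}(f)$. Applying this with the specific choice $c = \mean_X(f)$ gives
\begin{equation*}
\|f - \mean_{X_1}(f)\|_{L^2(X_1)} \le \|f - \mean_X(f)\|_{L^2(X_1)}.
\end{equation*}

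Finally, since $X_1 \subset X$, the integrand $|f - \mean_X(f)|^2$ is non-negative and $\int_{X_1} \le \int_X$, which yields
\begin{equation*}
\|f - \mean_X(f)\|_{L^2(X_1)} \le \|f - \mean_X(f)\|_{L^2(X)}.
\end{equation*}
Chaining the two inequalities gives \eqref{eq:meanineq}. There is no real obstacle here; the only subtle point is the implicit assumption that $f \in L^2(X)$ (rather than merely $L^1$, as literally stated) so that both sides of the desired inequality are finite, and this is the setting in which the lemma will be invoked in the Poincar\'e--Wirtinger argument that follows.
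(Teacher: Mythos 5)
Your proof is correct. It takes a mildly different route from the paper's: the paper splits $X$ into $X_1$ and $X_2 = X\setminus X_1$, writes $f-\mean_X(f)$ piecewise in terms of $f_i-\mean_{X_i}(f)$ and the difference of the two sub-means, and derives the exact identity
\begin{equation*}
\|f-\mean_X(f)\|^2_{L^2(X)} = \|f_1-m_1\|_{L^2(X_1)}^2  + \|f_2 - m_2\|_{L^2(X_2)}^2 + \theta(1-\theta) |X| \,|m_2 - m_1|^2,
\end{equation*}
from which the claim follows since all three terms are non-negative. You instead invoke the best-constant-approximation property of the mean on $X_1$ and then monotonicity of the integral over $X_1\subset X$. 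Both arguments hinge on the same orthogonality (the mean-zero function kills the cross term), but yours is shorter and does not need the complementary set $X_2$ or the volume fraction $\theta$; the paper's version yields an exact formula for the defect in the inequality, which is more information than the lemma requires. Your closing remark about the $L^1$ versus $L^2$ hypothesis is also apt: as literally stated the right-hand side may be $+\infty$, in which case the inequality is vacuous, and the lemma is only used for $H^1$ (hence $L^2$) functions.
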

\begin{proof} To simplify notations, let $f_1$ be the restriction of $f$ on $X_1$, $m_1 = \mean_{X_1}(f_1)$ and $\theta_1 = |X_1|/|X|$. Similarly, let $f_2$ be the restriction of $f$ on $X_2 = X\setminus X_1$, $m_2 = \mean_{X_2}(f_2)$. Let $m = \mean_X(f)$. Then we have that
\begin{equation*}
f - m = \begin{cases}
f_1 - m_1 + (1-\theta)(m_1-m_2), & x \in X_1,\\
f_2 - m_2 + \theta(m_2 - m_1), & x \in X_2.
\end{cases}
\end{equation*}
Then basic computation plus the observation that $f_i - m_i$
integrates to zero on $X_i$ for $i=1,2$ yield the following:
\begin{equation*}
\|f-m\|^2_{L^2(X)} = \|f_1-m_1\|_{L^2(X_1)}^2  + \|f_2 -
m_2\|_{L^2(X_2)}^2 + (1-\theta)\theta |X| (m_2 - m_1)^2.
\end{equation*}
Since the items on the right-hand side are all non-negative, we
obtain \eqref{eq:meanineq}.
\end{proof}

\begin{cor}\label{cor:poincare} Assume the same conditions as in Theorem \ref{thm:Dext}. Then for any $u \in \Hq^1(\Depsp)$, we have that
\begin{equation}
\|u\|_{L^2(\Depsp)} \le C\|\nabla u\|_{L^2(\Depsp)}, \label{eq:PW}
\end{equation}
where the constant $C$ does not depend on $\eps$ or $\gamma$.
\end{cor}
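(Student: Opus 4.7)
The plan is to reduce the inequality on the $\eps$- and $\gamma$-dependent domain $\Depsp$ to the standard Poincar\'e--Wirtinger inequality on the fixed domain $\Omega$, using the uniform extension operator $P^\eps_\gamma$ from Theorem \ref{thm:Dext} together with Lemma \ref{lem:meanineq}.

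First, given $u \in \Hq^1(\Depsp)$, I would set $v := P^\eps_\gamma u \in H^1(\Omega)$. By Theorem \ref{thm:Dext} (applied with $p=2$), $v$ is an extension of $u$ and satisfies $\|\nabla v\|_{L^2(\Omega)} \le C\|\nabla u\|_{L^2(\Depsp)}$ with $C$ independent of $\eps$ and $\gamma$. Next, I would apply the standard Poincar\'e--Wirtinger inequality on the fixed bounded Lipschitz domain $\Omega$ to $v$: letting $m := \mean_\Omega(v)$, one has $\|v - m\|_{L^2(\Omega)} \le C_\Omega \|\nabla v\|_{L^2(\Omega)}$, where $C_\Omega$ is deterministic.

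The crucial step is to relate the fluctuation of $u$ on $\Depsp$ (where $u$ has mean zero) to the fluctuation of $v$ on $\Omega$ (where the mean is not {\itshape a priori} controlled). This is exactly what Lemma \ref{lem:meanineq} provides: applied with $X = \Omega$, $X_1 = \Depsp$ and $f = v$ (which agrees with $u$ on $\Depsp$), it yields
\begin{equation*}
\|u - \mean_{\Depsp}(u)\|_{L^2(\Depsp)} \;=\; \|v - \mean_{\Depsp}(v)\|_{L^2(\Depsp)} \;\le\; \|v - m\|_{L^2(\Omega)}.
\end{equation*}
Since $u \in \Hq^1(\Depsp)$, the left-hand side is exactly $\|u\|_{L^2(\Depsp)}$. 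Chaining these three estimates yields
\begin{equation*}
\|u\|_{L^2(\Depsp)} \;\le\; \|v - m\|_{L^2(\Omega)} \;\le\; C_\Omega \|\nabla v\|_{L^2(\Omega)} \;\le\; C_\Omega C \|\nabla u\|_{L^2(\Depsp)},
\end{equation*}
which is \eqref{eq:PW} with a constant depending only on $\Omega$, $Y^-$ and the bounds \eqref{eq:Phic2}--\eqref{eq:Phic4} on $\Phi$, but not on $\eps$ or $\gamma$.

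There is no real obstacle here, since all the hard work is packaged in Theorem \ref{thm:Dext} and Lemma \ref{lem:meanineq}. The only point that requires care is observing that the mean of the extension $v$ over $\Omega$ need not match the mean of $u$ over $\Depsp$, which is precisely why Lemma \ref{lem:meanineq} (rather than a naive subtraction of $\mean_\Omega(v)$ from $u$) is needed to keep the constant uniform in $\eps$ and $\gamma$.
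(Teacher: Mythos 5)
Your proof is correct and follows exactly the same route as the paper: extend $u$ via $P^\eps_\gamma$ from Theorem \ref{thm:Dext}, use Lemma \ref{lem:meanineq} together with $\mean_{\Depsp}(u)=0$ to bound $\|u\|_{L^2(\Depsp)}$ by the fluctuation of the extension over $\Omega$, then apply the standard Poincar\'e--Wirtinger inequality on $\Omega$ and the uniform gradient bound of the extension. Your remark about why one cannot naively subtract $\mean_\Omega(v)$ is precisely the point the paper's argument handles with Lemma \ref{lem:meanineq}.
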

\begin{proof} Thanks to Theorem \ref{thm:Dext}, we extend $u$ to $P^\eps_\gamma u$ which is in $H^1(\Omega)$. Use \eqref{eq:meanineq} and the fact that $\mean_{\Depsp}(u) = 0$ to get
$$
\|u\|_{L^2(\Depsp)} \le \|P^\eps_\gamma u - \mean_\Omega
(P^\eps_\gamma u)\|_{L^2(\Omega)}.
$$
Now apply the standard Poincar\'e--Wirtinger inequality for
functions in $H^1(\Omega)$, and then use \eqref{eq:lem:Dext}. We
get
$$
\|P^\eps_\gamma u - \mean_\Omega (P^\eps_\gamma u)\|_{L^2(\Omega)}
\le C\|\nabla P^\eps_\gamma u\|_{L^2(\Omega)} \le C\|\nabla
u\|_{L^2(\Depsp)}.
$$
The constant $C$ depends on $\Omega$ and the parameters stated in
Theorem \ref{thm:Dext} but not on $\eps$ or $\gamma$. The proof is
now complete.
\end{proof}

Another corollary of the extension lemma is that we have the
following uniform estimate when taking the trace of $u \in \Weps$
on the fixed boundary $\partial \Omega$.

\begin{cor}\label{cor:trace} Assume the same conditions as in Theorem \ref{thm:Dext}. Then there exists a constant $C$ depending
on $\Omega$ and the parameters as stated in Theorem \ref{thm:Dext}
but independent of $\eps$ and $\gamma$ such that
\begin{equation}
\|u\|_{H^{\frac 1 2}(\partial \Omega)} \le C\|\nabla
u\|_{L^2(\Depsp)}, \label{eq:Dtrace}
\end{equation}
for any $u \in H^1(\Depsp)$.
\end{cor}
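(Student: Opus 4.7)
The plan is to reduce the trace inequality on the $\eps$-dependent domain $\Depsp$ to the classical trace theorem on the fixed smooth domain $\Omega$, using the extension operator $P^\eps_\gamma$ furnished by Theorem \ref{thm:Dext}. Three ingredients must be combined: (i) the fact that $P^\eps_\gamma u$ coincides with $u$ in a neighbourhood of $\partial \Omega$, so that extending does not change the boundary trace; (ii) the standard trace inequality $\| \cdot \|_{H^{1/2}(\partial \Omega)} \le C_\Omega \|\cdot\|_{H^1(\Omega)}$, whose constant $C_\Omega$ depends only on $\Omega$; and (iii) the uniform Poincar\'e--Wirtinger inequality of Corollary \ref{cor:poincare}, which allows $\|u\|_{L^2(\Depsp)}$ to be replaced by $\|\nabla u\|_{L^2(\Depsp)}$ with a constant independent of $\eps$ and $\gamma$.

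Concretely, I would proceed as follows. First, observe from the definition of $P^\eps_\gamma$ (see the statement of Theorem \ref{thm:Dext} and formula \eqref{eq:KEdef}) that $P^\eps_\gamma u \equiv u$ on the cushion layer $K_\eps$, which contains a one-sided neighbourhood of $\partial \Omega$ of width at least $\eps\,\mathrm{dist}(Y^-,\partial Y)$. Hence the traces on $\partial \Omega$ agree, and
\begin{equation*}
\|u\|_{H^{1/2}(\partial \Omega)} \;=\; \|P^\eps_\gamma u\|_{H^{1/2}(\partial \Omega)} \;\le\; C_\Omega \, \|P^\eps_\gamma u\|_{H^1(\Omega)}.
\end{equation*}
Applying the two inequalities of \eqref{eq:lem:Dext} gives $\|P^\eps_\gamma u\|_{H^1(\Omega)} \le C \|u\|_{H^1(\Depsp)}$ with $C$ independent of $\eps,\gamma$. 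Finally, invoking \eqref{eq:PW} yields $\|u\|_{H^1(\Depsp)} \le C\|\nabla u\|_{L^2(\Depsp)}$, and composing the three estimates produces the desired bound.

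The main conceptual point, rather than a genuine obstacle, is that the right-hand side of \eqref{eq:Dtrace} only involves $\|\nabla u\|_{L^2(\Depsp)}$, so the statement must be read on the quotient space $\Hq^1(\Depsp)$, i.e.\ for $u$ of vanishing mean on $\Depsp$ (otherwise the constant function $u\equiv 1$ would be a trivial counterexample). This is consistent with the single place where the corollary is used, namely the proof of Proposition \ref{prop:apriori}, where $\uepsp$ satisfies exactly the normalisation $\int_{\Depsp} \uepsp = 0$ imposed in the last line of \eqref{eq:u_{epsilon}}. Beyond this normalisation, there is nothing delicate: the $\eps$- and $\gamma$-uniformity is inherited from the uniform bounds already proven in Theorem \ref{thm:Dext} and Corollary \ref{cor:poincare}, while the trace constant $C_\Omega$ is uniform because $\Omega$ is fixed.
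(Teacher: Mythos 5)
Your proof is correct and follows essentially the same route as the paper: extend $u$ by $P^\eps_\gamma$, apply the classical trace inequality on the fixed domain $\Omega$, and conclude via \eqref{eq:lem:Dext} and \eqref{eq:PW}. Your two added observations --- that the traces agree because $P^\eps_\gamma u = u$ on the cushion $K_\eps$, and that the statement must be read for mean-zero $u$ (as in \eqref{eq:PW} and in the sole application to $\uepsp$) --- are both correct and are left implicit in the paper.
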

\begin{proof} Thanks to Theorem \ref{thm:Dext} we extend $u$ to $P^\eps_\gamma u$ which is in $H^1(\Omega)$. The trace inequality
on $\Omega$ shows
\begin{equation}
\|P^\eps_\gamma u\|_{H^{\frac 1 2}(\partial \Omega)} \le C(\Omega)
\|P^\eps_\gamma u\|_{H^1(\Omega)}. \label{eq:cor:trace1}
\end{equation}
The desired estimate then follows from \eqref{eq:lem:Dext} and
\eqref{eq:PW}.
\end{proof}

\section{Equivalence of the two norms on $\Weps$} \label{appendixc}

In this section, we prove Proposition \ref{prop:equiv} which
establishes the equivalence between the two norms on $\Weps$. We
essentially follow \cite{Monsur} where the periodic case was
considered. The random deformation setting requires certain
modification. The details of such modifications are provided here
for the reader's convenience.

The first inequality of the proposition is proved by the following
lemma together with the Poincar\'e--Wirtinger inequality
\eqref{eq:PW}:
\begin{lem}\label{lem:L2Gamma}
There exists a constant $C$ independent of $\eps$ or $\gamma$,
such that
\begin{equation}
\|v^\pm\|_{L^2(\interface)}^2 \le
C(\eps^{-1}\|v^\pm\|_{L^2(\Deps^\pm)}^2 + \eps \|\nabla
v^\pm\|_{L^2(\Deps^\pm)}^2) \label{eq:L2Gamma}
\end{equation}
for any $v^+ \in H^1(\Depsp)$ and $v^- \in H^1(\Depsm)$.
\end{lem}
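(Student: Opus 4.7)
The plan is a standard scaling--plus--trace argument applied cell by cell, made uniform in $\eps$ and $\gamma$ by exploiting the two-sided bounds on $\nabla\Phi$ from \eqref{eq:Phic2}--\eqref{eq:Phic3}.

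First I would decompose the interface as $\interface = \bigcup_{n \in \mathcal{I}_\eps} \eps\Phi(\Gamma_n)$ and work on a single piece $\eps\Phi(\Gamma_n)$, which bounds the unit-scale deformed cell $\Phi(Y_n^\pm)$. On this fixed smooth domain the classical trace inequality gives
\begin{equation*}
\|u\|_{L^2(\Phi(\Gamma_n))}^2 \le C_n \bigl(\|u\|_{L^2(\Phi(Y_n^\pm))}^2 + \|\nabla u\|_{L^2(\Phi(Y_n^\pm))}^2\bigr).
\end{equation*}
The key point is that the trace constant $C_n$ can be chosen independent of both $n$ and $\gamma$: assumptions \eqref{eq:Phic2}--\eqref{eq:Phic3} guarantee uniform upper and lower bounds on $|\nabla \Phi|$ and $\det \nabla \Phi$, hence a uniform Lipschitz constant and a uniform cone condition for $\Phi(Y_n^\pm)$, and these are what determine the trace constant.

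Next I would apply the inequality to the pull-back $u(y) := v^\pm(\eps y)$ defined on $\Phi(Y_n^\pm)$. The change of variables $x=\eps y$ yields the scalings
\begin{equation*}
\|u\|_{L^2(\Phi(Y_n^\pm))}^2 = \eps^{-2}\|v^\pm\|_{L^2(\eps\Phi(Y_n^\pm))}^2, \quad
\|\nabla u\|_{L^2(\Phi(Y_n^\pm))}^2 = \|\nabla v^\pm\|_{L^2(\eps\Phi(Y_n^\pm))}^2,
\end{equation*}
and on the one-dimensional interface $\|u\|_{L^2(\Phi(\Gamma_n))}^2 = \eps^{-1}\|v^\pm\|_{L^2(\eps\Phi(\Gamma_n))}^2$. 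Substituting and multiplying through by $\eps$ produces the local estimate
\begin{equation*}
\|v^\pm\|_{L^2(\eps\Phi(\Gamma_n))}^2 \le C\bigl(\eps^{-1}\|v^\pm\|_{L^2(\eps\Phi(Y_n^\pm))}^2 + \eps\|\nabla v^\pm\|_{L^2(\eps\Phi(Y_n^\pm))}^2\bigr),
\end{equation*}
with $C$ independent of $n$, $\eps$, $\gamma$.

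Finally I would sum over $n \in \mathcal{I}_\eps$. Since the cells $\eps\Phi(Y_n^\pm)$ are disjoint subsets of $\Deps^\pm$ (the remaining cushion $K_\eps$ only enlarges the right-hand side), the left-hand sides reassemble to $\|v^\pm\|_{L^2(\interface)}^2$ while the right-hand sides are bounded by $C(\eps^{-1}\|v^\pm\|_{L^2(\Deps^\pm)}^2 + \eps\|\nabla v^\pm\|_{L^2(\Deps^\pm)}^2)$, yielding \eqref{eq:L2Gamma}. The only genuine subtlety is the uniformity of the trace constant, which is precisely what the structural hypotheses on $\Phi$ were designed to give; once that is in hand, the scaling bookkeeping is routine.
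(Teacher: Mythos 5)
Your proof is correct and follows essentially the same route as the paper's: decompose $\interface$ cell by cell, apply a trace inequality at unit scale with a constant uniform in $n$, $\eps$ and $\gamma$, undo the scaling, and sum. The only (minor) difference is that the paper pulls back all the way to the fixed reference cell $Y_n^\pm$ --- where uniformity of the trace constant is immediate --- and absorbs the deformation via the measure comparison $C_1\,ds(x)\le \eps\,ds(y)\le C_2\,ds(x)$ coming from \eqref{eq:Phic2}--\eqref{eq:Phic3}, whereas you apply the trace inequality directly on the deformed cells $\Phi(Y_n^\pm)$ and justify uniformity through the uniform bi-Lipschitz character of $\Phi$; both justifications rest on the same hypotheses and are valid.
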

\begin{proof} According to the set-up, the interface $\interface$ consists of $\eps\Phi(\Gamma_i)$ where $i=1,\cdots, N(\eps)$ are the labels for the deformed cubes $\{\eps \Phi(Y_i)\}$
inside $\Omega$ and $\Gamma_i$ are the corresponding unit scale
interfaces.

Let us consider the case of $v^+ \in H^1(\Depsp)$; the other case
is proved in the same manner. Denote by $v_i$ the restriction of
$v^+$ on the deformed cube $\eps \Phi(Y_i)$. We lift this function
to $\tilde{v}_i(y) = v_i(\eps\Phi(y))$ which is now defined on
$Y_i^+$. For this function, we have the trace inequality
\begin{equation}
\|\tilde{v}_i\|_{L^2(\Gamma_i)}^2 \le
C(\|\tilde{v}_i\|_{L^2(Y_i^+)}^2 + \|\nabla
\tilde{v}_i\|_{L^2(Y_i^+)}^2). \label{eq:tracei}
\end{equation}
Note that this constant depends on the reference shape $Y^-$ but
is uniform in $i$.

On the other hand, because for any $\gamma \in \mathcal{O}$, the
diffeomorphism $\Phi$ satisfies \eqref{eq:Phic2} and
\eqref{eq:Phic3}, the Lebesgue measures $ds(x)$ on the curve
$\eps\Phi(\Gamma_i)$ and $ds(y)$ on $\Gamma_i$, which are related
by the change of variable $x = \eps \Phi(y)$, satisfy
\begin{equation*}
C_1  ds(x) \le \eps ds(y) \le C_2 ds(x)
\end{equation*}
for some constant $C_{1,2}$ which depend only on the constants in
the assumptions and $Y^-$ but uniform in $\eps$ and $\gamma$.

Consequently, we have
\begin{equation*}
\|v^+\|_{L^2(\interface)}^2 = \sum_{i=1}^{N(\eps)}
\int_{\eps\Phi(\Gamma_i)} |v_i(x)|^2 ds(x) \le C\eps
\sum_{i=1}^{N(\eps)} \int_{\Gamma_i} |\tilde{v}_i(y)|^2 ds(y).
\end{equation*}
Apply \eqref{eq:tracei} and change the variable back; use again
$dx \sim \eps^2 dy$ and $\nabla_y \tilde{v}_i = \eps \nabla_x v_i$
to get
\begin{equation*}
\begin{aligned}
\|v^+\|_{L^2(\interface)}^2 \le &\ C\eps \sum_{i=1}^{N(\eps)} \int_{Y_i^+} |\tilde{v}_i(y)|^2 + |\nabla_y \tilde{v}(y)|^2 dy\\
\le &\ C\eps^{-1}\sum_{i=1}^{N(\eps)} \int_{\eps\Phi(Y_i^+)}
|v_i(x)|^2 + \eps^2 |\nabla v(x)|^2 dx
\end{aligned}
\end{equation*}
This completes the proof of \eqref{eq:L2Gamma}.
\end{proof}

The other inequality in \eqref{eq:equiv} is implied by the
following lemma:
\begin{lem}\label{lem:L2fW} There exists a constant $C>0$ independent of $\eps$ or $\gamma$ such that
\begin{equation}
\|v\|_{L^2(\Depsm)} \le C\left(\sqrt{\eps} \|v\|_{L^2(\interface)}
+ \eps \|\nabla v\|_{L^2(\Depsm)}\right) \label{eq:L2fW}
\end{equation}
for all $v\in H^1(\Depsm)$.
\end{lem}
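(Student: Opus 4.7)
The plan is to localize the estimate to each deformed cell and transport it back to the reference configuration, in analogy with the proof of Lemma \ref{lem:L2Gamma}. Write $v|_{\eps\Phi(Y_n^-)} = v_n$ for each $n \in \mathcal{I}_\eps$ and lift to $\tilde{v}_n(y) = v_n(\eps \Phi(y,\gamma))$, which belongs to $H^1(Y_n^-) = H^1(Y^-)$ up to translation. Summing over $n \in \mathcal{I}_\eps$ at the end yields the global estimate, since $\Depsm = \bigcup_n \eps\Phi(Y_n^-)$ and $\interface = \bigcup_n \eps\Phi(\partial Y_n^-)$ (recall that $\mathrm{dist}(Y^-,\partial Y)>0$, so these decompositions are disjoint and no boundary contributions are lost).

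The key ingredient at the reference scale is the inequality
\begin{equation*}
\|\tilde{v}\|_{L^2(Y^-)} \le C\bigl(\|\tilde{v}\|_{L^2(\partial Y^-)} + \|\nabla \tilde{v}\|_{L^2(Y^-)}\bigr), \quad \tilde{v} \in H^1(Y^-),
\end{equation*}
which I would establish by writing $\tilde{v} = \bar{v} + (\tilde{v}-\bar{v})$ with $\bar{v} = \mean_{Y^-}(\tilde{v})$: the fluctuation is controlled by $\|\nabla \tilde{v}\|_{L^2(Y^-)}$ through the Poincar\'e--Wirtinger inequality, while the constant $|\bar{v}|$ is controlled by $\|\tilde{v}\|_{L^1(\partial Y^-)} + \|\tilde{v}-\bar{v}\|_{L^1(\partial Y^-)}$, and the second term is bounded using the standard trace theorem on $Y^-$ combined with Poincar\'e--Wirtinger. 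The constant depends only on the fixed reference cell $Y^-$.

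Next I transport this back via the change of variable $x = \eps\Phi(y,\gamma)$. Using assumptions \eqref{eq:Phic2}--\eqref{eq:Phic3} on $\nabla\Phi$ and its determinant, the relations
\begin{equation*}
\|v\|_{L^2(\eps\Phi(Y_n^-))} \simeq \eps\,\|\tilde{v}_n\|_{L^2(Y_n^-)}, \quad \|v\|_{L^2(\eps\Phi(\partial Y_n^-))} \simeq \sqrt{\eps}\,\|\tilde{v}_n\|_{L^2(\partial Y_n^-)}, \quad \|\nabla_x v\|_{L^2(\eps\Phi(Y_n^-))} \simeq \|\nabla_y \tilde{v}_n\|_{L^2(Y_n^-)},
\end{equation*}
hold with constants depending only on $\kappa,\kappa'$ and uniform in $\eps,\gamma,n$. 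Substituting into the reference inequality applied to $\tilde{v}_n$ and multiplying through by $\eps$ yields the per-cell estimate
\begin{equation*}
\|v\|_{L^2(\eps\Phi(Y_n^-))} \le C\bigl(\sqrt{\eps}\,\|v\|_{L^2(\eps\Phi(\partial Y_n^-))} + \eps\,\|\nabla v\|_{L^2(\eps\Phi(Y_n^-))}\bigr).
\end{equation*}
Squaring and summing over $n \in \mathcal{I}_\eps$ gives \eqref{eq:L2fW}.

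The only technical point to be careful about is the uniformity of the constants: the reference inequality has a constant depending only on $Y^-$, and the change-of-variable constants are controlled by $\kappa,\kappa'$ from \eqref{eq:Phic2}--\eqref{eq:Phic3}. These are independent of $\eps$, $\gamma$ and the cell index $n$, which is exactly what is needed for the global estimate. I do not expect any essential obstacle beyond a careful bookkeeping of the $\eps$-powers in the scaling step.
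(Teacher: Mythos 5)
Your proof is correct and follows the same route as the paper's: reduce to a per-cell estimate by lifting $v$ to the reference cell via $x=\eps\Phi(y)$, apply a fixed-domain inequality $\|\tilde v\|_{L^2(Y^-)} \le C(\|\tilde v\|_{L^2(\partial Y^-)}+\|\nabla\tilde v\|_{L^2(Y^-)})$, track the powers of $\eps$ through the scaling relations $dx\sim\eps^2 dy$, $ds(x)\sim\eps\, ds(y)$, $\nabla_y\tilde v=\eps\nabla\Phi^T\nabla_x v$ using \eqref{eq:Phic2}--\eqref{eq:Phic3}, and sum over cells. The one place you diverge is the proof of the reference inequality itself: the paper establishes it by a compactness/contradiction argument (a bounded sequence with vanishing trace and gradient must converge to a constant that is zero on $\Gamma_0$), whereas your mean-plus-fluctuation decomposition combined with Poincar\'e--Wirtinger and the trace theorem is constructive and in principle yields an explicit constant; both are valid and the constant in either case depends only on $Y^-$, as required for uniformity in $\eps$, $\gamma$ and $n$.
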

\begin{proof} We first observe that on the reference cube $Y$ with reference cell $Y^-$, we have that
\begin{equation}
\|v\|_{L^2(Y^-)}^2 \le C\left(\|v\|_{L^2(\Gamma_0)}^2 + \|\nabla
v\|_{L^2(Y^-)}^2 \right), \label{eq:Ymest}
\end{equation}
for any $v \in H^1(Y^-)$ where $C$ only depends on $Y^-$ and the
dimension. Indeed, suppose otherwise, we could find a sequence
$\{v_n\} \subset H^1(Y^-)$ such that $\|v_n\|_{L^2(Y^-)} \equiv 1$
but
\begin{equation*}
\|v_n\|_{L^2(\Gamma_0)} + \|\nabla v_n\|_{L^2(Y^-)}
\longrightarrow 0, \quad \text{as } n \to \infty.
\end{equation*}
Then since $\|v_n\|_{H^1}$ is uniformly bounded, there exists a
subsequence, still denoted as $\{v_n\}$, and a function $v\in
H^1(Y^-)$ such that
\begin{equation*}
v_n \rightharpoonup v \text{ weakly in } H^1(Y^-), \quad \nabla
v_n \rightharpoonup \nabla v \text{ weakly in } L^2(Y^-).
\end{equation*}
Consequently, $\|\nabla v\|_{L^2} \le \liminf \|\nabla v_n\|_{L^2}
= 0$, which implies that $v = C$ for some constant. Moreover,
since the embedding $H^1(Y^-) \hookrightarrow L^2(\Gamma_0)$ is
compact, the convergence $v_n \rightarrow v$ holds strongly in
$L^2(\Gamma_0)$ and $\|v\|_{L^2(\Gamma)} \le \lim
\|v_n\|_{L^2(\Gamma_0)} = 0$. Consequently $v \equiv 0$. On the
other hand, $v_n \rightarrow v$ holds strongly in $L^2(Y^-)$ and
hence $\|v\|_{L^2(Y^-)} = \lim\|v_n\|_{L^2(Y^-)} = 1$. This
contradicts with the fact that $v \equiv 0$.

To prove \eqref{eq:L2fW}, we lift functions in $\eps \Phi(Y^-_i)$
to functions in $Y^-_i$ as in the proof of the previous lemma, and
use the scaling relations of the measures: $dx \sim \eps^2 dy$ and
$ds(x) \sim \eps ds(y)$. We calculate
\begin{equation*}
\|v\|_{L^2(\Depsm)}^2 = \sum_{i=1}^{N(\eps)}
\int_{\eps\Phi(Y^-_i)} |v|^2 dx \le C\eps^2 \int_{Y^-}
|\tilde{v}|^2 dy \le C\eps^2 \sum_{i=1}^{N(\eps)} \int_{\Gamma_i}
|\tilde{v}|^2 ds + \int_{Y^-_i} |\nabla \tilde{v}|^2 dy
\end{equation*}
where in the last inequality we used \eqref{eq:Ymest}. Change the
variables back to get
\begin{equation*}
\|v\|_{L^2(\Depsm)}^2 \le C\eps^2 \sum_{i=1}^{N(\eps)}
\int_{\eps\Phi(\Gamma_i)} \eps^{-d+1}|v|^2 ds +
\int_{\eps\Phi(Y^-_i)} \eps^{-d+2}|\nabla v|^2 dy.
\end{equation*}
Note that we used again $\nabla_y \tilde{v} = \eps\nabla_x v$. The
above inequality is precisely \eqref{eq:L2fW}.
\end{proof}

\begin{proof}[Proof of Proposition \ref{prop:equiv}] To prove the first inequality, we apply Lemma \ref{lem:L2Gamma} to get
\begin{equation*}
\begin{aligned}
\eps\|u^+ - u^-\|_{L^2(\interface)}^2 \le&\ 2(\eps\|u^+\|_{L^2(\interface)}^2 +\|u^-\|_{L^2(\interface)}^2)\\
\le&\ C(\|u^+\|^2_{L^2(\Depsp)} + \|u^-\|^2_{L^2(\Depsm)} +
\eps^2\|\nabla u^+\|_{L^2(\Depsp)}^2 + \eps^2\|\nabla
u^+\|_{L^2(\Depsp)}^2).
\end{aligned}
\end{equation*}
Only the first term in \eqref{eq:PW} does not show in
$\|\cdot\|_{\Hq^1\times H^1}$, but it is controlled by $\|\nabla
u^+\|_{L^2(\Depsp)}$ uniformly in $\eps$ and $\gamma$ thanks to
\eqref{eq:PW}.

For the second inequality, we only need to control
$\|u^-\|_{L^2(\Depsm)}$. We apply Lemma \ref{lem:L2fW} and the
triangle inequality:
\begin{equation*}
\|u^-\|_{L^2(\Depsm)}^2 \le C\left(
\eps\|u^+\|_{L^2(\interface)}^2 + \eps\|u^+ -
u^-\|_{L^2(\interface)}^2 + \eps^2 \|\nabla u^-\|_{L^2(\Depsm)}^2
\right).
\end{equation*}
Only the first term does not appear in $\|\cdot\|_{\Weps}$, but
using Lemma \ref{lem:L2Gamma} and \eqref{eq:PW} we can bound it by
\begin{equation*}
\eps\|u^+\|_{L^2(\interface)}^2 \le C(\|u^+\|_{L^2(\Depsp)}^2 +
\eps^2 \|\nabla u^+\|_{L^2(\Depsp)}^2) \le C\|\nabla
u^+\|_{L^2(\Depsp)}^2.
\end{equation*}
This completes the proof.
\end{proof}

\section{Technical lemma}

\begin{lem}\label{lem:theta}
Let $\varphi_1$ be a function in $\mathcal{D}(\Omega,
C^{\infty}_{\sharp}(Y^+))\times \mathcal{D}(\Omega,
C^{\infty}_{\sharp}(Y^-))$. There exists at least one function
$\theta$ in $(\mathcal{D}(\Omega, H^1_{\sharp}(Y^+)) \times
\mathcal{D}(\Omega, H^1_{\sharp}(Y^-)))^2$ solution of the
following problem:
\begin{equation}\label{eq:theta}
\left \{
\begin{array}{ll}
\vspace{0.3cm} \displaystyle- \nabla_y \cdot \theta^+(x, y) =0 & \textrm{in} \, \,Y^+ ,\\
\vspace{0.3cm} \displaystyle -\nabla_y \cdot \theta^-(x,y) =0 & \textrm{in} \, \,Y^- ,\\
\vspace{0.3cm} \theta^+(x, y) \cdot n = \theta^-(x, y) \cdot n & \textrm{on} \, \, \Gamma ,\\
\vspace{0.3cm}\displaystyle \theta^+(x, y) \cdot n= \varphi_1^+(x,y) - \varphi_1^-(x,y) & \textrm{on} \, \Gamma ,\\
y\longmapsto \theta(x,y) Y-\textrm{periodic}. &
\end{array}
\right .
\end{equation}
\end{lem}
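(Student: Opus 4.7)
The plan is to construct $\theta$ pointwise in $x \in \Omega$ via a Lax--Milgram argument applied to a scalar potential $u$, set $\theta^\pm := \nabla_y u^\pm$, and then transfer smoothness and compact support in $x$ by the linearity of the construction. Fix $x \in \Omega$ and put $\phi := \varphi_1^+(x,\cdot) - \varphi_1^-(x,\cdot) \in C^\infty_\sharp(\Gamma)$. The natural ansatz is that $u^\pm(x,\cdot)$ solve a homogeneous Neumann transmission problem in $Y^\pm$ with Neumann data $\phi$ on $\Gamma$ and $Y$-periodic boundary conditions on $\partial Y$, so that $\theta^\pm = \nabla_y u^\pm$ is automatically divergence free, has the prescribed normal trace, and is periodic.

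For the variational setting I would work on the Hilbert space $\mathcal{V} := (H^1_\sharp(Y^+)/\C) \times (H^1_\sharp(Y^-)/\C)$ equipped with the gradient norm, and introduce the sesquilinear form
\begin{equation*}
a(u,v) := \int_{Y^+} \nabla u^+ \cdot \overline{\nabla v^+}\,dy + \int_{Y^-} \nabla u^- \cdot \overline{\nabla v^-}\,dy
\end{equation*}
together with the antilinear form $\ell(v) := -\int_\Gamma \overline{\phi}\,(v^+ - v^-)\,ds$. The form $a$ is continuous and coercive on $\mathcal{V}$ by the Poincar\'e--Wirtinger inequality on each side, and $\ell$ is continuous by the trace theorem. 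Lax--Milgram yields a unique $u \in \mathcal{V}$ with $a(u,v) = \ell(v)$ for all $v \in \mathcal{V}$. Setting $\theta^\pm := \nabla_y u^\pm$ and first testing against $v \in C^\infty_c(Y^\pm)$ gives $\Delta_y u^\pm = 0$, hence $\nabla_y \cdot \theta^\pm = 0$ in $Y^\pm$; testing then against arbitrary $v \in \mathcal{V}$ and integrating by parts (using $\Delta_y u^\pm = 0$) yields both the continuity $\theta^+ \cdot n = \theta^- \cdot n$ and the prescribed trace $\theta^+ \cdot n = \phi$ on $\Gamma$. Periodicity of $\theta^\pm$ is inherited from $u^\pm \in H^1_\sharp(Y^\pm)$, and standard elliptic regularity with $C^\infty$ data lifts $u^\pm$ to $C^\infty$, so $\theta^\pm \in H^1_\sharp(Y^\pm)$. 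Smoothness and compact support in $x$ follow from the linearity of the map $\phi \mapsto u$ together with $\varphi_1 \in \mathcal D(\Omega, C^\infty_\sharp(Y^\pm))$, placing $\theta$ in the stated regularity class.

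The main obstacle is the compatibility condition $\int_\Gamma \phi\, ds = 0$: integrating $\nabla_y \cdot \theta^+ = 0$ over $Y^+$ and using the cancellation on $\partial Y$ produced by $Y$-periodicity forces $\int_\Gamma \phi\, ds = 0$, and this is also exactly the condition for $\ell$ to descend to the quotient space $\mathcal{V}$. In the intended use of the lemma in \eqref{eq:vftheta}, where the test functions $(\psi^+,\psi^-)$ range over $H^1_\sharp(Y^+)/\C \times H^1_\sharp(Y^-)$, the variational identity is itself meaningful only when this mean-zero condition on $\varphi_1^+ - \varphi_1^-$ is in force, so the compatibility should be viewed as an implicit hypothesis on the admissible $\varphi_1$. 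Under that hypothesis, the construction above produces the desired $\theta$ solving \eqref{eq:theta}.
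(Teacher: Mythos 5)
Your construction is essentially the paper's: for each fixed $x$ the authors also pose a variational problem for a scalar potential $\eta$ on $(H^1_{\sharp}(Y^+)/\C)\times(H^1_{\sharp}(Y^-)/\C)$, invoke Lax--Milgram, and set $\theta=\nabla_y\eta$, with smoothness and compact support in $x$ inherited by linearity. Your additional observation that the compatibility condition $\int_\Gamma(\varphi_1^+-\varphi_1^-)\,ds=0$ is forced (both for the antilinear form to descend to the quotient space and for the Neumann-type transmission problem to be solvable at all) is correct and identifies a point the paper's proof silently skips; as you say, it has to be read as an implicit restriction on the admissible $\varphi_1$.
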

\begin{proof}
We look for a solution under the form $\theta = \nabla_y \eta$. We
hence introduce the following variational problem:
\begin{equation*}
\left \{
\begin{array}{l}
\textrm{Find}\, \eta \in (H^1_{\sharp}(Y^+)/\mathbb{C})\times (H^1_{\sharp}(Y^-)/\mathbb{C})\, \textrm{such that}\\
\displaystyle \int_{Y^+} \nabla \eta^+(y) \cdot \overline{\psi}^+(y) dy +  \int_{Y^-} \nabla \eta^-(y) \cdot \overline{\psi}^-(y) dy \\
\hspace{6.5cm}\displaystyle=\frac{1}{\beta k_0} \int_{\Gamma}(\varphi_1^+-\varphi^-_1)(\overline{\psi}^+ - \overline{\psi}^-)(y) ds(y),\\
\textrm{for all}\, \psi \in (H^1_{\sharp}(Y^+)/\mathbb{C})\times
(H^1_{\sharp}(Y^-)/\mathbb{C}),
\end{array}
\right.
\end{equation*}
for a fixed $x \in \Omega$. Lax-Milgram theorem gives us existence
and uniqueness of such an $\eta$.  Since $\varphi_1 \in
\mathcal{D}(\Omega, C^{\infty}_{\sharp}(Y^+)) \times
\mathcal{D}(\Omega, C^{\infty}_{\sharp}(Y^-))$, there exists at
least one function $\theta \in (\mathcal{D}(\Omega,
H^1_{\sharp}(Y^+)) \times \mathcal{D}(\Omega,
H^1_{\sharp}(Y^-))^2$ solution of (\ref{eq:theta}). Note that we
do not have uniqueness of such a solution.
\end{proof}


\begin{thebibliography}{10}

\bibitem{zilico} S. Abdul, B.H. Brown, P. Milnes and J. Tidy. The use of
electrical impedance spectroscopy in the detection of cervical
intraepithelial neoplasia. Int. J Gynaecological Cancer, 16
(2006), 1823--1832.

\bibitem{allaire} G. Allaire. Homogenization and two-scale
convergence. SIAM J. Math. Anal., 23 (1992), 1482--1518.

\bibitem{allairebook} G. Allaire. {\em Shape Optimization by the Homogenization Method}.
Applied Mathematical Sciences, 146. Springer-Verlag, New York,
2002.

\bibitem{ganaoui} G. Allaire and K. El Ganaoui. Homogenization of
a conductive and radiative heat transfer problem. Multiscale
Model. Simul., 7 (2008), 1148--1170.

\bibitem{habibi1} G. Allaire and Z. Habibi. Second order corrector in the
homogenization of a conductive-radiative heat transfer problem.
Discrete Contin. Dyn. Syst. Ser. B,  18 (2013),  1--36.

\bibitem{habibi2}
G. Allaire and Z. Habibi. Homogenization of a Conductive,
Convective and radiative heat transfer problem in a heterogeneous
domain. SIAM J. Math. Anal., 45 (2013), 1136--1178.

\bibitem{almog} Y. Almog. Averaging of dilute random media: A
rigorous proof of the Clausius-Mossotti formula. Arch. Rat. Mech.
Anal., 207 (2013), 785--812.


\bibitem{pierre} H. Ammari, P. Garapon, H. Kang, and H. Lee. Effective viscosity properties of
  dilute suspensions of arbitrarily shaped particles. Asympt. Anal., 80 (2012),
  189--211.


\bibitem{book2} H. Ammari and H. Kang. {\em Polarization and Moment Tensors. With Applications to Inverse
Problems and Effective Medium Theory}. Applied Mathematical
Sciences, 162. Springer, New York, 2007.


\bibitem{mikyoung} H. Ammari, H. Kang and M. Lim. Effective
      parameters of elastic composites. Indiana Univ. Math. J., 55 (2006),
 903--922.







\bibitem{AKT} H. Ammari, H. Kang, and K. Touibi. Boundary layer techniques for
deriving the effective properties of composite materials. {
Asymptot. Anal.},  41 (2005), 119--140.

\bibitem{food} A. Angersbach, V. Heinz, and D. Knorr.
 Effects of pulsed electric fields on cell membranes in real food
systems. Innov. Food Sci. Emerg. Techno.,  1 (2000), 135--149.

\bibitem{armstrong} S. N. Armstrong and P. E.  Souganidis,
Stochastic homogenization of Hamilton-Jacobi and degenerate
Bellman equations in unbounded environments. J. Math. Pures Appl.,
97 (2012),  460--504.

\bibitem{asami} K. Asami. Characterization of biological cells by dielectric
spectroscopy.
 J. Non-Crystal. Solids, 305 (2002), 268--277.

\bibitem{asami2} K. Asami.  Characterization of heterogeneous systems by dielectric
spectroscopy. Prog. Polym. Sci., 27 (2002) 1617--1659.


\bibitem{tanter1} C. T. Barry, B. Mills, Z. Hah, R. A. Mooney, C. K. Ryan, D. J.
Rubens, and K. J. Parker. Shear wave dispersion measures liver
steatosis. Ultrasound Medicine Bio.,   38 (2012), 175--182.

\bibitem{beryland} L. Beryland and V. Mityushev. Generalized
Clausius-Mosotti formula for random composite with circular
fibers. J Stat. Phys., 102 (2001), 115--145.

\bibitem{BLBL07}
X.~Blanc, C.~{Le Bris}, and P.-L. Lions.
\newblock Stochastic homogenization and random lattices.
\newblock { J. Math. Pures Appl.}, 88 (2007), 34--63.

\bibitem{BLBL06}
X. Blanc, C. {Le Bris}, and P.-L. Lions. Une variante de la
th{\'e}orie de l'homog{\'e}n{\'e}isation
  stochastique des op{\'e}rateurs elliptiques.
\newblock { C. R. Math. Acad. Sci. Paris}, 343 (2006), 717--724.



\bibitem{zilico2} B. H. Brown, J. Tidy, K. Boston, A. D. Blackett, R. H. Smallwood and
F. Sharp. The relationship between tissue structure and imposed
electrical current flow in cervical neoplasia. The Lancet, 355
(2000), 892--895.


\bibitem{rutkov1}
A. B. Chin, L. P. Garmirian,  R. Nie, and S. B. Rutkove.
Optimizing measurement of the electrical anisotropy of muscle.
Muscle Nerve, 37 (2008), 560--565.


\bibitem{Ciopau}
D. Cioranescu and J. Saint~Jean Paulin.
\newblock Homogenization in open sets with holes.
\newblock { J. Math. Anal. Appl.}, 71 (1979), 590--607.


\bibitem{14}
D. Colton and R. Kress, \textsl{Inverse Acoustic and
Electromagnetic Scattering Theory}, Applied Mathemarical Sciences,
Vol. 93, 2nd edition, Springer-Verlag, New York, 1998.




\bibitem{tanter2} T. Deffieux, G. Montaldo, M. Tanter, M. Fink. Shear wave
spectroscopy for in vivo quantification of human soft tissues
viscoelasticity. IEEE Trans. Med. Im., 28 (2009), 313--322.



\bibitem{Dunford} N. Dunford and J. T. Schwartz. Convergence almost everywhere of
operator averages. { J. Rational Mech. Anal.}, 5 (1956), 129--178.

\bibitem{Ein1906} A. Einstein. Eine neue Bestimmung der
Molek{\"u}ldimensionen. Annalen der Physik, 19 (1906), 289--306.


\bibitem{Evans}
L.~C. Evans.
\newblock {\em Partial differential equations}, {Graduate
  Studies in Mathematics}, Vol. 19.
\newblock American Mathematical Society, Providence, RI, 1998.


\bibitem{fricke1} H. Fricke. A mathematical treatment of the electrical
conductivity of colloids and cell suspensions. J. General Physio.,
4 (1924), 375--383.

\bibitem{fricke2} H. Fricke. A mathematical treatment of the electric
conductivity and capacity of disperse systems. I. The electric
conductiivty of a suspension of homogeneous spheroids. Phys. Rev.,
24 (1924), 575--587.

\bibitem{fricke3}  H. Fricke. A mathematical treatment of the electrical conductivity
and capacity of disperse systems. II. The capacity of a suspension
of conducting spheroids surrounded by a non-conducting membrane
for a current of low frequency. Phys. Rev., 26 (1925),  678--681.

\bibitem{fricke53} H. Fricke. The Maxwell-Wagner dispersion in a
suspension of ellipsoids. J. Phys. Chem., 57 (1953), 934--937.


\bibitem{rutkov2} L. P. Garmirian, A. B. Chin, and S. B. Rutkove. Discriminating
neurogenic from myopathic disease via measurement of muscle
anisotropy. Muscle Nerve, 39 (2009), 16--24.


\bibitem{pulse2} C. D. Hopkins and G. W. M. Westby. Time domain processing of electrical organ
discharge waveforms by pulse-type electric fish. Brain Behav.
Evol., 29 (1986), 77--104.


\bibitem{Jikov_book}
V.~V. Jikov, S.~M. Kozlov, and O.~A. Ole{\u\i}nik.
\newblock {\em Homogenization of Differential Operators and Integral
  Functionals}.
\newblock Springer-Verlag, Berlin, 1994.

\bibitem{KPS} D. Khavinson, M. Putinar, and H.S. Shapiro.
Poincar\'{e}'s variational problem in potential theory. Arch.
Ration. Mech. Anal., 185 (2007),  143--184.

\bibitem{Khelifi}
A. Khelifi and H. Zribi. {Asymptotic expansions for the voltage
potentials with thin interfaces}. Math. Meth. Appl. Sci., 34
(2011), 2274--2290.


\bibitem{pulse} T. Kotnik, D. Miklavcic, and T. Slivnik. Time course of transmembrane
voltage induced by time-varying electric fields-a method for
theoretical analysis and its application. Bioelectrochemistry and
Bioenergetics,  45 (1998), 3--16.




\bibitem{Kozlov}
S.~M. Kozlov.
\newblock The averaging of random operators.
\newblock { Mat. Sb. (N.S.)}, 109 (1979), 188--202.




\bibitem{biotech} G. H.
Markxa and C. L. Daveyb. The dielectric properties of biological
cells at radiofrequencies: Applications in biotechnology. Enzyme
and Microbial Technology, 25 (1999) 161--171.

\bibitem{techno2} M. S. Mannor, S. Zhang, A. J. Link, and M. C.
McAlpine. Electrical detection of pathogenic bacteria via
immobilized antimicrobial peptides. PNAS, 107 (2010),
19207--19212.

\bibitem{schwan1}
{\O}. G. Martinsen, S. Grimnes, and H. P. Schwan. Interface
phenomena and dielectric properties of biological tissue. In
Encyclopedia of Surface and Colloid Science,  2643--2652, Marcel
Dekker Inc, 2002.

\bibitem{miltonbook} G.W. Milton. {\sl  The Theory of Composites},
\newblock Cambridge Monographs on Applied and Computational
Mathematics, Cambridge University Press, 2001.



\bibitem{Monsur}
S. Monsurr{\`o}.
\newblock Homogenization of a two-component composite with interfacial thermal
  barrier.
\newblock { Adv. Math. Sci. Appl.}, 13 (2003), 43--63.

\bibitem{murat} F.~Murat, and L.~Tartar. Calcul des variations et
homog\'en\'eisation. Univ. Pierre et Marie Curie, Publ. du
Laboratoire d'Analyse Numérique, no. 84012.

\bibitem{2}
J-C. N\'ed\'elec, \textsl{Acoustic and Electromagnetic Equations -
Integral Representations for Harmonic Problems}, Applied
Mathematical Sciences, Vol. 144, Springer, 2001.


\bibitem{PV79}
G.~C. Papanicolaou and S.~R.~S. Varadhan.
\newblock Boundary value problems with rapidly oscillating random coefficients.
\newblock In {\em Random fields, {V}ol. {I}, {II} ({E}sztergom, 1979)},
  volume~27 of {\em Colloq. Math. Soc. J{\'a}nos Bolyai},  835--873.
  North-Holland, Amsterdam, 1981.


\bibitem{poig2} C. Poignard. Asymptotics for steady state voltage potentials in
a bidimensional highly contrasted medium with thin layer. Math.
Meth. Appl. Sci., 31 (2008), 443--479.

\bibitem{poig3} C. Poignard. About the transmembrane voltage potential of a
biological cell in time-harmonic regime. ESAIM:Proceedings,  26
(2009), 162--179.

\bibitem{poisson} S. D. Poisson. Mem. Acad. Roy. Sci. Inst. France, 5 (1826),
488.



\bibitem{blood} Y. Polevaya, I.
Ermolina,  M. Schlesinger, B.-Z. Ginzburg, and Y. Feldman. Time
domain dielectric spectroscopy study of human cells II. Normal and
malignant white blood cells. Biochimica et Biophysica Acta,  1419
(1999), 257--271.


\bibitem{rayleigh} Lord Rayleigh. Phil. Mag, 34 (1892), 481.


\bibitem{schwan2} H. P. Schwan. Electrical properties of tissue and cell
suspensions. In Advances in Biological and Medical Physics,
Lawrence, J.H., Tobias, C.A., Eds.; Acad. Press: New York, vol V,
147--209, 1957.

\bibitem{schwan3} H. P. Schwan. Mechanism responsible for electrical properties of
tissues and cell suspensions. Med. Prog. Technol., 19 (1993),
163--165.


\bibitem{jinkeun} J. K. Seo, T. K. Bera1, H. Kwon, and R. Sadleir. Effective
admittivity of biological tissues as a coefficient of elliptic
PDE. Comput. Math. Meth.  Medicine,  2013, Article ID 353849, 10
pages.

\bibitem{geo} L. D. Slater and S. K. Sandberg. Resistivity and induced
polarization monitoring of salt transport under natural hydraulic
gradients. Geophysics, 65 (2000), 408--420.

\bibitem{food2} L. Yang. Electrical impedance spectroscopy for detection of
bacterial cells in suspensions using interdigitated
microelectrodes. Talanta, 74 (2008), 1621--1629.

\bibitem{geo2} V. Y. Zadorozhnaya and M. Hauger.
Mathematical modeling of membrane polarization occurring in rocks
due to applied electrical field.  Izvestiya, Phys. Solid Earth, 45
(2009), 1038--1054.


\bibitem{dune} http://dunemedical.com/dune/.

\bibitem{zilicoweb} http://www.zilico.co.uk/products/zedscan.html.
\end{thebibliography}
\end{document}